\numberwithin{equation}{section}
\numberwithin{table}{section}
\numberwithin{equation}{section}
\newtheorem{theorem}{Theorem}[section]
\newtheorem{lemma}[theorem]{Lemma}
\newtheorem{proposition}[theorem]{Proposition}
\newtheorem{corollary}[theorem]{Corollary}
\theoremstyle{definition}
\newtheorem{example}[theorem]{Example}
\newtheorem{remark}[theorem]{Remark}
\theoremstyle{definition}
\newtheorem{definition}[theorem]{Definition}
\def\la{\lambda}
\def \ot{\otimes}
\def \vt{\vartheta}
\def \As{\mathcal{A}ss}
\def \tt{\mathcal{T}}
\def \Com{\mathcal{C}om}
\def \OP{\mathcal{O}p}
\def\Pois{\mathcal{P}ois}
\def\Id{{\rm Id}}
\newcommand{\up}[1]{{^{#1}{\it \!\Upsilon}}}
\def \Uni{\mathcal{U}ni}
\def \I{\mathcal{I}}
\def \To{\longrightarrow}
\def \ip{{\mathcal{P}}}
\def \iq{\mathcal {Q}}
\def \iu{\!{\it{\Upsilon}}}
\def \dim{\operatorname{dim}}
\def\gkdim{\operatorname{GKdim}}
\def \Hom{\operatorname{Hom}}
\def \End{\operatorname{End}}
\def \id{\operatorname{id}}
\def \ker{\operatorname{Ker}}
\def \1{\mathbbm 1}
\def \D{\Delta}
\def \d{\delta}
\def \k{\Bbbk}
\def \N{\mathbb{N}}
\def \S{\mathbb{S}}
\def \rad{\operatorname{rad}}
\def \n{[n]}
\def \d{[d]}
\def\M{\mathcal{M}}
\def\rMod{\mbox{\rm Mod-}}
\begin{document}
\title{Truncation of unitary Operads}

\subjclass[2010]{}

\keywords{}

\author[Y.-H. Bao]{Yan-Hong Bao}
\address{(Bao) School of Mathematical Sciences, Anhui University,
Hefei 230601, China}
\email{baoyh@ahu.edu.cn, yhbao@ustc.edu.cn}

\author[Y. Ye]{Yu Ye}
\address{(Ye) School of Mathematical Sciences,
University of Sciences and Technology of China, Hefei 230001, China}
\email{yeyu@ustc.edu.cn}

\author[J.J. Zhang]{James J. Zhang}
\address{(Zhang) Department of Mathematics, Box 354350,
University of Washington, Seattle, Washington 98195, USA}
\email{zhang@math.washington.edu}

\begin{abstract}
We introduce truncation ideals of a $\Bbbk$-linear unitary symmetric
operad and use them to study ideal structure, growth property and
to classify operads of low Gelfand-Kirillov dimension.
\end{abstract}

\subjclass[2000]{Primary 18D50, 55P48, 16W50, 16P90}



\keywords{operad, unitary, 2-unitary, operadic ideal, truncation,
Gelfand-Kirillov dimension, classification, generating function,
basis theorem}

\date{\today}

\maketitle


\setcounter{section}{-1}

\section{Introduction}

Operad theory originates from work of Boardman-Vogt \cite{BV} and May 
\cite{Ma} in homotopy theory in 1970s. Since then many applications of 
both topological and algebraic operads have been discovered in algebra, 
category theory, combinatorics, geometry, mathematical physics and 
topology \cite{Fr1, Fr2, LV, MSS}. In this paper we study operads from
the algebraic viewpoint.

Following tradition, let $\As$ denote the associative algebra operad 
that encodes the category of unital associative algebras. (In the book 
\cite{LV}, it is denoted by $uAs$.) Given an operadic ideal $\I$ of 
$\As$, one can define the quotient operad $\As/\I$. Quotient operads 
of $\As$ relate to polynomial identity algebras (PI-algebras) closely. 
In fact, a PI-algebra is equivalent to an algebra over $\As/\I$ for 
some nonzero operadic ideal of $\As$ \cite{Am, Kle}. It is worth 
mentioning that an operadic ideal is essentially equivalent to
so-called $T$-ideal. For an introduction to PI-algebras and $T$-ideals, 
we refer to \cite[Chapter 13]{McR}.

We are mainly interested in those operads that
have some common properties with $\As/\I$. Let $\Bbbk$ be a base field.
Let $\ip:=\{\ip(n)\}_{n\geq 0}$ denote a $\Bbbk$-linear operad. Recall
that $\ip$ is {\it unitary} if $\ip(0)=\Bbbk \1_0$ with a basis
element $\1_0$, see \cite[Section 2.2]{Fr1}. Operads in this paper
are usually unitary. We say $\ip$ is {\it 2-unitary},
if $\ip$ is unitary and there is an element $\1_2\in \ip(2)$ such that
\begin{equation}
\label{E0.0.1}\tag{E0.0.1}
\1_2 \circ (\1_0,\1)=\1=\1_2\circ (\1, \1_0),
\end{equation}
where $\1\in \ip(1)$ is the identity of the operad $\ip$ and $\circ$ is
composition in $\ip$. A 2-unitary operad
$\ip$ is called $2a$-unitary if, further,
\begin{equation}
\notag
\1_2 \circ (\1_2,\1)=\1_2\circ (\1, \1_2).
\end{equation}
Note that every quotient operad $\As/\I$ is $2a$-unitary and that
there are many other interesting 2-unitary (respectively, $2a$-unitary)
operads [Example \ref{xxex2.1} and Lemma \ref{xxlem2.2}].

All operads in this paper are $\Bbbk$-linear. An operad usually 
means a {\it symmetric} operad and the word symmetric could be 
omitted. Plain operads are used in a few places.

\subsection{Definition of truncations}
\label{xxsec0.1}
Given a unitary
operad $\ip$, one can define restriction operators \cite[Section 2.2.1]{Fr1}
as follows.  We are using different notation from \cite{Fr1} and
explanations concerning the restriction operators are given in
\cite[Section 2.2]{Fr1}. Let $\n$ be the set $\{1, \cdots, n\}$ and $I$ be a
subset of $\n$. Let $\chi_{I}$ be the characteristic function of $I$,
i.e. $\chi_{I}(x)=1$ for $x\in I$ and $\chi_{I}(x)=0$ otherwise. Then
one defines the {\it restriction operator} $\pi^I: \ip(n)\to \ip(s)$,
where $s=|I|$, by
$$ \pi^I(\theta)=
\theta\circ(\1_{\chi_{I}(1)}, \cdots, \1_{\chi_{I}(n)})
$$
for all $\theta\in \ip(n)$. The restriction operator also appeared in many
other papers, see for example, \cite{LP}. For $k\geq 1$, the {\it $k$-th truncation}
of $\ip$, denoted by $^k \iu$, is defined by
\begin{equation}
\label{E0.0.2}\tag{E0.0.2}
^k\iu(n) =\begin{cases}
\bigcap\limits_{I\subset \n,\, |I|= k-1} \ker\pi^I, & \text{if }n\geq k;\\
\quad\ \ 0, & \text{if } n<k.
\end{cases}
\end{equation}
By convention, ${^0 \iu}=\ip$. The {\it truncation} $\{^k \iu\}_{k\geq 1}$
of $\ip$ is a sequence of ideals that are naturally associated to $\ip$.
In the case of $\ip=\As$,
$${^1 \iu}={^2 \iu} =\ker (\As\to \Com)$$
where $\Com$ is the commutative algebra operad defined by
$\Com(n)=\Bbbk$ for all $n\geq 0$. We will use the
truncation to study the growth of operads, as well as their ideal structure
and classification of operads of low growth.

\subsection{Truncations and Gelfand-Kirillov dimension}
\label{xxsec0.2}
The first application of the truncations concerns the growth property.
The growth of a $T$-ideal (in the theory of PI algebras) has been studied
by many authors, see for instance \cite{KR, GZ1, GZ2, GZ3, GMZ}. This paper
deals with a similar question in the framework of operad theory. Next we 
define the Gelfand-Kirillov dimension of an operad. For the definition of
Gelfand-Kirillov dimension of an algebra, we refer to \cite{KL}. The
{\it Gelfand-Kirillov dimension} (or {\it GKdimension} for short) of
an operad $\ip$ is defined to be
\begin{equation}
\label{E0.0.3}\tag{E0.0.3}
\gkdim \ip:=\limsup_{n\to\infty} \left(\log_{n}
(\sum_{i=0}^{n} \dim_{\Bbbk} \ip(i))\right).
\end{equation}
The {\it exponent} of $\ip$ is defined to be
\begin{equation}
\label{E0.0.4}\tag{E0.0.4}
\exp(\ip):=\limsup_{n\to \infty} (\dim \ip(n))^{\frac{1}{n}}.
\end{equation}
When we talk about the GKdimension or the exponent of an
operad $\ip$, we usually implicitly assume that $\ip$ is
{\it locally finite}, namely,
$\dim_{\Bbbk} \ip(n)<\infty$ for all $n\geq 0$. We
say $\ip$ {\it has polynomial growth} if $\gkdim \ip<\infty$. It is
easy to see that $\gkdim \As=\infty$ and $\gkdim \Com=1$. The {\it
generating series} or {\it Hilbert series} of $\ip$ is defined to be
$$G_{\ip}(t)=\sum_{n=0}^{\infty}\dim_{\Bbbk}\ip(n)t^n\in {\mathbb Z}[[t]].$$
Also see Definition \ref{xxdef4.1}.
\footnote{In \cite[Section 5.1.10, p. 128]{LV}, the
\emph{generating series} of $\ip$ is defined to be
$E_{\ip}(x)\colon
=\sum\limits_{n\ge 0}\dfrac{\dim_{\Bbbk} {\ip}(n)}{n!}x^n$,
which is also called the \emph{Hilbert-Poincar\'{e} series} of
$\ip$.}

Our first result is to give a characterization of operads that have
finite GKdimension. 

\begin{theorem}
\label{xxthm0.1}
Let $\ip$ be a 2-unitary operad.
\begin{enumerate}
\item[(1)]
If $\ip$ has polynomial growth, then the generating series $G_{\ip}(t)$
is rational. As a consequence, $\gkdim \ip\in {\mathbb N}$.
\item[(2)]
$\ip$ has polynomial growth if and only if there is an integer $k$ such
that ${^k\iu}=0$. And
$$\gkdim \ip=\max\{ k\mid {^k \iu}\neq 0\}+1=\min\{ k\mid {^k \iu}= 0\}.$$
\end{enumerate}
\end{theorem}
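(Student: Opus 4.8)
The plan is to deduce both parts from a single structural fact: for each $k$ and each $n\ge k$, the size-$k$ restriction operators identify the subquotient ${^{k}\iu}(n)/{^{k+1}\iu}(n)$ with a direct sum of copies of the top piece ${^{k}\iu}(k)$, indexed by the $k$-element subsets of $\n$.

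First I would record the elementary structure of the restriction operators. Operadic associativity gives the composition rule $\pi^{J}=\pi^{\bar J}\circ\pi^{I}$ whenever $J\subseteq I\subseteq\n$ (where $\bar J$ is the image of $J$ under the identification $I\cong[\,|I|\,]$); in particular $\ker\pi^{I}\subseteq\ker\pi^{J}$. Since every $(k-1)$-subset of $\n$ lies in some $k$-subset (for $n\ge k$), this yields the descending chain of ideals $\ip={^{0}\iu}\supseteq{^{1}\iu}\supseteq{^{2}\iu}\supseteq\cdots$, and it shows that for $\theta\in{^{k}\iu}(n)$ and $|I|=k$ the element $\pi^{I}(\theta)$ lies in ${^{k}\iu}(k)$, because any $(k-1)$-restriction of $\pi^{I}(\theta)$ is a $(k-1)$-restriction of $\theta$ and hence vanishes. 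Writing $b_{k}:=\dim_{\k}{^{k}\iu}(k)$ and noting that ${^{k+1}\iu}(k)=0$ (the only size-$k$ subset of $[k]$ is $[k]$ itself, for which $\pi^{[k]}=\id$), the structural fact I want is:
\[
\text{(Key Lemma)}\qquad (\pi^{I})_{|I|=k}\colon\ {^{k}\iu}(n)/{^{k+1}\iu}(n)\ \xrightarrow{\ \sim\ }\ \bigoplus_{I\subseteq\n,\ |I|=k}{^{k}\iu}(k).
\]
Its injectivity is immediate: by definition ${^{k+1}\iu}(n)=\bigcap_{|I|=k}\ker\pi^{I}$, so a class in the quotient is killed exactly when all its size-$k$ restrictions vanish.

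Granting the Key Lemma, the theorem follows by a dimension count. Since ${^{j}\iu}(n)=0$ for $j>n$, the chain terminates in each arity and telescopes to $\dim_{\k}\ip(n)=\sum_{k=0}^{n}\binom{n}{k}b_{k}$, and more generally $\dim_{\k}{^{j}\iu}(n)=\sum_{k=j}^{n}\binom{n}{k}b_{k}$. Set $m:=\max\{k:b_{k}>0\}\in\N\cup\{\infty\}$. The last formula shows ${^{j}\iu}\neq0$ iff $b_{k}>0$ for some $k\ge j$, so $m=\max\{k:{^{k}\iu}\neq0\}$ and $m+1=\min\{k:{^{k}\iu}=0\}$. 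If $m<\infty$ then, for $n\ge m$, $\dim_{\k}\ip(n)=\sum_{k=0}^{m}\binom{n}{k}b_{k}$ is a polynomial in $n$ of degree exactly $m$ with positive leading coefficient $b_{m}/m!$; hence the partial sums in \eqref{E0.0.3} grow like $n^{m+1}$ and $\gkdim\ip=m+1$, while
\[
G_{\ip}(t)=\sum_{k=0}^{m}b_{k}\,\frac{t^{k}}{(1-t)^{k+1}}
\]
is rational. If instead $m=\infty$ then $b_{k}>0$ for infinitely many $k$ and $\dim_{\k}\ip(n)\ge b_{k}\binom{n}{k}$ for each such $k\le n$, forcing superpolynomial growth, i.e. $\gkdim\ip=\infty$ and ${^{k}\iu}\neq0$ for all $k$. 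This proves the equivalence and both displayed equalities in (2), and (1) is the case $m<\infty$.

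The main obstacle is surjectivity in the Key Lemma. By $S_{n}$-equivariance it suffices, for a single $k$-subset $I$, to realize an arbitrary $\omega\in{^{k}\iu}(k)$ as $\pi^{I}(\theta)$ for some $\theta\in{^{k}\iu}(n)$ with $\pi^{I'}(\theta)=0$ for every other $k$-subset $I'$. Here the $2$-unitary element enters: using $\1_{2}$, whose singleton restrictions equal $\1$ by \eqref{E0.0.1}, together with the nullary unit $\1_{0}$, one builds, for each $k$-subset $S$, an extension $E_{S}(\omega)\in\ip(n)$ that grafts the $n-k$ surplus inputs onto $\omega$ by combs of $\1_{2}$'s, so that collapsing a surplus slot to $\1_{0}$ via $\1_{2}\circ(\1,\1_{0})=\1$ restores $\omega$ on $S$. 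The point is that $\pi^{I'}(E_{S}(\omega))$ produces, whenever $I'$ omits a genuine input of $S$, a restriction of $\omega$ to fewer than $k$ inputs, which vanishes because $\omega\in{^{k}\iu}(k)$; the only surviving off-diagonal terms swap a grafted surplus input for a genuine one. Consequently the system $\big(\pi^{I'}E_{S}\big)_{I',S}$ is unitriangular for a suitable ordering of the $k$-subsets, and inverting it by inclusion--exclusion (Möbius inversion) produces the desired section of the Key Lemma map. Carrying out this construction and verifying that the resulting $\theta$ genuinely lies in ${^{k}\iu}(n)$ --- that is, that the $\1_{2}$-combs interact correctly with all the remaining restriction operators --- is the technical heart of the argument, and it is exactly here that $2$-unitarity, rather than mere unitarity, is used.
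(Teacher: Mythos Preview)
Your overall strategy matches the paper's: the Key Lemma is Theorem~\ref{xxthm4.5}(1), and the deduction of (1) and (2) from it via $\dim_{\k}\ip(n)=\sum_{k}\binom{n}{k}b_{k}$ (equivalently $G_{\ip}(t)=\sum_{k}b_{k}\,t^{k}/(1-t)^{k+1}$) is Lemma~\ref{xxlem5.2} together with the short argument in the proof of Theorem~\ref{xxthm0.1}.

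The difference is in how surjectivity of the Key Lemma is established, and here your sketch has a real gap. You propose extensions $E_{S}(\omega)$ built from $\1_{2}$-combs, assert that the matrix $(\pi^{I'}E_{S})_{I',S}$ is unitriangular, and plan to invert by M\"obius inversion. But if ``grafting surplus inputs onto $\omega$'' means applying $\Delta_{j}$'s (replacing an input of $\omega$ by $\1_{2}$), then already for $k=1$, $n=2$ one gets $E_{\{1\}}(\omega)=\omega\circ\1_{2}$ with $\pi^{\{1\}}(E_{\{1\}}(\omega))=\pi^{\{2\}}(E_{\{1\}}(\omega))=\omega$, and by equivariance the resulting $2\times2$ matrix is all $1$'s --- not unitriangular under any ordering. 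The paper avoids this by taking
\[
E_{S}(\omega)=\bigl(\1_{2}\circ(\omega,\1_{n-k})\bigr)\ast c_{\hat S},
\]
placing the surplus inputs into a \emph{separate} $\1_{n-k}$-block rather than onto inputs of $\omega$. Corollary~\ref{xxcor4.4} (via Lemma~\ref{xxlem4.3}) then shows that $\pi^{I'}E_{S}(\omega)$ equals $\omega$ if $I'=S$ and $0$ otherwise: any $k$-subset $I'\neq S$ omits some $r\in S$, so $\pi^{I'}$ factors through $\Gamma^{r}$ applied to the $\omega$-block, and $\Gamma^{r}(\omega)=0$ since $\omega\in{^{k}\iu}(k)$. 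Thus the matrix is already the identity --- no inclusion--exclusion is needed --- and membership $E_{S}(\omega)\in{^{k}\iu}(n)$ is automatic because ${^{k}\iu}$ is an ideal.
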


Theorem \ref{xxthm0.1}(1) answers an open question (or rather fulfills an
expectation) of Khoroshkin-Piontkovski \cite[Expectation 3]{KP} for 2-unitary
symmetric operads. When $\ip$ has finite Gr{\"o}bner basis \cite{KP},
Theorem \ref{xxthm0.1}(2) is a consequence of a more general result
\cite[Theorem 0.1.5]{KP}. Our proof is not dependent on the Gr{\"o}bner
basis. It follows from Corollary \ref{xxcor6.8} that the GKdimension of
a unitary operad can be a non-integer.
There are some other results concerning the {\it exponent} of an
operad, see for example Theorem \ref{xxthm0.8}(2).
In the next corollary, let $\{ ^k \iu\}_{k\geq 0}$ be the
truncation of $\As$.

\begin{corollary}
\label{xxcor0.2}
Let $\I$ be an operadic ideal of $\As$ and $\ip$ be the
quotient operad $\As/\I$. Let $k$ be a positive integer.
Then $\gkdim \ip\le k$ if and only if
$\I\supseteq {^{k}\iu}$. In particular,
$$\gkdim (\As/{^k\iu})=
\begin{cases}
1, & k=1,2,\\
k, & k\geq 3.
\end{cases}$$
\end{corollary}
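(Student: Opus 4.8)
The plan is to reduce everything to Theorem~\ref{xxthm0.1} applied to the quotient $\ip:=\As/\I$. This is legitimate: as recalled in the introduction every $\As/\I$ is $2a$-unitary, hence $2$-unitary, and it is locally finite since $\dim_{\Bbbk}\ip(n)\le\dim_{\Bbbk}\As(n)=n!$. Write $p\colon\As\to\ip$ for the quotient morphism and, for an operad $\iq$, let ${^k\iu_{\iq}}$ denote its $k$-th truncation, so that ${^k\iu_{\As}}$ is the ideal ${^k\iu}$ of the statement. Because $p$ sends $\1_0,\1$ to $\1_0,\1$, it commutes with the restriction operators: $\pi^I(p(\theta))=p(\pi^I(\theta))$. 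The argument hinges on two facts about the truncations of $\As$: the definitional vanishing ${^k\iu}(m)=0$ for $m<k$, and, for the lower bound, the non-vanishing ${^k\iu}(k)\neq0$ for $k\ge2$.

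For the equivalence I would prove the two implications separately. Suppose first $\I\supseteq{^k\iu}$. Then $\ip$ is a quotient of $\As/{^k\iu}$, whence $\gkdim\ip\le\gkdim(\As/{^k\iu})$ (the generating sums only decrease under a surjection), and it suffices to show $\gkdim(\As/{^k\iu})\le k$. To see this I compute the $k$-th truncation of $\As/{^k\iu}$: a class $\overline\theta\in(\As/{^k\iu})(n)$ belongs to it precisely when $\pi^I(\theta)\in{^k\iu}(k-1)$ for every $|I|=k-1$; as ${^k\iu}(k-1)=0$, this says $\pi^I(\theta)=0$ for all such $I$, i.e. $\theta\in{^k\iu}(n)$, i.e. $\overline\theta=0$. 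Thus the $k$-th truncation of $\As/{^k\iu}$ is zero and Theorem~\ref{xxthm0.1}(2) gives $\gkdim(\As/{^k\iu})\le k$. Conversely, suppose $\gkdim\ip\le k$. Since the truncations form a descending chain (restricting to $J$ and then to $I\subseteq J$ equals restricting to $I$, so ${^{k+1}\iu_\iq}\subseteq{^k\iu_\iq}$), Theorem~\ref{xxthm0.1}(2) forces ${^k\iu_{\ip}}=0$. Now for $\theta\in{^k\iu}(n)$ we get $\pi^I(p(\theta))=p(\pi^I(\theta))=0$ for all $|I|=k-1$, so $p(\theta)\in{^k\iu_{\ip}}(n)=0$, i.e. $\theta\in\I(n)$. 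Hence ${^k\iu}\subseteq\I$, and the equivalence follows.

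For the displayed formula, taking $\I={^k\iu}$ in the equivalence gives $\gkdim(\As/{^k\iu})\le k$. For $k=1,2$ we have ${^k\iu}=\ker(\As\to\Com)$, so $\As/{^k\iu}\cong\Com$ and $\gkdim(\As/{^k\iu})=\gkdim\Com=1$. For $k\ge3$ it remains to exclude $\gkdim(\As/{^k\iu})\le k-1$; by the equivalence this would force ${^{k-1}\iu}\subseteq{^k\iu}$, and with the descending chain ${^{k-1}\iu}\supseteq{^k\iu}$ it would give ${^{k-1}\iu}={^k\iu}$. Since ${^k\iu}(k-1)=0$, to contradict this I need only produce a nonzero element of ${^{k-1}\iu}(k-1)$. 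My candidate is the left-normed commutator $c_{k-1}=[[\cdots[x_1,x_2],x_3],\dots,x_{k-1}]\in\As(k-1)$: the restriction $\pi^{[k-1]\setminus\{j\}}$ substitutes the unit $\1_0$ for $x_j$, and since $\1_0$ is a two-sided unit for the product one has $[a,\1_0]=[\1_0,a]=0$, so a short induction gives $\pi^{[k-1]\setminus\{j\}}(c_{k-1})=0$ for every $j$, i.e. $c_{k-1}\in{^{k-1}\iu}(k-1)$; as the coefficient of $x_1\cdots x_{k-1}$ in $c_{k-1}$ equals $1$, it is nonzero. Hence ${^{k-1}\iu}\neq{^k\iu}$ and $\gkdim(\As/{^k\iu})=k$ for $k\ge3$. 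I expect this last step to be the main obstacle: the reductions are formal once Theorem~\ref{xxthm0.1} is available, whereas the strict descent ${^{k-1}\iu}\neq{^k\iu}$ requires exhibiting an explicit operation all of whose $(k-2)$-fold restrictions vanish, the verification resting on the unit relations in $\As$.
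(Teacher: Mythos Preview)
Your argument is correct. For the equivalence you follow essentially the paper's route: the paper quotes Theorem~\ref{xxthm0.1}(2) together with Lemma~\ref{xxlem4.7} (which identifies ${^k\iu_{\ip/\I}}$ with ${^k\iu_\ip}/({^k\iu_\ip}\cap\I)$), and your direct verification that ${^k\iu_{\As/{^k\iu}}}=0$ and that $p({^k\iu_{\As}})\subseteq{^k\iu_\ip}$ is precisely the special case of that lemma needed here.

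For the displayed formula you take a genuinely different path. The paper computes
\[
f_{\As}(n)=\dim{^n\iu}(n)=\sum_{s=0}^n(-1)^{n-s}s!\tbinom{n}{s}
\]
via the binomial transform of the sequence $(n!)_n$ (Lemma~\ref{xxlem5.2} and \eqref{E5.0.1}), recognizes these as derangement numbers, and then uses that $f_{\As}(n)\neq0$ for $n\neq1$. You instead exhibit an explicit nonzero witness, the left-normed commutator $c_{k-1}\in{^{k-1}\iu}(k-1)$, and deduce ${^{k-1}\iu}\neq{^k\iu}$ directly. Your approach is more elementary and self-contained (it does not invoke the generating-series machinery of Section~\ref{xxsec5}); the paper's approach yields more, namely the exact dimension $\dim{^n\iu}(n)$. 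It is worth noting that your idea already appears in the paper in a different place: Lemma~\ref{xxlem3.9}(1) shows $\Phi_2\circ(\theta,1_1)\in{^{k+1}\iu}$ iff $\theta\in{^k\iu}$, and iterating from $\Phi_2$ produces exactly your $c_{k-1}$; this is how Proposition~\ref{xxpro3.10}(2) establishes ${^k\iu}\neq{^{k+1}\iu}$ for $k\ge2$.
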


\subsection{Chain conditions on ideals of an operad}
\label{xxsec0.3}
The second application of the truncations concerns the ideal structure of
operads. We say an operad $\ip$ is \emph{artinian} (respectively, 
\emph{noetherian}) if the set of ideals of $\ip$ satisfies
the descending chain condition (respectively, ascending chain condition).

\begin{theorem}
\label{xxthm0.3}
Let $\ip$ be a 2-unitary operad that is locally finite.
\begin{enumerate}
\item[(1)]
If $\gkdim \ip<\infty$, then
$\ip$ is noetherian.
\item[(2)]
$\gkdim \ip<\infty$ if and only if $\ip$ is artinian.
\item[(3)]{\rm{[An operadic version of Hopkins' Theorem]}}
If $\ip$ is artinian, then it is noetherian.
\end{enumerate}
\end{theorem}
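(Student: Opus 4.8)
The plan is to deduce part (3) formally from parts (1) and (2), and to prove (1) and (2) together using the truncation filtration $\ip={^0\iu}\supseteq{^1\iu}\supseteq{^2\iu}\supseteq\cdots$ as a surrogate for a radical filtration. First I would record that this really is a descending chain of ideals: if $\theta\in{^{k+1}\iu}(n)$ then for every $(k-1)$-subset $I$ one may choose a $k$-subset $J\supseteq I$ and factor the restriction as $\pi^I=\pi^{I'}\circ\pi^J$, so $\pi^J(\theta)=0$ forces $\pi^I(\theta)=0$; hence ${^{k+1}\iu}\subseteq{^k\iu}$. By Theorem~\ref{xxthm0.1}(2), $\gkdim\ip<\infty$ is equivalent to the termination of this filtration.

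For the implication ``artinian $\Rightarrow\gkdim\ip<\infty$'' I would argue by contraposition. If $\gkdim\ip=\infty$ then ${^k\iu}\neq0$ for every $k$ by Theorem~\ref{xxthm0.1}(2). This descending chain cannot stabilize: were ${^K\iu}={^{K+1}\iu}=\cdots=:\J$, then for each arity $n$ we could pick $k\ge\max(K,n+1)$ and use ${^k\iu}(n)=0$ to conclude $\J(n)=0$, whence $\J=0$ and, once more by Theorem~\ref{xxthm0.1}(2), $\gkdim\ip<\infty$, a contradiction. Thus the truncation ideals form a descending chain with infinitely many strict inclusions, violating the descending chain condition, so $\ip$ is not artinian.

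For the forward direction of (2) and for (1), assume $\gkdim\ip<\infty$, so ${^{k+1}\iu}=0$ where $k=\gkdim\ip-1$, giving a \emph{finite} filtration of $\ip$ by ideals with factors ${^i\iu}/{^{i+1}\iu}$ for $0\le i\le k$. I would reduce both chain conditions on $\ip$ to the same condition on these finitely many factors: a chain of ideals $\K_1\supseteq\K_2\supseteq\cdots$ of $\ip$ induces, for each $i$, a chain of ideals $(\K_m\cap{^i\iu}+{^{i+1}\iu})/{^{i+1}\iu}$ of the factor, and if all these stabilize then—since two nested ideals with equal image in every factor must coincide—the original chain stabilizes. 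The same reasoning applies to ascending chains, so it suffices to prove that each factor is simultaneously artinian and noetherian.

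The crux is this factor analysis, and I expect it to be the main obstacle. Using the restriction operators as above, if $\theta\in{^i\iu}(n)$ and $|J|=i$ then $\pi^J(\theta)\in\bigcap_{|I'|=i-1}\ker\pi^{I'}={^i\iu}(i)$, and assembling these gives a map with kernel exactly ${^{i+1}\iu}(n)$, i.e.
$$ {^i\iu}(n)/{^{i+1}\iu}(n)\ \hookrightarrow\ \bigoplus_{\,|J|=i,\ J\subseteq\n}{^i\iu}(i),\qquad \bar\theta\mapsto\bigl(\pi^J(\theta)\bigr)_{|J|=i}. $$
Thus every arity of the factor is detected by the single finite-dimensional space ${^i\iu}(i)$, which is finite-dimensional by local finiteness. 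The delicate point is to upgrade this arity-wise detection to an \emph{order-preserving embedding of the ideal lattice of the factor into the lattice of $\mathbb{S}_i$-submodules of ${^i\iu}(i)$}; concretely, one must show that an ideal of ${^i\iu}/{^{i+1}\iu}$ is recovered from its arity-$i$ component, so that distinct ideals have distinct arity-$i$ parts. Granting this, the ideal lattice of the factor has length at most $\dim_{\Bbbk}{^i\iu}(i)<\infty$, hence the factor is both artinian and noetherian; feeding this into the filtration argument yields (1) and the forward direction of (2). Finally (3) follows at once: if $\ip$ is artinian then $\gkdim\ip<\infty$ by (2), so $\ip$ is noetherian by (1).
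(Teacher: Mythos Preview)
Your approach is correct and the acknowledged gap can be closed, but it is worth comparing with the paper's shorter route.

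Your d\'evissage is sound: with a finite filtration by the ${^i\iu}$, a nested chain of ideals stabilizes as soon as all the induced chains in the factors ${^i\iu}/{^{i+1}\iu}$ do, and the backward direction of (2) via nonstabilization of $\{{^k\iu}\}$ matches the paper exactly. The point you flag as ``delicate'' is precisely the content of Theorem~\ref{xxthm4.5}(1): for any ideal $\K$ the elements $\1_2\circ(\theta,\1_{n-i})\ast c_I$ with $\theta$ ranging over a basis of $(\K\cap{^i\iu})(i)$ form a basis of $\bigl((\K\cap{^i\iu})/(\K\cap{^{i+1}\iu})\bigr)(n)$. In particular your injection $\bar\theta\mapsto(\pi^J(\theta))_{|J|=i}$ is in fact an \emph{isomorphism} onto $\bigoplus_{|J|=i}(\K\cap{^i\iu})(i)$, so the factor at level $i$ is completely determined by the single $\S_i$-submodule $(\K\cap{^i\iu})(i)\subseteq{^i\iu}(i)$. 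Since ${^i\iu}(i)$ is finite dimensional, the lattice of such submodules has finite length, and your argument goes through. (A minor imprecision: the factors ${^i\iu}/{^{i+1}\iu}$ are not operads, so speak of sub-$\S$-modules rather than ``ideals'' there; this does not affect the d\'evissage.)

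The paper packages the same structural input differently. Instead of filtering, it observes (Lemma~\ref{xxlem5.3}) that $f_\I(k)=\dim({^k\iu}\cap\I)(k)\le f_\ip(k)$ and that only finitely many $f_\ip(k)$ are nonzero, so the set $\{G_\I(t)\mid \I\subseteq\ip\}$ is finite; any monotone chain of nested ideals then has eventually constant Hilbert series, hence stabilizes by local finiteness. Both proofs rest on Theorem~\ref{xxthm4.5}(1); yours is more modular (it would adapt to statements about lengths of chains), while the paper's Hilbert-series argument is shorter and avoids the explicit d\'evissage.
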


We have a version of Artin-Wedderburn Theorem
for operads. Similar to the definition given before Theorem \ref{xxthm0.3},
we can define left or right artinian operads [Definition \ref{xxdef1.8}(2,3)].
We say an operad $\ip$ is {\it semiprime}, if it does not contain an
ideal $\mathcal{N}\neq 0$ such that $\mathcal{N}^2=0$ [Definition \ref{xxdef1.10}(4)].
An operad $\ip$ is called {\it bounded above} if $\ip(n)=0$
for all $n\gg 0$.

\begin{theorem}[Operadic versions of Artin-Wedderburn Theorem]
\label{xxthm0.4}
Suppose $\ip$ is semiprime. In parts {\rm{(1)}} and {\rm{(2)}},
$\ip$ is either a plain operad or a symmetric
operad. In part {\rm{(3)}}, $\ip$ is a symmetric operad.
\begin{enumerate}
\item[(1)]
If $\ip$ is reduced and left or right artinian, then
$$\ip(n)=
\begin{cases}
0, & n\neq 1,\\
A, & n=1,\end{cases}
$$
where $A$ is a semisimple algebra.
\item[(2)]
If $\ip$ is unitary, bounded above, and left or right artinian, then
$$\ip(n)=\begin{cases}
0, & n\neq 0, 1,\\
\Bbbk, & n=0,\\
A, & n=1,\end{cases}
$$
where $A$ is an augmented semisimple algebra.
\item[(3)]
If $\ip$ is 2-unitary and left or right artinian, then
$\ip$ is as in Example {\rm{\ref{xxex2.3}(1)}} and $\ip(1)$ is an
augmented semisimple algebra.

If, further, $\ip(1)$ is finite dimensional over $\Bbbk$, then $\ip$
is locally finite, $\gkdim\ip=2$ or $\gkdim \ip=1$
{\rm{(}}and hence $\ip=\Com${\rm{)}}, and
$\ip(1)$ is a finite dimensional augmented semisimple algebra.
\end{enumerate}
\end{theorem}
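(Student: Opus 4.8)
The plan is to read all three parts as operadic incarnations of the classical Artin--Wedderburn dichotomy. In each case one isolates a two-sided ideal $\mathcal{N}$ carrying the ``non-semisimple'' data, shows that the chain condition forces $\mathcal{N}$ (or a suitable power of it) to be \emph{nilpotent}, and then invokes semiprimeness to kill it, leaving an operad concentrated in the forced arities whose arity-$1$ part $A:=\ip(1)$ is semisimple by the ordinary Artin--Wedderburn theorem for algebras. The engine producing nilpotency is always the interplay between the arity grading and composition, fed by the descending chain condition from left/right artinianness (a two-sided ideal is in particular one-sided, so its powers form a chain to which the DCC applies), and the translation that an operad concentrated in arity $1$ \emph{is} the associative algebra $\ip(1)$, under which operadic semiprimeness and artinianness become the corresponding properties of $A$.

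For (1), since $\ip$ is reduced, composition never lowers arity, so $\mathcal{N}:=\bigoplus_{n\geq 2}\ip(n)$ is a two-sided ideal, and a bookkeeping of arities shows $\mathcal{N}^{k}\subseteq\bigoplus_{n\geq k+1}\ip(n)$ (each additional factor of arity $\geq 2$ replaces an arity-$1$ slot and raises the total arity by at least one). Left (or right) artinianness makes the chain $\mathcal{N}\supseteq\mathcal{N}^{2}\supseteq\cdots$ stabilize, and the arity estimate forces the stable value into $\bigcap_{k}\bigoplus_{n\geq k+1}\ip(n)=0$; hence $\mathcal{N}$ is nilpotent, and semiprimeness gives $\mathcal{N}=0$. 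Then $\ip$ is concentrated in arity $1$, is the algebra $A=\ip(1)$, and classical Artin--Wedderburn finishes the argument.

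For (2) the obstruction is that $\ip(0)=\Bbbk\1_0$ lets composition lower arity, so $\bigoplus_{n\geq 2}\ip(n)$ is no longer an ideal; instead I would use boundedness. Let $N\geq 2$ be the top nonzero arity. The key computation is that for $\theta\in\ip(N)$ the composite $\theta\circ(\theta,\1,\dots,\1)$ lands in arity $2N-1>N$ and therefore vanishes; restricting this relation along the operators $\pi^{I}$ then forces the single-$\1_0$ restrictions of $\theta$ to vanish, that is $\ip(N)\subseteq {}^{N}\iu(N)$. Any $\theta\in{}^{N}\iu(N)$ is annihilated by every $\1_0$-insertion and cannot be composed upward, so the ideal it generates is concentrated in arity $N$ and squares to zero; semiprimeness forces $\ip(N)=0$, and inducting downward on $N$ gives $\ip(n)=0$ for all $n\geq 2$. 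This leaves $\ip(0)=\Bbbk$ and $\ip(1)=A$, with $A$ augmented via $\pi^{\emptyset}$ and semisimple as in (1). The step requiring care is extracting $\ip(N)\subseteq {}^{N}\iu(N)$ from the vanishing overflow composites, since a priori these relations only say that certain restrictions act as zero through the $A$-module structure.

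For (3), boundedness is unavailable: a $2$-unitary operad has $\ip(n)\neq 0$ for all $n$, since $\1_2\circ(\1_2,\1)$ survives the restriction that reinserts $\1$ for $\1_0$. I would reduce the whole statement to the single claim ${}^{2}\iu=0$, because ${}^{2}\iu=0$ says exactly that the combined single-input restriction $\ip(n)\hookrightarrow A^{\oplus n}$ is injective, which is the structure of Example \ref{xxex2.3}(1); augmentedness and semisimplicity of $A=\ip(1)$ follow as before, and in the finite-dimensional case Theorem \ref{xxthm0.1}(2) then yields $\gkdim\ip\leq 2$, local finiteness being automatic from the resulting linear growth of $\dim_{\Bbbk}\ip(n)$, with the value $1$ occurring exactly for $\ip=\Com$ (Theorems \ref{xxthm0.1} and \ref{xxthm0.3}). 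To prove ${}^{2}\iu=0$ I would first establish the truncation filtration $({}^{2}\iu)^{k}\subseteq {}^{k+1}\iu$, checking that composing elements killed by single restrictions (and lying in ${}^{1}\iu$, so with $\pi^{\emptyset}=0$) pushes the product into a higher truncation layer by the same restriction-commutes-with-composition bookkeeping used above; then left/right artinianness stabilizes the descending chain ${}^{1}\iu\supseteq {}^{2}\iu\supseteq\cdots$ at an idempotent ideal $\mathcal{T}=\bigcap_{k}{}^{k}\iu\subseteq {}^{1}\iu$. The decisive step, and the point I expect to be the main obstacle, is to show $\mathcal{T}=0$: artinianness alone only stabilizes the chain, and $\As$ (semiprime but not artinian) has $\mathcal{T}\neq 0$, so semiprimeness must be combined with an operadic Nakayama/Hopkins-type argument in the spirit of Theorem \ref{xxthm0.3}(3) to rule out a nonzero idempotent ideal sitting inside ${}^{1}\iu$ under the descending chain condition. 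Once $\mathcal{T}=0$, artinianness gives ${}^{m}\iu=0$ for some $m$, the filtration yields $({}^{2}\iu)^{m-1}\subseteq {}^{m}\iu=0$, and semiprimeness finally forces ${}^{2}\iu=0$.
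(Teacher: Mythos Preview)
Your part (1) is fine and essentially matches the paper's argument (the paper uses the chain $\ip_{\geq n}$ directly rather than powers of $\mathcal{N}$, but this is cosmetic).

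For part (3) you have manufactured a difficulty that does not exist. The intersection $\mathcal{T}=\bigcap_{k}{^k\iu}$ is \emph{always} zero in any unitary operad, because ${^k\iu}(n)=0$ for $n<k$ by definition; this is exactly the observation the paper uses (``$\bigcap_{k\geq 1}{^k\iu}=0$ since ${^k\iu}(k-1)=0$''). In particular your claim that $\As$ has $\mathcal{T}\neq 0$ is false: in $\As$ the chain does not stabilize, but the intersection is still zero. Once you know $\bigcap_k{^k\iu}=0$ and (by left or right artinianness) that the chain stabilizes, you immediately get ${^m\iu}=0$ for some $m$; then take the largest $n$ with ${^n\iu}\neq 0$, and if $n\geq 2$, Proposition~\ref{xxpro3.1}(2) gives $({^n\iu})^2\subseteq{^{2n-1}\iu}=0$, contradicting semiprimeness. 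So ${^2\iu}=0$, and the rest of your outline (showing $A$ semisimple via the ideals ${^1\iu}^N$, then invoking the classification) is correct and is what the paper does.

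Your part (2) has a genuine gap, and the fix is to reuse the truncation argument you planned for (3). The step you flag---extracting $\ip(N)\subseteq{^N\iu}(N)$ from the vanishing of $\theta\circ(\theta,\1,\dots,\1)$---does not go through: that vanishing gives no control over the single restrictions $\pi^i(\theta)\in A$, which can be arbitrary nonzero elements. The paper does not attempt this. Instead it runs the \emph{same} truncation-chain argument as in (3) (which uses only unitarity, not $2$-unitarity) to obtain ${^2\iu}=0$ and $A$ semisimple, and \emph{then} uses boundedness together with the semisimplicity of $A$: if $n\geq 2$ is the top nonzero arity and $0\neq\mu\in\ip(n)$, then ${^2\iu}=0$ gives some $0\neq x=\pi^i(\mu)\in A$; using an idempotent in the ideal $AxA$ one manufactures $g\in\ip(n)$ and $\nu\in\ip(2)$ with $\pi^i(g\underset{i}{\circ}\nu)\neq 0$, contradicting $\ip(n+1)=0$. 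So boundedness enters only at the last step, after ${^2\iu}=0$ is already in hand.
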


Note that there are unitary and left (or right) artinian operads that
are not bounded above. Such examples are given in Example
\ref{xxex2.3}(2).

\subsection{Classifications of operads of low Gelfand-Kirillov
dimension}
\label{xxsec0.4}
The third application of truncations concerns classifications
of 2-unitary operads.

The classification of 2-unitary operads of GKdimension 1 is easy.

\begin{proposition}
\label{xxpro0.5} Let $\ip$ be a {\rm{(}}symmetric or plain{\rm{)}}
2-unitary operad. If $\gkdim (\ip)<2$, then $\ip\cong \Com$.
\end{proposition}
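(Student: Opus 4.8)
The plan is to deduce from $\gkdim(\ip)<2$ that the first truncation ${}^{1}\iu$ vanishes, and then to read off from ${}^{1}\iu=0$ that $\ip$ is one-dimensional in each arity with exactly the composition of $\Com$. First I would apply Theorem~\ref{xxthm0.1}(2): since $\gkdim(\ip)<2<\infty$, the operad $\ip$ has polynomial growth and $\gkdim(\ip)=\min\{k\mid {}^{k}\iu=0\}$. As ${}^{0}\iu=\ip\neq 0$ (because $\ip(0)=\k\1_{0}$), this minimum cannot be $0$, so $\gkdim(\ip)<2$ forces ${}^{1}\iu=0$. By \eqref{E0.0.2} the only $I\subseteq\n$ with $|I|=0$ is $\emptyset$, hence ${}^{1}\iu(n)=\ker\pi^{\emptyset}$, where $\pi^{\emptyset}\colon\ip(n)\to\ip(0)=\k\1_{0}$ sends $\theta$ to $\theta\circ(\1_{0},\dots,\1_{0})$. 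Thus ${}^{1}\iu=0$ says precisely that $\pi^{\emptyset}$ is injective for every $n\geq 1$.

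Next I would show $\pi^{\emptyset}$ is also surjective, whence $\dim_{\k}\ip(n)=1$. This is where $2$-unitarity enters. From \eqref{E0.0.1} and operad associativity one gets $\1_{2}\circ(\1_{0},\1_{0})=(\1_{2}\circ(\1_{0},\1))\circ\1_{0}=\1\circ\1_{0}=\1_{0}$, that is $\pi^{\emptyset}(\1_{2})=\1_{0}$. Writing $\1_{1}=\1$ and $\1_{n}=\1_{2}\circ(\1_{n-1},\1)$ for $n\geq 3$, the same manipulation yields inductively $\pi^{\emptyset}(\1_{n})=\1_{2}\circ(\pi^{\emptyset}(\1_{n-1}),\1_{0})=\1_{2}\circ(\1_{0},\1_{0})=\1_{0}$, so $\pi^{\emptyset}(\1_{n})=\1_{0}$ for all $n\geq 1$. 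Thus $\pi^{\emptyset}$ attains the generator $\1_{0}$ and is onto; together with the injectivity above this gives $\ip(n)=\k\1_{n}$, and $\1_{n}$ is the \emph{unique} element of $\ip(n)$ mapping to $\1_{0}$ under $\pi^{\emptyset}$.

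It then remains to identify the full operad structure with that of $\Com$. The decisive point is that $\pi^{\emptyset}$ is invariant under the $\S_{n}$-action: all of its inputs equal the single arity-$0$ element $\1_{0}$, so by equivariance of operadic composition $\pi^{\emptyset}(\sigma\cdot\theta)=\pi^{\emptyset}(\theta)$ for every $\sigma\in\S_{n}$ (the induced block permutation lies in $\S_{0}$ and is trivial). Taking $\theta=\1_{n}$ and invoking uniqueness yields $\sigma\cdot\1_{n}=\1_{n}$, so $\S_{n}$ acts trivially; in the plain case this step is simply omitted. The same device determines composition: applying $\pi^{\emptyset}$ to $\1_{k}\circ(\1_{n_{1}},\dots,\1_{n_{k}})$ and distributing the $\1_{0}$'s by associativity gives $\1_{k}\circ(\1_{0},\dots,\1_{0})=\1_{0}$, so by uniqueness $\1_{k}\circ(\1_{n_{1}},\dots,\1_{n_{k}})=\1_{N}$ with $N=n_{1}+\cdots+n_{k}$. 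These are exactly the structure constants of $\Com$, so the arity-wise linear map sending $\1_{n}$ to the canonical generator of $\Com(n)$ is an isomorphism of operads.

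Most of the work is routine bookkeeping with the operad associativity axiom; the single step needing a genuine idea is excluding a nontrivial character (for instance the sign) of $\S_{n}$ on the one-dimensional space $\ip(n)$. I expect this to be the only real obstacle, and the $\S_{n}$-invariance of $\pi^{\emptyset}$ settles it cleanly, with no representation-theoretic case analysis.
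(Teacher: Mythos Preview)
Your argument is correct, but it takes a genuinely different route from the paper. The paper proves Proposition~\ref{xxpro0.5} directly in Section~\ref{xxsec2}, \emph{before} developing the machinery behind Theorem~\ref{xxthm0.1}: it assumes $\ip\not\cong\Com$, picks the least $n$ with $\ip(n)\neq\Bbbk\1_n$, extracts a nonzero $\theta\in\ip(n)$ with $\pi^J(\theta)=0$ for all $|J|=n-1$, and then shows the elements $\theta^w_i:=\iota^i_{w-i-n}(\theta)$ are linearly independent in $\ip(w)$ (using only the elementary identities of Lemma~\ref{xxlem2.9}), forcing $\dim\ip(w)\geq w-n+1$ and hence $\gkdim\ip\geq 2$. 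Your proof instead invokes Theorem~\ref{xxthm0.1}(2) to get ${}^{1}\iu=0$ in one stroke and then reads off the $\Com$-structure from injectivity of $\pi^{\emptyset}$; this is clean and conceptually appealing, but it imports the full basis theorem (Theorems~\ref{xxthm4.5}--\ref{xxthm4.6}) that underlies Theorem~\ref{xxthm0.1}. There is no circularity---the proof of Theorem~\ref{xxthm0.1} in Section~\ref{xxsec5} does not use Proposition~\ref{xxpro0.5}---but be aware that the paper's ordering puts Proposition~\ref{xxpro0.5} logically prior, so your argument is a valid reorganization rather than a replacement in situ. The trade-off: the paper's proof is elementary and self-contained (and makes the ``symmetric or plain'' parenthetical unproblematic, since only Lemmas~\ref{xxlem2.9}--\ref{xxlem2.10} are needed), while yours is shorter once the later theory is in hand.
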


A 2-unitary operad consists of a triple
$(\ip, \1_0,\1_2)$ satisfying \eqref{E0.0.1}. A morphism between
two 2-unitary operads means a morphism of operads that preserves
$\1_0$ and $\1_2$. All 2-unitary operads form a category with
morphisms being defined as above.

\begin{theorem}
\label{xxthm0.6}
There are natural equivalences between
\begin{enumerate}
\item[(1)]
the category of finite dimensional, not necessarily unital, $\Bbbk$-algebras;
\item[(2)]
the category of $2$-unitary operads of GKdimension $\leq 2$;
\item[(3)]
the category of $2a$-unitary operads of GKdimension $\leq 2$.
\end{enumerate}
\end{theorem}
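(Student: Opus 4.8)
The plan is to realize every object of categories (2) and (3) concretely from its arity-one algebra, and then to recognize the extra relation defining $2a$-unitarity as automatic under the dimension constraint. Throughout write $A=\ip(1)$, a unital associative $\k$-algebra, and let $\e\colon A\to\k$ be the \emph{augmentation} determined by $\theta\circ\1_0=\e(\theta)\1_0$; operad associativity together with $\1\circ\1_0=\1_0$ shows $\e$ is a unital algebra homomorphism, so $A$ is augmented and $B:=\ker\e$ is a (not necessarily unital) algebra. The assignment $\ip\mapsto B$, combined with the classical equivalence between augmented unital algebras and non-unital algebras ($A\mapsto\ker\e$, $B\mapsto\k\oplus B$), will be the functor from (2) to (1); a morphism of $2$-unitary operads restricts on arity one to a unital, augmentation-preserving map, hence to a map of non-unital algebras. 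Since $\gkdim\ip\le 2$ forces $\ip$ to be locally finite (otherwise some $\dim\ip(n)=\infty$ and $\gkdim\ip=\infty$), both $A$ and $B$ are finite dimensional, so this functor lands in (1).

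First I would translate the hypothesis. By Theorem \ref{xxthm0.1}(2), $\gkdim\ip\le 2$ is equivalent to ${^2\iu}=0$, which by \eqref{E0.0.2} means exactly that the total restriction map $\Phi_n\colon\ip(n)\to A^{n}$, $\theta\mapsto(\pi^{\{1\}}\theta,\dots,\pi^{\{n\}}\theta)$, is injective for each $n\ge 1$ (and $\ip(0)=\k\1_0$ matches the one-dimensional $W_0$). A short computation with $\pi^{\emptyset}$ shows $\e(\pi^{\{i\}}\theta)$ equals the scalar $\pi^{\emptyset}(\theta)$ independently of $i$, so the image lands in $W_n:=\{(a_1,\dots,a_n)\in A^{n}\mid \e(a_1)=\cdots=\e(a_n)\}$, a space of dimension $n\dim B+1$. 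The heart of the argument is to identify $\ip$ with the model operad $\ip_A$ given by $\ip_A(n)=W_n$, with symmetric action permuting coordinates and composition forced by compatibility with restriction: expanding $\pi^{\{(j,l)\}}\bigl(\theta\circ(\psi_1,\dots,\psi_n)\bigr)$ via operad associativity yields the explicit rule whose $(j,l)$-coordinate is $\bigl(\prod_{j'\ne j}\e(\psi_{j'})\bigr)\,a_j\,c_{j,l}$, where $a_j$ is the $j$-th coordinate of $\theta$ and $c_{j,l}$ the $l$-th coordinate of $\psi_j$. The product of augmentation values over the sibling branches is the one genuinely non-obvious feature, and it is exactly what keeps the output in $W_{\sum m_j}$. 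The same expansion shows $\Phi=(\Phi_n)_n$ is an operad morphism onto the abstractly defined $\ip_A$, and simultaneously that $\ip_A$ satisfies the operad axioms.

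It remains to prove $\Phi_n$ surjective, i.e. $\dim\ip(n)=\dim W_n$; this is the step I expect to be the main obstacle, since $\gkdim\le 2$ alone bounds $\dim\ip(n)$ from above but does not exclude proper suboperads with the same arity-one algebra. I would produce an explicit preimage of a basis of $W_n$. For the diagonal vector $(1,\dots,1)$ one uses $\1_n$, the unique element with that restriction profile (unique because $\Phi_n$ is injective), built by iterating $\1_2$. For a basis vector $b$ of $B$ placed in a single coordinate, the key identity $\1_2\circ(a,\1_0)=a$ (proved by checking that $a\mapsto\1_2\circ(a,\1_0)$ is right $A$-linear and fixes $\1$) gives $\pi^{\{1\}}\bigl(\1_2\circ(b,\1)\bigr)=b$ and $\pi^{\{2\}}\bigl(\1_2\circ(b,\1)\bigr)=\e(b)\1=0$; composing with identities then spreads $b$ into any prescribed coordinate of $\ip(n)$ with zeros elsewhere. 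These $n\dim B+1$ elements are linearly independent, so $\Phi_n$ is an isomorphism and $\ip\cong\ip_A$. Functoriality of $A\mapsto\ip_A$, together with the fact that every element of $W_n$ is a composite of arity-one elements, $\1_0$ and $\1_2$, shows the two functors are mutually quasi-inverse and in particular fully faithful, establishing the equivalence of (1) and (2).

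Finally, for (3) I would observe that the extra $2a$-relation is automatic in this range. In $\ip_A$ both $\1_2\circ(\1_2,\1)$ and $\1_2\circ(\1,\1_2)$ have restriction profile $(1,1,1)$, and injectivity of $\Phi_3$ forces there to be a single such element; hence they coincide, and every $2$-unitary operad with $\gkdim\le 2$ is already $2a$-unitary. Thus (3) is the full subcategory of (2) on objects satisfying a relation that all objects of (2) satisfy, so the inclusion $(3)\hookrightarrow(2)$ is an equivalence; composing with the equivalence between (1) and (2) finishes the proof.
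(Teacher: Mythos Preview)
Your proposal is correct and follows the same underlying strategy as the paper: use Theorem~\ref{xxthm0.1}(2) to convert $\gkdim\le 2$ into ${^2\iu}=0$, reconstruct the operad from its arity-one algebra $A=\ip(1)$, and observe that the $2a$-relation is forced by injectivity of the arity-$3$ restriction map (the paper isolates this last point as Lemma~\ref{xxlem6.2}).

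The packaging, however, is genuinely cleaner than the paper's. The paper builds the model operad $\mathcal{D}_A$ by hand in Example~\ref{xxex2.3}(1), writing down a basis $\{\1_n,\delta^n_{(i)j}\}$ and a six-clause table of partial compositions, and then spends five computational steps verifying that an arbitrary $\ip$ with ${^2\iu}=0$ matches that table. You instead realize $\ip(n)$ as a subspace of $A^n$ via the joint restriction map $\Phi_n$, identify the image as $W_n=\{(a_1,\dots,a_n):\e(a_i)\ \text{constant}\}$, and read off a single uniform composition rule whose $(j,l)$-coordinate is $(\prod_{j'\neq j}\lambda_{j'})\,a_j c_{j,l}$. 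The paper's case distinctions (e.g.\ $\delta^m_{(s)t}\underset{i}{\circ}\delta^n_{(k)l}=0$ when $s\neq i$) all become instances of this formula, the vanishing coming from a zero factor $\lambda_{j'}=\e(\psi_{j'})$. Your surjectivity argument is exactly the paper's Step~1 (the elements $\1_n\underset{i}{\circ} b$ for $b\in B$ are the paper's $\delta^n_{(i)j}$). What the paper's explicit presentation buys is an unambiguous definition of the inverse functor $A\mapsto\mathcal{D}_A$ for \emph{every} augmented $A$, with the operad axioms checked once and for all; in your write-up you should either carry out that direct verification for $W_\bullet$ or note that the axioms are polynomial identities in the structure constants of $A$ and hence hold universally once checked for one faithful instance.
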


At this point we have not found any 2-unitary plain operad of GKdimension two
that is not a symmetric operad. It would be nice to show that every
2-unitary plain operad of GKdimension two is automatically symmetric.

Note that the category in Theorem \ref{xxthm0.6}(1) is equivalently
to the category of finite dimensional unital augmented $\Bbbk$-algebras.
The description of operads in the above theorem is given in Example
\ref{xxex2.3}(1).

For quotient operads of $\As$, we can classify a few more operads with small
GKdimension.

\begin{theorem}
\label{xxthm0.7}
Let $\ip$ be a quotient operad of $\As$ and $n$ be $\gkdim \ip$.
Let ${^k\iu}$ be the truncations of $\As$.
\begin{enumerate}
\item[(1)]{\rm{[Proposition \ref{xxpro0.5}]}}
If $n=1$, $\ip=\As/{^1\iu}\cong\Com$.
\item[(2)]{\rm{[Gap Theorem]}}
$\gkdim \ip$ can not be 2, {\rm{(}}so can not be strictly
between $1$ and $3${\rm{)}}.
\item[(3)]
If $n=3$, then $\ip=\As/{^3\iu}$.
\item[(4)]
If $n=4$, then $\ip=\As/{^4\iu}$.
\item[(5)]
There are at least two non-isomorphic quotient operads $\ip$
such that $\gkdim \ip=5$.
\end{enumerate}
\end{theorem}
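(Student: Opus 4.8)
The plan is to reduce every part to Corollary~\ref{xxcor0.2}, which for an operadic ideal $\I$ of $\As$ we rewrite as
\[
\gkdim(\As/\I)=\min\{k\ge 1\mid {^k\iu}\subseteq\I\}
\]
(using the decreasing chain $\cdots\subseteq {^k\iu}\subseteq {^{k-1}\iu}\subseteq\cdots\subseteq {^1\iu}\subseteq\As$; both sides are $\infty$ when no such $k$ exists). Part (1) is Proposition~\ref{xxpro0.5}: since $\gkdim\ip=1<2$ we get $\ip\cong\Com$, so $\dim\ip(n)=1$ for all $n$; as $\gkdim\ip\le 1$ forces ${^1\iu}\subseteq\I$ and $\dim(\As/{^1\iu})(n)=\dim\Com(n)=1$ as well, the containment ${^1\iu}\subseteq\I$ together with equality of dimensions in every arity gives $\I={^1\iu}$. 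For part (2) recall from the introduction that ${^1\iu}={^2\iu}$ for $\As$; hence ${^2\iu}\subseteq\I$ if and only if ${^1\iu}\subseteq\I$, so $k=2$ is never the minimum in the displayed formula and $\gkdim\ip\ne 2$. Since $\As/\I$ is $2a$-unitary, Theorem~\ref{xxthm0.1}(1) makes $\gkdim\ip$ an integer whenever it is finite, so it cannot lie strictly between $1$ and $3$ either.

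For parts (3) and (4) the displayed formula shows that $\gkdim\ip=k$ forces ${^k\iu}\subseteq\I$ and ${^{k-1}\iu}\not\subseteq\I$. Both parts therefore follow from the

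\textbf{Key Lemma.} \emph{For $k\in\{3,4\}$, every operadic ideal $\I$ of $\As$ with ${^k\iu}\subsetneq\I$ satisfies ${^{k-1}\iu}\subseteq\I$.}

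Indeed, if $\gkdim\ip=k$ and $\I\ne {^k\iu}$, then ${^k\iu}\subsetneq\I$ and the Key Lemma gives ${^{k-1}\iu}\subseteq\I$, contradicting ${^{k-1}\iu}\not\subseteq\I$; hence $\I={^k\iu}$ and $\ip=\As/{^k\iu}$.

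Writing $Q_k:=\As/{^k\iu}$ and $L_k:={^{k-1}\iu}/{^k\iu}$, the Key Lemma says precisely that the bottom truncation layer $L_k$ is contained in every nonzero ideal of $Q_k$. I would prove this in two steps. First, \emph{essentiality}: using the composition and restriction operators of $Q_k$, show that every nonzero ideal $\bar\I$ of $Q_k$ meets $L_k$, i.e. $\bar\I\cap L_k\ne 0$; this is where one inputs an explicit basis of $\As(n)$ adapted to the truncation filtration and the fact that any nonzero element of $Q_k$ can be pushed, by compositions and restrictions, into the lowest nonvanishing layer. Second, \emph{minimality}: since $\bar\I\cap L_k$ is an ideal of $Q_k$ contained in $L_k$, it suffices to show that $L_k$ is a minimal nonzero ideal of $Q_k$, for then $\bar\I\cap L_k\ne 0$ forces $\bar\I\cap L_k=L_k$, i.e. $L_k\subseteq\bar\I$. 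The main obstacle is this minimality: one must analyze each $L_k(n)={^{k-1}\iu}(n)/{^k\iu}(n)$ as an $S_n$-representation and as a sub-bimodule of $Q_k$, and verify for $k=3,4$ that $L_k$ admits no proper nonzero sub-ideal. This is the genuinely computational heart of the argument, and it is where the representation theory of the symmetric groups enters.

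Part (5) is exactly the assertion that the Key Lemma fails at $k=5$, and this failure manufactures the second example. Concretely, I would exhibit a proper nonzero sub-ideal $\mathcal{J}$ of $Q_5$ contained in $L_5={^4\iu}/{^5\iu}$: the layer $L_5$, unlike $L_3$ and $L_4$, is no longer a minimal ideal, the relevant $S_n$-decomposition splitting off a proper summand for the first time at $k=5$. Let $\I$ be the preimage of $\mathcal{J}$ under $\As\twoheadrightarrow Q_5$, so that ${^5\iu}\subsetneq\I\subsetneq {^4\iu}$. Then ${^5\iu}\subseteq\I$ but ${^4\iu}\not\subseteq\I$, whence the displayed formula yields $\gkdim(\As/\I)=5=\gkdim(\As/{^5\iu})$. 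Finally $\As/\I$ and $\As/{^5\iu}$ are non-isomorphic: the strict containment ${^5\iu}\subsetneq\I$ makes $\dim(\As/\I)(n)<\dim(\As/{^5\iu})(n)$ for some $n$, so their Hilbert series differ. Thus there are at least two non-isomorphic quotient operads of GKdimension $5$. As in parts (3)--(4), all the real work is in the representation-theoretic analysis of the layer $L_5$; the reduction itself is pure bookkeeping through Corollary~\ref{xxcor0.2}.
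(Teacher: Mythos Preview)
Your reduction to Corollary~\ref{xxcor0.2} and your treatment of (1)--(2) match the paper. For (3)--(5), your Key Lemma framework is sound and would work, but the paper takes a shorter and more concrete route that avoids proving minimality of the entire layer $L_k$.

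For (3), the paper simply invokes Lemma~\ref{xxlem3.8} (any proper ideal of $\As$ is either ${^2\iu}$ itself or contained in ${^3\iu}$), which immediately gives $\I={^3\iu}$. For (4), rather than showing $L_4$ is a minimal ideal of $Q_4$, the paper works at a single arity: Lemma~\ref{xxlem3.8} forces $\I\subseteq{^3\iu}$, so $\I(3)$ is an $\S_3$-submodule of ${^3\iu}(3)$; but ${^3\iu}(3)$ is the $2$-dimensional standard representation and hence simple, so $\I(3)\in\{0,{^3\iu}(3)\}$. A signature count rules out $\I(3)={^3\iu}(3)$ (it would make $f_\ip(3)=0$, contradicting $\gkdim\ip=4$), so $\I(3)=0$, and then dimension comparison with ${^4\iu}\subseteq\I$ gives $\I={^4\iu}$. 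Thus the ``minimality'' you defer is replaced by the single representation-theoretic fact that ${^3\iu}(3)$ is irreducible.

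For (5) the paper again works at one arity: $\dim{^4\iu}(4)=4!-15=9$, so ${^4\iu}(4)$ cannot be an irreducible $\S_4$-module and admits a proper nonzero submodule $M$. The second example is then the explicit ideal ${^4\iu}^M$ from Proposition~\ref{xxpro3.2}, which satisfies ${^5\iu}\subsetneq{^4\iu}^M\subsetneq{^4\iu}$; the Hilbert series differ, so $\As/{^5\iu}\not\cong\As/{^4\iu}^M$. Your proposed sub-ideal $\mathcal{J}\subseteq L_5$ would also work (via $\langle M\rangle+{^5\iu}$ and Theorem~\ref{xxthm2.14}(2)), but the ${^4\iu}^M$ construction is more direct.

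In short: your outline is correct, but the paper localizes each step to the single arity $k-1$ via the simplicity/non-simplicity of ${^{k-1}\iu}(k-1)$, rather than analyzing the full layer $L_k$ across all arities.
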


\subsection{Other results related to truncations}
\label{xxsec0.5}

We list two other results related to the truncations indirectly.
In Theorem \ref{xxthm0.9}, operads $\ip$ need not be unitary.

Using the Hilbert series of an operad $\ip$, one can define
another numerical invariant, {\it signature} of $\ip$, denoted
by ${\mathcal S}(\ip)$ [Definition \ref{xxdef6.1}].
Let $\OP_{\Com}$ be the category of operads with morphism 
$\Com\to \ip$. (More precisely, objects in $\OP_{\Com}$ are $f: 
\Com \to \ip$ and morphisms are the obvious commutative triangles.) 
Every operad in $\OP_{\Com}$ is canonically 2a-unitary, inherited from
$\Com$. We prove the following

\begin{theorem}
\label{xxthm0.8} Let $\OP_{\Com}$ be defined as above.
\begin{enumerate}
\item[(1)]
For every sequence of non-negative integers ${\bf d}:=\{d_1, d_2, \cdots\}$,
there is an operad $\ip$ in $\OP_{\Com}$ such that
${\mathcal S}(\ip)={\bf d}$.
\item[(2)]
Exponent $\exp$ of \eqref{E0.0.4} is a surjective map from
$\OP_{\Com}$ or from the category
of 2-unitary operads to $\{1\} \cup [2,\infty]$.
\end{enumerate}
\end{theorem}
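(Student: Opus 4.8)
The plan is to read the \emph{signature} off the Hilbert series and then to deduce part~(2) from part~(1). My working interpretation of Definition~\ref{xxdef6.1} is that, for $\ip$ in $\OP_{\Com}$, the signature ${\mathcal S}(\ip)=\{d_1,d_2,\dots\}$ records the dimensions $d_k=\dim_{\k}\overline{\ip}(k)$ of the ``cross-effect'' (primitive) parts detected by the restriction operators $\pi^I$, equivalently the coefficients in the expansion
$$
G_{\ip}(t)=\sum_{k\ge0}d_k\,\frac{t^k}{(1-t)^{k+1}},
\qquad\text{i.e.}\qquad
\dim_{\k}\ip(n)=\sum_{k\ge0}d_k\binom{n}{k},\quad d_0=1 .
$$
The binomial shape is forced by the unitary structure: a map $\pi^I$ with $|I|=k-1$ measures insertion of $n-(k-1)$ copies of $\1_0$, and the graded layers ${^{k}\iu}/{^{k+1}\iu}$ contribute exactly the $\binom{n}{k}$-term, while the constant $d_0=\dim_{\k}\overline{\ip}(0)=1$ is pinned down by the canonical map $\Com\to\ip$. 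So the free data is precisely $\{d_k\}_{k\ge1}$; I would first reconcile this with the exact Definition~\ref{xxdef6.1}.

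For part~(1), given an arbitrary sequence ${\bf d}=\{d_1,d_2,\dots\}$ I would build $\ip$ directly as a split square-zero (abelian) extension $\ip=\Com\ltimes M$, choosing $\Sigma_k$-modules $\overline{\ip}(k)$ of dimension $d_k$ and reconstructing
$$
\ip(n)=\bigoplus_{k=0}^{n}\ \bigoplus_{I\subseteq \{1,\dots,n\},\ |I|=k}\overline{\ip}(k),
$$
so that $\dim_{\k}\ip(n)=\sum_{k}\binom{n}{k}d_k$ holds by construction and only the finitely many $k\le n$ contribute, which disposes of infinite sequences at once. Here $M$ is the square-zero ideal carrying the primitive layers $k\ge1$, the composition is that of $\Com$ on the degree-$0$ layer, a primitive supported on $I$ is expanded only by inserting units on the complement of $I$, and every product of two primitives vanishes. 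The degree-$0$ layer supplies both $\Com\to\ip$ and an augmentation $\ip\to\Com$, placing $\ip$ in $\OP_{\Com}$. The technical heart of part~(1) — and what I expect to be the main obstacle overall — is producing an \emph{honest} abelian $\Com$-bimodule $M$, not merely a graded vector space, and verifying that this rigid rule satisfies operadic associativity and $\Sigma_n$-equivariance. One clean way to isolate the check is the fibre product over $\Com$: for split extensions $\Com\ltimes M$ and $\Com\ltimes N$ one has $(\Com\ltimes M)\times_{\Com}(\Com\ltimes N)=\Com\ltimes(M\oplus N)$, so signatures add and it suffices to realize each single-slot signature $d_k\,e_k$ by one elementary block and assemble.

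For part~(2) I would feed part~(1) a sequence tuned to the target exponent. The value $1$ is realized by $\Com$ (any finitely supported ${\bf d}$), and $\infty$ by, e.g., $d_k=k!$, since then $\dim_{\k}\ip(n)\ge\binom{n}{n}d_n=n!$ forces $(\dim_{\k}\ip(n))^{1/n}\to\infty$. For a finite target $c\in[2,\infty)$ I take $d_k=\lfloor (c-1)^k\rfloor$; the estimate $(c-1)^k-1<d_k\le(c-1)^k$ together with the binomial theorem gives
$$
c^{\,n}-2^{\,n}\ \le\ \dim_{\k}\ip(n)\ \le\ c^{\,n},
$$
and since $c\ge2$ the lower bound equals $c^{\,n}\bigl(1-(2/c)^{\,n}\bigr)$, whence $\exp(\ip)=\lim_{n}(\dim_{\k}\ip(n))^{1/n}=c$ (the case $c=2$ being $d_k\equiv1$, $\dim_{\k}\ip(n)=2^n$). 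The asymptotics here are routine once $c>2$ makes $c^{\,n}$ dominate $2^{\,n}$.

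Finally, the reverse inclusion — that the image avoids the open interval $(1,2)$ — comes from the same decomposition: if $\dim_{\k}\ip(n)$ grows super-polynomially then infinitely many $d_k\ge1$, and evaluating at $n=2k$ for such $k$ gives $\dim_{\k}\ip(2k)\ge\binom{2k}{k}\sim 4^{\,k}/\sqrt{\pi k}$, so $\exp(\ip)\ge\lim_{k}\bigl(4^{\,k}/\sqrt{\pi k}\bigr)^{1/2k}=2$. Combining this with the realizations above shows that $\exp$ maps onto $\{1\}\cup[2,\infty]$, and since every operad produced lies simultaneously in $\OP_{\Com}$ and in the category of $2$-unitary operads, surjectivity holds from both domains. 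Thus the entire argument rests on part~(1), and once the square-zero extensions are confirmed to be genuine operads the rest is bookkeeping and elementary asymptotics.
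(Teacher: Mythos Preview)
Your proposal is correct and follows essentially the same architecture as the paper: build single-slot operads (the paper's Example~\ref{xxex6.6}) and assemble them via a $\Com$-augmented sum (your fibre product is exactly the paper's Definition~\ref{xxdef6.4} and Lemma~\ref{xxlem6.5}), then read off exponents from the signature via the binomial transform. The only cosmetic differences are that the paper packages the exponent computation as the radius-of-convergence identity $\exp(a)=\exp(b)+1$ of Lemma~\ref{xxlem5.1} rather than your direct binomial estimates, and cites $\As$ rather than $d_k=k!$ for the value $\exp=\infty$.
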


For a 2-unitary operad $\ip$ with infinite GKdimension,
we can show that $\exp(\ip)\geq 2$. This implies that
there are no 2-unitary operads that have subexponential
growth [Definition \ref{xxdef4.1}(5)]. On the other hand,
there are many unitary operads having subexponential
growth [Example \ref{xxex2.1}(3)]. Theorem \ref{xxthm0.8}(2)
says that $\exp$ of an 2-unitary operad can be any real number
larger than $2$. However, for 2-unitary Hopf operads, we don't
have any example that has non-integer $\exp$.

The next result is a connection between the GKdimension of an operad
and the GKdimension of finitely generated algebras over it.

\begin{theorem}
\label{xxthm0.9}
Let $\ip$ be an operad and $A$ be an algebra over $\ip$.
Suppose $A$ is generated by $g$ elements as an algebra over $\Bbbk$.
Then
\[\gkdim A\leq g-1+\gkdim \ip.\]
\end{theorem}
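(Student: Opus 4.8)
The plan is to bound the dimensions of the natural degree filtration on $A$ coming from the $g$ generators, and to show that passing from $\ip$ to $A$ costs at most a polynomial factor of degree $g-1$, which shifts the GKdimension by $g-1$. Throughout I would assume $\gkdim\ip<\infty$, since otherwise the inequality is vacuous.

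First I would fix the generating subspace $W:=\Bbbk a_1+\cdots+\Bbbk a_g\subseteq A$, so that $\dim_\Bbbk W\le g$, and recall that $A$ is a quotient of the free $\ip$-algebra on $W$, the Schur functor $\ip(W)=\bigoplus_{n\ge 0}\ip(n)\otimes_{\Sigma_n}W^{\otimes n}$. This uses only that $A$ is generated by $a_1,\dots,a_g$, since operadic composition shows every element of the subalgebra they generate is a linear combination of elements $\theta(a_{i_1},\dots,a_{i_n})$ with $\theta\in\ip(n)$. Writing $A_n:=\Im\big(\ip(n)\otimes_{\Sigma_n}W^{\otimes n}\to A\big)$ and $F_m:=\sum_{n=0}^m A_n$, I get an exhaustive filtration $A=\bigcup_m F_m$ with which to compute $\gkdim A$, together with the obvious bound $\dim_\Bbbk F_m\le\sum_{n=0}^m\dim_\Bbbk\big(\ip(n)\otimes_{\Sigma_n}W^{\otimes n}\big)$.

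The key step is the representation-theoretic estimate
\[
\dim_\Bbbk\big(\ip(n)\otimes_{\Sigma_n}W^{\otimes n}\big)\le \binom{n+g-1}{g-1}\,\dim_\Bbbk\ip(n).
\]
To prove it I would decompose $W^{\otimes n}$ as a $\Sigma_n$-module. Choosing a basis of $W$, the monomial basis of $W^{\otimes n}$ is indexed by functions $[n]\to\{1,\dots,\dim W\}$, on which $\Sigma_n$ acts by permuting tensor slots; the orbits are indexed by the multidegrees $\mu=(m_1,\dots)$ with $\sum_j m_j=n$, of which there are at most $\binom{n+g-1}{g-1}$, and the stabilizer of a monomial of multidegree $\mu$ is the Young subgroup $\Sigma_\mu=\Sigma_{m_1}\times\cdots$. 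Hence $W^{\otimes n}\cong\bigoplus_\mu \Ind_{\Sigma_\mu}^{\Sigma_n}\Bbbk$ as $\Sigma_n$-modules, and by associativity of the tensor product each summand contributes $\ip(n)\otimes_{\Sigma_n}\Ind_{\Sigma_\mu}^{\Sigma_n}\Bbbk\cong\ip(n)\otimes_{\Bbbk\Sigma_\mu}\Bbbk$, the $\Sigma_\mu$-coinvariants of $\ip(n)$, whose dimension is at most $\dim_\Bbbk\ip(n)$ because coinvariants form a quotient. Summing over the orbits gives the claimed bound. Finally I would assemble: writing $d:=\gkdim\ip$ and bounding $\binom{n+g-1}{g-1}\le C\,m^{g-1}$ for $n\le m$, the key estimate yields $\dim_\Bbbk F_m\le C\,m^{g-1}\sum_{n=0}^m\dim_\Bbbk\ip(n)$; by the definition \eqref{E0.0.3} of $\gkdim\ip$, for every $\varepsilon>0$ one has $\sum_{n=0}^m\dim_\Bbbk\ip(n)\le m^{d+\varepsilon}$ for all large $m$, whence $\dim_\Bbbk F_m\le C\,m^{g-1+d+\varepsilon}$ and
\[
\gkdim A=\limsup_{m\to\infty}\log_m\dim_\Bbbk F_m\le g-1+d+\varepsilon,
\]
so letting $\varepsilon\to 0$ gives $\gkdim A\le g-1+\gkdim\ip$.

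I expect the main obstacle to be the key representation-theoretic estimate, and specifically getting the extra polynomial factor to have degree exactly $g-1$: this rests on the two facts that the number of $\Sigma_n$-orbits on degree-$n$ monomials in $g$ variables grows like $n^{g-1}$ and that forming $\Sigma_\mu$-coinvariants never increases dimension. A secondary point to handle with care is that $\gkdim A$ should be computed with respect to the filtration $\{F_m\}$ induced by the finitely many generators; this is the natural filtration arising from the free-algebra presentation, and any other finite-dimensional generating subspace is contained in some $F_m$ and conversely, giving a cofinal filtration with the same value.
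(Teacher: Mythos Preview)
Your proof is correct and follows the same approach as the paper: the central estimate $\dim_\Bbbk(\ip(n)\otimes_{\S_n}W^{\otimes n})\le\binom{n+g-1}{g-1}\dim_\Bbbk\ip(n)$ is exactly the paper's first claim, proved there by the more elementary observation that any pure tensor can be rewritten with its indices sorted, which is your orbit decomposition in disguise. Two minor differences are worth noting. First, your final assembly is slightly cleaner: you bound $\sum_{n\le m}\dim_\Bbbk\ip(n)\le m^{d+\varepsilon}$ directly from the definition of $\gkdim\ip$, whereas the paper detours through the growth order $o(\ip)$ and then needs the side remark $o(\ip)=\gkdim\ip-1$ (stated only under a rationality hypothesis). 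Second, the paper is explicit about your ``cofinality'' step, proving from operadic associativity that for any finite-dimensional $V\subseteq F_m$ one has $\gamma_n(V)\subseteq F_{mn}$; this is precisely what you need to pass from the supremum over all finite-dimensional $V$ in the definition of $\gkdim A$ to the single filtration $\{F_m\}$, and your closing remark identifies but does not quite carry out this computation.
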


When $\ip$ is the commutative algebra operad $\Com$, then the above
theorem gives rise to a well-known fact  that the GKdimension
of a commutative algebra $A$ is bounded by the number of generators of
$A$ [Example \ref{xxex5.7}]. Note that every finitely generated PI-algebra
has finite GKdimension, see for instance  \cite{KR,Dr}.

%

The theory of operads provides a unified approach to several
different topics. Operads are also closely related to {\it 
clones} in universal algebra \cite{Sz, Cu} and {\it species} 
in combinatorics \cite{Jo, AM1, AM2}. Some ideas presented 
in this paper can be adapted to study both
clones and species.

The paper is organized as follows. We recall some basic 
concepts in Section \ref{xxsec1}. In Section \ref{xxsec2}, 
we study basic properties of 2-unitary operads, and prove 
some lemmas that are needed in later sections. One of the 
main examples is given in Example \ref{xxex2.3}. Proposition 
\ref{xxpro0.5} is proved in Section \ref{xxsec2}. The main 
object of this paper, the sequence of truncation ideals, is 
defined in Section \ref{xxsec3}. As an application of
truncations, a basis theorem is proved in Section \ref{xxsec4}.
Binomial transform of generating series is defined in Section 
\ref{xxsec5}. Theorems \ref{xxthm0.1}, \ref{xxthm0.9}, 
\ref{xxthm0.3} and Corollary \ref{xxcor0.2} are proved in 
Section \ref{xxsec5}. In Section \ref{xxsec6}, we study the 
signature of an operad. Theorems \ref{xxthm0.6}, \ref{xxthm0.7} 
and \ref{xxthm0.8} are proved in Section \ref{xxsec6}. Theorem 
\ref{xxthm0.4} is proved in Sections \ref{xxsec3} and 
\ref{xxsec6}. In Section \ref{xxsec7} we introduce the notion 
of a truncatified operad. Some basic material is reviewed in
Section \ref{xxsec8} (Appendix).

\section{Preliminaries}
\label{xxsec1}

Throughout let $\k$ be a fixed base field, and all unadorned 
$\ot$ will be $\ot_\k$. In this section, we recall some basic 
facts about operads from standard books such as \cite{LV} and 
\cite{Fr1, Fr2}. Also see Section \ref{xxsec8} for some extra 
material.

\subsection{Operads}
\label{xxsec1.1}
An algebraic structure of a certain type is usually defined by
generating operations and relations, see for instance, the
definition for associative algebras, commutative algebras,
Lie algebras and so on. Given a type of algebras, the set of operations
generated by the ones defining this algebra structure will
form an operad, and an algebra of this type is exactly given
by a set (or a vector space) together with an action of the operad
on it. Roughly speaking, an operad can be viewed as a set of
operations, each of which has a fixed number of inputs and one
output, satisfying a set of compatibility laws.

In this paper we consider operads over $\Bbbk$-vector spaces.
We now recall the \emph{classical definition} of an operad.
Usually the word ``symmetric'' is omitted in this paper.

\begin{definition}
\label{xxdef1.1}
Most of the following definitions are copied from \cite[Chapter 5]{LV}.
\begin{enumerate}
\item[(1)]
A \emph{plain operad} (sometimes called a
\emph{non-$\Sigma$} or \emph{non-symmetric} operad) consists of the
following data:
\begin{enumerate}
\item[(i)]
a sequence $(\ip(n))_{n\geq 0}$ of sets, whose elements are
called \emph{$n$-ary operations},
\item[(ii)]
an element $\1\in \ip(1)$ called the \emph{identity},
\item[(iii)]
for all integers $n\ge 1$, $k_1, \cdots, k_n \ge0$, a
\emph{composition map}
\begin{equation*}
\begin{array}{c}
\circ\colon \ip(n)\times \ip(k_1)\times\cdots \times \ip(k_n)\To \ip(k_1+\cdots +k_n)\\
(\theta,\theta_1,\cdots, \theta_n)\mapsto \theta\circ(\theta_1,\cdots, \theta_n),
\end{array}
\end{equation*}
\end{enumerate}
satisfying the following coherence axioms:
\begin{enumerate}
\item[(OP1)](Identity)
$$\theta\circ(\1,\1,\cdots,\1) = \theta =\1\circ \theta;$$
\item[(OP2)] (Associativity)
\begin{align*}
&\theta\circ(\theta_1\circ(\theta_{1,1},\cdots, \theta_{1,k_1} ),
\cdots, \theta_n\circ(\theta_{n,1},\cdots, \theta_{n,k_n}))\\
 &= (\theta\circ(\theta_1,\cdots,\theta_n))\circ
(\theta_{1,1},\cdots, \theta_{1,k_1},\cdots, \theta_{n,1},
\cdots, \theta_{n,k_n}),
\end{align*}
\end{enumerate}
where in the left hand side,
$\theta_i\circ(\theta_{i,1}, \cdots, \theta_{i,k_i})=\theta_i$
in case $k_i=0$.
\item[(2)]
A plain operad $\ip$ is called an
\emph{operad} (or a \emph{symmetric operad}), if there exists a right action $\ast$ of the
symmetric group $\S_n$ on  $\ip(n)$ for each $n$, satisfying
the following compatibility condition:
\begin{enumerate}
\item[(OP3)](Equivariance)
\begin{align*}
&(\theta\ast\sigma)\circ(\theta_1 \ast \sigma_{1},
\cdots, \theta_{n}\ast \sigma_{n})\\
=&(\theta\circ(\theta_{\sigma^{-1}(1)},\cdots, \theta_{\sigma^{-1}(n)}))
\ast \vartheta_{n; k_1, \cdots, k_{n}}
(\sigma, \sigma_{1}, \cdots, \sigma_{n}),
\end{align*}
where $\vartheta_{n; k_1, \cdots, k_{n}}$ is defined in 
Section \ref{xxsec8}
\end{enumerate}
\item[(3)]
An operad (respectively, a plain operad) is said to be \emph{$\k$-linear}
if $\ip(n)$ is a $\k\S_n$-module (respectively, a $\Bbbk$-module)
for each $n$ and all composition maps are $\k$-multilinear.
\item[(4)]
A $\Bbbk$-linear operad is called {\it unitary}
if $\ip(0)=\Bbbk \1_0\cong \Bbbk$, which is the unit object in
the symmetric monoidal category ${\text{Vect}}_{\Bbbk}$.
Here $\1_0$ is a basis for $\ip(0)$ and is called a \emph{$0$-unit}
of $\ip$.
\item[(5)]
Let $\ip$ be a unitary operad with a fixed $0$-unit $\1_0
\in \ip(0)$. An element $\1_2\in \ip(2)$ is called a {\it right 2-unit} if
\begin{equation}
\label{E1.1.1}\tag{E1.1.1}
\1_2\circ (\1, \1_0)=\1.
\end{equation}
An element $\1_2\in \ip(2)$ is called a {\it left 2-unit} if
\begin{equation}
\label{E1.1.2}\tag{E1.1.2}
\1_2\circ (\1_0,\1)=\1.
\end{equation}
If both \eqref{E1.1.1} and \eqref{E1.1.2} hold for the same $\1_2$,
then it is called a {\it 2-unit}. A unitary operad $\ip$ is called
{\it 2-unitary} (respectively, {\it right 2-unitary}, or
{\it left 2-unitary}) if it has a 2-unit (respectively, right 2-unit,
or left 2-unit) $\1_2$.
\item[(6)]
If $\ip(0)=0$, $\ip$ is called {\it reduced}.
\item[(7)]
If $\ip(1)=\Bbbk$, $\ip$ is called {\it connected}.
\end{enumerate}
\end{definition}

Note that a 2-unit (if exists) may not be unique. For example,
if $\1_2$ is a 2-unit, then so is $\1_2\ast (12)$, where
$(12)$ is the non-identity element in $\S_2$.

Suggested by \eqref{E1.1.1}-\eqref{E1.1.2}, sometimes
we denote $\1$ by $\1_1$.
It is easy to see that \eqref{E1.1.1} implies that
\begin{equation}
\label{E1.1.3}\tag{E1.1.3}
\1_2\circ (\theta, \1_0)=\theta
\end{equation}
for all $\theta\in \ip(n)$ and that \eqref{E1.1.2} implies that
\begin{equation}
\label{E1.1.4}\tag{E1.1.4}
\1_2\circ (\1_0, \theta)=\theta
\end{equation}
for all $\theta\in \ip(n)$.

Unless otherwise stated, all operads considered here will be
$\k$-linear. In some occasions, it will be more convenient
to use another definition, called the {\it partial definition}
of an operad.

\begin{definition}[{\cite[Section 2.1]{Fr1}, \cite[Section 5.3.4]{LV}}]
\label{xxdef1.2}
An \emph{operad} consists of the following data:
\begin{enumerate}
\item[(i)]
a sequence $(\ip(n))_{n\geq 0}$ of right $\Bbbk\S_n$-modules, whose elements are
called \emph{$n$-ary operations},
\item[(ii)]
an element $\1\in \ip(1)$ called the \emph{identity},
\item[(iii)]
for all integers $m\ge 1$, $n \ge0$, and $1\le i\le m$, a
\emph{partial composition map}
\[-\underset{i}{\circ}-\colon \ip(m) \otimes \ip(n) \to \ip(m+n-1) \ \ (1\le i\le m), \]
\end{enumerate}
satisfying the following axioms:
\begin{enumerate}
\item[(OP1$'$)] (Identity)

for $\theta\in \ip(n)$ and $1\leq i\leq n$,
\[\theta\underset{i}{\circ} \1 = \theta =\1\underset{1}{\circ} \theta;
\]

\item[(OP2$'$)] (Associativity)

for $\lambda \in \ip(l)$, $\mu\in \ip(m)$ and $\nu\in \ip(n)$,
\[\begin{cases}
(\la  \underset{i}{\circ} \mu) \underset{i-1+j}{\circ} \nu
=\la \underset{i}{\circ} (\mu \underset{j}{\circ} \nu),
& 1\le i\le l, 1\le j\le m,\\
(\la  \underset{i}{\circ} \mu) \underset{k-1+m}{\circ} \nu
=(\la \underset{k}{\circ}\nu) \underset{i}{\circ} \mu,
& 1\le i<k\le l;
\end{cases}
\]

\item[(OP3$'$)] (Equivariance)

for $\mu\in \ip(m)$, $\phi\in \S_m$, $\nu\in \ip(n)$ and $\sigma\in \S_{n}$,
\[
\begin{cases}
\mu  \underset{i}{\circ} (\nu \ast \sigma)=
&(\mu  \underset{i}{\circ} \nu)\ast \sigma',\\
(\mu\ast \sigma)  \underset{i}{\circ} \nu=
&(\mu  \underset{\sigma(i)}{\circ} \nu)\ast \sigma'',
\end{cases}
\]
where
\begin{equation}\label{E1.2.1}\tag{E1.2.1}
\begin{split}
\sigma'= \vartheta_{m; 1, \cdots, 1, \underset{i}{n}, 1, \cdots, 1}
(1_m, 1_1, \cdots, 1, \underset{i}{\sigma}, 1_1, \cdots, 1_1),\\
\sigma''= \vartheta_{m; 1, \cdots, 1, \underset{i}{n}, 1, \cdots, 1}
(\sigma, 1_1, \cdots,1_1, \underset{i}{1_n}, 1_1 \cdots,  1_1).
\end{split}
\end{equation}
(see \eqref{E8.1.3} for the definition of 
$\vartheta_{m; 1, \cdots, 1, \underset{i}{n}, 1, \cdots, 1}$).
\end{enumerate}
\end{definition}

\begin{remark}
\label{xxrem1.3}
The above two definitions for operads are equivalent
by \cite[Proposition 5.3.4]{LV}.
Let $\ip$ be an operad in the sense of Definition \ref{xxdef1.1}.
Then the partial compositions
\[-\underset{i}{\circ}-\colon \ip(m) \otimes \ip(n) \to
\ip(m+n-1) \ \ (1\le i\le m) \]
associated to $\ip$ are defined by
\[\mu \underset{i}{\circ} \nu=\mu \circ (\1_1, \cdots, \1_1,
\underset{i}{\nu}, \1_1, \cdots, \1_1).\]
Conversely, let $\ip$ be an operad in the sense of Definition
\ref{xxdef1.2}, then one can define composition maps by
\[\theta \circ (\theta_1, \cdots, \theta_n)=
(\cdots ((\theta \underset{n}{\circ} \theta_n)
\underset{n-1}{\circ} \theta_{n-1})\underset{n-2}{\circ}
\theta_{n-2}\cdots) \underset{1}{\circ}\theta_1.\]
One can show that the axioms (OP1)-(OP3) are equivalent to
the axioms (OP1$'$)-(OP3$'$) respectively.
\end{remark}

We will use the  partial definition in several examples in later
sections.

\begin{example}\cite[Section 5.2.11]{LV}
\label{xxex1.4}
For every $\k$-vector space $V$, the sequence $(\End_V(n))_{n\ge 0}$
together with the composition map defined as in \eqref{E8.1.6}
gives rise to an operad, which is denoted by $\End_V$. We call $\End_V$
the \emph{endomorphism operad} of $V$. It is easy to see that $\End_V$
is not unitary unless $V=\Bbbk$.

If ${\mathcal T}$ is a $\Bbbk$-linear symmetric monoidal category
with internal hom-bifunctor
$$\Hom_{\mathcal T}(-,-):
{\mathcal T}^{op}\times {\mathcal T}\to {\mathcal T},$$
then endomorphism operad $\End_V$ can be defined for any object
$V\in {\mathcal T}$. Some results in this paper can be extended from
${\text{Vect}}_{\k}$ to ${\mathcal T}$.
\end{example}

\subsection{Algebras and free algebras over an operad}
\label{xxsec1.2}
Given a type of algebras, there is a notion of ``free" algebras, which
can be constructed by using the associated operad.

\begin{definition}
\cite[Sections 5.2.1 and 5.2.3]{LV}
\label{xxdef1.5}
\begin{enumerate}
\item[(1)]
Let $\ip, \ip'$ be ($\k$-linear) operads. A \emph{morphism} from $\ip$ to
$\ip'$ is a sequence of $\S_n$-morphism
$\gamma=(\gamma_n\colon \ip(n)\to \ip'(n))_{n\geq 0}$,
satisfying
\[\gamma(\1_1)=\1'_1\]
where $\1_1$ and $\1'_1$ are identities of $\ip$ and $\ip'$, respectively, and
$$\gamma(\theta\circ(\theta_1,\cdots,\theta_n)) =
\gamma(\theta)\circ(\gamma(\theta_1),\cdots,\gamma(\theta_n))$$
for all $\theta, \theta_1,\cdots \theta_n$.
\item[(2)]
An \emph{algebra over $\ip$},
or a \emph{$\ip$-algebra} for short, is a $\k$-vector space $A$ equipped with
a morphism $\gamma\colon \ip\to \End_A$. Also see \cite[Proposition 1.1.15]{Fr1}.
\end{enumerate}
\end{definition}

Let $\ip$ be an operad and $V$ a $\k$-vector space. Set
$$\ip(V)_n= \ip(n)\ot_{\k\S_n} V^{\ot n},\qquad \ip(V)= \bigoplus_{n\ge 0}\ip(V)_n$$
where a pure tensor $\theta\ot x_1\ot \cdots \ot x_n$ in
$\ip(n)\ot_{\k\S_n} V^{\ot n}$ is denoted by
$[\theta, x_1,\cdots,x_n]$. Then we have
\begin{align*}\ip(V)^{\ot n}&= \bigoplus_{m\ge0}\bigoplus_{k_1+\cdots+ k_n =m}
\ip(V)_{k_1}\ot\cdots\ot \ip(V)_{k_n}.
\end{align*}
The composition in $\ip$ gives a linear map
\begin{equation*}
\begin{gathered}
\gamma_n\colon \ip(n)\to \Hom_\k\left(\bigoplus_{k_1+\cdots+ k_n =m}
\ip(V)_{k_1}\ot\cdots\ot \ip(V)_{k_n}, \ip(V)_m\right)\\
\begin{aligned}
\gamma_n(\theta)([\theta_1, x_{1,1}, \cdots, x_{1,k_1}]&\ot\cdots
\ot
[\theta_n, x_{n,1}, \cdots, x_{n,k_n}]) \\
&=[\theta\circ(\theta_1,\cdots,\theta_n),x_{1,1}, \cdots, x_{1,k_1},
\cdots, x_{n,1}, \cdots, x_{n,k_n}],
\end{aligned}
\end{gathered}
\end{equation*}
which extends to a linear map $\gamma_n\colon \ip(n)\to \End_{\ip(V)}(n)$.
One can check that $\gamma_n$ is well defined and the sequence
$\gamma=(\gamma_n)_{n\ge0}$ is a morphism of operads, i.e.,  $\ip(V)$
is a $\ip$-algebra. We mention that $\ip(V)$ is a \emph{free $\ip$-algebra}
in the following sense.

\begin{proposition} \cite[Proposition 5.2.1]{LV}
\label{xxpro1.6}
Let $A$ be a $\ip$-algebra, and $V$ a $\k$-vector space. Then every linear map
$f\colon V\to A$ extends uniquely to a morphism $f\colon \ip(V) \to A$ of
$\ip$-algebras.
\end{proposition}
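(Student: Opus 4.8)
The plan is to define the extension $\bar f\colon \ip(V)\to A$ directly on the pure tensors $[\theta,x_1,\cdots,x_n]$ that span $\ip(V)$, using the structure morphism $\gamma\colon\ip\to\End_A$ defining the $\ip$-algebra $A$ to absorb the operations. Concretely I would set
$$\bar f([\theta,x_1,\cdots,x_n])=\gamma(\theta)(f(x_1)\ot\cdots\ot f(x_n)),$$
where $\gamma(\theta)\in\End_A(n)=\Hom_\k(A^{\ot n},A)$. The first task is well-definedness: since $[\theta,x_1,\cdots,x_n]$ lives in the coinvariants $\ip(n)\ot_{\k\S_n}V^{\ot n}$, I must check that the assignment $(\theta,\, x_1\ot\cdots\ot x_n)\mapsto \gamma(\theta)(f(x_1)\ot\cdots\ot f(x_n))$ is $\k\S_n$-balanced, i.e.\ that replacing $\theta$ by $\theta\ast\sigma$ and simultaneously permuting the $x_i$ by $\sigma$ leaves the value unchanged. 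This is exactly the equivariance of $\gamma$: being a morphism of operads, each $\gamma_n$ is an $\S_n$-morphism, and the $\S_n$-action on $\End_A(n)$ permutes inputs, so $\gamma(\theta\ast\sigma)(f(x_1)\ot\cdots\ot f(x_n))=\gamma(\theta)(f(x_{\sigma(1)})\ot\cdots\ot f(x_{\sigma(n)}))$. Hence $\bar f$ descends to a well-defined linear map on $\ip(V)$.

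Next I would verify that $\bar f$ is a morphism of $\ip$-algebras, which is the substantive step. By linearity it suffices to test the defining identity on pure tensors: for $\theta\in\ip(n)$ and $\xi_i=[\theta_i,x_{i,1},\cdots,x_{i,k_i}]$, the action on $\ip(V)$ constructed before the statement via $\gamma_n$ gives
$$\theta\cdot(\xi_1,\cdots,\xi_n)=[\theta\circ(\theta_1,\cdots,\theta_n),\,x_{1,1},\cdots,x_{n,k_n}],$$
so applying $\bar f$ yields $\gamma(\theta\circ(\theta_1,\cdots,\theta_n))$ evaluated on the elements $f(x_{i,j})$. On the other side, $\gamma(\theta)(\bar f(\xi_1)\ot\cdots\ot\bar f(\xi_n))$ inserts the outputs $\gamma(\theta_i)(f(x_{i,1})\ot\cdots\ot f(x_{i,k_i}))$ into $\gamma(\theta)$. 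The equality of these two expressions is precisely the statement that $\gamma$ respects operadic composition, i.e.\ $\gamma(\theta\circ(\theta_1,\cdots,\theta_n))=\gamma(\theta)\circ(\gamma(\theta_1),\cdots,\gamma(\theta_n))$ in $\End_A$, which holds because $\gamma$ is a morphism of operads (Definition \ref{xxdef1.5}(1)). Finally, identifying $V$ with its image under $x\mapsto[\1,x]$ and using the identity axiom (OP1) gives $\bar f([\1,x])=\gamma(\1)(f(x))=f(x)$, so $\bar f$ extends $f$.

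For uniqueness I would observe that $\ip(V)$ is generated, as a $\ip$-algebra, by the image of $V$: indeed $[\theta,x_1,\cdots,x_n]=\theta\cdot([\1,x_1],\cdots,[\1,x_n])$, since $\theta\circ(\1,\cdots,\1)=\theta$ by (OP1). Hence any $\ip$-algebra morphism $g\colon\ip(V)\to A$ extending $f$ must satisfy
$$g([\theta,x_1,\cdots,x_n])=\gamma(\theta)(g([\1,x_1])\ot\cdots\ot g([\1,x_n]))=\gamma(\theta)(f(x_1)\ot\cdots\ot f(x_n))=\bar f([\theta,x_1,\cdots,x_n]),$$
and since these elements span $\ip(V)$, we conclude $g=\bar f$.

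I expect the only real obstacle to be the bookkeeping in the morphism computation---matching the bracket notation $[-,\cdots]$ with the evaluation of $\gamma$ and keeping the indices $k_1,\cdots,k_n$ straight---rather than any conceptual difficulty; everything reduces to the two operad axioms (OP1) and (OP2) transported through the structure morphism $\gamma$, exactly as already encoded in the construction of the maps $\gamma_n$ preceding the statement.
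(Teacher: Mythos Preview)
Your argument is correct and is the standard proof of this classical fact. Note that the paper does not supply its own proof of this proposition; it is simply cited from \cite[Proposition 5.2.1]{LV}. One minor indexing slip: with the paper's conventions (Section~\ref{xxsec8.2}), the right $\S_n$-action on $\End_A(n)$ satisfies $(\gamma(\theta)\ast\sigma)([a_1,\cdots,a_n])=\gamma(\theta)([a_{\sigma^{-1}(1)},\cdots,a_{\sigma^{-1}(n)}])$, so in your well-definedness check the permuted indices should read $\sigma^{-1}(i)$ rather than $\sigma(i)$; this does not affect the argument.
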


\begin{remark}
\label{xxrem1.7}
The above proposition can be restated as follows. Given an operad $\ip$, the functor
$V\mapsto \ip(V)$ is a left adjoint to the forgetful functor from the category of
$\ip$-algebras to the category of $\k$-vector spaces ${\text{Vect}}_{\k}$.
\end{remark}

\subsection{Operadic ideals and quotient operads}
\label{xxsec1.3}
We denote by $\S$ the disjoint union of all $\S_n$, $n\ge0$.
We call a family
\[\M =(\M(0), \M(1), \cdots, \M(n),\cdots)\]
of right $\k\S_n$-modules $\M(n)$ a (right) \emph{$\S$-module over $\k$}.
Thus a $\k$-linear operad is an $\S$-module over $\k$ equipped with a family
of suitable composition maps.

An $\S$-submodule $\mathcal{N}$ of $\M$ is a sequence
$\mathcal{N}=(\mathcal{N}(n))_{n\ge0}$, where each $\mathcal{N}(n)$
is an $\S_n$-submodule of $\M(n)$. Given $\M, \mathcal{N}$, one defines
the quotient $\S$-module $\M/\mathcal{N}$ by setting
$(\M/\mathcal{N})(n) = \M(n)/\mathcal{N}(n)$.

\begin{definition}
\label{xxdef1.8}
Let $\ip$ be an operad and $\I$ is a $\S$-submodule of $\ip$.
\begin{enumerate}
\item[(1)]\cite[Section 5.2.14]{LV}.
We call $\I$ an \emph{operadic ideal} (or simply {\it ideal}) of $\ip$
if the operad structure on $\ip$ passes to $\ip/\I$. In this case,
$\ip/\I$ is called a \emph{quotient operad } of $\ip$. More explicitly,
$\I$ is an ideal if and only if
\[\I(n)\circ (\ip(k_1),\cdots, \ip(k_n))\subseteq \I(k_1+\cdots+k_n)\]
and
\[\ip(n)\circ (\ip(k_1),\cdots, \ip(k_{s-1}), \I(k_s),\ip(k_{s+1}),\cdots,
\ip(k_n))\subseteq \I(k_1+\cdots+k_n)\]
for all $n>0$, $k_1,\cdots, k_n\ge 0$. In other words, for any family of
operations $\theta, \theta_1,\cdots,\theta_n$,  if one of them is in $\I$,
then so is $\theta\circ(\theta_1,\cdots,\theta_n)$.
\item[(2)]
An $\S$-submodule $\I$ of $\ip$ is called a {\it right
ideal} of $\ip$, if for every $\lambda \in \I(m)$ and $\mu\in \ip(n)$,
$\lambda \underset{i}{\circ} \mu\in \I(m+n-1)$ for every
$1\leq i\leq m$. We say $\ip$ is {\it right artinian} if the
set of right ideals of $\ip$ satisfies the descending chain condition.
\item[(3)]
An $\S$-submodule $\I$ of $\ip$ is called a {\it left
ideal} of $\ip$, if for every $\lambda \in \ip(m)$ and $\mu\in \I(n)$,
$\lambda \underset{i}{\circ} \mu\in \I(m+n-1)$ for every
$1\leq i\leq m$. We say $\ip$ is {\it left artinian} if the
set of left ideals of $\ip$ satisfies the descending chain condition.
\end{enumerate}
It is easy to see that $\I$ is an ideal if and only if it is both
a left and a right ideal.
\end{definition}

Let $\{\I^j\}_{j\in J}$ be a family of ideals of $\ip$. Let
$\sum_{j\in J} \I^j$ and $\bigcap_{j\in J}\I^j$ be the $\S$-modules
given by
\[(\sum\nolimits_{j\in J} \I^j)(n)=\sum\nolimits_{j\in J} \I^j(n), \qquad
(\bigcap\nolimits_{j\in J} \I^j)(n)= \bigcap\nolimits_{j\in J} \I^j(n)\]
for all $n\ge0$. The following lemmas are easy and their proofs are
omitted.

\begin{lemma}
\label{xxlem1.9}
Let $\{\I^j\}_{j\in J}$ be a family of ideals {\rm{(}}respectively,
left or right ideals{\rm{)}} of an operad $\ip$. Then both
$\sum_{j\in J} \I^j$ and $\bigcap_{j\in J}\I^j$ are ideals
{\rm{(}}respectively, left or right ideals{\rm{)}} of $\ip$.
\end{lemma}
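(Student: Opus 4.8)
The plan is to verify the required closure conditions levelwise, handling the intersection and the sum separately and reducing the two-sided case to the one-sided ones via the observation recorded just after Definition \ref{xxdef1.8}. First I would note that both $\sum_{j\in J}\I^j$ and $\bigcap_{j\in J}\I^j$ are automatically $\S$-submodules of $\ip$: at each arity $n$ they are, respectively, the sum and the intersection of the $\S_n$-submodules $\I^j(n)\subseteq \ip(n)$, and a sum or an intersection of submodules of a given module is again a submodule. Thus only the ideal-closure properties remain to be checked, and these I would test using the partial composition maps $\underset{i}{\circ}$.

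Next I would treat the intersection. Fix the right-ideal case; the left case is entirely symmetric, exchanging the roles of the two arguments of $\underset{i}{\circ}$. Given $\lambda\in (\bigcap_{j\in J}\I^j)(m)$ and $\mu\in\ip(n)$, we have $\lambda\in\I^j(m)$ for every $j$. Since each $\I^j$ is a right ideal, $\lambda\underset{i}{\circ}\mu\in\I^j(m+n-1)$ for every $j$ and every $1\le i\le m$, whence $\lambda\underset{i}{\circ}\mu\in (\bigcap_{j\in J}\I^j)(m+n-1)$. The intersection case therefore needs nothing beyond the closure already holding in each $\I^j$.

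For the sum, the one extra ingredient is the $\k$-multilinearity of the partial composition maps. An element $\lambda\in (\sum_{j\in J}\I^j)(m)$ is, by definition of the levelwise sum, a \emph{finite} sum $\lambda=\sum_{j\in F}\lambda_j$ with $F\subseteq J$ finite and $\lambda_j\in\I^j(m)$. In the right-ideal case, for $\mu\in\ip(n)$, linearity of $\underset{i}{\circ}$ in its first variable gives $\lambda\underset{i}{\circ}\mu=\sum_{j\in F}(\lambda_j\underset{i}{\circ}\mu)$, and each summand lies in $\I^j(m+n-1)$ because $\I^j$ is a right ideal; hence $\lambda\underset{i}{\circ}\mu\in(\sum_{j\in J}\I^j)(m+n-1)$. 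The left-ideal case is identical, using linearity in the second variable.

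Finally, for two-sided ideals I would simply invoke the remark that an $\S$-submodule is an ideal precisely when it is both a left and a right ideal; combining the two one-sided verifications above then yields the assertion for ideals. I do not anticipate any genuine obstacle, since the statement is formal; the only point requiring a moment's care is remembering that elements of $\sum_{j\in J}\I^j$ are finite sums, so that the multilinearity argument applies term by term.
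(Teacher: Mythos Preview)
Your argument is correct and is exactly the standard verification one would write; the paper itself omits the proof entirely, simply remarking that the lemma is easy. There is nothing to add.
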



Let $\I$ and $\mathcal{J}$ be $\S$-submodules (or ideals) of $\ip$. The product
$\I \mathcal{J}$ is defined to be the $\S$-submodule of $\ip$ generated by
elements of the form
$\mu \underset{i}{\circ} \nu$
for all possible $\mu\in \I(m)$, $\nu\in \mathcal{J}(n)$ and $1\leq i\leq m$.

\begin{definition}
\label{xxdef1.10} Let $\ip$ be an operad.
\begin{enumerate}
\item[(1)]
Let $X$ be a property that is defined on operads (or a class of operads).
We define $X$-radical of $\ip$ to be
$$X\rad(\ip):=\bigcap\{ \I \mid \ip/\I {\text{ has property $X$}}\}.$$
\item[(2)]
For example, if $(GK\leq k)$ denotes the property that the $\gkdim$
of $\ip$ is no more than $k$, then
$$(GK\leq k)\rad(\ip):=\bigcap\{ \I \mid \gkdim(\ip/\I)\leq k \}.$$
\item[(3)]
We say $\ip$ is {\it semiprime} if $\ip$ does not contain
an ideal $\mathcal{N}\neq 0$ such that $\mathcal{N}^2=0$.
\item[(4)]
If $p.$ denotes the property of $\ip$ being semiprime, then
$$p.\rad(\ip):=\bigcap\{ \I \mid {\text{$\ip/\I$ does not contain
an ideal $\mathcal{N}\neq 0$ such that $\mathcal{N}^2=0$}}.\}$$
\end{enumerate}
\end{definition}

\begin{lemma}
\label{xxlem1.11} Let $\I$ and $\M$ be $\S$-submodules of
an operad $\ip$.
\begin{enumerate}
\item[(1)]
If $\I$ and $\mathcal{J}$ are right ideals of $\ip$, then so is $\I\mathcal{J}$.
\item[(2)]
If $\I$ is a left ideal of $\ip$, then so is $\I\mathcal{J}$.
\item[(3)]
If $\I$ is an ideal of $\ip$ and $\mathcal{J}$ is a right ideal of $\ip$,
then $\I\mathcal{J}$ is an ideal of $\ip$.
\end{enumerate}
\end{lemma}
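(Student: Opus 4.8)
The plan is to reduce every statement to the defining generators of the product and then push the extra composition through using the associativity and equivariance axioms (OP2$'$) and (OP3$'$). Since partial composition is $\k$-bilinear and $\I,\mathcal{J}$ are $\k$-subspaces, a typical homogeneous element of $\I\mathcal{J}$ is a $\k$-linear combination of elements of the form $(\mu\underset{j}{\circ}\nu)\ast\tau$ with $\mu\in\I(a)$, $\nu\in\mathcal{J}(b)$ and $\tau\in\S_{a+b-1}$. Because the right- and left-ideal conditions are linear in the operation being acted upon, it suffices to verify them on such spanning elements; and the symmetric-group factor $\ast\tau$ can always be absorbed at the very end using equivariance together with the fact that $\I\mathcal{J}$ is by construction an $\S$-submodule, hence already stable under the $\S$-action.

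For part (1), fix a generator $g=\mu\underset{j}{\circ}\nu$ and an arbitrary $\rho\in\ip(c)$, and consider $g\underset{i}{\circ}\rho$ for $1\le i\le a+b-1$. The key is a three-way case split according to where the slot $i$ falls among the $a+b-1$ inputs of $g$: (a) inside the block $\{j,\dots,j+b-1\}$ coming from $\nu$; (b) among the first $j-1$ inputs coming from $\mu$; or (c) among the last inputs coming from $\mu$. In case (a) the nesting law (the first line of (OP2$'$)) gives $g\underset{i}{\circ}\rho=\mu\underset{j}{\circ}(\nu\underset{i-j+1}{\circ}\rho)$, and since $\mathcal{J}$ is a right ideal the inner factor lies in $\mathcal{J}$, so the result is again a generator. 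In cases (b) and (c) the commuting law (the second line of (OP2$'$)) lets me slide the composition with $\rho$ into $\mu$, producing $(\mu\underset{i'}{\circ}\rho)\underset{j'}{\circ}\nu$ for suitably reindexed $i',j'$; since $\I$ is a right ideal the inner factor lies in $\I$, and again we land on a generator. Finally the second relation of (OP3$'$), namely $(g\ast\tau)\underset{i}{\circ}\rho=(g\underset{\tau(i)}{\circ}\rho)\ast\tau''$, handles the $\ast\tau$ factor, so $\I\mathcal{J}$ is a right ideal.

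Part (2) is the easy direction and needs only the nesting law. Given $\rho\in\ip(c)$ and a generator $g=\mu\underset{j}{\circ}\nu$, the first line of (OP2$'$) immediately yields $\rho\underset{i}{\circ}g=(\rho\underset{i}{\circ}\mu)\underset{i-1+j}{\circ}\nu$; since $\I$ is a left ideal, $\rho\underset{i}{\circ}\mu\in\I$, so the right-hand side is a generator of $\I\mathcal{J}$, and the first relation of (OP3$'$) absorbs the symmetric-group factor. Thus $\I\mathcal{J}$ is a left ideal. Part (3) is then formal: an ideal is simultaneously a left and a right ideal, so if $\I$ is an ideal and $\mathcal{J}$ a right ideal, part (1) shows $\I\mathcal{J}$ is a right ideal and part (2) shows it is a left ideal; by the remark following Definition \ref{xxdef1.8}, $\I\mathcal{J}$ is therefore an ideal.

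The only delicate point — the hard part — is the index bookkeeping in cases (b) and (c) of part (1): one must match the slot $i$ against the correct instance of the commuting associativity law $(\la\underset{i}{\circ}\mu)\underset{k-1+m}{\circ}\nu=(\la\underset{k}{\circ}\nu)\underset{i}{\circ}\mu$, read in the appropriate direction, and verify that the resulting constraints $1\le i<k\le a$ on the reindexed slots are exactly the ones that place $i$ in $\mu$'s inputs rather than in $\nu$'s block (in case (b) taking $k=j$, and in case (c) taking $k=i-b+1$). Everything else is a routine, if careful, verification.
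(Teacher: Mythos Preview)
Your proof is correct and is precisely the natural argument one would give; the paper itself omits the proof entirely (stating just before Lemma~\ref{xxlem1.11} that ``the following lemmas are easy and their proofs are omitted''), so there is nothing to compare against. Your case split in part~(1) and the index computations in cases (b) and (c) are right, and your handling of the $\S$-action via (OP3$'$) is the standard reduction.
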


We conclude this section with the following fact. Recall that $\Com$ denotes the operad
that encodes the category of unital commutative algebras, namely,
$\Com(n)=\Bbbk$ for all $n\geq 0$. Let $\Uni$ be the trivial unitary operad defined by
$$\Uni(n)=\begin{cases}
\Bbbk=\Bbbk\1_0, & n=0,\\
\Bbbk=\Bbbk\1_1, & n=1,\\
0, & n\geq 2.
\end{cases}$$

\begin{lemma}\label{xxlem1.12}
\begin{enumerate}
\item[(1)]\cite[Proposition 2.2.21]{Fr1}
The operad $\Com$ is the terminal object in the category of
unitary operads.
\item[(2)]
The operad $\Uni$ is the initial object in the category of
unitary operads.
\end{enumerate}
\end{lemma}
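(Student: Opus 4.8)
The plan is to prove both statements by exhibiting the required universal morphisms explicitly and verifying uniqueness, using the fact that a morphism of unitary operads is forced to respect both the identity $\1_1$ and the $0$-unit $\1_0$.

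For part (1), recall $\Com$ is defined by $\Com(n) = \Bbbk$ for all $n \geq 0$. First I would fix, for each $n$, the canonical basis element $c_n \in \Com(n) = \Bbbk$, noting that $c_0 = \1_0$ and $c_1 = \1_1$ in $\Com$, and that the composition in $\Com$ is multiplication of these scalars, so $c_m \circ (c_{k_1}, \cdots, c_{k_m}) = c_{k_1 + \cdots + k_m}$. Given an arbitrary unitary operad $\ip$, I would define a candidate morphism $\gamma : \ip \to \Com$ by sending each $\theta \in \ip(n)$ to the scalar obtained by fully restricting $\theta$ down to arity $0$; concretely, $\pi^{\emptyset}(\theta) = \theta \circ (\1_0, \cdots, \1_0) \in \ip(0) = \Bbbk\1_0$, and one reads off the scalar coefficient. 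The content to check is that this assignment is $\S_n$-equivariant (immediate since $\Com(n) = \Bbbk$ carries the trivial action and the scalar is permutation-invariant by equivariance (OP3)), that it sends $\1_1 \mapsto c_1$, and that it is compatible with composition, which follows from the associativity axiom (OP2) together with \eqref{E1.1.4}-style reductions to arity zero. For uniqueness, since $\Com(n) = \Bbbk$ is one-dimensional and any morphism must send $\1_0 \mapsto \1_0$ and $\1_1 \mapsto \1_1$, the scalar image of any $\theta$ is determined by collapsing all inputs via compositions with the forced images of $\1_0$, so no freedom remains.

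For part (2), the strategy is dual. Given a unitary operad $\ip$, a morphism $\Uni \to \ip$ must send $\1_0^{\Uni} \mapsto \1_0^{\ip}$ and $\1_1^{\Uni} \mapsto \1_1^{\ip}$, and since $\Uni(n) = 0$ for $n \geq 2$ there are no further choices; this already forces uniqueness. For existence I would simply define $\eta : \Uni \to \ip$ on the two nonzero components by $\eta(\1_0) = \1_0$ and $\eta(\1_1) = \1_1$, and verify that this respects composition. The only compositions to examine are those among $\1_0$ and $\1_1$ in $\Uni$, and each such composite lands in $\Uni(0)$ or $\Uni(1)$; compatibility then reduces to the identity axiom (OP1) in $\ip$ together with the defining property $\ip(0) = \Bbbk\1_0$, e.g. $\1_1 \circ \1_0 = \1_0$ maps correctly.

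I do not expect a serious obstacle here, as both claims are formal consequences of the universal (one-dimensional, respectively minimal) structure of $\Com$ and $\Uni$; indeed part (1) is cited from \cite[Proposition 2.2.21]{Fr1}. The one point requiring genuine care is the well-definedness of the collapse map $\gamma$ in part (1): I must confirm that $\theta \circ (\1_0, \cdots, \1_0)$ is genuinely independent of any choices and is multiplicative under $\circ$, which is where the associativity and equivariance axioms do the real work. Everything else is bookkeeping, and I would keep the verification brief precisely because the lemma is stated as ``easy.''
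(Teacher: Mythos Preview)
Your proposal is correct and follows the standard argument; the paper itself omits the proof entirely, simply citing \cite[Proposition 2.2.21]{Fr1} for part (1) and stating the lemma as a fact. One small imprecision: your reference to ``\eqref{E1.1.4}-style reductions'' is slightly off, since \eqref{E1.1.4} concerns the 2-unit $\1_2$ and the lemma is about merely unitary operads; the compatibility of $\gamma$ with composition follows directly from (OP2) and $\Bbbk$-multilinearity alone, as your sketch otherwise indicates.
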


\section{Unitary and 2-unitary operads}
\label{xxsec2}

Recall from Definition \ref{xxdef1.1} that an operad $\ip$ is 
unitary if $\ip(0)\cong \Bbbk$, and a unitary operad $\ip$ is
2-unitary if there is a 2-unit $\1_2\in \ip(2)$ such that
$$\1_2\circ (\1_1, \1_0)=\1_1=\1_2\circ(\1_0, \1_1),$$
or equivalently, for any $\theta\in \ip(n)\ \ (n\ge 0)$,
\[\1_2\circ (\theta, \1_0)=\theta=\1_2\circ (\1_0, \theta).\]

\subsection{Examples of 2-unitary operads}
\label{xxsec2.1}
\begin{example}
\label{xxex2.1}
Parts (1) and (2) are examples of 2-unitary operads and part 
(3) is an example of unitary operads.
\begin{enumerate}
\item[(1)]
There are many commonly-used 2-unitary operads from textbooks, such as unitary
operads $\As$ and $\Com$, the unitary $A_{\infty}$-algebra operad
(denoted by ${\mathcal A}_{\infty}$),
the Poisson operad (denoted by $\mathcal{P}ois$), the unitary differential
graded algebra operad.
\item[(2)]
One can easily show that every quotient operad of a 2-unitary
operad is again 2-unitary.
\item[(3)]
Let $\M$ be an $\S$-module with $\M(0)=0$. Then $\Uni\oplus \M$ is an unitary
operad with partial composition defined by
\begin{align*}
\1_1 \underset{1}{\circ} \theta = \theta= \theta \underset{i}{\circ}\1_1,
\quad & \quad \forall \theta \in \M,\\
\theta \underset{i}{\circ} \1_0 =0, \quad & \quad \forall \theta \in \M,\\
\theta_1 \underset{i}{\circ} \theta_2 =0, \quad & \quad \forall \theta_1, \theta_2 \in \M.
\end{align*}
One can use the partial definition to check that this operad is unitary, but not
2-unitary.
\end{enumerate}
\end{example}

Of course, any non-unitary operads can not be 2-unitary. In the rest of this
subsection we give some examples of 2-unitary operads different from ones in
Example \ref{xxex2.1}. The following lemma is easy to prove.

\begin{lemma}
\label{xxlem2.2}
Let $\ip$ and $\iq$ be unitary operads.
\begin{enumerate}
\item[(1)]
If $\ip$ and $\iq$ are 2-unitary, then so is the {\it Hadamard product}
\cite[Section 5.3.2]{LV} {\rm{(}}also called {\it Segre product} or
{\it white product}{\rm{)}} of $\ip$ and $\iq$. In fact, the $2$-unit in
$\ip\underset{{\rm H}}\otimes \iq$ is just $\1_2^\ip \otimes \1_2^\iq$,
where $\1_2^\ip$ and $\1_2^\iq$ are $2$-units in $\ip$ and $\iq$,
respectively.
\item[(2)]
Suppose $\ip$ is 2-unitary with 2-unit $\1_2^{\ip}$ and $f: \ip\to \iq$
is a morphism of unitary operads. Then $\iq$ is 2-unitary with
2-unit $f(\1_2^{\ip})$.
\end{enumerate}
\end{lemma}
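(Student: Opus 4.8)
The plan is to verify the defining equation~\eqref{E0.0.1} in each case by reducing it to the corresponding statement for the factors. Both parts are purely formal once the relevant structure maps are spelled out, which is why the lemma is flagged as easy. For part~(1), I would first recall the structure of the Hadamard product $\ip\underset{{\rm H}}\otimes\iq$: its arity-$n$ component is $\ip(n)\ot\iq(n)$ with the diagonal $\S_n$-action, its identity is $\1_1^\ip\ot\1_1^\iq$, and its composition is the tensor of the two compositions, namely
\[
(\theta\ot\theta')\circ(\mu_1\ot\mu_1',\cdots,\mu_n\ot\mu_n')
=\big(\theta\circ(\mu_1,\cdots,\mu_n)\big)\ot\big(\theta'\circ(\mu_1',\cdots,\mu_n')\big).
\]
In particular the arity-$0$ component is $\ip(0)\ot\iq(0)=\Bbbk\1_0^\ip\ot\Bbbk\1_0^\iq\cong\Bbbk$, so $\ip\underset{{\rm H}}\otimes\iq$ is unitary with $0$-unit $\1_0^\ip\ot\1_0^\iq$. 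Using that composition is componentwise I would then compute
\[
(\1_2^\ip\ot\1_2^\iq)\circ\big(\1_1^\ip\ot\1_1^\iq,\;\1_0^\ip\ot\1_0^\iq\big)
=\big(\1_2^\ip\circ(\1_1^\ip,\1_0^\ip)\big)\ot\big(\1_2^\iq\circ(\1_1^\iq,\1_0^\iq)\big)
=\1_1^\ip\ot\1_1^\iq,
\]
where the last equality applies \eqref{E1.1.1} in each factor, and $\1_1^\ip\ot\1_1^\iq$ is the identity of the Hadamard product. The mirror computation with \eqref{E1.1.2} handles the left-hand condition. Hence $\1_2^\ip\ot\1_2^\iq$ is a $2$-unit and the Hadamard product is $2$-unitary. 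Note the $\S_n$-actions play no role here, since \eqref{E0.0.1} involves only composition and the units.

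For part~(2), the key observation is that a morphism of unitary operads $f\colon\ip\to\iq$ satisfies $f(\1_1^\ip)=\1_1^\iq$ and $f(\1_0^\ip)=\1_0^\iq$ and is compatible with composition by Definition~\ref{xxdef1.5}(1). Applying $f$ to the two identities $\1_2^\ip\circ(\1_1^\ip,\1_0^\ip)=\1_1^\ip$ and $\1_2^\ip\circ(\1_0^\ip,\1_1^\ip)=\1_1^\ip$ and pulling $f$ through each composition gives
\[
f(\1_2^\ip)\circ(\1_1^\iq,\1_0^\iq)=\1_1^\iq=f(\1_2^\ip)\circ(\1_0^\iq,\1_1^\iq),
\]
which is exactly the assertion that $f(\1_2^\ip)\in\iq(2)$ is a $2$-unit of $\iq$.

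The one point that genuinely needs to be invoked, rather than computed, is the claim used in part~(2) that a morphism of unitary operads preserves the $0$-unit, i.e. $f(\1_0^\ip)=\1_0^\iq$. This is built into the definition of the category of unitary operads: the identification $\ip(0)=\Bbbk\1_0\cong\Bbbk$ with the monoidal unit of ${\text{Vect}}_{\Bbbk}$ is part of the structure, and a morphism must respect it (compatibly with Lemma~\ref{xxlem1.12}). It is worth flagging that this hypothesis is essential and not automatic: a bare operad morphism could send $\1_0^\ip$ to $c\,\1_0^\iq$ with $c\neq1$, in which case $f(\1_2^\ip)\circ(\1_1^\iq,\1_0^\iq)=c^{-1}\1_1^\iq$ and $f(\1_2^\ip)$ itself would fail to be a $2$-unit. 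Apart from isolating this normalization, I expect no real obstacle; the only care needed in part~(1) is writing the Hadamard composition so that the two tensor factors separate cleanly.
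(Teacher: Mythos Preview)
Your proposal is correct and is exactly the direct verification the paper has in mind; the paper actually omits the proof entirely, introducing Lemma~\ref{xxlem2.2} only with the sentence ``The following lemma is easy to prove.'' Your write-up supplies precisely the routine check (componentwise composition for the Hadamard product in part~(1), and preservation of $\1_0$, $\1_1$, and composition under a morphism of unitary operads in part~(2)), and your remark isolating the normalization $f(\1_0^\ip)=\1_0^\iq$ is a useful clarification consistent with the paper's conventions (cf.\ the hypothesis ``preserving $\1_0$'' in Lemma~\ref{xxlem3.3}).
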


The next example will be used in the classification of 2-unitary
operads of GKdimension two.

\begin{example}
\label{xxex2.3}
Let $A=\Bbbk \1_1\oplus \bar A$ be an augmented algebra with augmentation
ideal $\bar A$. Let $\{\delta_i\mid i\in T\}$ be a $\Bbbk$-basis
for $\bar A$ where $T$ is an index set, and $\{\Omega_{kl}^v\mid k,l,v\in T\}$
the corresponding structural constants, namely,
\begin{equation}
\label{E2.3.1}\tag{E2.3.1}
\delta_i\delta_j=\sum_{k\in T}\Omega_{ij}^k\delta_k
\end{equation}
for all $i,j\in T$.  We assume that $0$ is not in $T$.

\begin{enumerate}
\item[(1)]
We define a 2-unitary operad $\mathcal{D}$ as follows. Set
$\mathcal{D}(0)=\Bbbk \1_0\cong \Bbbk$, $\mathcal{D}(1)=A=\Bbbk \1_1\oplus {\bar A}$,
and
\begin{equation}
\label{E2.3.2}\tag{E2.3.2}
\mathcal{D}(n)=\Bbbk \1_n \oplus \bigoplus
\limits_{i\in [n], j\in T}
\Bbbk \delta^n_{(i)j}
\end{equation}
for $n\ge 2$. For consistency of notations, we set $\delta^1_{(1)j}=\delta_j$
for each $j\in T$, and $\delta^n_{(i)0}=\1_n$ for all $i\in [n]$.

The action of $\S_n$ on $\mathcal{D}(n)$ is given by
$\1_n\ast \sigma = \1_n$ and
$\delta^n_{(i)j}\ast \sigma =  \delta^n_{(\sigma^{-1}(i))j}$
for all $\sigma\in \S_n$ and all $n$.

We use the partial
definition of an operad [Definition \ref{xxdef1.2}]. The
partial composition
\[-\underset{i}{\circ}-\colon \mathcal{D}(m) \otimes \mathcal{D}(n)
\to \mathcal{D}(m+n-1)\ \  (i\in [m])\]
is defined by
\begin{equation}
\label{E2.3.3}\tag{E2.3.3}
\delta^m_{(s)t}\underset{i}{\circ} \delta_{(k)l}^n
=\begin{cases}
\delta_{(k+i-1) l}^{m+n-1}, & t=0, l\ge 0, \\
\delta_{(s)t}^{m+n-1}, & t\ge 1, l=0, 1\le s\le i-1,\\
\sum\limits_{h=i}^{i+n-1} \delta_{(h)t}^{m+n-1},
& t\ge 1, l=0, s=i,\\
\delta_{(s+n-1) t}^{m+n-1}, & t\ge 1, l=0, i<s\le m,\\
\sum\limits_{v\in T}\Omega^v_{tl}\delta^{m+n-1}_{(i+k-1)v},
& t\ge 1, l\ge 1, s=i,\\
0, & t\ge 1, l\ge 1, s\neq i
\end{cases}
\end{equation}
for all $n\ge 1$, and $\1_1\underset{1}{\circ} \1_0=\1_0$,
$\delta_j\underset{1}{\circ} \1_0=0$ for all $j\in T$. If we separate
$\1_m$ from elements of the form $\delta^m_{(k)l}$ for $k\in [m]$
and $0\neq l\in T$, it
is easy to see that \eqref{E2.3.3} is equivalent to
$$\begin{aligned}
\1_{m} \underset{i}{\circ} \1_{n}& = \1_{m+n-1},\\
\1_{m} \underset{i}{\circ} \delta_{(k)l}^n&= \delta_{(k+i-1)l}^{m+n-1},\\
\delta^m_{(s)t} \underset{i}{\circ} \1_n&=
\begin{cases} \delta^{m+n-1}_{(s)t}, & \quad 1\leq s \leq i-1,\\
              \sum\limits_{h=i}^{i+n-1} \delta_{(h)t}^{m+n-1}, & \quad s=i,\\
              \delta_{(s+n-1) t}^{m+n-1}, & \quad i<s\le m,
\end{cases}\\
\delta^m_{(s)t}\underset{i}{\circ} \delta_{(k)l}^n&=
\begin{cases}
              \sum\limits_{v\in T}\Omega^v_{tl}\delta^{m+n-1}_{(i+k-1)v}, & s=i,\\
              0, & s\neq i.
\end{cases}
\end{aligned}
$$

Note that $-\underset{1}{\circ}-$ in $A$ is just the associative
multiplication of $A$. By the second relation on the above list,
we obtain
$$\delta_{(i)j}^n=\1_n\underset{i}{\circ} \delta_j$$
for all $i\in [n], j\in T$. One can now  directly
check via a tedious computation that $\mathcal{D}$ is a 2-unitary 
operad by partial composition.

An algebra $A$ over $\mathcal{D}$ means a unital commutative associative
algebra together with a set of derivations $\{\delta_i\}_{i\in T}$ satisfying
\begin{enumerate}
\item[(1)]
$\delta_i(x)\delta_j(y)=0$ for all $i, j\in T$ and all $x, y\in A$,
and
\item[(2)]
\eqref{E2.3.1}:
$\delta_i\delta_j=\sum_{k\in T}\Omega_{ij}^k\delta_k$.
\end{enumerate}
Note that a $\mathcal{D}$-algebra is a special kind of commutative differential
$\Bbbk$-algebra. Similar algebras have been studied by Goodearl in \cite[Section 1]{Go}.

A $\Bbbk$-linear basis of ${\mathcal D}$ is explicitly given in \eqref{E2.3.2}.
When $T$ is a finite set with $d$ elements, the generating function of
${\mathcal D}$ is
$$G_{\mathcal D}(t)=\sum_{n=0}^{\infty} (1+dn)t^n=\frac{1}{1-t}+\frac{d}{(1-t)^{2}}.$$
As a consequence, ${\mathcal D}$ has GKdimension two. We will see later that
every 2-unitary operad of GKdimension two is of this form.

\item[(2)]
Let $I:=\{\I_\alpha\}_{\alpha\geq 2}$ be a descending chain of
ideals of $A$ inside ${\bar{A}}$ such that $\I_{\alpha}\I_{\beta}
\subseteq \I_{\alpha+\beta-1}$ for all $\alpha$ and $\beta$.
We define a unitary operad, denoted by
${\mathcal D}^{I}$, associated $I$. For the sake
of using $\Bbbk$-linear bases, suppose we can choose
a descending chain of subsets $\{T_{\alpha}\}$ of $T$ such that
$\{\delta_i \mid i\in T_{\alpha}\}$ is a $\Bbbk$-linear basis of $\I_{\alpha}$
(this is not essential). Define
$${\mathcal D}^{I}(n)=
\begin{cases}
\Bbbk \1_0, & n=0,\\
A =\Bbbk \1_1\oplus \bigoplus_{j\in T} \Bbbk \delta_j,& n=1,\\
\bigoplus_{i\in \n, j\in T_n} \Bbbk \delta^{n}_{(i)j},& n\geq 2.
\end{cases}$$
One can check that ${\mathcal D}^{I}$ is a unitary, but
not 2-unitary, suboperad of ${\mathcal D}$.

A special case is when $\I_{\alpha}=\I$ for all $\alpha\geq 2$.
In this case, the above defined operad is denoted by ${\mathcal D}^{\I}$.
Suppose $T'$ is a subset of $T$ such that
$\{\delta_i \mid i\in T'\}$ is a $\Bbbk$-linear basis of $\I$.
Then
$${\mathcal D}^\I(n)=\begin{cases}
\Bbbk \1_0, & n=0,\\
A, & n=1,\\
\bigoplus_{i\in \n, j\in T'} \Bbbk \delta^{n}_{(i)j}, & n\geq 2.
\end{cases}$$
\end{enumerate}
\end{example}

\subsection{Some elementary operators on 2-unitary operads}
\label{xxsec2.2}
Let $s$ be an integer no more than $n$, and $I\subseteq \n$ a
subset consisting of $s$ elements. Clearly, there exists a unique
1-1 correspondence from $[s]$ to $I$
that preserves the ordering. Choosing $I\subset \n$ is equivalent
to giving an order preserving map
$$\overrightarrow{I}: [s]\longrightarrow I\subseteq \n.$$
Let $\chi_{I}$ be the characteristic function
of $I$, i.e. $\chi_{I}(x)=1$ for $x\in I$ and $\chi_{I}(x)=0$ otherwise.

We recall the following useful operators. Let $\ip$ be a
(2-)unitary operad. Consider the following {\it restriction operator}
\cite[Section 2.2.1]{Fr1}
\begin{equation}
\label{E2.3.4}\tag{E2.3.4}
\pi^{I}\colon \ip(n)\to\ip(s),\qquad \pi^{I}(\theta)=
\theta\circ(\1_{\chi_{I}(1)}, \cdots, \1_{\chi_{I}(n)})
\end{equation}
for all $\theta\in \ip(n)$. The {\it contraction operator} is defined
by $\Gamma^I=\pi^{\hat{I}}$ where $\hat{I}$ is the complement of $I$
in $\n$, or
\begin{equation}
\label{E2.3.5}\tag{E2.3.5}
\Gamma^{I}\colon \ip(n)\to \ip(n-s),\qquad \Gamma^{I}(\theta)=
\theta\circ(\1_{\chi_{{\hat I}}(1)}, \cdots, \1_{\chi_{{\hat I}}(n)})
\end{equation}
for all $\theta\in \ip(n)$.

Recall that $\bullet$ denotes the usual composition of two functions
that is omitted sometimes.

\begin{lemma}\cite[Lemma 2.2.4(1)]{Fr1}
\label{xxlem2.4}
Retain the above notation.
\begin{enumerate}
\item[(1)]
Let $I\subseteq \n$ with $|I|=s$ and $J\subseteq [s]$. Let
$\widetilde{J}:= \overrightarrow{I}(J)$ be the image of $J$ under
$\overrightarrow{I}$. Then $\pi^{\widetilde{J}}=\pi^J\bullet \pi^I$.
\item[(2)]
For each $W\subseteq \hat{I}$, $\pi^I=\Gamma^{\hat{I}}= \Gamma^{W'}
\bullet \Gamma^W$ for some subset $W'$ of $[n-|W|]$ with
$|W'|+|W| = n-|I|$.
\item[(3)]
If $k\not\in I$, then $\pi^I=\pi^{I'}\bullet \Gamma^k$ for some
$I'\subseteq [n-1]$ with $|I'| = |I|$.
\end{enumerate}
\end{lemma}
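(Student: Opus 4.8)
The plan is to reduce the whole lemma to part (1), which is just the transitivity of restriction and is, read in the right direction, nothing but the operad associativity axiom (OP2). So I would prove (1) first and then obtain (2) and (3) as purely formal consequences by passing to complements, using the definition $\Gamma^K=\pi^{\hat K}$.

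For (1), write $\pi^I(\theta)=\theta\circ(\alpha_1,\dots,\alpha_n)$ with $\alpha_i=\1_{\chi_I(i)}$, and $\pi^J(\xi)=\xi\circ(\beta_1,\dots,\beta_s)$ with $\beta_j=\1_{\chi_J(j)}$ for $\xi\in\ip(s)$. Then $(\pi^J\bullet\pi^I)(\theta)=\bigl(\theta\circ(\alpha_1,\dots,\alpha_n)\bigr)\circ(\beta_1,\dots,\beta_s)$, and I would expand the right-hand side by (OP2). The crucial point is to track how the $s$ inner inputs $\beta_1,\dots,\beta_s$ are distributed among the $\alpha_i$ according to arity: each $\alpha_i=\1_0$ with $i\notin I$ is nullary and absorbs no $\beta$, while each $\alpha_i=\1_1$ with $i\in I$ is unary and absorbs exactly one $\beta$. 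Since the inputs of $\pi^I(\theta)$ are indexed by $[s]$ through the order-preserving bijection $\overrightarrow I\colon[s]\to I$, the unary slot $\alpha_{\overrightarrow I(j)}$ receives precisely $\beta_j$, and $\1_1\circ(\beta_j)=\beta_j$ by the identity axiom.

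Carrying this out, (OP2) rewrites the composite as $\theta\circ(\gamma_1,\dots,\gamma_n)$ with $\gamma_i=\1_0$ for $i\notin I$ and $\gamma_{\overrightarrow I(j)}=\beta_j=\1_{\chi_J(j)}$. Thus $\gamma_i=\1_1$ exactly when $i=\overrightarrow I(j)$ for some $j\in J$, i.e. when $i\in\overrightarrow I(J)=\widetilde J$, and $\gamma_i=\1_0$ otherwise; this says precisely $\gamma_i=\1_{\chi_{\widetilde J}(i)}$, so the composite equals $\pi^{\widetilde J}(\theta)$, proving (1). Parts (2) and (3) then follow by dualizing through complements. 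First, $\pi^I=\Gamma^{\hat I}$ is immediate from the definition $\Gamma^K=\pi^{\hat K}$, since the complement of $\hat I$ is $I$. For (2), write $\Gamma^W=\pi^{[n]\setminus W}$, so $\Gamma^{W'}\bullet\Gamma^W=\pi^{\widehat{W'}}\bullet\pi^{[n]\setminus W}$; applying (1) with $I_0:=[n]\setminus W$ (of size $n-|W|$) and $J:=\widehat{W'}\subseteq[n-|W|]$ gives $\pi^{\overrightarrow{I_0}(\widehat{W'})}$. Because $W\subseteq\hat I$ forces $I\subseteq[n]\setminus W$, there is a unique $\widehat{W'}\subseteq[n-|W|]$ with $\overrightarrow{I_0}(\widehat{W'})=I$; its complement $W'$ then maps onto $\hat I\setminus W$, whence $|W'|=(n-|I|)-|W|$, i.e. $|W'|+|W|=n-|I|$. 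Part (3) is the special case $W=\{k\}$ (legitimate since $k\notin I$ means $k\in\hat I$): one applies (1) to $\pi^{I'}\bullet\Gamma^k=\pi^{I'}\bullet\pi^{[n]\setminus\{k\}}$ and chooses $I'\subseteq[n-1]$ to be the preimage of $I$ under $\overrightarrow{[n]\setminus\{k\}}$, which has $|I'|=|I|$.

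The only genuine obstacle is the index bookkeeping: one must verify that the grouping of the $\beta_j$ forced by (OP2) matches the order-preserving identification $\overrightarrow I$, and that the nullary units $\1_0$ absorb no inputs so that arities add up correctly. Once the convention that $\1_0\in\ip(0)$ consumes zero inputs is used consistently, every choice is forced and all three identities fall out of (OP2) together with the identity axiom $\1_1\circ(\beta)=\beta$.
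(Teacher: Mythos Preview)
Your proof is correct and takes essentially the same approach as the paper: the paper simply states that (1) follows from (OP2) and that (2), (3) are easy consequences of (1), which is exactly your strategy carried out in full detail. Your careful bookkeeping of how the nullary and unary units distribute the inner inputs under (OP2), and your derivation of (2) and (3) by passing to complements via $\Gamma^K=\pi^{\hat K}$, make explicit precisely what the paper leaves to the reader.
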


\begin{proof}
(1) This is \cite[Lemma 2.2.4(1)]{Fr1}. It follows from (OP2).

(2, 3) Easy consequences of part (1).
\end{proof}

If $\ip$ is  2-unitary, we can define another
operator as follows. The {\it extension operator}
$\D_{_I}\colon \ip(n) \to \ip(n+s)$ is defined by
\[\D_{_I}(\theta)= \theta\circ(\1_{\chi_{_I}(1)+1}, \cdots, \1_{\chi_{_I}(n)+1})\]
for all $\theta\in \ip(n)$. If $I=\{i_1,\cdots, i_s\}$ with
$i_1<i_2<\cdots<i_s$, then we also write $\pi^I $, $\Gamma^I$
and $\D_{_I}$ as $\pi^{i_1,\cdots, i_s}$, $\Gamma^{i_1,\cdots, i_s}$
and $\D_{i_1,\cdots, i_s}$ respectively.

Assume that $\ip$ is 2-unitary. For every $n\geq 3$, we define
inductively that
\begin{equation}
\label{E2.4.1}\tag{E2.4.1}
\1_{n}=\1_{2}\circ (\1_{n-1}, \1_1).
\end{equation}
Note that one might also define inductively
\begin{equation}
\label{E2.4.2}\tag{E2.4.2}
\1'_n=\1_2\circ (\1_1, \1'_{n-1})
\end{equation}
for all $n\geq 3$. By convention, $\1_{n}=\1'_{n}$ for $n=0,1,2$.
Unless $\ip$ is a quotient operad of $\As$,
it is not automatic that $\1'_n=\1_n$ for any $n\geq 3$.
In fact, $\1_3=\1'_3$ means that the binary operation
given by $\1_2$ is associative.

\begin{definition}
\label{xxdef2.5}
Let $\ip$ and $\iq$ be operads.
\begin{enumerate}
\item[(1)]
$\ip$ is called {\it $2a$-unitary} if $\ip$ is 2-unitary and
the 2-unit $\1_2\in \ip(2)$ is associative, or equivalently,
$\1_3=\1'_3$.
\item[(2)]
Let $\iq$ be a (unitary) operad.
We call $\ip$ {\it  $\iq$-augmented} if there are morphisms of operads
$f: \iq\to \ip$ and $g: \ip\to \iq$ such that $gf=\Id_{\iq}$.
\item[(3)]
$\ip$ is called {\it $\Com$-augmented} if there is a morphism from
$\Com\to \ip$.
\end{enumerate}
\end{definition}

It is easy to see that $\Com$-augmented operads are $2a$-unitary.
The 2a-unitary property in the above definition may be dependent on
choices of $\1_2$. For example, if $(\1_0, \1_1,\1_2)=(1_0, 1_1, 1_2)$
as elements in $\S_n$ for $n=0,1,2$, then $(\As, \1_0, \1_1,\1_2)$ is
a 2a-unitary operad. Suppose ${\rm{char}}\; \Bbbk\neq 2$.
If we set $(\1_0, \1_1,\1_2)=(1_0, 1_1, \frac{1}{2}(1_2+1_2\ast (12)))$,
$(\As, \1_0, \1_1,\1_2)$ is only 2-unitary, but not 2a-unitary.

\begin{lemma}
\label{xxlem2.6}
Let $\mathcal{P}$ be a $2a$-unitary operad, namely, $\1_3=\1'_3$.
Then the following hold.
\begin{enumerate}
\item[$(1)$]
For every $n\ge 3$, $\1_n=\1'_n$.
\item[$(2)$]
For every $n\ge 1$ and $k_1, \cdots, k_n\ge 0$,
$\1_n\circ (\1_{k_1}, \cdots, \1_{k_n})=\1_{k_1+\cdots+k_n}$.
\item[$(3)$]
There exists an operad morphism $\gamma\colon \As\to \ip$. As
a consequence, an algebra over $\ip$ has an associative algebra
structure.
\end{enumerate}
\end{lemma}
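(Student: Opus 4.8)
The plan is to prove the three parts of Lemma \ref{xxlem2.6} in order, using the $2a$-unitary hypothesis $\1_3=\1'_3$ as the single base case from which everything propagates by induction and the associativity axiom (OP2).

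For part (1), I would argue by induction on $n\geq 3$, the case $n=3$ being the hypothesis. Assume $\1_m=\1'_m$ for all $3\leq m<n$. The goal is to show that the two inductive definitions \eqref{E2.4.1} and \eqref{E2.4.2} agree. The key is to express $\1_n$ using $\1_3=\1'_3$ together with (OP2) so as to ``move'' the parenthesization from one side to the other. Concretely, I would compute $\1_2\circ(\1_{n-1},\1_1)$ by first rewriting $\1_{n-1}=\1_2\circ(\1_{n-2},\1_1)$ (or its primed version, available by the induction hypothesis), and then apply associativity to regroup. The associativity of the binary operation, encoded precisely in $\1_3=\1'_3$, should let me slide the innermost $\1_2$ past the outer one; iterating, I can transport the single $\1_1$ factor from the right end to the left end, converting $\1_n$ into $\1'_n$. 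This is the step I expect to be the main obstacle: one must set up the composition of associativity rewrites carefully so that each application is licensed by an already-established equality $\1_m=\1'_m$ with $m<n$, and bookkeeping the indices in (OP2) is delicate.

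For part (2), once (1) is known I may use $\1_m$ and $\1'_m$ interchangeably. I would prove $\1_n\circ(\1_{k_1},\cdots,\1_{k_n})=\1_{k_1+\cdots+k_n}$ by induction, likely a double induction on $n$ and on $\sum k_i$, reducing to the building blocks $\1_2\circ(\1_a,\1_b)=\1_{a+b}$. The latter identity follows from (1): writing out $\1_2\circ(\1_a,\1_b)$ and repeatedly applying \eqref{E2.4.1}, \eqref{E2.4.2} and associativity collapses it to $\1_{a+b}$. The boundary cases where some $k_i=0$ are handled by the unitarity relations \eqref{E1.1.3} and \eqref{E1.1.4}, which say $\1_2\circ(\theta,\1_0)=\theta=\1_2\circ(\1_0,\theta)$, so that inserting a $\1_0$ simply deletes that slot. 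The associativity axiom (OP2) then lets me decompose a general $\1_n\circ(\1_{k_1},\cdots,\1_{k_n})$ into nested partial compositions of the binary case.

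For part (3), I would define $\gamma\colon\As\to\ip$ on the standard basis. Recall $\As(n)=\Bbbk\S_n$, with the symmetric group permuting the inputs; I would send the identity permutation $1_n\in\As(n)$ to $\1_n\in\ip(n)$ and extend $\S_n$-equivariantly, i.e. $\gamma(\sigma):=\1_n\ast\sigma$. To verify this is an operad morphism I must check $\gamma(\1_1)=\1_1$, $\S_n$-equivariance (immediate by construction), and compatibility with composition $\gamma(\theta\circ(\theta_1,\cdots,\theta_n))=\gamma(\theta)\circ(\gamma(\theta_1),\cdots,\gamma(\theta_n))$. By equivariance (OP3) it suffices to check this on the identity permutations, where it reduces exactly to the identity proved in part (2). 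The statement that an algebra over $\ip$ acquires an associative algebra structure is then a formal consequence: composing the structure morphism $\ip\to\End_A$ with $\gamma$ gives $\As\to\End_A$, i.e. an $\As$-algebra, which is a unital associative algebra.
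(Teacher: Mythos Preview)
Your proposal is correct and follows essentially the same approach as the paper. For (1) the paper also proceeds by induction, rewriting $\1_{n-1}=\1'_{n-1}$ via the hypothesis and then applying a single instance of (OP2) together with $\1_3=\1'_3$ to shift the parenthesization; for (2) the paper simply says ``follows from induction,'' so your sketch is in fact more detailed; and for (3) the paper defines $\gamma(\sigma)=\1_n\ast\sigma$ exactly as you do and verifies the morphism condition by the same combination of (OP3) and part (2).
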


\begin{proof}
(1) Use induction on $n$. Assume that $\1_k=\1'_k$ for all
$3\le k\le n-1$. Then
\begin{align*}
\1_n=& \1_2\circ (\1_{n-1}, \1_1)
    =\1_2\circ (\1'_{n-1}, \1_1)
		=\1_2\circ (\1_2\circ (\1_1, \1'_{n-2}), \1_1\circ \1_1)\\
=& (\1_2\circ (\1_2, \1_1))\circ (\1_1, \1'_{n-2}, \1_1)
    =(\1_2\circ (\1_1, \1_2))\circ (\1_1, \1_{n-2}, \1_1)\\
=& \1_2\circ (\1_1, \1_2\circ (\1_{n-2}, \1_1))
    =\1_2\circ (\1_1, \1_{n-1})=\1_2\circ (\1_1, \1'_{n-1})\\
=& \1'_n.
\end{align*}

(2) This follows from induction.

(3) For any $\sigma\in \S_n$, we define
$\gamma(\sigma)=\1_n\ast \sigma$. Clearly,
for all $\sigma\in \S_n, \sigma_i\in \S_{k_i},
n\ge 1, k_i\ge 0, i=1, \cdots, n$, we have
\begin{align*}
\gamma(\sigma\circ (\sigma_1, \cdots, \sigma_n))
&= \gamma(\vartheta(\sigma, \sigma_1, \cdots, \sigma_n))\\
&= \1_{k_1+\cdots+k_n}\ast \vartheta(\sigma, \sigma_1, \cdots, \sigma_n)\\
&= (\1_n\ast \sigma)\circ (\1_{k_1}\ast \sigma_1, \cdots, \1_{k_n}\ast \sigma_n)\\
&= \gamma(\sigma)\circ (\gamma(\sigma_1), \cdots, \gamma(\sigma_n)).
\end{align*}
\end{proof}

By Lemma \ref{xxlem2.6}(3), $\As$ is the initial object in
the category of $2a$-unitary operads. It is easy to see that
Lemma \ref{xxlem2.2} holds for $2a$-unitary operads. If we consider
2-unitary operads that are not necessarily 2a-unitary, then the unitary
magmatic operad \cite[Section 13.8.2]{LV} is the initial object
in the category of 2-unitary operads. The unital magmatic operad encodes the
category of unital magmatic algebras \cite[Section 13.8.1]{LV}, namely,
unital non-associative algebras.

For any $l, r\ge 0$, we define the function
$\iota^l_r\colon \ip(n)\to \ip(l+n+r)$ by
\[\iota^l_r(\theta) = \1_3\circ(\1_l, \theta, \1_r).\]
We simply write $\iota_r=\iota^0_r$ and $\iota^l=\iota^l_0$.

\begin{lemma}
\label{xxlem2.7}
Retain the above notation. Let $\ip$ be a 2-unitary operad and let
$\theta\in \ip(n)$.
\begin{enumerate}
\item[(1)] $\pi^{I}(\1_n)=\1_{|I|}$ for all $I\subseteq \n$.
\item[(2)]
$\iota_r(\theta)=\1_2\circ (\theta, \1_r)$.
\item[(3)]
$\iota^l(\theta)=\1_2\circ (\1_l, \theta)$.
\item[(4)]
$\iota^l_r=\iota_r\bullet \iota^l$.
Moreover, $\iota_r^l=\iota^l \bullet \iota_r$ for all $l,r\geq 0$
if and only if $\ip$ is $2a$-unitary.
\end{enumerate}
\end{lemma}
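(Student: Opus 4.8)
The plan is to derive all four identities directly from the operad axioms (OP1), (OP2), the $2$-unit relations \eqref{E1.1.1}--\eqref{E1.1.4}, and the recursive definitions \eqref{E2.4.1}--\eqref{E2.4.2} of $\1_n$ and $\1'_n$. Only part (1) needs more than a one-line unfolding; the others become short computations once the technique for (1) is in place. Crucially, I would deliberately \emph{avoid} Lemma \ref{xxlem2.6}(2), which would trivialise part (1), because that lemma assumes $2a$-unitarity whereas here only $2$-unitarity is available and $\1_n$ is pinned down solely by the right-nested recursion \eqref{E2.4.1}.

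For part (1) I would argue by induction on $n$. Since $\pi^{I}(\1_n)=\1_n\circ(\1_{\chi_I(1)},\cdots,\1_{\chi_I(n)})$ plugs $\1_1$ into the slots indexed by $I$ and $\1_0$ into the rest, the natural move is to peel off the last slot via $\1_n=\1_2\circ(\1_{n-1},\1_1)$ and apply (OP2). This splits into two cases according to whether $n\in I$. If $n\in I$ the last inner factor is $\1_1\circ\1_1=\1_1$ and associativity leaves $\1_2\circ(\pi^{I\cap[n-1]}(\1_{n-1}),\1_1)$; if $n\notin I$ it is $\1_1\circ\1_0=\1_0$ and one gets $\1_2\circ(\pi^{I}(\1_{n-1}),\1_0)$. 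Feeding in the inductive hypothesis $\pi^{J}(\1_{n-1})=\1_{|J|}$ and then collapsing with \eqref{E2.4.1} (respectively \eqref{E1.1.3}) yields $\1_{|I|}$ in both cases. The base cases $n\le 2$ are checked by hand from (OP1) and \eqref{E1.1.1}--\eqref{E1.1.4}. The one point needing care is that \eqref{E2.4.1} is declared only for arity $\ge 3$, so when $|I|\in\{1,2\}$ the final collapse must instead invoke (OP1) or the $2$-unit relations ($\1_2\circ(\1_1,\1_1)=\1_2$ and $\1_2\circ(\1_0,\1_1)=\1_1$); these small cases cause no trouble but should be recorded.

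Parts (2) and (3) are immediate unfoldings of $\1_3=\1_2\circ(\1_2,\1_1)$. Applying (OP2) to $\iota_r(\theta)=\1_3\circ(\1_0,\theta,\1_r)$ gives $\1_2\circ(\1_2\circ(\1_0,\theta),\1_1\circ\1_r)$, and \eqref{E1.1.4} with (OP1) reduces the inner factors to $\theta$ and $\1_r$, proving (2). Symmetrically, $\iota^l(\theta)=\1_3\circ(\1_l,\theta,\1_0)$ expands to $\1_2\circ(\1_2\circ(\1_l,\theta),\1_0)$, and \eqref{E1.1.3} removes the trailing $\1_0$ to give (3).

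For part (4), the first identity $\iota^l_r=\iota_r\bullet\iota^l$ holds in every $2$-unitary operad: composing (3) then (2) gives $\iota_r(\iota^l(\theta))=\1_2\circ(\1_2\circ(\1_l,\theta),\1_r)$, which is exactly $\1_3\circ(\1_l,\theta,\1_r)$ after reassembling via $\1_3=\1_2\circ(\1_2,\1_1)$ and (OP2). The substance is the ``if and only if''. The key observation I would record is that composing in the other order produces the \emph{mirror} nested expression: using (2) then (3), $\iota^l(\iota_r(\theta))=\1_2\circ(\1_l,\1_2\circ(\theta,\1_r))$, and expanding $\1'_3=\1_2\circ(\1_1,\1_2)$ by (OP2) shows this equals $\1'_3\circ(\1_l,\theta,\1_r)$. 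Thus $\iota^l_r$ and $\iota^l\bullet\iota_r$ are the operators $\theta\mapsto\1_3\circ(\1_l,\theta,\1_r)$ and $\theta\mapsto\1'_3\circ(\1_l,\theta,\1_r)$. If $\ip$ is $2a$-unitary then $\1_3=\1'_3$ and the two coincide for all $l,r,\theta$; conversely, specialising to $l=r=1$ and $\theta=\1_1$ and using (OP1) collapses both sides to $\1_3$ and $\1'_3$, so equality forces $\1_3=\1'_3$, i.e.\ $2a$-unitarity. I expect this last step --- recognising that $\iota^l\bullet\iota_r$ is governed by $\1'_3$ rather than $\1_3$, and that $l=r=1,\ \theta=\1_1$ is the right probe --- to be the only genuinely non-mechanical part of the argument.
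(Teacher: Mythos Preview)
Your proposal is correct and follows essentially the same route as the paper. For part (1) the paper merely writes ``This follows by induction on $n$'', so your peel-off-the-last-slot argument via $\1_n=\1_2\circ(\1_{n-1},\1_1)$ with the two cases $n\in I$ and $n\notin I$ is exactly the intended induction; parts (2)--(4) match the paper's computations line for line, including the specialisation $l=r=1$, $\theta=\1_1$ to recover $\1_3=\1'_3$ in the ``only if'' direction.
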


\begin{proof} (1) This follows by induction on $n$.

(2) We compute
\begin{align*}
\iota_r(\theta)&=\1_3\circ (\1_0, \theta, \1_r)=(\1_2\circ (\1_2, \1_1))\circ
(\1_0, \theta, \1_r)\\
&=\1_2\circ (\1_2\circ(\1_0, \theta), \1_1\circ \1_r)\\
&=\1_2\circ (\theta, \1_r).
\end{align*}

(3) We compute
\begin{align*}
\iota^l(\theta)&=\1_3\circ (\1_l, \theta, \1_0)=(\1_2\circ (\1_2, \1_1))\circ
(\1_l, \theta, \1_0)\\
&=\1_2\circ (\1_2\circ(\1_l, \theta), \1_1\circ \1_0)\\
&=\1_2\circ (\1_2\circ(\1_l, \theta), \1_0)\\
&=\1_2\circ(\1_l, \theta).
\end{align*}

(4)  Using parts (2) and (3), we compute
\begin{align*}
\iota^l_r(\theta)&=\1_3\circ (\1_l, \theta, \1_r)=(\1_2\circ (\1_2, \1_1))\circ
(\1_l, \theta, \1_r)\\
&=\1_2\circ (\1_2\circ(\1_l, \theta), \1_1\circ \1_r)\\
&=\1_2\circ (\iota^l(\theta), \1_r)\\
&=\iota_r\bullet \iota^l(\theta).
\end{align*}

If $\iota^l\bullet \iota_r=\iota^l_r$, taking $r=l=1$, then
\[\1_3=\iota_1(\iota^1(\1_1))=\iota^1(\iota_1(\1_1))
=\1_2\circ (\1_1, \1_2)=\1'_3.\]
Conversely, if $\1_3=\1'_3$ (equivalently, if $\ip$ is
$2a$-unitary), then we have
\[
\iota^l(\iota_r(\theta))=\1_2\circ (\1_l, \1_2\circ (\theta, \1_r))
=(\1_2\circ (\1_1, \1_2))\circ (\1_l, \theta, \1_r)
=\1_3\circ (\1_l, \theta, \1_r)=\iota_r^l(\theta).\]
for all $\theta$.
\end{proof}

%

\begin{example}
\label{xxex2.8}
Let $\ip=\As$. Assume $n=5$, $I=\{2, 4\}$ and $\sigma=(14)(235)$.
Then $\Gamma^{I}(\sigma)=(123)\in\S_3$,
$\pi^{I}(\sigma)= (12)\in\S_2$,
$\D_{I}(\sigma)=(1624735)\in\S_7$ and $\iota^1_2(\sigma)= (25)(346)\in\S_8$.
Following the convention introduced in Section \ref{xxsec8.1}
the sequences corresponded to

\centerline{$\sigma$, $\qquad$ $\Gamma^{I}(\sigma)$, $\qquad$ $\pi^{I}(\sigma)$, $\qquad$
$\D_{_I}(\sigma)$ $\qquad$ and $\qquad$ $\iota^1_2(\sigma)$}

\noindent
are given by

\centerline{$(4,5,2,1,3)$, $(2,1)$,
$(3,1,2)$, $(5,6,7,2,3,1,4)$ and $(1,5,6,3,2,4,7,8)$,}

\noindent
respectively.
\end{example}

By an easy calculation, we have the following useful lemmas.

\begin{lemma}
\label{xxlem2.9}
Let $\ip$ be 2-unitary. Let $n, l, r\ge0$ be integers and
$i, j, i_1, \cdots, i_s\in \n$. Then the following hold.
\begin{enumerate}
\item[(1)]
Assume that $i_1<\cdots<i_s$, then
    \[\D_{i_1,\cdots,i_s}=\D_{i_s+s-1}\bullet\cdots\bullet\D_{i_2+1}\bullet\D_{i_1}
		= \D_{i_1}\bullet\cdots\bullet\D_{i_s}, \]
    and
    \[\Gamma^{i_1,\cdots,i_s}
		= \Gamma^{i_s-s+1}\bullet\cdots\bullet\Gamma^{i_2-1}\bullet\Gamma^{i_1}
		= \Gamma^{i_1}\bullet\cdots\bullet\Gamma^{i_s}.\]
\item[(2)]
$\Gamma^{i+1}\bullet\D_i =\id$.
\item[(3)]
$\Gamma^i\bullet\D_i =\id$.
\item[(4)]
        $\D_{j}\bullet \Gamma^i=
            \begin{cases}
            \Gamma^i\bullet\D_{j+1},\quad i\le j;\\
            \Gamma^{i+1}\bullet\D_{j},\quad i>j.
            \end{cases}
        $
\item[(5)]
$\Gamma^{l+i}\bullet\iota^l_r=
\iota^l_r\bullet\Gamma^i$, and $\D_{l+i}\bullet\iota^l_r=\iota^l_r\bullet\D_i$.
\item[(6)]
$\Gamma^1\bullet\iota^1= \id$, and
$\Gamma^{n+1}\bullet\iota_1|_{\ip(n)} = \id_{\ip(n)}$.
\end{enumerate}
\end{lemma}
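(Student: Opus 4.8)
The plan is to rewrite each operator in its single-slot partial-composition form and then reduce every identity to one application of the associativity axiom together with a $2$-unit relation. Taking $|I|=1$ in \eqref{E2.3.5} and in the definition of $\D_{_I}$, and discarding the identity inputs $\1_1$ by (OP1$'$), gives
\[
\Gamma^{i}(\theta)=\theta\underset{i}{\circ}\1_0,
\qquad
\D_{i}(\theta)=\theta\underset{i}{\circ}\1_2 .
\]
This is the computational backbone: all six parts are assembled from these two formulas, the associativity axioms (OP2) and (OP2$'$), and the relations \eqref{E1.1.1}--\eqref{E1.1.4}.

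For parts (2) and (3) I would feed $\D_i(\theta)=\theta\underset{i}{\circ}\1_2$ into $\Gamma^{i+1}$ (resp.\ $\Gamma^{i}$) and apply the first associativity relation of (OP2$'$), namely $(\la\underset{a}{\circ}\mu)\underset{a-1+j}{\circ}\nu=\la\underset{a}{\circ}(\mu\underset{j}{\circ}\nu)$, with $\la=\theta$, $\mu=\1_2$, $\nu=\1_0$. Choosing $j=2$ (so $a-1+j=i+1$) yields $\Gamma^{i+1}\bullet\D_i(\theta)=\theta\underset{i}{\circ}(\1_2\underset{2}{\circ}\1_0)=\theta\underset{i}{\circ}\1_1=\theta$ by \eqref{E1.1.1}; choosing $j=1$ yields $\Gamma^{i}\bullet\D_i(\theta)=\theta\underset{i}{\circ}(\1_2\underset{1}{\circ}\1_0)=\theta$ by \eqref{E1.1.2}. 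Part (4) is the only place needing the \emph{second} relation $(\la\underset{a}{\circ}\mu)\underset{c-1+m}{\circ}\nu=(\la\underset{c}{\circ}\nu)\underset{a}{\circ}\mu$ (valid for $a<c$, with $\mu\in\ip(m)$): for $i\le j$ I take $\mu=\1_0$ ($m=0$), $a=i$, $c=j+1$, $\nu=\1_2$, which turns $\D_j\bullet\Gamma^i(\theta)=(\theta\underset{i}{\circ}\1_0)\underset{j}{\circ}\1_2$ into $(\theta\underset{j+1}{\circ}\1_2)\underset{i}{\circ}\1_0=\Gamma^i\bullet\D_{j+1}(\theta)$; for $i>j$ I instead take $\mu=\1_2$ ($m=2$), $a=j$, $c=i$, $\nu=\1_0$, giving $\Gamma^{i+1}\bullet\D_j(\theta)=(\theta\underset{j}{\circ}\1_2)\underset{i+1}{\circ}\1_0=(\theta\underset{i}{\circ}\1_0)\underset{j}{\circ}\1_2=\D_j\bullet\Gamma^i(\theta)$.

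For parts (5) and (6) the strategy is to push the single insertion or deletion through the block decomposition $\iota^l_r(\theta)=\1_3\circ(\1_l,\theta,\1_r)$. Applying (OP2) to this element post-composed with $\1_0$ (resp.\ $\1_2$) at slot $l+i$, the inserted operation lands inside the middle $\theta$-block at local position $i$, while the two outer blocks collapse to $\1_l$ and $\1_r$ by (OP1); this gives $\Gamma^{l+i}\bullet\iota^l_r=\iota^l_r\bullet\Gamma^i$ and $\D_{l+i}\bullet\iota^l_r=\iota^l_r\bullet\D_i$. Part (6) is the boundary case of the same computation: using $\iota^1(\theta)=\1_2\circ(\1_1,\theta)$ and $\iota_1(\theta)=\1_2\circ(\theta,\1_1)$ from Lemma \ref{xxlem2.7}(2,3), the deletion at slot $1$ (resp.\ slot $n+1$) lands in the $\1_1$-block and produces $\1_2\circ(\1_0,\theta)=\theta$ and $\1_2\circ(\theta,\1_0)=\theta$ via \eqref{E1.1.3}--\eqref{E1.1.4}.

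The one part that is not a single axiom application is (1), which I view as the main (though still routine) obstacle, since it is genuinely combinatorial. Here I would expand the full composition $\D_{_I}(\theta)=\theta\circ(\1_{\chi_{I}(1)+1},\dots,\1_{\chi_{I}(n)+1})$ into iterated partial compositions via Remark \ref{xxrem1.3} and drop the identity inputs, leaving a product of the insertions $\underset{i_k}{\circ}\1_2$. Performing these from the largest index $i_s$ downward leaves the lower slots unshifted and immediately gives the factorization $\D_{_I}=\D_{i_1}\bullet\cdots\bullet\D_{i_s}$; performing them from $i_1$ upward forces the $k$-th slot to move from $i_k$ to $i_k+(k-1)$, giving $\D_{_I}=\D_{i_s+s-1}\bullet\cdots\bullet\D_{i_2+1}\bullet\D_{i_1}$. (If preferred, the equality of the two orderings can be re-derived by induction on $s$ using part (4).) The statement for $\Gamma^{i_1,\dots,i_s}$ is proved identically with $\1_2$ replaced by $\1_0$, i.e.\ with deletions in place of insertions. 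The bookkeeping of these index shifts is the only delicate point; everything else is forced by (OP1$'$)--(OP2$'$) and \eqref{E1.1.1}--\eqref{E1.1.4}.
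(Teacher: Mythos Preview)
Your proposal is correct and is precisely the approach the paper has in mind: the paper's own proof consists of the single line ``This follows from easy computations and (OP2)'', and your argument spells out exactly those computations using (OP2$'$) and the $2$-unit relations. Your treatment is simply a detailed expansion of what the paper leaves implicit.
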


\begin{proof} This follows from easy computations and (OP2).
\end{proof}

\begin{lemma}
\label{xxlem2.10} Let $\ip$ be 2-unitary.
Let $n, k_1, \cdots, k_n\ge0$ be integers. Then, for each
$\theta\in\ip(n)$,
\[\theta\circ(\1_{k_1},\cdots, \1_{k_n})
= ((\D_1)^{k_1-1}\bullet\cdots \bullet(\D_n)^{k_n-1})(\theta),\]
where, by convention, $(\D_i)^{-1}$ means $\Gamma^i$ in case $k_i=0$.
\end{lemma}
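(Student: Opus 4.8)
The plan is to collapse the full composition on the left into a chain of partial compositions and then to recognize each partial composition against $\1_k$ as a power of the extension operator (or, when $k=0$, as a contraction). The whole argument is carried out in the partial-composition language, and the starting point is the pair of translations
\[\D_p(\psi)=\psi\underset{p}{\circ}\1_2,\qquad \Gamma^p(\psi)=\psi\underset{p}{\circ}\1_0,\]
which are immediate from the defining formulas for $\D_p$ and $\Gamma^p$ together with the conversion $\mu\underset{i}{\circ}\nu=\mu\circ(\1_1,\cdots,\nu,\cdots,\1_1)$ recorded in Remark \ref{xxrem1.3}.

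First I would isolate the auxiliary identity $\1_m\underset{1}{\circ}\1_2=\1_{m+1}$ for all $m\ge 1$, proved by induction on $m$. The cases $m=1,2$ are (OP1$'$) and the recursive definition \eqref{E2.4.1}; for $m\ge 3$ I write $\1_m=\1_2\underset{1}{\circ}\1_{m-1}$, apply the first case of the associativity axiom (OP2$'$) to reassociate $(\1_2\underset{1}{\circ}\1_{m-1})\underset{1}{\circ}\1_2=\1_2\underset{1}{\circ}(\1_{m-1}\underset{1}{\circ}\1_2)$, and invoke the inductive hypothesis. Note this uses only \eqref{E2.4.1} and (OP2$'$), not associativity of $\1_2$, so it is valid for any $2$-unitary $\ip$.

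The key step is the single-slot formula: for every $\psi\in\ip(N)$, every position $p$, and every integer $k\ge 0$,
\[(\D_p)^{k-1}(\psi)=\psi\underset{p}{\circ}\1_k,\]
where $(\D_p)^{-1}:=\Gamma^p$. I would prove this by induction on $k$. The cases $k=0$ and $k=1$ are the two translations above together with $\psi\underset{p}{\circ}\1_1=\psi$ from (OP1$'$). For the inductive step I apply $\D_p$ to $\psi\underset{p}{\circ}\1_{k-1}$ to get $(\psi\underset{p}{\circ}\1_{k-1})\underset{p}{\circ}\1_2$, use the first case of (OP2$'$) to collapse this to $\psi\underset{p}{\circ}(\1_{k-1}\underset{1}{\circ}\1_2)$, and rewrite the inner factor as $\1_k$ via the auxiliary identity. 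This is the step I expect to be the main obstacle, since it is where the recursive definition of $\1_m$ and the precise index bookkeeping of the associativity axiom must be lined up correctly.

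Finally I would assemble the statement. By the conversion formula of Remark \ref{xxrem1.3},
\[\theta\circ(\1_{k_1},\cdots,\1_{k_n})=(\cdots((\theta\underset{n}{\circ}\1_{k_n})\underset{n-1}{\circ}\1_{k_{n-1}})\cdots)\underset{1}{\circ}\1_{k_1}.\]
At each stage the current operation is partial-composed against $\1_{k_p}$ in slot $p$; since the preceding stages only altered slots $>p$, position $p$ is untouched and remains valid, so the single-slot formula identifies this stage (with $N$ the current arity) precisely as $(\D_p)^{k_p-1}$. Reading the composite from the innermost factor $(p=n)$ out to the outermost $(p=1)$ then yields exactly $(\D_1)^{k_1-1}\bullet\cdots\bullet(\D_n)^{k_n-1}$ applied to $\theta$, which is the claim.
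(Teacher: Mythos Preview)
Your argument is correct. Both your proof and the paper's reduce the claim to iterated associativity, peeling off one $\D$ (or $\Gamma$) at a time from the rightmost slot inward; the difference is packaging. The paper stays entirely in the full-composition language: it uses the definitional factorization $\1_{k_s}=\1_2\circ(\1_{k_s-1},\1_1)$ directly inside $\theta\circ(\1_{k_1},\dots,\1_{k_s},\1_1,\dots,\1_1)$ and applies (OP2) once to split off a single $\D_s$ (or $\Gamma^s$ when $k_s=0$), obtaining the recursive formula \eqref{E2.10.1}, then iterates. You instead pass through Remark~\ref{xxrem1.3} to partial compositions, isolate the single-slot identity $(\D_p)^{k-1}(\psi)=\psi\underset{p}{\circ}\1_k$, and then assemble. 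The cost of your route is the extra inductive lemma $\1_m\underset{1}{\circ}\1_2=\1_{m+1}$, which the paper avoids because its per-step factorization $\1_{k_s}=\1_2\circ(\1_{k_s-1},\1_1)$ is literally \eqref{E2.4.1}; the benefit is that your single-slot lemma is a clean reusable statement and the final assembly is transparent.
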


\begin{proof} We use (OP2) in the following computation.
If $k_s\ge 2$, we have
\begin{align*}
\theta\circ (\1_{k_1}, \cdots, \1_{k_s}, & \underbrace{\1_1,\cdots, \1_1}_{t})\\
=&\theta\circ (\1_1\circ \1_{k_1}, \cdots, \1_1\circ \1_{k_{s-1}}, \1_2\circ (\1_{k_{s}-1}, \1_1),
\underbrace{\1_1\circ \1_1,\cdots, \1_1\circ \1_1}_{t})\\
=& \D_s(\theta)\circ(\1_{k_1}, \cdots, \1_{k_{s-1}}, \1_{k_s-1},
\underbrace{\1_1, \1_1,\cdots, \1_1}_{t+1}).
\end{align*}
If $k_s=0$, then
\begin{align*}
\theta\circ (\1_{k_1}, \cdots, \1_{k_s}, &
\underbrace{\1_1,\cdots, \1_1}_{t})\\
=&\theta\circ (\1_1\circ \1_{k_1}, \cdots, \1_1\circ \1_{k_{s-1}}, \1_0\circ ( ),
\underbrace{\1_1\circ \1_1,\cdots, \1_1\circ \1_1}_{t})\\
=& \Gamma^s(\theta)\circ(\1_{k_1}, \cdots, \1_{k_{s-1}},
\underbrace{\1_1, \1_1,\cdots, \1_1}_{t}).
\end{align*}
Combining the above, we have
\begin{align}\label{E2.10.1}\tag{E2.10.1}
\theta\circ (\1_{k_1}, \cdots, \1_{k_s}, &\underbrace{\1_1,\cdots, \1_1}_{t})\\
&=    \begin{cases} \D_s(\theta)\circ(\1_{k_1}, \cdots, \1_{k_{s-1}}, \1_{k_s-1},
		\underbrace{\1_1, \1_1,\cdots, \1_1}_{t+1}) & \text{if } k_s\ge 2; \\
    \Gamma^s(\theta)\circ (\1_{k_1}, \cdots, \1_{k_{s-1}},
		\underbrace{\1_1, \cdots, \1_1}_t) & \text{if } k_s=0.
    \end{cases} \notag
\end{align}
The lemma follows by applying the formula \eqref{E2.10.1} iteratively.
\end{proof}

Note that lemmas \ref{xxlem2.9} and \ref{xxlem2.10} hold
for plain operads. We are ready to prove Proposition \ref{xxpro0.5}.

\begin{proof}[Proof of Proposition \ref{xxpro0.5}]
Assume that $\ip$ is not $\Com$.
Let $n=\min \{ m \mid \ip(m)\neq \Bbbk \1_m\}$.
Since $\ip$ is unitary, $n\geq 1$. Since $\ip(n-1)
=\Bbbk \1_{n-1}$, there is a nonzero element
$\theta\in \ip(n)$ such that $\pi^{I}(\theta)=0$
where $I=[n-1]$. For every $J\subseteq
\n$ such that $|J|=n-1$,
$$\pi^{\emptyset}\bullet \pi^{J}(\theta)=\pi^{\emptyset}(\theta)
=\pi^{\emptyset}\bullet \pi^{I}(\theta)=0.$$
Firstly since $\pi^{\emptyset}: \ip(n-1)\to \ip(0)$
is an isomorphism,
\begin{equation}
\label{E2.10.2}\tag{E2.10.2}
\pi^{J}(\theta)=0
\end{equation}
for all $J\subseteq \n$ with $|J|=n-1$. For every
$w\geq n+1$ and $0\leq i\leq w-n$,
let $\theta^{w}_{i}=\iota^i_{w-i-n}(\theta)$.
We claim that $\{\theta^w_{0},\theta^w_{1}\cdots, \theta^w_{w-n}\}$
are linearly independent. We prove this by induction
on $w$. The initial case is when $w=n+1$.
Suppose
\begin{equation}
\label{E2.10.3}\tag{E2.10.3}
a \theta^w_0+ b \theta^w_1=0.
\end{equation}
By \eqref{E2.10.2},
we have $\Gamma^1(\theta^w_0)=0$ and $\Gamma^1(\theta^w_1)=\theta$.
Thus $b\theta=0$ after applying $\Gamma^1$ to \eqref{E2.10.3}.
Hence $b=0$. Applying $\Gamma^w$ to \eqref{E2.10.3}, we obtain
that $a=0$. Therefore the claim holds for $w=n+1$. Now suppose
the claim holds for $w$, and we consider the equation
\begin{equation}
\label{E2.10.4}\tag{E2.10.4}
\sum_{s=0}^{w-n+1} a_s \theta^{w+1}_s=0.
\end{equation}
Since 
$\Gamma^{w+1}(\theta^{w+1}_s)=
\begin{cases} 
\theta^{w}_s, & s< w-n+1,\\
0, & s=w-n+1,\end{cases}$ we obtain that $\sum_{s=0}^w a_s \theta^{w}_s=0$
after applying $\Gamma^{w+1}$ to \eqref{E2.10.4}. By induction
hypothesis, $a_s=0$ for all $s=0,\cdots, w-n$. Using $\Gamma^1$ instead
of $\Gamma^{w+1}$, we obtain that $a_s=0$ for all $s=1,\cdots,w-n+1$.
Therefore we proved the claim by induction.

By the claim $\dim \ip(w)\geq w-n$ for all $w$, which implies that
$\gkdim \ip\geq 2$, a contradiction.
\end{proof}

Recall that $\ast$ denote the right action of $\S_{n}$ on $\ip(n)$.
The following lemma is easy.

\begin{lemma}
\label{xxlem2.11} Let $\ip$ be a unitary operad.
\begin{enumerate}
\item[(1)]
Let $n$ be a positive integer and $I$ a subset of $\n$. Then, for all
$\theta\in \ip(n), \sigma \in \S_n$,
\begin{align}
\label{E2.11.1}\tag{E2.11.1}
 \pi^{I}(\theta\ast \sigma)= \pi^{\sigma I }(\theta)\ast \pi^{I}(\sigma),
\end{align}
where $\sigma I =\{\sigma(i)\mid i\in I\}\subseteq \n$.
\item[(2)]
Let $\mu\in \ip(m)$, $\nu\in \ip(n)$ and
$1\leq i\leq m$. Then
\begin{equation}
\label{E2.11.2}\tag{E2.11.2}
\pi^{I}(\mu \underset{i}{\circ}  \nu)=\pi^{J}(\mu)\underset{j}{\circ}  \pi^{I'}(\nu)
\end{equation}
where
$$\begin{aligned}
J&= (I \cap [i-1]) \cup \{i\} \cup ((I\cap \{i+n,\cdots,m+n-1\})-(n-1)),\\
I'&= (I\cap \{i, \cdots, i+n-1\})-(i-1),\\
j&=|I \cap [i-1]|+1.
\end{aligned}
$$
\end{enumerate}
\end{lemma}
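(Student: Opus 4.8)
The plan is to prove both identities by expanding the definition of the restriction operator and invoking a single operad axiom in each case: the equivariance axiom (OP3) for part (1) and the associativity axiom (OP2) for part (2). Neither part uses $2$-unitarity; only the units $\1_0,\1_1$ enter, so everything takes place in a general unitary operad.

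For part (1), I would start from the definition $\pi^{I}(\theta\ast\sigma)=(\theta\ast\sigma)\circ(\1_{\chi_I(1)},\cdots,\1_{\chi_I(n)})$. Each slot operation $\1_{\chi_I(j)}$ is a unit ($\1_1$ or $\1_0$) and hence carries only the trivial element of its symmetric group, which is exactly the situation governed by (OP3) with inner permutations all equal to $1$. That axiom rewrites the right-hand side as $\bigl(\theta\circ(\1_{\chi_I(\sigma^{-1}(1))},\cdots,\1_{\chi_I(\sigma^{-1}(n))})\bigr)\ast\vartheta_{n;\chi_I(1),\cdots,\chi_I(n)}(\sigma,1,\cdots,1)$. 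Two identifications then finish it: first, $\chi_I(\sigma^{-1}(j))=\chi_{\sigma I}(j)$, so the composition factor is precisely $\pi^{\sigma I}(\theta)$; second, the block permutation $\vartheta_{n;\chi_I(1),\cdots,\chi_I(n)}(\sigma,1,\cdots,1)$, in which every block has size $0$ or $1$, is by construction the permutation that $\sigma$ induces on the $|I|$ retained positions, i.e. $\pi^{I}(\sigma)$.

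For part (2), I would first rewrite the partial composite via Remark \ref{xxrem1.3} as $\mu\underset{i}{\circ}\nu=\mu\circ(\1_1,\cdots,\underset{i}{\nu},\cdots,\1_1)$, turning $\pi^{I}(\mu\underset{i}{\circ}\nu)$ into a two-layer full composition. Applying associativity (OP2) to merge the two layers distributes the restriction units $\1_{\chi_I(k)}$ across the slots of $\mu$: the $n$ units landing on the central slot collapse to $\nu\circ(\1_{\chi_I(i)},\cdots,\1_{\chi_I(i+n-1)})=\pi^{I'}(\nu)$ with $I'=(I\cap\{i,\cdots,i+n-1\})-(i-1)$, while the units on the remaining slots reassemble, after the index shift $k\mapsto k+n-1$ for $k>i$, into $\pi^{J}(\mu)$ for the stated $J$. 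Checking that the retained central slot occupies position $j=|I\cap[i-1]|+1$ of $\pi^{J}(\mu)$ is just counting the elements of $J$ lying below $i$, and the arity balance $|J|+|I'|-1=|I|$ confirms internal consistency.

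The routine but error-prone step is the index bookkeeping in part (2): tracking the splitting of $I$ into its portions below $i$, inside $\{i,\cdots,i+n-1\}$, and above, and verifying that the shift by $n-1$ returns $J$ into $[m]$ with the correct characteristic function. The only genuinely conceptual point lies in part (1): identifying the equivariance datum $\vartheta_{n;\chi_I(1),\cdots,\chi_I(n)}(\sigma,1,\cdots,1)$ with the restricted permutation $\pi^{I}(\sigma)$, which I would justify by appealing to the explicit description of the block permutations $\vartheta$ recorded in Section \ref{xxsec8}.
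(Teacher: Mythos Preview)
Your proposal is correct and matches the paper's approach exactly: part (1) via (OP3) with trivial inner permutations, part (2) via (OP2) after unfolding the partial composition. The paper's proof is terser---it leaves part (2) as a one-line remark---but your more explicit bookkeeping of $I'$, $J$, and $j$ is entirely in line with what the paper intends.
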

\begin{proof} (1)
First we recall (OP3). For all $k_1\cdots, k_n\ge 0$,
$\theta_i\in \ip(k_i), \sigma_i \in\S_{k_i}, 1\le i\le n$ and $\theta\in \ip(n),
\sigma\in\S_n$,
\begin{equation}
\label{E2.11.3}\tag{E2.11.3}
(\theta\ast \sigma)\circ(\theta_1\ast\sigma_1, \cdots, \theta_n\ast\sigma_n)
=(\theta\circ(\theta_{\sigma^{-1}(1)}, \cdots, \theta_{\sigma^{-1}(n)}))
\ast(\sigma\circ(\sigma_1, \cdots, \sigma_n)).
\end{equation}
Let $k=|I|$.
Take $\theta_i= \1_1\in \ip(1)$ and $\sigma_i =\1_1\in \S_1$ for $i\in I$ and
$\theta_i= \1_0\in \ip(0)$ and $\sigma_i =\1_0\in \S_0$ otherwise.
By \eqref{E2.11.3}, we obtain
\begin{align*} \pi^{I}(\theta\ast \sigma) = & (\theta\ast \sigma)
\circ(\theta_1\ast \sigma_1, \cdots, \theta_n\ast \sigma_n)\\
=&(\theta\circ(\theta_{\sigma^{-1}(1)}, \cdots, \theta_{\sigma^{-1}(n)}))
\ast(\sigma\circ(\sigma_1, \cdots, \sigma_n))\\
=&\pi^{\sigma I}(\theta)\ast \pi^{I}(\sigma).
\end{align*}

(2) This follows from the definition of $\pi$ and (OP2).
\end{proof}

\subsection{Some basic lemmas}
\label{xxsec2.3}
We show the following properties of ideals of $\ip$.

\begin{lemma}
\label{xxlem2.12} Let $\ip$ be 2-unitary.
Let $\I$ and $\I'$ be ideals of $\ip$.
\begin{enumerate}
\item[(1)]
For each integer $n\ge 0$ and each subset $I\subseteq \n$,
$\pi^{I}\colon \I(n)\to \I(|I|)$ is surjective.
\item[(2)]
If $\I(n)=\I'(n)$ for some $n$, then $\I(s)=\I'(s)$ for all $s\le n$.
\end{enumerate}
\end{lemma}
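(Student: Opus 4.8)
The plan is to reduce part (2) to part (1), and to prove part (1) by producing, for each $I\subseteq\n$, a right inverse of the restriction operator $\pi^I$ that preserves the ideal $\I$. Once such an $\I$-preserving section exists, surjectivity of $\pi^I\colon\I(n)\to\I(|I|)$ is immediate, and part (2) becomes a formal consequence of the fact that $\pi^I$ is one fixed linear map, independent of the ideal.

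The two facts that make the construction work are both instances of the ideal axiom [Definition~\ref{xxdef1.8}]. First, $\pi^I$ carries $\I(n)$ into $\I(|I|)$: by \eqref{E2.3.4} we have $\pi^I(\theta)=\theta\circ(\1_{\chi_I(1)},\cdots,\1_{\chi_I(n)})$, a composition with the ideal element $\theta$ in the outer slot, so $\pi^I(\theta)\in\I(|I|)$. Second, each extension operator preserves $\I$ for the same reason, since $\D_i(\theta)=\theta\underset{i}{\circ}\1_2$ again has the ideal element in the outer slot. I would then build an $\I$-preserving section $\Phi^I\colon\ip(|I|)\to\ip(n)$ of $\pi^I$ by induction on $n-|I|=|\hat I|$. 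The base case $n=|I|$ is trivial, as $I=\n$ forces $\pi^I=\id$. For $n>|I|$, choose any $k\in\hat I$; Lemma~\ref{xxlem2.4}(3) factors $\pi^I=\pi^{I'}\bullet\Gamma^k$ with $I'\subseteq[n-1]$ and $|I'|=|I|$, and the inductive hypothesis provides an $\I$-preserving section $\Phi^{I'}$ of $\pi^{I'}$. It remains to split the single contraction $\Gamma^k\colon\ip(n)\to\ip(n-1)$: by Lemma~\ref{xxlem2.9}(2,3) the extension operator $\D_k$ (when $k\le n-1$) or $\D_{k-1}$ (when $k=n$) is a section of $\Gamma^k$ and preserves $\I$. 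Calling that section $E$, the composite $\Phi^I:=E\bullet\Phi^{I'}$ satisfies
\[\pi^I\bullet\Phi^I=\pi^{I'}\bullet(\Gamma^k\bullet E)\bullet\Phi^{I'}=\pi^{I'}\bullet\Phi^{I'}=\id\]
and carries $\I(|I|)$ into $\I(n)$, completing the induction. Then for $\phi\in\I(|I|)$ the element $\Phi^I(\phi)$ lies in $\I(n)$ and maps to $\phi$ under $\pi^I$, which is exactly part (1).

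Part (2) follows at once. For $s\le n$, fix any $I\subseteq\n$ with $|I|=s$. Since $\pi^I\colon\ip(n)\to\ip(s)$ is a single linear map not depending on the ideal, and since by part (1) it restricts to a surjection $\I(n)\twoheadrightarrow\I(s)$ and likewise $\I'(n)\twoheadrightarrow\I'(s)$, the hypothesis $\I(n)=\I'(n)$ gives $\I(s)=\pi^I(\I(n))=\pi^I(\I'(n))=\I'(s)$. The only genuine work is in part (1), and the point needing care is the index bookkeeping in the inductive factorization — selecting $k\in\hat I$ and the matching section ($\D_k$ versus $\D_{k-1}$) so that $\Gamma^k\bullet E=\id$. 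This is controlled entirely by Lemmas~\ref{xxlem2.4} and \ref{xxlem2.9} and introduces no new ideas beyond that bookkeeping.
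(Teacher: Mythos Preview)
Your proof is correct and follows essentially the same approach as the paper: factor $\pi^I$ into a chain of single contractions $\Gamma^k$ and split each one by an extension operator $\D_k$ (or $\D_{k-1}$) via Lemma~\ref{xxlem2.9}(2,3), noting that both $\pi^I$ and $\D_i$ preserve ideals. The only cosmetic difference is that the paper writes the full factorization $\pi^I=\Gamma^{i_1}\bullet\cdots\bullet\Gamma^{i_s}$ at once using Lemma~\ref{xxlem2.9}(1), whereas you peel off one $\Gamma^k$ at a time by induction via Lemma~\ref{xxlem2.4}(3); the content is the same.
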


\begin{proof} (1)
Without loss of generality, we may assume that the complement $\hat{I}$ 
of $I$ is $\{i_1,\cdots, i_s\}$ with $1\le i_1<\cdots<i_s\le n$. Since
$\pi^I=\Gamma^{i_1,\cdots,i_s} = \Gamma^{i_1}\bullet\cdots\bullet\Gamma^{i_s}$
[Lemma \ref{xxlem2.9}(1)], it
suffices to prove that each $\Gamma^{i_{t}}\colon \I(n+t-s)\to \I(n+t-s-1)$ is
surjective, which follows from the fact $\Gamma^{i_t}\bullet\D_{i_t} =\id$
[Lemma \ref{xxlem2.9}(3)] or $\Gamma^{i_t}\bullet\D_{i_t-1} =\id$
[Lemma \ref{xxlem2.9}(2)]. The proof is completed.

(2) This is an easy consequence of part (1).
\end{proof}

Let $X$ be a subset of an operad $\ip$. The operadic ideal
of $\ip$ generated by $X$ is the unique minimal ideal of $\ip$ that contains $X$.
We denote by $\langle X\rangle$ the ideal generated by $X$. An ideal is said to
be {\it finitely generated} if it can be generated by a finite subset.

\begin{lemma}
\label{xxlem2.13}
Let $\ip$ be 2-unitary and $\I$ an ideal of $\ip$.
\begin{enumerate}
\item[(1)]
If $\I$ is finitely generated, then $\I$ is generated by a subset
$X\subseteq \ip(n)$ for some $n$.
\item[(2)]
Suppose $\ip$ is a quotient operad of $\As$. Then $\I$ is finitely
generated if and only if there exists some $n\ge0$ and some
$x\in \I(n)$, such that $\I=\langle x\rangle$.
\end{enumerate}
\end{lemma}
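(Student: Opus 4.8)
The plan is to handle the two parts in turn, with the extension operators $\D_i$ and the restriction/contraction operators $\pi^I,\Gamma^i$ of Section \ref{xxsec2.2} as the main tools. For part (1), suppose $\I=\langle x_1,\dots,x_m\rangle$ with $x_i\in\ip(n_i)$, and put $n=\max_i n_i$. The key observation is that raising the arity of a generator does not change the ideal it generates. Concretely, for each $i$ pick a subset $I_i$ with $|I_i|=n-n_i$ and replace $x_i$ by $y_i:=\D_{I_i}(x_i)\in\ip(n)$. Since $\D_{I_i}(x_i)=x_i\circ(\1_*,\dots,\1_*)$ is a composition with $x_i$ as the \emph{outer} operation, the ideal axiom gives $y_i\in\langle x_i\rangle$. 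Conversely, by Lemma \ref{xxlem2.9}(1)--(3) some composite $\Gamma^{\bullet}$ of contraction operators satisfies $\Gamma^{\bullet}\bullet\D_{I_i}=\id$, and each $\Gamma^j(\psi)=\psi\circ(\1_1,\dots,\1_0,\dots,\1_1)$ is again a composition with $\psi$ outer, so $x_i=\Gamma^{\bullet}(y_i)\in\langle y_i\rangle$. Hence $\langle x_i\rangle=\langle y_i\rangle$ for each $i$, and therefore $\I=\sum_i\langle x_i\rangle=\sum_i\langle y_i\rangle=\langle y_1,\dots,y_m\rangle$ (using Lemma \ref{xxlem1.9}), which is generated by $X=\{y_1,\dots,y_m\}\subseteq\ip(n)$.

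For part (2), the implication $(\Leftarrow)$ is immediate, since a principal ideal is finitely generated. For $(\Rightarrow)$ I would first record a structural feature special to quotients of $\As$: for $\sigma\in\S_n$ one has $\pi^{\emptyset}(\sigma)=\1_0$ and $\pi^{\{1\}}(\sigma)=\1_1$, so writing $\varepsilon(x)$ for the scalar determined by $\pi^{\emptyset}(x)=\varepsilon(x)\1_0$, linearity of the restriction operators gives $\pi^{\{1\}}(x)=\varepsilon(x)\1_1$ for every $x\in\ip(n)$. This yields a dichotomy. If $\I$ contains any $y$ with $\varepsilon(y)\neq0$, then $\1_1=\varepsilon(y)^{-1}\pi^{\{1\}}(y)\in\I$, whence $\I=\ip=\langle\1_1\rangle$ is principal. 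Otherwise $\I\subseteq\ker\varepsilon$, and by part (1) we may write $\I=\langle x_1,\dots,x_m\rangle$ with all $x_i\in\ip(n)$ for a common $n$ and with $\varepsilon(x_i)=0$ for every $i$.

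In this remaining case I would assemble the single generator
\[ x=\sum_{i=1}^m \1_m\circ(\1_n,\dots,\1_n,\underset{i}{x_i},\1_n,\dots,\1_n)\in\ip(mn), \]
where the $i$-th summand carries $x_i$ in its $i$-th slot and $\1_n$ in the others. Each summand is a composition with some $x_j$ in an inner slot, so $x\in\I$ and $\langle x\rangle\subseteq\I$. To recover $x_i$, apply the restriction $\pi^{B_i}$ keeping the $i$-th block of $n$ inputs and contracting the remaining blocks. Using (OP2), $\pi^{\emptyset}(\1_n)=\1_0$ (Lemma \ref{xxlem2.7}(1)) and the unit identities \eqref{E1.1.3}--\eqref{E1.1.4} together with Lemma \ref{xxlem2.6}, the $i$-th summand restricts to $\1_m\circ(\1_0,\dots,\1_0,x_i,\1_0,\dots,\1_0)=x_i$, while for $k\neq i$ the $k$-th summand restricts to a composite whose $k$-th slot becomes $\pi^{\emptyset}(x_k)=\varepsilon(x_k)\1_0=0$, so that summand vanishes. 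Hence $\pi^{B_i}(x)=x_i\in\langle x\rangle$ for each $i$, giving $\I\subseteq\langle x\rangle$ and therefore $\I=\langle x\rangle$.

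The hard part is precisely the extraction step in part (2). A naive concatenation $\1_m\circ(x_1,\dots,x_m)$ only recovers the scalar multiple $\big(\prod_{k\neq i}\varepsilon(x_k)\big)x_i$, and the separated version above produces cross terms $\varepsilon(x_k)\1_n$; in either form the augmentations $\varepsilon(x_k)$ obstruct reading off $x_i$ on the nose. The device that resolves this is the augmentation dichotomy: once $\I$ is proper it is forced into $\ker\varepsilon$, so every $\varepsilon(x_k)$ vanishes and all cross terms die. Verifying the two restriction computations, in particular the identity $\1_m\circ(\1_0,\dots,\1_0,x_i,\1_0,\dots,\1_0)=x_i$, is a routine induction on $m$ using \eqref{E1.1.3}--\eqref{E1.1.4} and Lemma \ref{xxlem2.6}, which I would not carry out in detail.
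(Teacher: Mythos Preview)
Your proof of part~(1) is correct and matches the paper's argument in spirit: both push the finitely many generators up to a common arity $n$. The paper phrases this via Lemma~\ref{xxlem2.12}(1) (surjectivity of $\pi^I\colon\I(n)\to\I(|I|)$), while you use the explicit section $\D_{I_i}$; these are the same mechanism.

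For part~(2) your argument is correct but takes a genuinely different route from the paper. The paper's proof is a one-liner from representation theory: by part~(1) one has $\I=\langle\I(n)\rangle$ for some $n$; since $\I(n)$ is a right $\Bbbk\S_n$-submodule of $\Bbbk\S_n$, it is a direct summand (Maschke), and a direct summand of a cyclic module is cyclic, so $\I(n)=x\cdot\Bbbk\S_n$ and $\I=\langle x\rangle$. Your approach instead builds the single generator explicitly by stacking the $x_i$ into disjoint blocks of $x\in\ip(mn)$ and using the augmentation dichotomy to guarantee that the block restrictions $\pi^{B_i}$ kill the cross terms. The paper's argument is shorter and more conceptual, but it tacitly uses semisimplicity of $\Bbbk\S_n$, hence a characteristic hypothesis. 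Your construction is purely operadic, avoids Maschke's theorem entirely, and gives the generator explicitly; it therefore goes through over any base field. The verification $\1_m\circ(\1_0,\dots,\1_0,x_i,\1_0,\dots,\1_0)=x_i$ that you defer is indeed routine from \eqref{E1.1.3}--\eqref{E1.1.4} and Lemma~\ref{xxlem2.7}(1), and does not even need the full strength of Lemma~\ref{xxlem2.6}.
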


\begin{proof}
(1) Let $X$ be a finite generating set of $\I$. Then there exists
some $n$ such that $X\subset \bigcup_{0\le i\le n} \I(n)$.
Therefore we can take $X\subseteq \langle \I(n)\rangle$ by
Lemma \ref{xxlem2.12}(1).

(2) It suffices to show the ``only if " part. Without loss of
generality, we suppose $\ip=\As$. Then $\I(n)$ is a right
submodule and hence a direct summand of $\k\S_n$. Since a
direct summand of a cyclic module is always cyclic, we have
$\I(n)=x\cdot\k\S_n$ for some $x\in \I(n)$. Clearly, $x$ is the
desired  generator of $\I$, which completes the proof.
\end{proof}

\begin{theorem}
\label{xxthm2.14}
Let $\ip$ be $\As/\I$ for some ideal $I\subseteq \As$. Let 
$k\geq 0$ be an integer and $M$ a submodule of the 
$\Bbbk \S_k$-module $\ip(k)$.
\begin{enumerate}
\item[(1)]
As an $\S$-module,
$\langle M\rangle$ is generated by elements of the form
$\iota^l_r\bullet  \D_{i_1}\bullet \cdots \bullet \D_{i_s}
\bullet \pi^I(x)$,  $x\in M$.
More explicitly, for every $n\ge 0$, $\langle M\rangle(n)$
is a $\k\S_n$-submodule generated by
\[X_n=\left\{(\iota^l_r\bullet \D_{i_1}\bullet \cdots \bullet \D_{i_s}
\bullet \pi^I)(x){\bigg |}
{x\in M, I\subseteq [k], l, r\ge 0,
l+r+s+|I|=n, \atop 1\le i_t\le |I|+t-1, t=1, \cdots, s}\right\}.\]

\item[(2)]
If $\Gamma^i(M)=0$ for all $1\le i\le k$, then $\langle M\rangle(k)=M$
and $\langle M\rangle(n)=0$ for all $n<k$.
\end{enumerate}
\end{theorem}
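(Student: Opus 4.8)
The plan is to let $\mathcal N$ be the $\S$-submodule of $\ip$ generated by $\bigcup_{n\ge 0}X_n$ and to prove $\mathcal N=\langle M\rangle$; part (2) will then follow by inspecting the shape of the generators. First I would dispose of the inclusion $\mathcal N\subseteq\langle M\rangle$: each generator $(\iota^l_r\bullet\D_{i_1}\bullet\cdots\bullet\D_{i_s}\bullet\pi^I)(x)$ lies in $\langle M\rangle$ because $\pi^I(x)$ is a composition with $x\in M$ as outer operation, each $\D_{i_t}$ keeps the current operation outermost (so the result stays in the right ideal $\langle M\rangle$ by Definition~\ref{xxdef1.8}), and $\iota^l_r(-)=\1_3\circ(\1_l,-,\1_r)$ inserts the result into an inner slot (so it stays in the left ideal). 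For the reverse inclusion it suffices to check $M\subseteq\mathcal N$ and that $\mathcal N$ is an operadic ideal. The containment $M\subseteq\mathcal N$ comes from the choice $I=[k]$, $s=l=r=0$, since $\pi^{[k]}(x)=x$ and $\iota^0_0=\id$ by Lemma~\ref{xxlem2.7}(2,3) together with \eqref{E1.1.3}--\eqref{E1.1.4}; as $M$ is $\S_k$-stable, all of $M$ enters $\mathcal N(k)$.

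The crux is that $\mathcal N$ is an ideal, and here I would use that $\ip=\As/\I$: every $\mu\in\ip(p)$ is a $\Bbbk$-linear combination of the elements $\1_p\ast\tau$, $\tau\in\S_p$. Combining this with the equivariance axiom (OP3$'$) reduces the right- and left-ideal conditions to composing a single generator $g$ with a single $\1_p$. For right composition, $g\underset{j}{\circ}\1_p=g\circ(\1_1,\dots,\1_p,\dots,\1_1)$, which by Lemma~\ref{xxlem2.10} equals $(\D_j)^{p-1}(g)$ when $p\ge 1$ and $\Gamma^j(g)$ when $p=0$; so everything reduces to showing that $\D_j$ and $\Gamma^j$ carry a generator into $\mathcal N$. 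For left composition, $\1_m\underset{i}{\circ}g=\iota^{i-1}_{m-i}(g)$, and since $\As/\I$ is $2a$-unitary one has $\iota^a_b\bullet\iota^l_r=\iota^{a+l}_{b+r}$ (from Lemma~\ref{xxlem2.7}(4) and \eqref{E2.4.1}--\eqref{E2.4.2}); hence $\iota^{i-1}_{m-i}(g)$ is again a single generator and the left-ideal condition is automatic.

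It thus remains to run a case analysis for $\D_j(g)$ and $\Gamma^j(g)$ according to the position of $j$ among the three blocks of $g=\iota^l_r(\theta)$, where $\theta=\D_{i_1}\bullet\cdots\bullet\D_{i_s}\bullet\pi^I(x)$ has arity $m:=|I|+s$. If $j$ lies in the $\1_l$-block, then $\D_j$ enlarges $\1_l$ to $\1_{l+1}$ (by Lemma~\ref{xxlem2.6}(2)) and $\Gamma^j$ shrinks it, yielding $\iota^{l\pm1}_r(\theta)$; the $\1_r$-block is symmetric. If $j=l+i$ lies in the middle, then $\D_{l+i}\bullet\iota^l_r=\iota^l_r\bullet\D_i$ and $\Gamma^{l+i}\bullet\iota^l_r=\iota^l_r\bullet\Gamma^i$ (Lemma~\ref{xxlem2.9}(5)), so I am left to normalize $\D_i\bullet\D_{i_1}\bullet\cdots\bullet\D_{i_s}$, respectively $\Gamma^i\bullet\D_{i_1}\bullet\cdots\bullet\D_{i_s}\bullet\pi^I$. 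The first is rewritten into a canonical word with indices in the ranges $1\le i'_t\le|I|+t-1$ using the reordering relations of Lemma~\ref{xxlem2.9}(1,4); in the second, the relations of Lemma~\ref{xxlem2.9}(2,3,4) push $\Gamma^i$ rightward, either cancelling one $\D$ or being absorbed into $\pi^I$ via $\Gamma^i\bullet\pi^I=\pi^{I''}$ (Lemma~\ref{xxlem2.4}(1)), leaving a generator with smaller $I$. In every case $\D_j(g)$ and $\Gamma^j(g)$ are single generators (or $0$), hence lie in $\mathcal N$; iterating handles $(\D_j)^{p-1}$, and $\mathcal N=\langle M\rangle$ follows.

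For part (2) I would read off the conclusion from part (1). Every generator of $\langle M\rangle(n)$ begins with a restriction $\pi^I$ with $|I|=n-(l+r+s)\le n$. If $n<k$ then $|I|<k$, so $\pi^I$ drops at least one input and factors as $\pi^I=\pi^{I'}\bullet\Gamma^a$ for some $a\in[k]\setminus I$ (Lemma~\ref{xxlem2.4}(3)); since $\Gamma^a(M)=0$ we get $\pi^I(x)=0$, whence $\langle M\rangle(n)=0$. If $n=k$, a generator with $|I|<k$ vanishes for the same reason, while $|I|=k$ forces $I=[k]$ and $l=r=s=0$, giving $\pi^{[k]}(x)=x\in M$; as $M$ is $\S_k$-stable, $\langle M\rangle(k)=M$. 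The main obstacle I anticipate is precisely the normalization step in the middle case of the third paragraph: verifying carefully that the commutation relations of Lemma~\ref{xxlem2.9} really do return an arbitrary extended word $\D_i\bullet\D_{i_1}\bullet\cdots\bullet\D_{i_s}$ (and its $\Gamma$-counterpart) to the prescribed index ranges without leaving the span of the generators.
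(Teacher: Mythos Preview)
Your proposal is correct and follows essentially the same route as the paper: define the $\S$-module $\mathcal N$ spanned by the $X_n$'s, show it is an operadic ideal using that in $\As/\I$ every operation is a $\Bbbk\S$-combination of $\1_p$'s so one only has to check closure under $\iota^l_r$, $\Delta_j$ and $\Gamma^j$, and then invoke Lemmas~\ref{xxlem2.7}(4), \ref{xxlem2.9} and \ref{xxlem2.10} for the normalization; part (2) is then read off from the generators exactly as you describe. The paper phrases the ideal check via the classical composition $\theta\circ(\1_{k_1},\dots,\1_{k_n})$ rather than partial compositions, and it is terser about the commutation step you flag as the main obstacle (it simply cites ``the commutativity relations in Lemma~\ref{xxlem2.9}(2--4)''), but the argument is the same.
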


\begin{proof}
(1) Let $\I(n)$ denote the $\k\S_{n}$-submodule of $\ip(n)$ generated
by the subset $X_n$ and let $X$ be $\bigcup_{n} X_n$. We claim that
$\I=({\I(n)})_{n\ge0}$ is an ideal of $\ip$.

By definition we need to show that
$\theta\circ(\theta_1,\cdots, \theta_n)\in \I$
provided that one of $\theta$, $\theta_1, \cdots, \theta_n$ is in $\I$.
By (OP2) or \eqref{E1.1.2}, it suffices to show that
\[\1_{s}\circ(\1_{k_1}, \cdots, \1_{k_{t-1}}, \theta, \1_{k_{t+1}},
\cdots, \1_{k_s})\in X,
\ \ {\rm and}\ \  \theta\circ(\1_{k_1},\cdots, \1_{k_n})\in X\]
if $\theta\in X$. Since $\ip$ is $2a$-unitary, we have $\1_3=\1'_3$
and $\iota_r\bullet \iota^l=\iota^l \bullet \iota_r$
[Lemma \ref{xxlem2.7}(4)]. It follows that
\[\1_{s}\circ(\1_{k_1}, \cdots, \1_{k_{t-1}}, \theta, \1_{k_{t+1}},
\cdots, \1_{k_s})
=\iota^{k_1+\cdots+k_{t-1}}_{k_{t+1}+\cdots+k_s}(\theta).\]
and therefore the former holds.
For the latter, Lemma \ref{xxlem2.10} shows that
$\theta\circ(\1_{k_1},\cdots, \1_{k_n})$ is obtained by applying
$\Gamma^i$'s and $\Delta_j$'s on $\theta$ iteratively. The
commutativity relations in Lemma \ref{xxlem2.9}(2-4) together with
Lemma \ref{xxlem2.10} imply that
$$\theta\circ(\1_{k_1},\cdots, \1_{k_n})\in X_{k_1+\cdots+k_n}.$$

Clearly $M\subseteq X_k\subseteq \I(k)$, and $\I\subseteq\langle M\rangle$. By
definition $\langle M\rangle$ is the minimal ideal containing $M$, 
which forces that $\langle M\rangle\subseteq \I$ and hence 
$\langle M\rangle=\I$.

(2) If $\Gamma^i(M)=0$ for all $1\le i\le k$, then $\pi^I(M)=0$ for any
$I\subset [k]$ with $|I|< k$, and the statement follows.
\end{proof}

\section{Truncation Ideals}
\label{xxsec3}

\subsection{The truncation ideals $^k\iu$}
\label{xxsec3.1}
We first recall the definition of truncation ideals \eqref{E0.0.2}
from the introduction. Let $\ip$ be a unitary operad (or a unitary plain
operad). For integers $k, n \ge0$, we use ${^k\iu_\ip}(n)$ to denote the
subspace of $\ip(n)$ defined by
\begin{equation}
\label{E3.0.1}\tag{E3.0.1}
^k\iu_\ip(n) =
\bigcap\limits_{I\subset \n,\, |I|\leq  k-1} \ker\pi^I
=\begin{cases}
\bigcap\limits_{I\subset \n,\, |I|= k-1} \ker\pi^I, & \text{if }n\geq  k;\\
\quad\ \ 0, & \text{otherwise}.
\end{cases}
\end{equation}

Clearly, ${^0\iu}_{\ip}=\ip$. It is easily deduced from Lemma \ref{xxlem2.11}
that $^k\iu_\ip(n)$ is an $\S_n$-submodule of $\ip(n)$. Therefore
we obtain an $\S$-submodule $^k\iu_\ip=(^k\iu_\ip(n))_{n\ge0}$ of $\ip$. If
no confusion, we write ${^k\iu}={^k\iu_\ip}$ for brevity.
For two ideals $\I$ and $\mathcal{J}$ of $\ip$, let
$\I\mathcal{J}$ denote the $\S$-module generated by all elements of the form
$\mu\underset{i}{\circ} \nu$ for all $\mu\in \I(m)$ and $\nu\in \mathcal{J}(n)$
and all $i$. It is easy to see that $\I\mathcal{J}$ is also an ideal of
$\ip$.

\begin{proposition}
\label{xxpro3.1}
Let $\ip$ be a unitary operad {\rm{(}}respectively, a unitary plain
operad{\rm{)}}.
\begin{enumerate}
\item[(1)]
${^k\iu}$ is an ideal of $\ip$ for any $k\ge 1$.
\item[(2)]
If $m, n\geq 1$, then
${^m \iu}{^n \iu}\subseteq {^{m+n-1} \iu}$, and
if $m=0$ or $n=0$, then
${^m \iu}{^n \iu}\subseteq {^{m+n} \iu}$.
\end{enumerate}
\end{proposition}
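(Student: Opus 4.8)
The plan is to push everything through the restriction operators $\pi^I$, whose interaction with partial composition is recorded in Lemma \ref{xxlem2.11}(2): for $\mu\in\ip(p)$, $\nu\in\ip(q)$ and $1\le i\le p$,
\[\pi^I(\mu \underset{i}{\circ} \nu)=\pi^J(\mu)\underset{j}{\circ}\pi^{I'}(\nu),\]
where $I'=(I\cap\{i,\dots,i+q-1\})-(i-1)$ collects the part of $I$ meeting the inputs contributed by $\nu$, and $J$ collects the surviving inputs of $\mu$ together with the grafting position $i$. The one arithmetic fact I will use repeatedly is the bookkeeping identity
\[|J|+|I'|=|I|+1,\]
read off directly from the explicit descriptions of $J$ and $I'$; the ``$+1$'' reflects that $i\in J$ unconditionally. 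Since each ${^k\iu}(n)$ is an $\S_n$-submodule, membership in ${^k\iu}$ is tested exactly by the vanishing of $\pi^I$ on sets of size $\le k-1$.

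First I would prove (1) by showing that ${^k\iu}$ is both a left and a right ideal, which is equivalent to being an ideal by the remark following Definition \ref{xxdef1.8}. For the left ideal property, let $\lambda\in\ip(p)$, $\mu\in{^k\iu}(q)$ and $|I|=k-1$. Then $|I'|\le|I|=k-1$, so $\pi^{I'}(\mu)=0$ and hence $\pi^I(\lambda\underset{i}{\circ}\mu)=\pi^J(\lambda)\underset{j}{\circ}0=0$. For the right ideal property, let $\lambda\in{^k\iu}(p)$, $\mu\in\ip(q)$ and $|I|=k-1$, so that $|J|=k-|I'|$. If $|I'|\ge1$ then $|J|\le k-1$ and $\pi^J(\lambda)=0$. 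The delicate case is $|I'|=0$: there $|J|=k$ and $\pi^{I'}(\mu)=\pi^{\emptyset}(\mu)=c\1_0$ is merely a scalar, so I would rewrite
\[\pi^J(\lambda)\underset{j}{\circ}(c\1_0)=c\,\Gamma^{j}\bigl(\pi^J(\lambda)\bigr)=c\,\pi^{[k]\setminus\{j\}}\bigl(\pi^J(\lambda)\bigr)=c\,\pi^{J''}(\lambda),\]
where, by Lemma \ref{xxlem2.4}(1), $J''=\overrightarrow{J}([k]\setminus\{j\})\subseteq[p]$ has size $k-1$; since $\lambda\in{^k\iu}(p)$ this vanishes too.

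Next I would treat (2). When $m=0$ or $n=0$ the statement is immediate from (1): as ${^0\iu}=\ip$, the left ideal property gives ${^0\iu}\,{^n\iu}=\ip\,{^n\iu}\subseteq{^n\iu}={^{0+n}\iu}$, and symmetrically ${^m\iu}\,{^0\iu}\subseteq{^m\iu}$ via the right ideal property. For $m,n\ge1$, since ${^{m+n-1}\iu}$ is an $\S$-submodule and the product is generated by the elements $\mu\underset{i}{\circ}\nu$, it suffices to show each generator with $\mu\in{^m\iu}(p)$ and $\nu\in{^n\iu}(q)$ lies in ${^{m+n-1}\iu}(p+q-1)$. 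Fix $I$ with $|I|=m+n-2$; then $|J|=m+n-1-|I'|$, and I split on $|I'|$. If $|I'|\le n-1$ then $\pi^{I'}(\nu)=0$; if $|I'|\ge n$ then $|J|\le m-1$ and $\pi^J(\mu)=0$. As these ranges are complementary, $\pi^I(\mu\underset{i}{\circ}\nu)=0$ for every such $I$, which is what is needed.

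Everything reduces to the size identity together with the two module-theoretic facts $\pi^{I'}(\nu)=0$ for $|I'|\le n-1$ and $\pi^J(\mu)=0$ for $|J|\le m-1$, so the calculations are routine. The only step that is not purely formal is the right ideal case $|I'|=0$ in part (1), where one tensor factor is a nonzero multiple of $\1_0$ rather than $0$; there the vanishing is recovered only after folding the closing-off of position $j$ back into a restriction of $\lambda$ over a set of size $k-1$ through Lemma \ref{xxlem2.4}(1). Since none of these arguments use the symmetric-group action, the same computations establish the statements verbatim for unitary plain operads.
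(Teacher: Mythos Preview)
Your proof is correct and, for part (2), essentially identical to the paper's argument: both apply Lemma \ref{xxlem2.11}(2), invoke the size identity $|J|+|I'|=|I|+1$, and split on whether $|I'|\le n-1$ or $|I'|\ge n$.

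For part (1) there is a minor organizational difference worth noting. The paper argues directly with the \emph{full} composition $\theta\circ(\theta_1,\dots,\theta_n)$, computing $\pi^I$ of it as $\theta\circ(\pi^{I_1}(\theta_1),\dots,\pi^{I_n}(\theta_n))$ and then treating the two cases (some $\theta_i\in{^k\iu}$, versus $\theta\in{^k\iu}$) in one stroke; the collapse of the $I_i=\emptyset$ factors to scalar multiples of $\1_0$ plays exactly the role that your ``delicate case $|I'|=0$'' does. You instead reduce to partial compositions and check the left- and right-ideal conditions separately, invoking Lemma \ref{xxlem2.11}(2) uniformly and the equivalence noted after Definition \ref{xxdef1.8}. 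Your route is a bit more modular and reuses the same lemma for both parts; the paper's route avoids appealing to the left/right decomposition but has to redo the restriction computation for the full composition. Neither approach buys anything substantial over the other, and the underlying idea---that restriction of a composite factors through restrictions of the pieces, with the unitary condition handling the residual $\1_0$'s---is the same.
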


\begin{proof}
(1)
Let $n>0$, $k_1, \cdots, k_n\ge 0$ be integers, and $\theta\in \ip(n)$,
$\theta_i\in \ip(k_i)$ for $i=1, \cdots, n$. We need to show that
if either $\theta\in {^k\iu} (n)$ or $\theta_i\in {^k\iu} (k_i)$
for some $i\in \n$, then
$\theta\circ (\theta_1, \cdots, \theta_n)\in {^k\iu} (m)$, where
$m=k_1+\cdots+k_n$.

Let $I$ be an arbitrary subset of $[m]$ with $|I|=k-1$. Then
we have
\begin{align*}
\pi^{I}(\theta\circ (\theta_1, \cdots,& \theta_n))
= (\theta\circ (\theta_1, \cdots, \theta_n))\circ
  (\1_{\chi_I(1)}, \cdots, \1_{\chi_I(m)})\\
&= \theta\circ (\theta_1\circ (\1_{\chi(1)}, \cdots, \1_{\chi_I(k_1)}),
  \cdots, \theta_n\circ (\1_{\chi_I(k_1+\cdots+k_{n-1}+1)},
	\cdots, \1_{\chi_I(m)}))\\
&= \theta\circ (\pi^{I_1}(\theta_1), \cdots, \pi^{I_n}(\theta_n))
\end{align*}
where in the last equality, each $I_i$ is a subset of $[k_i]$
determined by $I$, with $|I_i|\le k_i$ and $\sum\limits_{i=1}^n
|I_i|=|I|=k-1$.

If $\theta_i\in {^k\iu}(k_i)$ for some $i\in \n$, then
$\pi^{I_i}(\theta_i)=0$ by Lemma \ref{xxlem2.4} and the fact that
$|I_i|\le k-1$. So $\pi^I(\theta\circ (\theta_1, \cdots, \theta_n))=0$.

We are left to show that if $\theta\in {^k\iu}(n)$, then
$\pi^I(\theta\circ (\theta_1, \cdots, \theta_n))=0$.
Set $J=\{i\in \n \mid I_i\neq \emptyset\}$.
By $\sum\limits_{i=1}^n |I_i|=k-1$, we have $s:=|J|\le k-1$.
Consequently, $\pi^{J}(\theta)=0$. Observe
that if $I_i=\emptyset$ and $\ip(0)=\Bbbk \1_0$, then
\[\pi^{I_i}(\theta_i)=\theta_i\circ (\1_0, \cdots, \1_0)=\la_i \1_0\]
for some $\la_i\in \Bbbk$. Therefore, we have
\begin{align*}
\pi^{I}(\theta\circ (\theta_1, \cdots, \theta_n))
&=\theta\circ (\pi^{I_1}(\theta_1), \cdots, \pi^{I_n}(\theta_n))\\
&= (\prod\limits_{i\notin J}\la_i)(\pi^J(\theta)) \circ
  (\pi^{I_{j_1}}(\theta_{j_1}), \cdots, \pi^{I_{j_{s}}}(\theta_{j_{s}}))\\
&= 0
\end{align*}
where $J=\{j_1, \cdots, j_s\}$ and $1\le j_1<j_2<\cdots<j_s\le n$.

(2) If $m=0$ or $n=0$, the assertion follows from part (1).
For the rest of the proof, we assume that $mn>0$.

Let $\mu\in {^m \iu}(m_0)$ and $\nu\in {^n\iu}(n_0)$ and
let $i\leq m_0$. It suffices to show that
$$\mu \underset{i}{\circ}  \nu\in
{^{m+n-1}\iu}(m_0+n_0-1)$$
for all $i$. Let $I\subseteq [m_0+n_0-1]$
such that $|I|\leq m+n-2$. By Lemma \ref{xxlem2.11} (2), we have
\begin{equation}
\label{E3.1.1}\tag{E3.1.1}
\pi^{I}(\mu \underset{i}{\circ}  \nu)=\pi^{J}(\mu)\underset{j}{\circ}  \pi^{I'}(\nu)
\end{equation}
where
\[\begin{aligned}
J&= (I \cap [i-1]) \cup \{i\} \cup ((I\cap \{i+n_0,i+n_0+1,\cdots,m_0+n_0-1\})-(n_0-1)),\\
I'&= I\cap \{i,i+1,\dots,i+n_0-1\}-(i-1),\\
j&=|I \cap [i-1]|+1.
\end{aligned}
\]
If $|I'|\leq n-1$, then $\pi^{I'}(\nu)=0$, whence
$\pi^{I}(\mu \underset{i}{\circ}  \nu)=0$ by \eqref{E3.1.1}. Otherwise,
$|I'|\geq n$ and then
$$|J|=|I|+1-|I'|\leq m+n-2+1-n=m-1.$$
In this case $\pi^{J}(\mu)=0$,
whence $\pi^{I}(\mu \underset{i}{\circ}  \nu)=0$ by \eqref{E3.1.1}.
Therefore $\mu \underset{i}{\circ}  \nu\in
{^{m+n-1}\iu}(m_0+n_0-1)$ as required.
\end{proof}

Note that for any operad $\ip$,  $\ip(1)$ is always an associative algebra;
and for a unitary operad $\ip$, $\ip(1)$ is an augmented algebra and
$^1\iu(1)$ is the augmented ideal of $\ip(1)$.

In later sections we will also use a modification of truncation ideals that 
we define now. Let $M$ be an $\S_k$-submodule of ${^k \iu}(k)$. We consider 
the following two conditions
\begin{enumerate}
\item[(E3.1.2)]
$\nu \circ m\in M$ for all $\nu\in \ip(1)$ and $m\in M$.
\item[(E3.1.3)]
$m\underset{i}{\circ} \nu\in M$ for all $\nu\in \ip(1)$, $m\in M$ and
$1\leq i\leq k$.
\end{enumerate}
Define ${^k \iu}^M$ by
$${^k \iu}^M(n)=\{ \mu\in {^k \iu}(n)\mid \pi^{I}(\mu)\in M, 
\; \forall \; I\subseteq \n, |I|=k\}.$$
We have the following proposition similar to Proposition \ref{xxpro3.1}.

\begin{proposition}
\label{xxpro3.2}
Let $\ip$ be a unitary operad. Let $M$ {\rm{(}}respectively, $N${\rm{)}} be an
$\S_m$ {\rm{(}}respectively, $\S_n${\rm{)}}-submodule of
${^m \iu}(m)$ {\rm{(}}respectively, ${^n \iu}(n)${\rm{)}}.
\begin{enumerate}
\item[(1)]
If {\rm{(E3.1.2)}} holds, then ${^m\iu}^M$ is a left ideal of $\ip$.
\item[(2)]
If {\rm{(E3.1.3)}} holds, then ${^m\iu}^M$ is a right ideal of $\ip$.
\item[(3)]
${^m \iu}^M {^n \iu}^N \subseteq {^{m+n-1} \iu}^{MN}$  where $MN$ is an
$\S_{m+n-1}$-submodule generated by elements of the form $\mu\underset{i}{\circ} \nu$
for all $\mu\in M$ and $\nu\in N$ and $1\leq i\leq m$.
\end{enumerate}
\end{proposition}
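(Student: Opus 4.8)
The plan is to verify, in each part, the defining membership conditions for ${^m\iu}^M$ (respectively ${^{m+n-1}\iu}^{MN}$) on a typical partial-composition generator, reducing everything to the behaviour of restriction operators under partial composition recorded in Lemma \ref{xxlem2.11}(2). First I note that ${^k\iu}^M$ is automatically an $\S$-submodule: by Lemma \ref{xxlem2.11}(1), $\pi^I(\theta\ast\sigma)=\pi^{\sigma I}(\theta)\ast\pi^I(\sigma)$, and since $|\sigma I|=|I|$ and $M$ is an $\S_k$-submodule, the condition $\pi^I(\theta)\in M$ is preserved under the $\S_n$-action. Moreover, by Proposition \ref{xxpro3.1}, ${^k\iu}$ is already an ideal and ${^m\iu}\,{^n\iu}\subseteq{^{m+n-1}\iu}$, so in all three parts the ``truncation part'' of the membership is free; only the extra condition ``$\pi^K(\,\cdot\,)\in M$ (resp.\ $\in MN$)'' must be checked on the generators.

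Throughout I write a generator as $\lambda\underset{i}{\circ}\mu$ with $\lambda\in\ip(a)$, $\mu\in\ip(b)$, $1\le i\le a$, and fix $K\subseteq[a+b-1]$ of the appropriate size. Lemma \ref{xxlem2.11}(2) gives $\pi^K(\lambda\underset{i}{\circ}\mu)=\pi^J(\lambda)\underset{j}{\circ}\pi^{K'}(\mu)$, where $K'=(K\cap\{i,\dots,i+b-1\})-(i-1)$ and $J$ is as in that statement; the key bookkeeping is that $J$ is a disjoint union yielding $|J|=|K|-|K'|+1$, and that $j$ is the rank of $i$ in $J$, so $1\le j\le|J|$. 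The entire argument is then a case analysis on the single integer $|K'|$.

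For part (1), $\lambda$ is arbitrary and $\mu\in{^m\iu}^M(b)$, with $|K|=m$. If $|K'|\le m-1$ then $\pi^{K'}(\mu)=0$ because $\mu\in{^m\iu}$. The only other case is $|K'|=m$, forcing $|J|=1$ and $j=1$; here $\pi^J(\lambda)\in\ip(1)$ and $\pi^{K'}(\mu)\in M$, so $\pi^K(\lambda\underset{i}{\circ}\mu)=\pi^J(\lambda)\underset{1}{\circ}\pi^{K'}(\mu)$ lies in $M$ by (E3.1.2). For part (3), $\lambda\in{^m\iu}^M(a)$ and $\rho\in{^n\iu}^N(b)$ with $|K|=m+n-1$, so $|J|=m+n-|K'|$: if $|K'|\le n-1$ then $\pi^{K'}(\rho)=0$; if $|K'|\ge n+1$ then $|J|\le m-1$ and $\pi^J(\lambda)=0$; and if $|K'|=n$ then $|J|=m$, whence $\pi^J(\lambda)\in M$, $\pi^{K'}(\rho)\in N$, $1\le j\le m$, so $\pi^K(\lambda\underset{i}{\circ}\rho)$ is a generator of $MN$. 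In both parts the result lies in the required submodule, and I expect no serious difficulty here.

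The delicate case is part (2): now $\lambda\in{^m\iu}^M(a)$ is the outer factor and $\mu\in\ip(b)$ is arbitrary, with $|K|=m$ and $|J|=m-|K'|+1$. When $|K'|\ge 2$ we get $|J|\le m-1$ and $\pi^J(\lambda)=0$; when $|K'|=1$ we get $|J|=m$, and $\pi^J(\lambda)\in M$, $\pi^{K'}(\mu)\in\ip(1)$, $1\le j\le m$, so the product lies in $M$ by (E3.1.3). The obstacle is $|K'|=0$: then $|J|=m+1$, too large to control $\pi^J(\lambda)$ directly. The way around this is to observe that $\pi^{K'}(\mu)=\pi^{\emptyset}(\mu)\in\ip(0)=\Bbbk\1_0$, say $\pi^{\emptyset}(\mu)=c\,\1_0$, so that $\pi^K(\lambda\underset{i}{\circ}\mu)=c\,(\pi^J(\lambda)\underset{j}{\circ}\1_0)=c\,\Gamma^j(\pi^J(\lambda))$, using that inserting $\1_0$ at slot $j$ is exactly the contraction $\Gamma^j$. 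By Lemma \ref{xxlem2.4}(1), $\Gamma^j\bullet\pi^J=\pi^{J''}$ where $J''$ is $J$ with its $j$-th element deleted, so $|J''|=m$; hence $\pi^K(\lambda\underset{i}{\circ}\mu)=c\,\pi^{J''}(\lambda)\in M$ since $\lambda\in{^m\iu}^M$ and $M$ is closed under scalars. This reduction, turning an apparently oversized restriction of $\lambda$ into one of size exactly $m$ via the $\1_0$-contraction, is the one genuinely non-formal step; the rest is the uniform counting argument above.
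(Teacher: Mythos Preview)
Your proof is correct and follows essentially the same approach as the paper: the same reduction via Lemma~\ref{xxlem2.11}(2), the same case analysis on the sizes of the two restriction indices (you organize part~(2) by $|K'|$ while the paper organizes it by $|J|$, but since $|J|+|K'|=m+1$ this is the same split), and the same handling of the delicate $|K'|=0$ case in part~(2) by rewriting $\pi^J(\lambda)\underset{j}{\circ}\1_0$ as a size-$m$ restriction of $\lambda$. Your justification of that last step via Lemma~\ref{xxlem2.4}(1) is in fact more explicit than the paper's, which simply writes $\pi^{J\setminus\{j'\}}(\mu)$ without further comment.
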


\begin{proof} (1) By Lemma \ref{xxlem2.11}, ${^m \iu}^M$ is an
$\S$-module. Next we show that ${^m \iu}^M$ is a left ideal.

For $\nu\in \ip(m_0)$ and $\mu\in {^m\iu}^M(n_0)$,
$I\subseteq [m_0+n_0-1]$ with $|I|=m$, by Lemma \ref{xxlem2.11} (2),
$$\begin{aligned}
\pi^I(\nu \underset{i}{\circ} \mu)&=
\pi^{J}(\nu) \underset{j}{\circ} \pi^{I'}(\mu)
\end{aligned}
$$
where $J$, $I'$ and $j$ are given as after \eqref{E3.1.1}.
If $|I'|\leq m-1$, then $\pi^{I'}(\mu)=0$, and
$\pi^I(\nu \underset{i}{\circ} \mu)=0$.
Otherwise $|I'|=m$ (which is maximum possible)
and $I\subset \{i, i+1, \cdots, i+n_0-1\}$, then $j=1$,
$|J|=1$, and
$$\pi^{J}(\nu) \underset{1}{\circ} \pi^{I}(\mu)\in M$$
by assumption of $M$. Thus $\pi^I(\nu \underset{i}{\circ} \mu)
\in M$.

(2) The proof is similar to the proof of part (1).
For $\mu\in {^m\iu}^M(m_0)$ and $\nu\in \ip(n_0)$,
$I\subseteq [m_0+n_0-1]$ with $|I|=m$, by Lemma \ref{xxlem2.11}(2),
$$\begin{aligned}
\pi^I(\mu \underset{i}{\circ} \nu)&=
\pi^{J}(\mu) \underset{j}{\circ} \pi^{I'}(\nu)
\end{aligned}
$$
where $J$, $I'$ and $j$ are given as after \eqref{E3.1.1}.
If $|J|\leq m-1$, then $\pi^J(\mu)=0$, and
$\pi^I(\nu \underset{i}{\circ} \mu)=0$. If $|J|=m$,
then $\pi^J(\mu)\in M$ and $\pi^{I'}(\nu)\in \ip(1)$,
and by the assumption on $M$, we obtain that
$\pi^{J}(\mu) \underset{j}{\circ} \pi^{I'}(\nu)\in M$.
If $|J|=m+1$ (maximal possible), then
$I'=\emptyset$ and $\pi^{I'}(\nu)\in \ip(0)$.
Then
$$\pi^{J}(\mu) \underset{j}{\circ} \pi^{I'}(\nu)
=\pi^{J}(\mu) \underset{j}{\circ} \pi^{\emptyset}(\nu) \1_0
=\pi^{J\setminus \{j'\}}(\mu) \pi^{\emptyset}(\nu)
\in M$$
for some $j'$. Combining these cases, we have
$\pi^I(\mu \underset{i}{\circ} \nu)\in M$. Therefore
${^m \iu}^M$ is a right ideal.

(3) Let $\mu\in {^m \iu}^M(m_0)$ and $\nu\in {^n\iu}^N(n_0)$ and
let $i\leq m_0$. It suffices to show that
$$\mu \underset{i}{\circ}  \nu\in
{^{m+n-1}\iu}^{MN}(m_0+n_0-1)$$
for all $i$. By Proposition \ref{xxpro3.1}(2), $\mu \underset{i}{\circ}  \nu\in
{^{m+n-1}\iu}(m_0+n_0-1)$.

Let $I\subseteq [m_0+n_0-1]$
such that $|I|= m+n-1$. It suffices to show that
$\pi^{I}(\mu \underset{i}{\circ}  \nu)\in MN$. By Lemma \ref{xxlem2.11}(2),
\begin{equation}
\notag
\pi^{I}(\mu \underset{i}{\circ}  \nu)=\pi^{J}(\mu)\underset{j}{\circ}  \pi^{I'}(\nu)
\end{equation}
where
$$\begin{aligned}
J&= (I \cap [i-1]) \cup \{i\} \cup ((I\cap \{i+n_0,i+n_0+1,\cdots,
m_0+n_0-1\})-(n_0-1)),\\
I'&= (I\cap \{i,i+1,\dots,i+n_0-1\})-(i-1),\; {\text{and}}\\
j&=|I \cap [i-1]|+1.
\end{aligned}
$$
In particular, $|I'|+|J|=m+n$. If $|I'|\leq n-1$ or
$|J|\leq m-1$, then $\pi^{I'}(\nu)=0$ or $\pi^{J}(\mu)=0$.
Hence $\pi^{I}(\mu \underset{i}{\circ} \nu)=0\in M$.
The remaining case is when $|I'|=n$ and $|J|=m$. Then, in this case,
$\pi^{J}(\mu)\in M$ and $\pi^{I'}(\nu)\in N$.
Hence $\pi^{J}(\mu)\underset{j}{\circ} \pi^{I'}(\nu)\in MN$
by definition. Combining all cases,
$\pi^{I}(\mu \underset{i}{\circ} \nu)\in MN$ as required.
\end{proof}

A version of Proposition \ref{xxpro3.2} holds for plain operads.
The following lemmas are clear.

\begin{lemma}
\label{xxlem3.3}
Let $f: \ip\to \iq$ be a morphism of unitary operads 
{\rm{(}}preserving $\1_0${\rm{)}}. Then,
for every $I\subseteq \n$ with $|I|=k-1$, we have a commutative
diagram
$$\begin{CD}
\ip(n) @>>> \iq(n)\\
@V \pi^I VV @VV \pi^I V \\
\ip(k-1) @>>> \iq(k-1).
\end{CD}$$
As a consequence, $f$ maps from ${^k \iu_{\ip}}$
to ${^k \iu_{\iq}}$ for all $k\geq 0$.
\end{lemma}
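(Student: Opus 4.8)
The plan is to establish the commutativity of the square directly from the definition of the restriction operator \eqref{E2.3.4} together with the defining properties of an operad morphism, and then to read off the statement about truncation ideals as an immediate consequence by a kernel argument.

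First I would unwind $\pi^{I}$ on an arbitrary element. For $\theta\in\ip(n)$ the restriction operator is $\pi^{I}(\theta)=\theta\circ(\1_{\chi_{I}(1)},\cdots,\1_{\chi_{I}(n)})$, where each argument is $\1_0$ or $\1_1$ according to whether its index lies outside or inside $I$. Applying $f$ and using that $f$ is compatible with operadic composition gives
\[
f(\pi^{I}(\theta))=f(\theta)\circ(f(\1_{\chi_{I}(1)}),\cdots,f(\1_{\chi_{I}(n)})).
\]

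The only point that needs checking is that $f$ carries each operand $\1_{\chi_{I}(j)}$ to the corresponding unit of $\iq$. Since $\chi_{I}$ takes only the values $0$ and $1$, the sole units occurring are the $0$-unit $\1_0$ and the identity $\1_1$. By hypothesis $f$ preserves the $0$-unit, and by Definition \ref{xxdef1.5}(1) every operad morphism preserves the identity; hence in each slot $f(\1_{\chi_{I}(j)})$ equals the unit $\1_{\chi_{I}(j)}$ of $\iq$. Substituting these back gives $f(\pi^{I}(\theta))=\pi^{I}(f(\theta))$, which is exactly the asserted commutativity of the diagram. For the consequence I would use the kernel description \eqref{E3.0.1}. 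Let $\theta\in {^k\iu_{\ip}}(n)$. If $n<k$ then ${^k\iu_{\ip}}(n)=0$ and there is nothing to prove; if $n\geq k$ then $\pi^{I}(\theta)=0$ for every $I\subseteq\n$ with $|I|=k-1$. Applying the commutativity just established, $\pi^{I}(f(\theta))=f(\pi^{I}(\theta))=0$ for every such $I$, so $f(\theta)$ lies in $\bigcap_{I\subseteq\n,\,|I|=k-1}\ker\pi^{I}={^k\iu_{\iq}}(n)$, as required.

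I do not expect a genuine obstacle here: the content is entirely bookkeeping with the defining properties of $f$. The one subtlety worth flagging is that the argument uses preservation of the $0$-unit, which is an assumption beyond being a bare operad morphism (the latter guarantees only preservation of the identity $\1_1$); this is precisely why the lemma is stated for morphisms of \emph{unitary} operads that preserve $\1_0$.
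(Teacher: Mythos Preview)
Your argument is correct and is exactly the direct verification one would write down; the paper in fact omits the proof entirely, stating only that the lemma is clear, so your approach is the natural expansion of that remark.
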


Recall from Definition \ref{xxdef1.1}(4) that
an operad $\ip$ is called {\it connected} if
$\ip(1)=\Bbbk\cdot \1_1\cong  \Bbbk$.

\begin{lemma}
\label{xxlem3.4}
Let $\ip$ be a connected unitary operad. Then
${^1 \iu}={^2\iu}$.
\end{lemma}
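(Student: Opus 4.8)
The plan is to prove the equality of $\S$-modules levelwise, splitting into the easy inclusion ${^2\iu}\subseteq{^1\iu}$ (valid for any unitary operad) and the reverse inclusion, where connectedness is the essential ingredient. First I would dispose of low degrees: for $n=0$ both sides vanish by definition, and for $n=1$ we have ${^2\iu}(1)=0$ since $1<2$, while ${^1\iu}(1)=\ker(\pi^\emptyset\colon\ip(1)\to\ip(0))$. Here connectedness forces $\ip(1)=\Bbbk\1_1$, and since $\pi^\emptyset(\1_1)=\1_1\circ(\1_0)=\1_0\neq 0$ by (OP1), the map $\pi^\emptyset$ is injective, so ${^1\iu}(1)=0$ as well. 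This is precisely where connectedness enters, since in general ${^1\iu}(1)$ equals the augmentation ideal of $\ip(1)$, which may be nonzero.

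For $n\geq 2$, I would first observe that ${^2\iu}(n)\subseteq{^1\iu}(n)$ follows immediately from Lemma~\ref{xxlem2.4}(1): taking $I=\{i\}$ (so $|I|=1$) and $J=\emptyset\subseteq[1]$ gives the factorization $\pi^\emptyset=\pi^\emptyset\bullet\pi^{\{i\}}$, so $\ker\pi^{\{i\}}\subseteq\ker\pi^\emptyset$, and hence ${^2\iu}(n)=\bigcap_{i}\ker\pi^{\{i\}}\subseteq\ker\pi^\emptyset={^1\iu}(n)$.

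The reverse inclusion ${^1\iu}(n)\subseteq{^2\iu}(n)$ is where the hypothesis is used a second time. Given $\theta\in\ker\pi^\emptyset$, connectedness gives $\pi^{\{i\}}(\theta)\in\ip(1)=\Bbbk\1_1$, so $\pi^{\{i\}}(\theta)=\lambda_i\1_1$ for some $\lambda_i\in\Bbbk$. Applying $\pi^\emptyset$ and invoking the factorization above together with $\theta\in\ker\pi^\emptyset$ yields $\lambda_i\1_0=\pi^\emptyset(\pi^{\{i\}}(\theta))=\pi^\emptyset(\theta)=0$, whence $\lambda_i=0$. Thus $\pi^{\{i\}}(\theta)=0$ for every $i\in[n]$, i.e. $\theta\in{^2\iu}(n)$. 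There is no genuine obstacle here; the entire content is the factorization $\pi^\emptyset=\pi^\emptyset\bullet\pi^{\{i\}}$ from Lemma~\ref{xxlem2.4}(1) and the observation that connectedness collapses each $\pi^{\{i\}}(\theta)$ into a scalar multiple of $\1_1$ that the injective map $\pi^\emptyset$ detects faithfully.
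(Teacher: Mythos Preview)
Your proof is correct and follows essentially the same approach as the paper. The paper's version is more compressed: it observes that connectedness makes $\pi^{\emptyset}\colon\ip(1)\to\ip(0)$ an isomorphism, and then directly concludes $\ker(\pi^{i}\colon\ip(n)\to\ip(1))=\ker(\pi^{\emptyset}\colon\ip(n)\to\ip(0))$ for all $i\leq n$ via the factorization $\pi^{\emptyset}=\pi^{\emptyset}\bullet\pi^{\{i\}}$, which is exactly the mechanism you spell out in detail (including the low-degree cases the paper leaves implicit).
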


\begin{proof} In this case, $\pi^{\emptyset}: \ip(1)\to \ip(0)$
is an isomorphism. Then
$$\ker (\pi^i: \ip(n)\to \ip(1))=
\ker (\pi^{\emptyset}: \ip(n)\to \ip(0))$$
for all $i\leq n$. Therefore ${^1 \iu}={^2\iu}$.
\end{proof}

Recall that operads $\Com$ and $\Uni$ are defined before Lemma \ref{xxlem1.12}.

\begin{lemma}
\label{xxlem3.5}
Let $\ip$ be a unitary operad.
\begin{enumerate}
\item[(1)]
${^1 \iu}$ is the maximal
ideal of $\ip$ and $\ip/{^1\iu}$ is isomorphic to either
$\Com$ or $\Uni$.
\item[(2)]
If $\ip/{^1\iu}\cong \Com$ and $\ip$ is connected, then
$\ip$ is 2-unitary.
\item[(3)]
$\Uni\oplus {^1\iu}$ is a suboperad of $\ip$ and it is unitary,
but not 2-unitary.
\item[(4)]
If $\ip/{^1\iu}\cong \Uni$, then
$\ip= \Uni \oplus {^1\iu}$.
\end{enumerate}
\end{lemma}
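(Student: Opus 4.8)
The plan is to reduce all four parts to properties of the single operator $\pi^{\emptyset}\colon\ip(n)\to\ip(0)=\Bbbk\1_0$, whose kernel is by definition ${^1\iu}(n)$ (this is \eqref{E3.0.1} with $k=1$). First I would note that ${^1\iu}$ is an ideal by Proposition \ref{xxpro3.1}(1), so the quotient $\iq:=\ip/{^1\iu}$ is a unitary operad, and that $\pi^{\emptyset}$ descends to an \emph{injective} map $\iq(n)\hookrightarrow\iq(0)=\Bbbk$ for every $n$ (its kernel in $\iq$ is ${^1\iu_{\iq}}$, which is $0$ because ${^1\iu}(0)=0$). Hence each $\iq(n)$ is $0$ or $\Bbbk$, and $\iq(0)=\iq(1)=\Bbbk$ since $\pi^{\emptyset}(\1_1)=\1_1\circ\1_0=\1_0\neq0$.

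To identify $\iq$, I would analyse $S:=\{n\mid\iq(n)\neq0\}$. For downward closure, take $m\le n$ with $\iq(n)\neq0$ and $I\subseteq\n$ with $|I|=m$; Lemma \ref{xxlem2.4}(1) gives $\pi^{\emptyset}=\pi^{\emptyset}\bullet\pi^{I}$, so if $\iq(m)=0$ then $\pi^{I}=0$ and $\pi^{\emptyset}$ would annihilate the generator of $\iq(n)$, contradicting injectivity; thus $m\in S$. For upward propagation, recall (as in the proof of Proposition \ref{xxpro3.1}) that $\pi^{\emptyset}(\theta\circ(\theta_1,\dots,\theta_n))=(\prod_i\lambda_i)\,\pi^{\emptyset}(\theta)$ where $\pi^{\emptyset}(\theta_i)=\lambda_i\1_0$; applying this to $a_n\circ(a_n,\1_1,\dots,\1_1)$ with $a_n$ a generator of $\iq(n)$, $n\ge2$, shows $\iq(2n-1)\neq0$, and iterating pushes $S$ beyond every bound. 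Therefore $S=\{0,1\}$, in which case $\iq\cong\Uni$, or $S=\N$, in which case the terminal morphism $\iq\to\Com$ of Lemma \ref{xxlem1.12}(1), being arity-wise injective (it commutes with $\pi^{\emptyset}$ by Lemma \ref{xxlem3.3}), is an isomorphism and $\iq\cong\Com$. This dichotomy is the one genuinely delicate step; everything else is bookkeeping with $\pi^{\emptyset}$ and the ideal property of ${^1\iu}$.

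For the maximality assertion in (1), suppose an ideal $\J$ is not contained in ${^1\iu}$; then some $\theta\in\J(n)$ has $\pi^{\emptyset}(\theta)=\lambda\1_0$ with $\lambda\neq0$, and since $\pi^{\emptyset}(\theta)=\theta\circ(\1_0,\dots,\1_0)\in\J(0)$ we get $\J(0)=\Bbbk\1_0$, so $\ip/\J$ fails to be unitary. Thus ${^1\iu}$ is the largest ideal with $\J(0)=0$, i.e. the largest one for which the quotient remains unitary, which is the intended maximality. For (2), connectedness forces ${^1\iu}(1)=0$ (as $\pi^{\emptyset}$ is injective on $\ip(1)=\Bbbk\1_1$), so $\ip(1)\cong(\ip/{^1\iu})(1)=\Com(1)$. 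Choose $\1_2\in\ip(2)$ lifting the generator of $(\ip/{^1\iu})(2)\cong\Com(2)$. By Lemma \ref{xxlem3.3} the class of $\1_2\circ(\1_1,\1_0)=\pi^{\{1\}}(\1_2)$ equals $\pi^{\{1\}}$ of the $\Com$-generator, namely the generator of $\Com(1)$; since $\1_2\circ(\1_1,\1_0)\in\ip(1)=\Bbbk\1_1$ and ${^1\iu}(1)=0$, this yields $\1_2\circ(\1_1,\1_0)=\1_1$, and symmetrically $\1_2\circ(\1_0,\1_1)=\1_1$. Hence $\1_2$ is a $2$-unit and $\ip$ is $2$-unitary.

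Finally, for (3) I would check directly that $\Uni\oplus{^1\iu}$ is a suboperad: any composition having a factor in the ideal ${^1\iu}$ lands in ${^1\iu}$, while a composition of elements of the $\Uni$-summand stays in $\Uni$ (only arities $0,1$ occur there), so this $\S$-module is closed under $\circ$ and contains $\1_0,\1_1$. It is unitary since its arity-$0$ part is $\Bbbk\1_0$, but not $2$-unitary: any candidate $2$-unit would lie in $(\Uni\oplus{^1\iu})(2)={^1\iu}(2)$, whence $\1_2\circ(\1_1,\1_0)\in{^1\iu}(1)$, which cannot equal $\1_1\notin{^1\iu}(1)$. For (4), $\ip/{^1\iu}\cong\Uni$ means $\ip(n)={^1\iu}(n)$ for $n\ge2$, while $\ip(0)=\Bbbk\1_0$ and $\ip(1)=\Bbbk\1_1\oplus{^1\iu}(1)$ (again from $\pi^{\emptyset}(\1_1)\neq0$); comparing arities shows $\ip=\Uni\oplus{^1\iu}$ as $\S$-modules, hence as operads by (3).
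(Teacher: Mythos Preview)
Your proof is correct and follows essentially the same approach as the paper: both reduce everything to the injectivity of $\pi^{\emptyset}$ on the quotient $\ip/{^1\iu}$, and your arguments for parts (2)--(4) match the paper's (which is extremely terse there). The only minor tactical difference is in establishing the dichotomy in (1): the paper argues that when $\iq(2)\neq 0$ the quotient is $2a$-unitary and hence produces all $\1_n$, whereas you use downward closure of $S=\{n:\iq(n)\neq 0\}$ together with the composition $a_n\circ(a_n,\1_1,\dots,\1_1)$ to push $S$ upward; both are short and your version is arguably more self-contained, and you also spell out the maximality of ${^1\iu}$ (among ideals with trivial arity-$0$ part), which the paper leaves implicit.
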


\begin{proof} (1) Since $\ip$ is unitary, $\ip(0)=\Bbbk \1_0$. By definition,
$${^1\iu}(n)=\ker (\pi^{\emptyset}: \ip(n)\to \ip(0)).$$
Then $\dim (\ip/{^1 \iu})(n)$ is either 0 or $1$ for each $n$. If $(\ip/{^1 \iu})(2)=0$,
then one can check that $(\ip/{^1 \iu})(n)=0$ for all $n\geq 2$. Consequently,
$\ip/{^1 \iu}=\Uni$. If $(\ip/{^1 \iu})(2)\neq 0$,
then one can check that $\ip/{^1\iu}$ is 2a-unitary and $(\ip/{^1 \iu})(n)=\Bbbk \1_n$
for all $n$.  Consequently, $\ip/{^1 \iu}=\Com$.

(2) Since $\ip$ is connected, ${^1\iu}={^2\iu}$ by Lemma \ref{xxlem3.4}.
Since $\ip(2)\neq {^1 \iu}(2)$, there is an $f\in \ip(2)$ such that
$\pi^1(f)=\1_1$. Since $\pi^{\emptyset} \bullet \pi^2(f)=\pi^{\emptyset}(f)=
\pi^{\emptyset} \bullet \pi^1(f)=\1_0$, we obtain that
$\pi^2(f)=\1_1$. Thus $f$ is a 2-unit by definition.

(3) This follows from the fact that ${^1 \iu}$ is an ideal of $\ip$.

(4) This follows from part (3).
\end{proof}

Now we are ready to prove the first two parts of Theorem \ref{xxthm0.4}.

\begin{proof}[Proof of Theorem \ref{xxthm0.4} (1, 2)]
We only prove the results for symmetric operads. The proofs
for plain operads are similar.

(1) Since $\ip$ is reduced, $\ip_{\geq n}:=\bigoplus_{i\geq n} \ip(i)$
is an ideal for every $n$. Since $\ip$ is artinian and
$\{\ip_{\geq n}\}_{n=0}^{\infty}$ is a descending chain of ideals,
$\ip_{\geq n}=0$ for some $n$. Let $n$ be the largest integer such
that $\ip(n)\neq 0$. If $n\geq 2$, then $\ip$ being reduced implies
that $\ip(n)$ is an ideal such that $\ip(n)^2=0$. This
contradicts the hypothesis that $\ip$ is semiprime.
Therefore $\ip(n)=0$ for all $n\geq 2$. Let $\ip(1)=A$.
In this case the left (or right) ideals of $\ip$ coincide
with the left (or right) ideals of $A$. Thus $A$ is left or
right artinian and semiprime. This implies that $A$ is
semisimple as desired.

(2) In the proof of part (2), we need to use truncation
ideals ${^k\iu}$ of $\ip$. By definition, $\bigcap_{k\geq 1}{^k \iu}
=0$. Since $\ip$ is left or right artinian, ${^k \iu}=0$
for some $k$. Let $n$ be the largest integer such that
${^n \iu}\neq 0$. If $n\geq 2$, by Proposition \ref{xxpro3.1}(3),
$({^n \iu})^2\subseteq {^{2n-1}\iu}=0$. This contradicts the
hypothesis that $\ip$ is semiprime. Therefore ${^2 \iu}=0$.
Let $A=\ip(1)$. By Proposition \ref{xxpro3.2} (1, 2),
if $A$ is not left (respectively, right) artinian, then $\ip$ is
not left (respectively, right) artinian. Since $\ip$ is
left or right artinian, so is $A$. Let $N$ be an ideal
of $A$ such that $N^2=0$. By Proposition \ref{xxpro3.2} (1, 2),
${^1 \iu}^N$ is an ideal of $\ip$. By Proposition \ref{xxpro3.2} (3),
$$({^1 \iu}^N)^2\subseteq {^1 \iu}^{N^2}={^1 \iu}^{0}={^2 \iu}=0.$$
Since $\ip$ is semiprime, ${^1 \iu}^N=0$, consequently, $N=0$.
Thus $A$ is semiprime. Since $A$ is left artinian or right
artinian, $A$ is semisimple. It remains to show that $\ip(n)=0$
for all $n\geq 2$. If not, let $n\geq 2$ be the largest integer
such that $\ip(n)\neq 0$ (such $n$ exists since $\ip$ is bounded
above). For every element $0\neq \mu\in \ip(n)$, $x:=\pi^{i}(\mu)\neq 0$
for some $i$ as ${^2\iu}=0$. Let $I$ be the ideal of $A$ generated
by $x$. For every element $f\in I$, $f$ can be written as
$\sum_{s=1}^w a_s x b_s$ with $a_s, b_s\in A$. Let
$$g=\sum_{s=1}^w a_s \underset{1}{\circ} ( \mu \underset{i}{\circ} b_s)
\in \ip(n).$$
Then $f=\pi^i(g)$. Since $I$ is a nonzero ideal of a semisimple
ring, $I=e A=Ae$ for some idempotent $e\in I$. Hence we may assume that
$f=e$ is a nonzero idempotent. Let $\nu=\pi^{i,j}(g)$. Then
$f=\pi^1(\nu)$ or $f=\pi^2(\nu)$. By symmetry, we assume that
$f=\pi^1(\nu)$. Let $h=g\underset{i}{\circ} \nu\in \ip(n+1)$. Then
$$\pi^i(h)=\pi^i(g)\circ \pi^1(\nu)=f\circ f=f\neq 0$$
which contradicts the fact that $\ip(n+1)=0$. Therefore $\ip(n)=0$
for all $n\geq 2$ as required.
\end{proof}

Part (3) of Theorem \ref{xxthm0.4} will be proved in Section \ref{xxsec6}.

\begin{lemma}
\label{xxlem3.6}
Let $\ip$ be a 2-unitary operad
and $\I$ an ideal of $\ip$. Then for each $k\ge 1$, $\I(k-1)=0$
if and only if $\I\subset {^k\iu}$.
\end{lemma}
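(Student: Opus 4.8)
The plan is to treat the two implications separately; the reverse one is immediate from the definition, while the forward one rests on a single structural observation about how restriction operators interact with ideals.

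For the reverse implication, suppose $\I \subseteq {^k\iu}$. Then $\I(k-1) \subseteq {^k\iu}(k-1)$, and since $k-1 < k$ the defining formula \eqref{E3.0.1} gives ${^k\iu}(k-1) = 0$, so $\I(k-1) = 0$. No further work is needed here.

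For the forward implication, assume $\I(k-1) = 0$. The key point I would use is that the restriction operator preserves the ideal in the relevant arity: for $\theta \in \I(n)$ and $I \subseteq \n$ with $|I| = k-1$, the element $\pi^{I}(\theta) = \theta \circ (\1_{\chi_{I}(1)}, \cdots, \1_{\chi_{I}(n)})$ lies in $\I(k-1)$, because $\I$ is an ideal and composing an element of $\I$ with any family of operations (here the units $\1_0$ and $\1_1$) remains in $\I$ [Definition \ref{xxdef1.8}(1)]. Under the hypothesis this forces $\pi^{I}(\theta) = 0$ for every such $I$, which is exactly the condition defining membership in ${^k\iu}(n)$ for $n \geq k$. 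Thus $\I(n) \subseteq {^k\iu}(n)$ for all $n \geq k$.

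It then remains to handle the low-arity range $n < k$, where ${^k\iu}(n) = 0$ and one must therefore check $\I(n) = 0$ directly. This is where I would invoke Lemma \ref{xxlem2.12}: for any $I \subseteq [k-1]$ with $|I| = n \leq k-1$, the map $\pi^{I}\colon \I(k-1) \to \I(n)$ is surjective, so $\I(k-1) = 0$ forces $\I(n) = 0$ (equivalently, apply Lemma \ref{xxlem2.12}(2) comparing $\I$ with the zero ideal). Combining the two ranges yields $\I(n) \subseteq {^k\iu}(n)$ for every $n$, hence $\I \subseteq {^k\iu}$. The argument is essentially routine; the only step requiring genuine care is the \emph{direction} of the surjectivity in Lemma \ref{xxlem2.12}(1), which must be used to propagate vanishing \emph{downward} from arity $k-1$ to smaller arities rather than the other way around.
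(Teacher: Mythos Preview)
Your proof is correct and follows essentially the same approach as the paper's: both handle $n\ge k-1$ by observing that $\pi^I(\theta)\in\I(k-1)=0$ for $\theta\in\I(n)$, and both use the 2-unitary structure to force $\I(n)=0$ for $n<k-1$. The only cosmetic difference is that for the low-arity range the paper pushes elements \emph{up} via the injective maps $\Delta_i$ (Lemma~\ref{xxlem2.9}(2,3)) to land in $\I(k-1)=0$, whereas you pull \emph{down} via the surjective maps $\pi^I$ of Lemma~\ref{xxlem2.12}(1); these are the two sides of the identity $\Gamma^i\bullet\Delta_i=\id$, so the arguments are equivalent.
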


\begin{proof} ($\Leftarrow$) is obvious. Next we show the other 
implication ($\Rightarrow$). Suppose $\I(k-1)=0$ for some $k\ge 1$.

If $n\ge k-1$, then we have
\[\pi^I(\theta)\in \I(k-1)=0\]
for any $\theta\in \I(n)$ and any $I\subseteq \n$ with $|I|=k-1$, and
hence $\theta\in {^k\iu}(n)$.

If $n<k-1$, for every $\theta\in \I(n)$, we have
$$(\Delta_{i_1}\bullet \cdots \bullet \Delta_{i_{k-1-n}})(\theta)\in \I(k-1)=0,$$
for all possible $i_1, \cdots, i_{k-1-n}$. Since $\ip$ is
$2$-unitary, each $\Delta_{i}$ is injective
by Lemma \ref{xxlem2.9} (2) (or (3)).
It follows that $\theta=0$ and hence $\I\subset {^k\iu}$.
\end{proof}

\subsection{The unique maximal ideal of a quotient operad of $\As$}
\label{xxsec3.2}
In this subsection we assume that $\ip=\As/\mathcal{W}$ for some
ideal $\mathcal{W}$. We use $\Phi_n$ to denote the alternating sum
$\sum_{\sigma\in\S_n}\mathrm{sgn}(\sigma)\sigma$, where $\mathrm{sgn}(\sigma)= 1$
if $\sigma$ is an even permutation, and $\mathrm{sgn}(\sigma)= -1$ if $\sigma$
is an odd permutation. When applied to an associative algebra,
the operator $\Phi_2$ gives exactly the usual commutator.

\begin{lemma}
\label{xxlem3.7}
As an ideal of $\ip$, ${^1\iu}={^2\iu}=\langle \Phi_2\rangle$.
\end{lemma}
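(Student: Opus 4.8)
The plan is to establish the two equalities in turn, treating ${^1\iu}={^2\iu}$ by a quick reduction and then concentrating on the substantive identity ${^2\iu}=\langle\Phi_2\rangle$. First I would note that $\ip=\As/\mathcal{W}$ is connected: since $\As(1)=\Bbbk\1_1$ and $\mathcal{W}(1)$ is an $\S_1$-submodule, either $\mathcal{W}(1)=0$, giving $\ip(1)=\Bbbk$, or $\mathcal{W}(1)=\As(1)$, which forces $\1_1=0$ and collapses $\ip$ to the zero operad; discarding the trivial case, $\ip$ is connected and Lemma \ref{xxlem3.4} gives ${^1\iu}={^2\iu}$ at once. The key observation for the rest is that each restriction operator $\pi^{\{i\}}\colon\ip(n)\to\ip(1)=\Bbbk\1_1$ is the augmentation $\epsilon$ descended from $\As$: in $\As(n)=\Bbbk\S_n$ one has $\pi^{\{i\}}(\sigma)=\1_1$ for every $\sigma\in\S_n$, so $\pi^{\{i\}}$ is the linear augmentation, independent of $i$. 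Hence ${^2\iu}(n)=\bigcap_i\ker\pi^{\{i\}}$ is exactly $\ker\epsilon$ on $\ip(n)$. In particular $\pi^{\{1\}}(\Phi_2)=\pi^{\{2\}}(\Phi_2)=0$, so $\Phi_2\in{^2\iu}(2)$, and since ${^2\iu}$ is an ideal by Proposition \ref{xxpro3.1}(1) this yields the easy inclusion $\langle\Phi_2\rangle\subseteq{^2\iu}$.

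For the reverse inclusion I would first compute inside $\As$. The crucial identity is $\iota^{i-1}_{n-i-1}(\Phi_2)=\1_n-(i,i+1)$ in $\As(n)$, which follows from Lemma \ref{xxlem2.7}: writing $\iota^{l}_{r}(\Phi_2)=\1_2\circ(\1_2\circ(\1_l,\Phi_2),\1_r)$ exhibits it as the block juxtaposition $\1_l\sqcup\Phi_2\sqcup\1_r=\1_n-(l+1,\,l+2)$. Because $\langle\Phi_2\rangle$ is an operadic ideal it is closed under the right $\S_n$-action, so it contains every $\1_n-s_i$ with $s_i=(i,i+1)$ together with all their $\ast$-translates. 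I then invoke the elementary fact that the augmentation ideal of $\Bbbk\S_n$ is generated as a right ideal by $\{s_i-1\}_i$: writing an arbitrary $\sigma$ as a product of adjacent transpositions and telescoping via $ab-1=(a-1)b+(b-1)$ expresses each $\sigma-1$ as a right $\Bbbk\S_n$-combination of the $s_i-1$. Hence ${^2\iu_{\As}}(n)=\ker\epsilon\subseteq\langle\Phi_2\rangle_{\As}(n)$.

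Finally I would transfer the result to $\ip$ along the quotient map $q\colon\As\to\ip$. Using the commuting square of Lemma \ref{xxlem3.3} together with the fact that $q$ induces an isomorphism $\As(1)\cong\ip(1)=\Bbbk$ (by connectedness), any lift of an element of $\ker\epsilon$ on $\ip(n)$ already lies in $\ker\epsilon$ on $\As(n)$; this shows ${^2\iu_\ip}(n)=q\bigl({^2\iu_{\As}}(n)\bigr)$. Since $q$ carries $\langle\Phi_2\rangle_{\As}$ into $\langle\Phi_2\rangle_\ip$, combining with the previous paragraph gives ${^2\iu_\ip}\subseteq\langle\Phi_2\rangle_\ip$, and hence equality. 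I expect the main obstacle to be the reverse inclusion, and within it the two points needing care: evaluating the operadic operators $\iota^{l}_{r}$ on $\Phi_2$ precisely enough to recover the adjacent-transposition differences (respecting the $\ast$-convention for the right action), and the generation statement for the augmentation ideal of $\Bbbk\S_n$ that turns these finitely many elements into all of ${^2\iu}$.
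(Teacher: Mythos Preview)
Your proof is correct and follows essentially the same approach as the paper: both identify ${^2\iu}(n)$ with the augmentation ideal via $\pi^{\{i\}}(\sigma)=\1_1$, compute $\iota^{i-1}_{n-i-1}(\Phi_2)=\1_n-(i,i+1)$, and use the telescoping identity $1_n-\sigma_1\cdots\sigma_s=\sum_j(1_n-\sigma_j)\sigma_{j+1}\cdots\sigma_s$ together with generation of $\S_n$ by adjacent transpositions. The only cosmetic difference is that the paper writes out the argument for $\ip=\As$ and leaves the quotient case implicit, whereas you explicitly transfer along $q\colon\As\to\ip$ using Lemma~\ref{xxlem3.3}; your handling of connectedness of $\ip$ is also slightly more explicit than the paper's.
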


\begin{proof} We only consider the case $\ip=\As$.
By Lemma \ref{xxlem3.4}, ${^1\iu}={^2\iu}$.
Clearly, we have ${^2\iu}\supseteq\langle \Phi_2\rangle$ since
$\Phi_2\in {^2\iu}(2)$. It suffices to show that
${^2\iu}\subseteq\langle \Phi_2\rangle$.
By definition $\pi^i(\sigma) = 1_1$ for all $n\ge 1$, $\sigma \in\S_n$, and
$1\le i\le n$. Thus we have
\[{^2\iu}(n)=\left\{ \sum_{\sigma\in\S_n}\lambda_\sigma\sigma\mid
\sum_{\sigma\in \S_n}\lambda_\sigma=0, \lambda_\sigma\in \Bbbk\right\}.\]

It is well-known that ${^2\iu}(n)$ is generated by the set
$\{1_n-\sigma\mid \sigma\in\S_n, \sigma\ne 1_n\}$. (It may not be a basis unless
$\ip=\As$). For any $\sigma_1, \cdots, \sigma_s\in\S_n$, we write
\begin{align*}
1_n-\sigma_1\cdots\sigma_s = & (1_n-\sigma_s)
   + (\sigma_{s}-\sigma_{s-1}\sigma_s) +\cdots
	 +(\sigma_2\cdots\sigma_s - \sigma_1\cdots \sigma_s)\\
= & (1_n-\sigma_s) + (1_n-\sigma_{s-1})\sigma_s + \cdots
   +(1_n-\sigma_1)\sigma_2\cdots\sigma_s .
\end{align*}
Since $\{(12), (23), \cdots, (n-1,n)\}$ generates 
the group $\S_n$, the above equality implies that  ${^2\iu}(n)$
is generated by the elements $1_n-(12), 1_n-(23), \cdots,
1_n-(n-1,n)$ as a right $\S_n$-module.  For each
$i\ge 1$, we have $\1_n-(i,i+1)=\iota^{i-1}_{n-i-1}(\Phi_2)$, and hence
${^2\iu}(n)\subseteq \langle \Phi_2\rangle(n)$. The assertion follows.
\end{proof}

\begin{lemma}
\label{xxlem3.8}
Let $\I\subsetneq \ip$ be an ideal. Then
either $\I={^2\iu}$ or $\I\subseteq {^3\iu}$.
\end{lemma}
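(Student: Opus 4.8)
The plan is to exploit that ${^2\iu}$ is the unique maximal ideal together with the observation that ${^2\iu}(2)$ is at most one-dimensional. First I would record the easy containment: since $\ip=\As/\mathcal{W}$ is connected, Lemma \ref{xxlem3.4} gives ${^1\iu}={^2\iu}$, and by Lemma \ref{xxlem3.5}(1) this is the unique maximal ideal of $\ip$. Hence every proper ideal $\I\subsetneq\ip$ satisfies $\I\subseteq{^2\iu}$. Thus the content of the lemma reduces to the implication: if $\I\not\subseteq{^3\iu}$, then in fact ${^2\iu}\subseteq\I$ (and so $\I={^2\iu}$).

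The key structural input is that ${^2\iu}(2)$ is spanned by the single element $\Phi_2$. Indeed, writing a general element of $\ip(2)$ as $a\1_2+b\,\overline{(12)}$ and using that $\pi^1$ and $\pi^2$ send every class $\overline{\sigma}$ with $\sigma\in\S_2$ to $\1_1$ (exactly as in the proof of Lemma \ref{xxlem3.7}), one computes ${^2\iu}(2)=\ker\pi^1\cap\ker\pi^2=\{a\1_2+b\,\overline{(12)}\mid a+b=0\}=\Bbbk\Phi_2$, a space of dimension at most one. This one-dimensionality is precisely what sharpens the dichotomy, since a single nonzero value in degree $2$ already recovers the generator $\Phi_2$.

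Now I would suppose $\I\not\subseteq{^3\iu}$. By the definition ${^3\iu}=\bigcap_{|I|=2}\ker\pi^I$, there exist $n$, an element $\theta\in\I(n)$, and a subset $I\subseteq\n$ with $|I|=2$ such that $\pi^I(\theta)\neq0$. Since $\I$ is an ideal and $\pi^I(\theta)$ is obtained by composing $\theta$ with $0$- and $1$-units, we have $\pi^I(\theta)\in\I(2)$; and since $\I\subseteq{^2\iu}$, this element lies in ${^2\iu}(2)=\Bbbk\Phi_2$. Being a nonzero element of a space of dimension at most one, it forces $\Phi_2\in\I(2)$. As $\I$ is an ideal containing $\Phi_2$, Lemma \ref{xxlem3.7} yields ${^2\iu}=\langle\Phi_2\rangle\subseteq\I$, and combined with $\I\subseteq{^2\iu}$ this gives $\I={^2\iu}$.

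I expect no serious obstacle here: the argument is a short assembly of Lemmas \ref{xxlem3.4}, \ref{xxlem3.5} and \ref{xxlem3.7}. The point deserving the most care is the computation ${^2\iu}(2)=\Bbbk\Phi_2$ in the second step, which is what makes the dichotomy sharp; this is done directly inside $\ip(2)$ and needs no comparison with $\As$. One should also note the degenerate case ${^2\iu}(2)=0$, occurring exactly when $\Phi_2\in\mathcal{W}$, i.e. $\ip\cong\Com$: then the hypothesis $\I\not\subseteq{^3\iu}$ can never be satisfied, so every proper ideal is automatically contained in ${^3\iu}$, in agreement with the statement.
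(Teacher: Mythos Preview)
Your proof is correct and follows essentially the same route as the paper's: show $\I\subseteq{^2\iu}$, then from $\I\not\subseteq{^3\iu}$ extract a nonzero element $\pi^{I}(\theta)\in{^2\iu}(2)=\Bbbk\Phi_2$ inside $\I(2)$, and invoke Lemma~\ref{xxlem3.7} to conclude $\I\supseteq\langle\Phi_2\rangle={^2\iu}$. The only cosmetic difference is that you obtain $\I\subseteq{^2\iu}$ by citing Lemmas~\ref{xxlem3.4} and~\ref{xxlem3.5}(1), whereas the paper argues directly that $\I\not\subseteq{^2\iu}$ would force $\1_1\in\I$.
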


\begin{proof}
First we claim that $\I\subseteq {^2\iu}$. Otherwise, there exist $n\ge 1$
and $\theta\in\I(n)$ such that $\pi^i(\theta)\ne 0$ for some $1\le i\le n$.
It follows that $\1_1\in \I$, and hence $\ip\subseteq \I$, which leads to a
contradiction.

Next assume that $\I\nsubseteq {^3\iu}$. Then there exist $n\ge 2$
and $\theta\in\I(n)$, such that $\pi^{i,j}(\theta)\ne 0$ for some
$1\le i<j\le n$. Note that $\pi^{i,j}(\theta)\in {^2\iu}(2)$ and
hence $\pi^{i,j}(\theta)= \lambda \Phi_2$ for some $\lambda\ne 0$.
Now Lemma \ref{xxlem3.7} implies that $\I={^2\iu}$.
\end{proof}

\subsection{A descending chain of ideals}
\label{xxsec3.3}
In this subsection we assume that $\ip=\As$. By definition
and Lemma \ref{xxlem2.4} (1),  ${^{k+1}\iu}\subseteq{^{k}\iu}$
for all $k\ge 0$. Thus we obtain a descending chain of ideals
\[{^1\iu}={^2\iu}\supseteq {^3\iu}\supseteq {^4\iu}\supseteq \cdots\]
of $\As$. Then for any ideal $\I$ of $\As$, after taking intersections
with ${^k\iu}$'s, we also obtain a descending chain of subideals
\[\I\cap{^2\iu}\supseteq \I\cap{^3\iu}\supseteq \I\cap{^4\iu}\supseteq \cdots. \]
Before continuing, we introduce a useful lemma.

\begin{lemma}
\label{xxlem3.9}
Let $n\geq k\ge 0$ be integers, and $\theta$ be in $\As(n)$. Then
\begin{enumerate}
\item[$(1)$]
$\Phi_2\circ(\theta, 1_1)\in {^{k+1}\iu}(n+1)$
if and only if $\theta\in{^k\iu}(n)$.
\item[$(1')$]
$\Phi_2\circ(1_1,\theta)\in {^{k+1}\iu}(n+1)$
if and only if $\theta\in{^k\iu}(n)$.
\item[$(2)$]
$1_2\circ (\theta,\Phi_2)\in {^{k+2}\iu}(n+2)$
if and only if $\theta\in {^k\iu}(n)$.
\item[$(2')$]
$1_2\circ (\Phi_2,\theta)\in {^{k+2}\iu}(n+2)$
if and only if $\theta\in {^k\iu}(n)$.
\end{enumerate}
\end{lemma}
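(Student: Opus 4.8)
The plan is to prove all four statements by the same two-step scheme: first reduce membership in the target truncation to the vanishing of an explicit family of ``restricted'' elements, and then prove an injectivity statement that upgrades that vanishing to $\theta\in{^k\iu}(n)$. I would carry out parts (1) and (2) in full and then remark that (1$'$) and (2$'$) are the mirror images, obtained by inserting on the other side (replacing $-\underset{1}{\circ}-$ by $-\underset{2}{\circ}-$); the combinatorics is identical after reflecting the word, so I would not repeat it. Throughout I use the identifications $\Phi_2\circ(\theta,\1_1)=\Phi_2\underset{1}{\circ}\theta$ and $\Phi_2\circ(\1_1,\theta)=\Phi_2\underset{2}{\circ}\theta$ coming from Remark \ref{xxrem1.3}.

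For the reduction step I would compute $\pi^J$ of each composite for every admissible index set $J$, using the compatibility of restriction with composition (Lemma \ref{xxlem2.11}(2), equivalently the block-wise computation in the proof of Proposition \ref{xxpro3.1}). For part (1), with $J\subseteq[n+1]$ and $|J|=k$, the crucial point is that the block carrying $\Phi_2$ is rigid: whenever $J$ does not keep the slot occupied by $\1_1$, the resulting factor is a restriction of $\Phi_2$ to at most one of its inputs, which is $0$ because $\Phi_2\in{^2\iu}(2)$ (for one input) and its coefficients sum to zero (for no input). Hence $\pi^J(\Phi_2\underset{1}{\circ}\theta)$ vanishes automatically unless $n+1\in J$, in which case it equals $\Phi_2\circ(\pi^{I}(\theta),\1_1)$ with $I=J\cap[n]$ of size $k-1$; as $J$ varies, $I$ ranges over all $(k-1)$-subsets of $[n]$. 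The same bookkeeping for part (2), with $J\subseteq[n+2]$ of size $k+1$, shows $\pi^J(\1_2\circ(\theta,\Phi_2))=0$ unless $J$ contains \emph{both} $\Phi_2$-slots $\{n+1,n+2\}$, and then it equals $\1_2\circ(\pi^{I}(\theta),\Phi_2)$ with $I=J\cap[n]$, $|I|=k-1$. This reduction already yields the ``if'' directions uniformly: if $\theta\in{^k\iu}(n)$ then every $\pi^{I}(\theta)=0$ for $|I|=k-1$, so all the surviving $\pi^J$ vanish. Note that for parts (2),(2$'$) this is genuinely sharper than the product estimate ${^k\iu}\,{^2\iu}\subseteq{^{k+1}\iu}$ of Proposition \ref{xxpro3.1}(2), which is one level too weak, so the finer restriction computation is really needed there.

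The ``only if'' directions then reduce to the injectivity, on $\As(k-1)=\k\S_{k-1}$, of the four maps $\psi\mapsto\Phi_2\circ(\psi,\1_1)$, $\psi\mapsto\Phi_2\circ(\1_1,\psi)$, $\psi\mapsto\1_2\circ(\psi,\Phi_2)$ and $\psi\mapsto\1_2\circ(\Phi_2,\psi)$, and this is where I expect the only real work to be. Writing $\Phi_2=1_2-(12)$, each map is a difference of two insertion operators, each of which sends the basis $\S_{k-1}$ injectively into $\S_k$ (resp.\ $\S_{k+1}$); I would finish by checking that the two images are \emph{disjoint} subsets of the target basis, so that the difference can vanish only when both halves do, forcing $\psi=0$. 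For $\1_2\circ(-,\Phi_2)$ the two images differ precisely by the transposition of the last two letters of the word, which are always present, so injectivity holds for every $k\ge 0$ — this is exactly why parts (2),(2$'$) land two truncation levels above $k$. For $\Phi_2\circ(-,\1_1)$ the two images differ by whether the single new letter sits at the front or the back, which are distinct only once the word has length $\ge 2$; thus the support-disjointness argument needs $k-1\ge 1$. The main obstacle is therefore this injectivity, and the one place the four cases diverge: I would handle the low-index cases of (1),(1$'$) (where $\As(k-1)$ is too small for the front/back positions to separate) directly, using ${^1\iu}={^2\iu}$ from Lemma \ref{xxlem3.4}.
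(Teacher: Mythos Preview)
Your approach matches the paper's: compute $\pi^J$ of the composite via Lemma~\ref{xxlem2.11}(2), case-split on which of the distinguished slots lie in $J$, and for the ``only if'' direction invoke injectivity of $\psi\mapsto\Phi_2\circ(\psi,1_1)$ (resp.\ $\psi\mapsto 1_2\circ(\psi,\Phi_2)$) on $\k\S_{k-1}$. The paper phrases that injectivity as the linear independence of $\{1_2\circ(\sigma,1_1):\sigma\in\S_{k-1}\}\cup\{\tau\circ(\sigma,1_1):\sigma\in\S_{k-1}\}$ in $\k\S_k$, which is exactly your disjoint-support observation.

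You correctly notice that this injectivity fails for (1),(1$'$) at $k=1$ (both images collapse to $\{1_1\}$), but your proposed patch via ${^1\iu}={^2\iu}$ cannot work, because the statement itself is false there. Take $\theta=1_n$: then $\Phi_2\circ(1_n,1_1)\in{^2\iu}(n+1)$ since every $\pi^{\{i\}}$ vanishes (using $\Phi_2\circ(-,1_0)=0=\Phi_2\circ(1_0,-)$), yet $1_n\notin{^1\iu}(n)$. The paper's proof shares this gap---its linear-independence claim is simply false at $k=1$---so parts (1),(1$'$) should really be read with $k\ge2$; this is harmless for the applications, since in Proposition~\ref{xxpro3.10} the $(\Rightarrow)$ direction is only invoked with $k\ge3$. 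Parts (2),(2$'$) are fine for all $k\ge0$, as you note.
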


\begin{proof} To avoid confusion, we use $\tau$ to denote the 2-cycle
$(12)\in\S_2$. Thus $\Phi_2= 1_2-\tau$. We only prove $(1)$ and $(2)$,
and the argument for $(1')$ and $(2')$ are the same.

(1) $(\Leftarrow)$ First we assume that $\theta\in {^k\iu}(n)$.
Take any subset $I$ of $[n+1]$ with $|I|=k$.
We claim that $\pi^I(\Phi_2\circ(\theta, 1_1))=0$.
By definition,
\begin{align}
\label{E3.9.1}\tag{E3.9.1}
\pi^I(\Phi_2\circ(\theta, 1_1))
=  (\Phi_2\circ(\theta, 1_1))\circ (1_{\chi_{_I}(1)},
   \cdots, 1_{\chi_{_I}(n)}, 1_{\chi_{_I}(n+1)})
= \Phi_2\circ (\pi^{I_1}(\theta), 1_{\chi_{_I}(n+1)}),
\end{align}
where $I_1= I\cap\n$. There are two cases: $n+1\in I$ or
$n+1\not\in I$. If $n+1\in I$, then $|I_1|=k-1$, and hence
$\pi^{I_1}(\theta)=0$ by assumption. The claim follows in this case.
Now we assume that $n+1\not\in I$, i.e., $I=I_1\subseteq \n$.
Obviously one has $\tau\circ(\theta', 1_0) = \theta'$, and hence
$\Phi_2\circ(\theta', 1_0)=0$ for any $\theta'$. Now in both cases,
we have  $\pi^I(\Phi_2\circ(\theta, 1_1))=0$.
Therefore the claim holds and the ``if\;" part follows.

$(\Rightarrow)$ Next we prove the ``only if\;" part. Assume that
$\Phi_2\circ(\theta, 1_1)\in {^{k+1}\iu}(n+1)$. We need only to
show that $\pi^{I}(\theta)=0$ for every subset $I\subseteq\n$ with
$|I|=k-1$. Set $I'=I\cup\{n+1\}$. Clearly $I'$ is a subset of
$[n+1]$ with $|I'|=k$. By \eqref{E3.9.1}, we have
\[0=\pi^{I'}(\Phi_2\circ(\theta, 1_1))=
\Phi_2\circ(\pi^I(\theta), 1_1)=
1_2\circ(\pi^I(\theta), 1_1)- \tau\circ(\pi^I(\theta), 1_1).\]
Note that
\[\{1_2\circ(\sigma, 1_1)\mid \sigma\in \S_{k-1}\}\bigcup
\{\tau\circ(\sigma, 1_1)\mid \sigma\in \S_{k-1}\}\]
are linearly independent in $\k\S_{k}$.
It follows that $1_2\circ(\pi^I(\theta), 1_1)=0$ and
hence $\pi^I(\theta)=0$.

(2) For every $I\subset \n$, denote
by $\tilde{I}$ the set $I\cup\{n+1, n+2\}$.
Then we obtain a 1-1 correspondence between subsets of $\n$
and the ones of $[n+2]$ containing both $n+1$ and $n+2$.

$(\Leftarrow)$
Assume that $\theta\in {^k\iu}(n)$. Then for every
$J\subseteq [n+2]$
with $|J|=k+1$, we claim that
$$\pi^J(1_2\circ (\theta, 1_2-\tau)) = 0.$$
Easy calculations show that
\[\Gamma^{n+1}(1_2\circ (\theta, 1_2-\tau)) =
\Gamma^{n+2}(1_2\circ (\theta, 1_2-\tau)) = 0.\]
Thus, if $\{n+1, n+2\}\not\subseteq J$, then
$\pi^J (1_2\circ (\theta, 1_2-\tau))= 0$
since $\pi^J$ will factor through $\Gamma^{n+1}$ or
$\Gamma^{n+2}$ in this case. Now we may assume
that $J=\tilde I$ for some $I\subseteq \n$. Then
\begin{equation}
\label{E3.9.2}\tag{E3.9.2}
\pi^{\tilde I}(1_2\circ (\theta, 1_2-\tau))
= 1_2\circ (\pi^I(\theta), 1_2-\tau) =0.
\end{equation}
The ``if" part follows.

$(\Rightarrow)$
For the ``only if" part, again we use \eqref{E3.9.2} and
the fact that $1_2\circ (\pi^I(\theta),1_2-\tau))=0$
if and only if $\pi^I(\theta)=0$.
\end{proof}

The main result of this subsection is the following
separability property of the ideals ${^k\iu}$ of $\As$.

\begin{proposition}
\label{xxpro3.10}
\begin{enumerate}
\item[(1)]
Let $\I$ be a nonzero ideal of $\As$. Then $\I\cap{^k\iu}
\neq \I\cap{^{k+1}\iu}$ for all $k\gg 0$.
\item[(2)]
${^k\iu}\neq {^{k+1}\iu}$ for every $k\ge 2$.
\end{enumerate}
\end{proposition}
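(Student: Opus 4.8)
The plan is to prove part (1) first for an arbitrary nonzero ideal $\I$ and then recover part (2) by specializing to $\I=\As$. The engine is Lemma \ref{xxlem3.9}(1), which raises the truncation level of an element by exactly one: composing with $\Phi_2$ in a fresh slot sends $\theta\in\As(n)$ to $\Phi_2\circ(\theta,1_1)\in\As(n+1)$, and this lies in ${^{k+1}\iu}$ exactly when $\theta$ lies in ${^k\iu}$. Since $\I$ is an ideal and $\theta\in\I$, the element $\Phi_2\circ(\theta,1_1)$ again lies in $\I$ by Definition \ref{xxdef1.8}(1), so the entire orbit of $\theta$ under this operation stays inside $\I$. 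The only extra input needed is that $\Phi_2\circ(-,1_1)\colon\As(n)\to\As(n+1)$ is injective for $n\ge 1$; this follows from the linear independence of $\{1_2\circ(\sigma,1_1)\}_{\sigma\in\S_n}\cup\{\tau\circ(\sigma,1_1)\}_{\sigma\in\S_n}$ in $\Bbbk\S_{n+1}$, where $\tau=(12)$, the same disjoint-support observation already used in the proof of Lemma \ref{xxlem3.9}(1).

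For $0\neq\theta\in\As(n)$ write $\ell(\theta)$ for the largest $k$ with $\theta\in{^k\iu}(n)$; this is well defined with $\ell(\theta)\le n$ since ${^{n+1}\iu}(n)=0$. The key step is to show that if $0\neq\theta\in\As(n)$ then $\Phi_2\circ(\theta,1_1)$ is nonzero with $\ell(\Phi_2\circ(\theta,1_1))=\ell(\theta)+1$. Put $m=\ell(\theta)$, so $n\ge m$. The bound $\ell\ge m+1$ is the forward direction of Lemma \ref{xxlem3.9}(1) with $k=m$. For the reverse bound $\Phi_2\circ(\theta,1_1)\notin{^{m+2}\iu}(n+1)$ there are two cases. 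If $n\ge m+1$, the backward direction of Lemma \ref{xxlem3.9}(1) with $k=m+1$ applies and reduces the claim to $\theta\notin{^{m+1}\iu}(n)$, which holds by definition of $m$. If instead $n=m$, then ${^{m+2}\iu}(n+1)={^{m+2}\iu}(m+1)=0$, so we only need $\Phi_2\circ(\theta,1_1)\neq 0$, which is exactly the injectivity fact. This boundary case, where the arity equals the truncation level and Lemma \ref{xxlem3.9}(1) is unavailable because of its hypothesis $n\ge k$, is the one delicate point of the argument, and is precisely what the injectivity statement is designed to cover. Note that the margin $n-\ell(\theta)$ is preserved under the operation, so the boundary case either never occurs or occurs at every step.

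Granting this, part (1) follows by iteration. Given $\I\neq 0$, choose a nonzero $\theta\in\I(n)$ of arity $n\ge 2$: such a $\theta$ exists because a nonzero element of arity $\le 1$ would force $1_1\in\I$ and hence $\I=\As$, in which case one may instead take $\theta=\Phi_2$. Set $\eta_0=\theta$ and $\eta_{j+1}=\Phi_2\circ(\eta_j,1_1)$. Each $\eta_j\in\I$ by the ideal property, and by the key step (induction on $j$) each $\eta_j$ is nonzero with $\ell(\eta_j)=m+j$, where $m=\ell(\theta)$. Hence $\eta_j\in(\I\cap{^{m+j}\iu})\setminus{^{m+j+1}\iu}$, so $\I\cap{^{m+j}\iu}\neq\I\cap{^{m+j+1}\iu}$ for all $j\ge 0$; that is, $\I\cap{^k\iu}\neq\I\cap{^{k+1}\iu}$ for all $k\ge m$, which is (1). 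For part (2) take $\I=\As$ and $\theta=\Phi_2\in\As(2)$: since $\Phi_2\in{^2\iu}(2)$ while ${^3\iu}(2)=0$ we have $\ell(\Phi_2)=2$, and the same iteration yields ${^k\iu}=\As\cap{^k\iu}\neq\As\cap{^{k+1}\iu}={^{k+1}\iu}$ for every $k\ge 2$. Here the arity always equals the level, so the boundary case recurs at every step and is handled uniformly by the injectivity of $\Phi_2\circ(-,1_1)$.
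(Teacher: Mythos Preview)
Your proof is correct and follows essentially the same route as the paper: iterate $\theta\mapsto\Phi_2\circ(\theta,1_1)$ and use Lemma~\ref{xxlem3.9}(1) to see that the truncation level goes up by exactly one at each step, so an element witnessing $\I\cap{^{k}\iu}\neq\I\cap{^{k+1}\iu}$ produces one for the next $k$. Your treatment is in fact slightly more careful than the paper's: when the arity equals the level ($n=m$), the backward direction of Lemma~\ref{xxlem3.9}(1) with $k=m+1$ is not literally available, and you correctly fall back on the injectivity of $\Phi_2\circ(-,1_1)$ (the same linear-independence observation used inside the proof of Lemma~\ref{xxlem3.9}) to conclude $\Phi_2\circ(\theta,1_1)\neq 0\in{^{m+2}\iu}(m+1)=0$; the paper glosses over this boundary case.
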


\begin{proof}
(1) Note that $\bigcap_{k\ge 0}{^k\iu}=0$ since ${^k\iu}(k-1)=0$ for all
$k\ge 1$. By Lemma \ref{xxlem3.8} and the assumption $\I\ne 0$, we have
$I\cap {^2\iu}\neq 0$. Thus $\I\cap{^{k_0}\iu} \ne \I\cap{^{k_0+1}\iu}$
for some $k_0\ge 1$. There exist some $k_0\ge1$, $n\ge k_0$, and
$\theta\in \I(n)$ such that $\theta\in {^{k_0}\iu}(n)$ while
$\theta\not\in {^{k_0+1}\iu}(n)$. By Lemma~\ref{xxlem3.9},
$\Phi_2\circ(\theta,1_1)\in {^{k_0+1}\iu}(n+1)$, and
$\Phi_2\circ(\theta,1_1)\not\in {^{k_0+2}\iu}(n+1)$, which implies that
$\I\cap{^{k_0+1}\iu} \ne \I\cap{^{k_0+2}\iu}$. By induction we may show
that $\I\cap{^k\iu}\neq \I\cap{^{k+1}\iu}$ for all $k\ge k_0$.

(2) The statement follows from the above proof and the fact that
$\Phi_2\in {^2\iu}$ and $\Phi_2\not\in {^3\iu}$.
\end{proof}

\begin{remark}
\label{xxrem3.11}
Recall that the descending chain condition (DCC, for short) for an object
$C$ means that any descending chain
$$C\supseteq C_1\supseteq C_2\supseteq C_3\supseteq \cdots$$
of subobjects of $C$ is stable, that is, $C_k = C_{k+1} =\cdots$ for
sufficiently large $k$. The proposition says that the DCC
does NOT hold for any nonzero ideal of $\As$ and $\As$ is not artinian.
\end{remark}

Let $\ip$ be a unitary operad and let ${^i \iu}$ be ${^i \iu}_{\ip}$.
Let $\iu$ denote the
$\S$-module
\begin{equation}
\label{E3.11.1}\tag{E3.11.1}
\bigoplus_{i=0}^{\infty} {^i \iu}(i).
\end{equation}

\begin{proposition}
\label{xxpro3.12}
Let $\ip$ be a unitary operad.
\begin{enumerate}
\item[(1)]
$\iu$ is closed under partial compositions.
\item[(2)]
$\Bbbk \1_1\oplus \iu$ is a unitary operad.
\item[(3)]
$\Bbbk \1_1\oplus \iu$ is $\Uni$-augmented.
\end{enumerate}
\end{proposition}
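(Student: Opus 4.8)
The plan is to exhibit $\iq := \Bbbk\1_1\oplus\iu$ as a unitary \emph{suboperad} of $\ip$ and then to read off the $\Uni$-augmentation from the structural Lemma \ref{xxlem3.5}. The guiding point is that $\iu(n) = {^n\iu}(n)$ is exactly the ``diagonal'' slice of the truncation ideal ${^n\iu}$, so arities line up with truncation indices in Proposition \ref{xxpro3.1}(2). Concretely, for part (1) take $\mu\in\iu(m) = {^m\iu}(m)$, $\nu\in\iu(n) = {^n\iu}(n)$ and $1\le i\le m$. If $m,n\ge 1$, then $\mu\underset{i}{\circ}\nu$ is a generator of the product ${^m\iu}{^n\iu}$, so Proposition \ref{xxpro3.1}(2) puts it in ${^{m+n-1}\iu}$; as its arity is $m+n-1$, it lies in ${^{m+n-1}\iu}(m+n-1) = \iu(m+n-1)$. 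The only boundary case is $n=0$, i.e.\ $\nu\in\Bbbk\1_0$: here $\mu\underset{i}{\circ}\1_0 = \pi^{[m]\setminus\{i\}}(\mu)$ is the restriction of $\mu$ to a subset of size $m-1$, hence vanishes by the definition of ${^m\iu}(m)$. (Composition with $\1_0$ on the left cannot occur, as $1\le i\le m$ forces $m\ge 1$.) This closes $\iu$ under partial compositions.

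For part (2), I would check that $\iq$ is a sub-$\S$-module of $\ip$: for $n\ne 1$ one has $\iq(n) = {^n\iu}(n)$, an $\S_n$-submodule of $\ip(n)$ as noted just after \eqref{E3.0.1}, while $\iq(1) = \Bbbk\1_1\oplus{^1\iu}(1) = \ip(1)$ since $\1_1$ splits the augmentation $\pi^{\emptyset}$. It contains the $0$-unit $\1_0$ (because $\iq(0) = {^0\iu}(0) = \ip(0)$) and the identity $\1_1$. Closure under $\underset{i}{\circ}$ follows from part (1) together with the identity axiom (OP1$'$): writing each arity-one input as its $\Bbbk\1_1$-part plus its ${^1\iu}(1)$-part, every partial composition either reduces to part (1) or is absorbed by the identity. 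Being a sub-$\S$-module of the operad $\ip$ that contains $\1_1$ and is closed under partial compositions, $\iq$ inherits (OP1$'$)--(OP3$'$) and is a unitary operad.

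For part (3), since $\iq$ is unitary, Lemma \ref{xxlem1.12}(2) gives the unique morphism $f\colon\Uni\to\iq$. For the retraction I would let $g$ be the canonical quotient $\iq\to\iq/{^1\iu_{\iq}}$ composed with the isomorphism of Lemma \ref{xxlem3.5}(1); that lemma says $\iq/{^1\iu_{\iq}}$ is isomorphic to either $\Com$ or $\Uni$, the two being distinguished in arity $2$. The point is that the quotient collapses to the \emph{trivial} operad: $\iq(2) = {^2\iu}(2)$, and ${^1\iu_{\iq}}(2) = \ker(\pi^{\emptyset}\colon\iq(2)\to\iq(0))$ is all of $\iq(2)$, because each $\mu\in{^2\iu}(2)$ satisfies $\pi^{\emptyset}(\mu) = \pi^{\emptyset}\bullet\pi^{I}(\mu) = 0$ for $|I|=1$ by Lemma \ref{xxlem2.4}(1). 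Hence $(\iq/{^1\iu_{\iq}})(2) = 0$, so $\iq/{^1\iu_{\iq}}\cong\Uni$ and $g\colon\iq\to\Uni$. Finally $gf$ is an endomorphism of the initial object $\Uni$, hence $\Id_{\Uni}$, proving $\iq$ is $\Uni$-augmented.

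The step I expect to be most delicate is not part (1), which is immediate from Proposition \ref{xxpro3.1}(2), but the arity-boundary bookkeeping feeding into (2) (making sure all $\1_0$-compositions vanish and the $\1_1$-splittings close up) and, above all, the $\Com$-versus-$\Uni$ determination in (3). The latter is the real crux: it records that $\iu$ carries no $2$-unit, so its maximal quotient degenerates to $\Uni$ rather than $\Com$.
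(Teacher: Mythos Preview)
Your proof is correct and follows essentially the same line as the paper. For part (1) both you and the paper invoke Proposition~\ref{xxpro3.1}(2) to get ${^m\iu}(m)\,{^n\iu}(n)\subseteq{^{m+n-1}\iu}(m+n-1)$; your treatment of the boundary case $n=0$ (where the composite actually vanishes) is a useful explicit step that the paper's one-line ``for all $m,n$'' glosses over. For parts (2) and (3) the paper simply writes ``These follow from part (1)'', so your arguments via the suboperad criterion and Lemma~\ref{xxlem3.5}(1) are a faithful unpacking of what the authors leave implicit rather than a different route.
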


\begin{proof} (1) By the proof of Proposition \ref{xxpro3.1} (2),
${^m \iu}(m) {^n \iu}(n)\subseteq {^{m+n-1}\iu}(m+n-1)$
for all $m,n$. The assertion follows.

(2,3) These follow from part (1).
\end{proof}

\section{Dimension computation, basis theorem and categorification}
\label{xxsec4}

\subsection{Definitions of growth properties}
\label{xxsec4.1}
We collect some definitions.

\begin{definition}
\label{xxdef4.1}
Let $\M=(\M(n))_{n\ge 0}$ be an $\S$-module (or a $\Bbbk$-linear
operad).
\begin{enumerate}
\item[(1)]
The sequence $(\dim \M(0), \dim \M(1), \cdots)$ is called the
\emph{dimension sequence} (or simply \emph{dimension}) of $\M$.
We call $\M$ {\it locally finite} if $\dim_{\Bbbk} \M(n)<\infty$
for all $n$.
\item[(2)]
The {\it generating series} of $\M$ is defined to be
$$G_{\M}(t)=\sum_{n=0}^{\infty} \dim \M(n) t^n \in {\mathbb Z}[[t]].$$
The {\it exponential generating series}  of $\M$ is defined to be
$$E_{\M}(t)=\sum_{n=0}^{\infty} \frac{\dim \M(n)}{n!} t^n
\in {\mathbb Q}[[t]].$$
\item[(3)]
The {\it exponent} of $\M$ is defined to be
$$\exp(\M):=\limsup_{n\to \infty} (\dim \M(n))^{\frac{1}{n}}.$$
We say $\M$ has {\it exponential growth} if $\exp(\M)>1$.
We say $\M$ has {\it finite exponent} if
$\exp(\M)<\infty$.
%
\item[(4)]
We say that $\M$ has \emph{polynomial growth}
if there are $0< C, k< \infty$ such that
$\dim \M(n)< Cn^k$ for all $n>0$. The infimum of
such $k$ is called the
\emph{order of polynomial growth} and denoted by $o(\M)$.
\item[(5)]
We say $\M$ has {\it sub-exponential growth} if $\exp(\M)\leq 1$ and
if $\M$ does not have polynomial growth.
\item[(6)]
The {\it Gelfand-Kirillov dimension} ({\it GKdimension} for short)
of $\M$
is defined to be
$$\gkdim (\M)=\limsup_{n\to\infty}
\log_{n} \left(\sum_{i=0}^{n} \dim_{\Bbbk} \M(i)\right)$$
which is the same as \eqref{E0.0.3}.
\end{enumerate}
\end{definition}

When we talk about the growth of an operad $\ip$, we implicitly assume that
$\ip$ is locally finite.
It is easy to see that $\exp(\As)=\infty$, so $\As$ has (infinite) exponential
growth. And $\gkdim (\Com)=1$, so $\Com$ has polynomial growth.
We will see that for every
integer $k\geq 1$, there exists a quotient operad
$\ip/{^k\iu}$ has polynomial growth of order (no more than) $k$.
First we state a lemma for arbitrary unitary operads.

\begin{lemma}
\label{xxlem4.2}
Let $\ip$ be a $\k$-linear {\rm{(}}symmetric or plain{\rm{)}}
unitary operad. If ${^k\iu}=0$ for some $k$, then $\gkdim \ip\leq k$.
\end{lemma}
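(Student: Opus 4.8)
The plan is to convert the hypothesis ${^k\iu}=0$ directly into a polynomial bound on the dimension sequence, using only the restriction operators. The key reformulation is this: for $n\ge k$, the definition \eqref{E3.0.1} together with the transitivity of restriction [Lemma \ref{xxlem2.4}(1)] shows that the intersection over $|I|\le k-1$ collapses to the intersection over $|I|=k-1$, so that
\[
{^k\iu}(n)=\bigcap_{I\subseteq\{1,\dots,n\},\,|I|=k-1}\ker\pi^I .
\]
Hence the assumption ${^k\iu}(n)=0$ is exactly the statement that the $\k$-linear map
\[
\Pi_n\colon \ip(n)\To \bigoplus_{|I|=k-1}\ip(k-1),\qquad \theta\longmapsto (\pi^I(\theta))_{|I|=k-1},
\]
is injective. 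This is the whole point of the argument, and I would isolate it first.

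From injectivity of $\Pi_n$ I would read off the dimension estimate immediately. There are exactly $\binom{n}{k-1}$ subsets $I\subseteq\{1,\dots,n\}$ of size $k-1$, and each coordinate $\pi^I(\theta)$ lands in $\ip(k-1)$, so comparing dimensions across the injection $\Pi_n$ gives
\[
\dim_{\k}\ip(n)\le \binom{n}{k-1}\,\dim_{\k}\ip(k-1)
\]
for all $n\ge k$. As in the rest of this section, local finiteness is in force, so $\dim_{\k}\ip(k-1)<\infty$; note that the displayed inequality then also guarantees $\dim_{\k}\ip(n)<\infty$ for every $n$. In particular $\dim_{\k}\ip(n)$ is bounded above by a polynomial in $n$ of degree $k-1$.

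The final step is to sum these bounds and compare against the definition \eqref{E0.0.3} of $\gkdim$. Summing a degree-$(k-1)$ polynomial bound over $i=0,\dots,n$ (concretely, via the hockey-stick identity $\sum_{i}\binom{i}{k-1}=\binom{n+1}{k}$, with the finitely many terms $i<k-1$ absorbed into a constant) produces
\[
\sum_{i=0}^{n}\dim_{\k}\ip(i)\ \le\ C+\binom{n+1}{k}\,\dim_{\k}\ip(k-1),
\]
whose right-hand side is a polynomial in $n$ of degree $k$. Applying $\log_n(-)$ and taking $\limsup_{n\to\infty}$ then yields $\gkdim\ip\le k$, as desired.

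I would not expect any serious obstacle: essentially all the content is in recognizing that ${^k\iu}=0$ is precisely the injectivity of the bundled restriction map $\Pi_n$. The only points needing a word of care are the reduction from $|I|\le k-1$ to $|I|=k-1$ in the intersection (which is just transitivity of restriction, Lemma \ref{xxlem2.4}(1), and is already recorded in \eqref{E3.0.1}) and the tacit local-finiteness assumption that makes the dimension counts meaningful. It is worth remarking that the argument never invokes a $2$-unit, so it applies verbatim to arbitrary unitary operads, symmetric or plain, as the statement claims.
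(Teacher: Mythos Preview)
Your proposal is correct and follows essentially the same approach as the paper: both recognize that ${^k\iu}(n)=0$ means the bundled restriction map $\ip(n)\to\bigoplus_{|I|=k-1}\ip(k-1)$ is injective, yielding $\dim\ip(n)\le\binom{n}{k-1}\dim\ip(k-1)$, from which the GKdimension bound follows. The paper's proof is terser---it simply says ``the assertion follows'' after the dimension inequality---while you spell out the summation and hockey-stick step explicitly, but the argument is the same.
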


As usual
$${n \choose k}=\frac{n!}{k!\cdot (n-k)!}.$$

\begin{proof} Consider the restriction operator $\pi^I: \ip(n)\to \ip(k-1)$
for all $n\geq k-1$, which induces an injective map
$$(\pi^I)': \quad \ip(n)/\ker \pi^I \to \ip(k-1)$$
where $I\subseteq \n$ with $|I|=k-1$.
By hypothesis and definition,
$$0={^k \iu}(n)=\bigcap\limits_{I\subset \n,\, |I|= k-1} \ker\pi^I.$$
Hence we have an injective map
$$\ip(n)\xrightarrow{\cong} 
\frac{\ip(n)}{(\bigcap\limits_{I\subset \n,\, |I|= k-1} \ker\pi^I)}
\to \bigoplus\limits_{I\subset \n,\, |I|= k-1} \frac{\ip(n)}{\ker\pi^I}
\to \bigoplus\limits_{I\subset \n,\, |I|= k-1} \ip(k-1),$$
which implies that
$$\dim \ip(n)\leq \dim \ip(k-1) {n \choose k-1}$$
for all $n\geq k-1$. The assertion follows.
\end{proof}

Let $\ip$ be a unitary operad and $\I$ an ideal of $\ip$.
Let $d^k_{\I}(n)$  denote the codimension of $({^k\iu}\cap \I)(n)$ in $\I(n)$,
that is,
\begin{equation}
\label{E4.2.1}\tag{E4.2.1}
d^k_{\I}(n)=\dim_\k\dfrac{\I}{{^k\iu}\cap \I}(n) = \dim_\k \I(n)-\dim_\k({^k\iu}\cap \I)(n),
\end{equation}
where the second equality holds if $\ip$ is locally finite.
If $\I=\ip$, we have
\[d^k(n)=\dim_\k\dfrac{\ip}{{^k\iu}}(n) = \dim_\k \ip(n)-\dim_\k {^k\iu}(n),\]
where the second equation holds if $\ip$ is locally finite.

We do not assume that $d^k_{\I}(n)$ is finite. When $\ip$ is locally
finite, we will give a recursive formula for $d^k_{\I}(n)$. The key idea
is to find a basis for the quotient module
$\dfrac{{^{k}\iu}\cap \I}{{^{k+1}\iu}\cap \I}(n)$, so we can calculate
$\dim \dfrac{{^{k}\iu}\cap I}{{^{k+1}\iu}\cap \I}(n)$ for all $n$.

For every subset $I\subseteq \n$, we use $c_I$ to denote the element
in $\S_n$ which corresponds to the permutation
\begin{equation}
\label{E4.2.2}\tag{E4.2.2}
c_I:=(1,\cdots,i_1-1, i_1+1, \cdots, i_s-1, i_s+1, \cdots, n, i_1,\cdots, i_s),
\end{equation}
where $I=\{i_1,\cdots, i_s\}$ with $i_1<\cdots < i_s$.
Let $\ip$ be a $2$-unitary operad. By an easy
calculation we have
\[\Gamma^I((\1_2\circ (\1_{n-s}, \1_s))\ast c_I)=\1_{n-s}.\]
In fact, we have a more general result.

\begin{lemma}
\label{xxlem4.3}
Let $\ip$ be 2-unitary. Let $n\geq k$ be integers and set $s=n-k$.
\begin{enumerate}
\item[(1)]
Let $I\subseteq [n]$ be a subset with $|I|=s$. Then
$\Gamma^I((\1_2\circ(\theta, \1_{s}))\ast c_I) = \theta$
for all $\theta\in\ip(k)$.
\item[(2)]
Let $J\subseteq \n$ be a subset with $|J|=k$. Then for every
$\theta\in {^k\iu}(k)$ and every $\sigma\in\S_{n}$,
\[\pi^{J}((\1_2\circ(\theta, \1_{s}))\ast \sigma)=0\]
unless $J=\{\sigma^{-1}(1),\cdots, \sigma^{-1}(k)\}$.
\end{enumerate}
\end{lemma}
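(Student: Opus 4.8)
The plan is to reduce both parts, by means of the equivariance formula \eqref{E2.11.1} of Lemma \ref{xxlem2.11}(1), namely $\pi^{I}(\theta\ast\sigma)=\pi^{\sigma I}(\theta)\ast\pi^{I}(\sigma)$, to a single family of restriction computations applied to the element $\1_2\circ(\theta,\1_s)\in\ip(n)$. The idea is that the right actions $\ast c_I$ and $\ast\sigma$ can be pushed through $\Gamma^I=\pi^{\hat I}$ and through $\pi^{J}$, respectively, at the cost of permuting the index set and leaving a factor $\pi^{\hat I}(c_I)$ or $\pi^{J}(\sigma)$ acting on the right. Since each such factor is a permutation, right multiplication by it is invertible and hence cannot affect whether the result vanishes; so everything comes down to evaluating $\pi^{J'}(\1_2\circ(\theta,\1_s))$ for suitable index sets $J'$.

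For part (1), first I would unwind \eqref{E4.2.2}: the permutation $c_I$ carries $\hat I=\n\setminus I$ onto $[k]$ and $I$ onto $\{k+1,\dots,n\}$, each order-preservingly, so that $c_I\hat I=[k]$ and the induced permutation $\pi^{\hat I}(c_I)$ is the identity $\1_k$. Applying Lemma \ref{xxlem2.11}(1) with $\Gamma^I=\pi^{\hat I}$ then collapses the statement to $\Gamma^I((\1_2\circ(\theta,\1_s))\ast c_I)=\pi^{[k]}(\1_2\circ(\theta,\1_s))$. This canonical restriction I would evaluate by associativity (OP2): it equals $\1_2\circ(\theta\circ(\1_1,\dots,\1_1),\,\1_s\circ(\1_0,\dots,\1_0))=\1_2\circ(\theta,\1_0)$, using $\pi^{\emptyset}(\1_s)=\1_0$ from Lemma \ref{xxlem2.7}(1), and this is $\theta$ by \eqref{E1.1.3}.

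For part (2), Lemma \ref{xxlem2.11}(1) reduces the claim to showing that $\pi^{J'}(\1_2\circ(\theta,\1_s))=0$ for every $k$-subset $J'=\sigma J$ of $\n$ with $J'\neq[k]$; note that $\sigma J=[k]$ is exactly the condition $J=\{\sigma^{-1}(1),\dots,\sigma^{-1}(k)\}$ excluded in the statement. Writing $J_1'=J'\cap[k]$ and $J_2'=(J'\cap\{k+1,\dots,n\})-k$, associativity together with $\pi^{J_2'}(\1_s)=\1_{|J_2'|}$ gives $\pi^{J'}(\1_2\circ(\theta,\1_s))=\1_2\circ(\pi^{J_1'}(\theta),\1_{|J_2'|})$. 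Because $|J'|=k$ while $J'\neq[k]$ forces $J'\not\subseteq[k]$, we have $J_2'\neq\emptyset$ and hence $|J_1'|\le k-1$; since $\theta\in{^k\iu}(k)=\bigcap_{|I|\le k-1}\ker\pi^{I}$ by \eqref{E3.0.1}, this yields $\pi^{J_1'}(\theta)=0$, and the whole expression vanishes.

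The only genuinely delicate point is the bookkeeping with conventions: checking directly from \eqref{E4.2.2} that $c_I$ sends $\hat I$ to $[k]$ and $I$ to $\{k+1,\dots,n\}$ order-preservingly, that the induced restriction $\pi^{\hat I}(c_I)$ is trivial, and the elementary index identity $\sigma J=[k]\Leftrightarrow J=\{\sigma^{-1}(1),\dots,\sigma^{-1}(k)\}$. Once these are settled, the remaining manipulations are routine invocations of (OP2), Lemma \ref{xxlem2.7}(1) and \eqref{E1.1.3}.
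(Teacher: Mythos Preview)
Your proposal is correct and essentially coincides with the paper's own proof. Both proofs use Lemma~\ref{xxlem2.11}(1) (the paper invokes (OP3) directly in part~(1), which is the same thing unpacked) to reduce to evaluating $\pi^{J'}(\1_2\circ(\theta,\1_s))$ for suitable $J'$; the only cosmetic difference is that in part~(2) the paper factors $\pi^{J'}$ through a single $\Gamma^r$ with $r\in[k]\setminus J'$ (via Lemma~\ref{xxlem2.4}(3)) rather than splitting via (OP2) into the two blocks $J_1',J_2'$, but both routes end with $\Gamma^r(\theta)=0$ or $\pi^{J_1'}(\theta)=0$ for the same reason.
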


\begin{proof} (1) To avoid possible confusion, we use $1_{n}$ for
$\1_{\S_n}\in \S_n$ for all $n\geq 0$. Applying (OP3)
and using the fact that $\theta\ast 1_{k}=\theta$
for all $\theta\in \ip(k)$, we have
\begin{align*}
\Gamma^I(\1_2\circ(\theta, \1_s)\ast c_I)
= & ((\1_2\circ(\theta, \1_s))\ast c_I)\circ(\1_{\chi_{_{\hat I}}(1)},
   \cdots, \1_{\chi_{_{\hat I}}(n)})\\
= & ((\1_2\circ(\theta, \1_s))\ast c_I)\circ(\1_{\chi_{_{\hat I}}(1)}\ast 1_{\chi_{_{\hat I}}(1)},
   \cdots, \1_{\chi_{_{\hat I}}(n)}\ast 1_{\chi_{_{\hat I}}(n)})\\
= & [(\1_2\circ(\theta, \1_s))\circ (\1_{\chi_{_{\hat I}}(c_I^{-1}(1))}, \cdots,
	 \1_{\chi_{_{\hat I}}(c_I^{-1}(n))})]\ast
	 [c_I\circ(1_{\chi_{_{\hat I}}(1)}, \cdots, 1_{\chi_{_{\hat I}}(n)})]\\
=& [(\1_2\circ(\theta, \1_s))\circ
   (\underbrace{\1_1, \cdots, \1_1}_{k}, \underbrace{\1_0,\cdots,\1_0}_{s})]
	 \ast 1_{k} \\
=& \1_2\circ(\theta\circ(\underbrace{\1_1, \cdots, \1_1}_{k}),
   \1_s\circ(\underbrace{\1_0,\cdots,\1_0}_{s}))\\
=& \1_2\circ(\theta, \1_0) = \theta,
\end{align*}
where the second to last equality is Lemma \ref{xxlem2.7}(1) and
the last equality uses the hypothesis that $\ip$ is 2-unitary.

(2) We will consider the special case $\sigma=1_{n}\in \S_n$,
and the general case follows from Lemma \ref{xxlem2.11}.
If there exists some $r\in [k]$ such that $r\not\in J$,
then $\pi^J =\pi^{J'}\bullet \Gamma^r$ for some
$J'\subseteq [n-1]$ by Lemma \ref{xxlem2.4}(3). We
have
$$\Gamma^r(\1_2\circ (\theta, \1_{n-k})) = \1_2\circ
(\Gamma^r(\theta), \1_{n-k}) = 0,$$
as $\theta\in {^k\iu}(k)$. Hence $\pi^{J}(\1_2\circ (\theta, \1_{n-k}))=0$
as desired.
\end{proof}

As an immediate consequence of the above lemma we have the following.

\begin{corollary}
\label{xxcor4.4}
Let $I, I'\subseteq\n$ be subsets with $|I|=|I'|=n-k=:s$.
For $\theta\in {^k \iu}(k)$, we have
\begin{equation}\label{E4.4.1}\tag{E4.4.1}
\Gamma^{I'}((\1_2\circ (\theta, \1_s))\ast c_I)=
\begin{cases}
\theta, & \quad \text {if } I' = I;  \\
0, & \quad \text{otherwise.}
\end{cases}
\end{equation}
\end{corollary}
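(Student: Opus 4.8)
The plan is to read this off directly from Lemma~\ref{xxlem4.3}, handling the two cases of \eqref{E4.4.1} separately. For the diagonal case $I'=I$, the required identity $\Gamma^{I}((\1_2\circ(\theta,\1_s))\ast c_I)=\theta$ is literally the content of Lemma~\ref{xxlem4.3}(1), applied to $\theta\in{^k\iu}(k)\subseteq\ip(k)$; so the first line of \eqref{E4.4.1} needs no further argument. The whole substance of the corollary thus lies in the off-diagonal case $I'\neq I$.

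For $I'\neq I$, the first step is to re-express the contraction operator as a restriction operator: by definition $\Gamma^{I'}=\pi^{\widehat{I'}}$, where $\widehat{I'}=\n\setminus I'$, and since $|I'|=s=n-k$ we have $|\widehat{I'}|=k$. This lands us exactly in the hypothesis of Lemma~\ref{xxlem4.3}(2), which I would invoke with $J=\widehat{I'}$ and $\sigma=c_I$: it gives
$\pi^{\widehat{I'}}((\1_2\circ(\theta,\1_s))\ast c_I)=0$ unless $\widehat{I'}=\{c_I^{-1}(1),\cdots,c_I^{-1}(k)\}$. It then remains only to identify this exceptional set. Unwinding the definition \eqref{E4.2.2} of $c_I$ under the one-line (preimage) convention of Section~\ref{xxsec8.1}, the listed sequence is $(c_I^{-1}(1),\cdots,c_I^{-1}(n))$, whose first $k$ entries are the elements of $\widehat{I}=\n\setminus I$ in increasing order and whose last $s$ entries are $i_1,\cdots,i_s$. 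Hence $\{c_I^{-1}(1),\cdots,c_I^{-1}(k)\}=\widehat{I}$, so the exceptional condition $\widehat{I'}=\{c_I^{-1}(1),\cdots,c_I^{-1}(k)\}$ reads $\widehat{I'}=\widehat{I}$, i.e.\ $I'=I$. Since we are in the case $I'\neq I$, the exceptional condition fails, and Lemma~\ref{xxlem4.3}(2) yields $\Gamma^{I'}((\1_2\circ(\theta,\1_s))\ast c_I)=0$, which is the second line of \eqref{E4.4.1}.

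The one genuine obstacle is precisely this combinatorial identification of $\{c_I^{-1}(1),\cdots,c_I^{-1}(k)\}$ with the complement $\widehat{I}$: one must read the permutation convention of Section~\ref{xxsec8.1} correctly and keep track of how $c_I$ routes $\widehat{I}$ into its first $k$ output slots and $I$ into its last $s$ slots. Once the convention is pinned down---which I would confirm against Example~\ref{xxex2.8}, where writing $\sigma^{-1}$ in one-line notation reproduces the sequence listed there---the matching is immediate and the corollary follows with no computation beyond the two parts of Lemma~\ref{xxlem4.3}.
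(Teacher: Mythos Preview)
Your proof is correct and follows exactly the route the paper intends: the corollary is stated as ``an immediate consequence of the above lemma,'' and you have unpacked that by invoking Lemma~\ref{xxlem4.3}(1) for the case $I'=I$ and Lemma~\ref{xxlem4.3}(2) with $J=\widehat{I'}$ and $\sigma=c_I$ for the case $I'\neq I$, together with the correct identification $\{c_I^{-1}(1),\ldots,c_I^{-1}(k)\}=\widehat{I}$ from the one-line convention of Section~\ref{xxsec8.1}.
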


We are now in a position to give a recursive formula to
compute the dimension of ${^{k}\iu \cap \I}$.
By convention, ${{^0}\iu}=\ip$.
Let $$G_{\I}^{k}(t)=\sum_{n=0}^{\infty} d_{\I}^{k}(n) t^n$$
and
let $$f_{\I}(k)=d^{k+1}_{\I}(k)-d^{k}_{\I}(k)$$
for all $k\geq 0$.
Clearly, it follows from \eqref{E4.2.1} that
\begin{align*}
f_{\I}(k)= \dim_{\Bbbk} ({^{k}\iu}\cap \I)(k),\ \ {\rm and} \ \
f_{\ip}(k)=\dim_{\Bbbk}  {^{k}\iu} (k).
\end{align*}
Note that if $f_{\I}(k)$ is not finite, then it denotes a
cardinal.

\begin{theorem}
\label{xxthm4.5}
Let $\ip$ be 2-unitary and $\I$ an ideal of $\ip$.
Let $n\ge k\geq 0$ be integers.
\begin{enumerate}
\item[(1)]
Let
$\{\theta_i\mid 1\le i\le f_{\I}(k)\}$ be a basis of
$({^k \iu}\cap \I)(k)$.
Then
\[\{\1_2\circ (\theta_i, \1_{n-k})\ast c_I\mid 1\le i\le f_\I(k),
I\subseteq \n, |I|= n-k\} \]
forms a basis of
$(({^{k}\iu}\cap \I)/({^{k+1}\iu}\cap \I))(n)$. Consequently,
\[\dim_{\Bbbk} \dfrac{{^{k}\iu}\cap \I}{{^{k+1}\iu}\cap \I}(n)=f_\I(k){\tbinom{n}{k}}.\]
\item[(2)]
\[d^{k+1}_{\I}(n) = d^k_{\I}(n) + f_{\I}(k){\tbinom{n}{k}}.\]
Equivalently,
$$G_{\I}^{k+1}(t)-G_{\I}^{k}(t)=f_{\I}(k) \;\frac{t^{k}}{(1-t)^{k+1}}.$$
\item[(3)]
If $\I=\ip$, then
\[d^{k+1}(n) = d^k(n) + (d^{k+1}(k)-d^{k}(k)){\tbinom{n}{k}},\]
for all $n$.
\end{enumerate}
\end{theorem}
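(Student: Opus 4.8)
The plan is to prove part (1) first, since parts (2) and (3) then follow by an elementary dimension count and a generating-series manipulation. Throughout I would lean on two facts. The first is Corollary \ref{xxcor4.4}: for subsets $I,I'\subseteq\n$ with $|I|=|I'|=n-k=:s$ and $\theta\in{^k\iu}(k)$ one has $\Gamma^{I'}((\1_2\circ(\theta,\1_s))\ast c_I)=\theta$ when $I'=I$ and $0$ otherwise. The second is the reformulation ${^{k+1}\iu}(n)=\bigcap_{|J|=k}\ker\pi^J$ from \eqref{E3.0.1}, combined with the identity $\Gamma^{I'}=\pi^{\hat{I'}}$, where $|\hat{I'}|=k$. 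Together these say that the family $\{\Gamma^{I'}\}_{|I'|=s}$ simultaneously reads off the coefficients of any combination of the proposed generators while annihilating ${^{k+1}\iu}(n)$.

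First I would check the proposed elements lie in $({^k\iu}\cap\I)(n)$. Membership in $\I$ is immediate: $\theta_i\in\I(k)$, $\I$ is an ideal, and $\I(n)$ is stable under $\ast c_I$. Membership in ${^k\iu}$ follows because $\theta_i\in{^k\iu}(k)$ and ${^k\iu}$ is an ideal (Proposition \ref{xxpro3.1}(1)), so $\1_2\circ(\theta_i,\1_{n-k})$ remains in the $\S_n$-stable space ${^k\iu}(n)$. For linear independence modulo $({^{k+1}\iu}\cap\I)(n)$: suppose $\sum_{i,I}a_{i,I}\,\1_2\circ(\theta_i,\1_{n-k})\ast c_I\in{^{k+1}\iu}(n)$. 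Fixing $I'$ with $|I'|=s$ and applying $\Gamma^{I'}$, the left side vanishes because $\Gamma^{I'}=\pi^{\hat{I'}}$ with $|\hat{I'}|=k$ kills ${^{k+1}\iu}(n)$, while Corollary \ref{xxcor4.4} collapses the sum to $\sum_i a_{i,I'}\theta_i$. Hence $\sum_i a_{i,I'}\theta_i=0$, and since $\{\theta_i\}$ is a basis, $a_{i,I'}=0$; letting $I'$ range over all $s$-subsets gives independence.

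The spanning step is the crux and where I expect the real work. Given $\mu\in({^k\iu}\cap\I)(n)$, I would form the candidate
$$\mu'=\sum_{|I|=s}\1_2\circ(\Gamma^I(\mu),\1_{s})\ast c_I.$$
For this to be a genuine combination of the generators I must first verify $\Gamma^I(\mu)\in({^k\iu}\cap\I)(k)$: membership in $\I$ is clear, and membership in ${^k\iu}(k)$ follows by writing $\pi^{J'}\bullet\Gamma^I=\pi^{J'}\bullet\pi^{\hat I}=\pi^{\widetilde{J'}}$ via Lemma \ref{xxlem2.4}(1), with $|\widetilde{J'}|=k-1$ for $|J'|=k-1$, which vanishes on $\mu\in{^k\iu}(n)$. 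Expanding each $\Gamma^I(\mu)$ in the basis $\{\theta_i\}$ then exhibits $\mu'$ as a combination of the proposed generators. The key point is that $\mu\equiv\mu'$: by Corollary \ref{xxcor4.4} only the $I=I'$ term survives, so $\Gamma^{I'}(\mu')=\Gamma^{I'}(\mu)$ for every $I'$ with $|I'|=s$; therefore $\Gamma^{I'}(\mu-\mu')=0$, i.e. $\pi^J(\mu-\mu')=0$ for all $J$ with $|J|=k$, which is exactly $\mu-\mu'\in{^{k+1}\iu}(n)$, and $\mu-\mu'\in\I(n)$ since both terms are. The dimension count is then automatic: there are $\binom{n}{s}=\binom{n}{k}$ subsets and $f_{\I}(k)$ vectors $\theta_i$, giving $\dim\frac{{^k\iu}\cap\I}{{^{k+1}\iu}\cap\I}(n)=f_{\I}(k)\tbinom{n}{k}$.

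Finally, part (2) follows since ${^{k+1}\iu}\subseteq{^k\iu}$ yields
$$d^{k+1}_{\I}(n)-d^k_{\I}(n)=\dim({^k\iu}\cap\I)(n)-\dim({^{k+1}\iu}\cap\I)(n)=f_{\I}(k)\tbinom{n}{k},$$
and summing against $t^n$ with $\sum_{n\ge k}\tbinom{n}{k}t^n=t^k/(1-t)^{k+1}$ gives the generating-series identity; part (3) is the specialization $\I=\ip$, using $f_{\ip}(k)=d^{k+1}(k)-d^k(k)$. The main obstacle is the spanning step, namely recognizing that $\mu\mapsto\sum_{|I|=s}\1_2\circ(\Gamma^I(\mu),\1_s)\ast c_I$ is the correct reconstruction operator and that its defect from $\mu$ is measured precisely by the next truncation ${^{k+1}\iu}$; once that is set up, the orthogonality in Corollary \ref{xxcor4.4} makes every verification a one-line application of $\Gamma^{I'}$.
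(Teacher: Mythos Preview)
Your proof is correct and follows essentially the same approach as the paper: both use the reconstruction element $\mu'=\sum_{|I|=n-k}\1_2\circ(\Gamma^I(\mu),\1_{n-k})\ast c_I$ together with the orthogonality of Corollary~\ref{xxcor4.4} to show spanning, and apply $\Gamma^{I'}$ to a putative relation to extract coefficients for independence. You are in fact slightly more careful than the paper in explicitly verifying that the generators lie in $({^k\iu}\cap\I)(n)$ and that $\Gamma^I(\mu)\in({^k\iu}\cap\I)(k)$.
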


\begin{proof}
(1) Let $I, I'\subseteq\n$ be subsets with $|I|=|I'|=n-k$.
By Corollary \ref{xxcor4.4}, we have
\begin{equation}\label{E4.5.1}\tag{E4.5.1}
\Gamma^{I'}(\1_2\circ (\Gamma^I(\theta), \1_{n-k})\ast c_I)=
\begin{cases}  \Gamma^I(\theta), & \quad \text {if } I' = I,  \\
0, & \quad \text{otherwise,}
\end{cases}
\end{equation}
for all $\theta\in ({^k\iu}\cap \I)(n)$, because
$\Gamma^I(\theta)\in ({^k\iu}\cap \I)(k)$.
For each $\theta\in ({^{k}\iu}\cap \I)(n)$, we set
\begin{equation}
\label{E4.5.2}\tag{E4.5.2}
\theta' = \theta -
\sum\limits_{I\subseteq \n \atop |I|= n-k}
\1_2\circ (\Gamma^I(\theta), \1_{n-k})\ast c_I.
\end{equation}
Then \eqref{E4.5.1} implies that $\theta'\in ({^{k+1}\iu}
\cap \I)(n)$, and hence the image of the elements of the form
$$\1_2\circ (\theta_i, \1_{n-k})\ast c_I$$
span $\dfrac{{^{k}\iu}\cap \I}{{^{k+1}\iu}\cap \I}(n)$.

Next we show the linear independency. Assume that
\[\sum_{1\le i\le f_{\I}(k)\atop I\subseteq \n, |I|= n-k}
\lambda_{i,I} \1_2\circ (\theta_i, \1_{n-k})\ast c_I \in ({^{k+1}\iu}\cap \I)(n)\]
for some $\lambda_{i,I}\in\k$. Then, for each $I$, by applying
$\Gamma^I$ we obtain that
\[\sum_{1\le i\le f_{\I}(k)} \lambda_{i,I} \theta_i =0,\]
again we use Corollary \ref{xxcor4.4}
here. It follows that all $\lambda_{i,I}$'s must be zero.

(2,3) Easy consequences of part (1).
\end{proof}

\subsection{Basis Theorem}
\label{xxsec4.2}

As a consequence of Theorem \ref{xxthm4.5}(1), we have the
following result concerning a $\Bbbk$-linear basis of
$\ip$. In theorem below, if $z_k$ is not finite, then it denotes a
cardinal.

Recall that an operad $\ip$ is {\it finitely generated}
if there is a finite dimensional subspace $X$ such that every element
in $\ip$ is generated by $X$ by using operad composition
and $\S_n$-actions for $n\geq 0$.

\begin{theorem}
\label{xxthm4.6}
Suppose $\ip$ is a 2-unitary operad.
\begin{enumerate}
\item[(1)]{\rm{[Basis theorem]}}
For each $k\geq 0$, let
$$\Theta^k:=\{\theta^k_1,\cdots, \theta^k_{z_k}\}$$
be a $\Bbbk$-linear basis for $^k\iu(k)$. Let ${\mathbf B}_k(n)$
be the set
$$\{\1_2\circ (\theta^k_i, \1_{n-k})\ast c_I\mid 1\le i\le z_k,
I\subseteq \n, |I|= n-k\}.$$
Then $\ip(n)$ has a
$\Bbbk$-linear basis
$$\bigcup_{0\leq k\leq n} {\mathbf B}_k(n)=
\{\1_n\} \cup \bigcup_{1\leq k\leq n} {\mathbf B}_k(n),$$
and, for every $k\geq 1$,
${^k \iu}(n)$ has a
$\Bbbk$-linear basis $\bigcup_{k\leq i\leq n} {\mathbf B}_i(n)$.
\item[(2)]
$\ip$ is generated by $\{\1_0, \1_1, \1_2\} \cup \left[ \bigoplus_{k\geq 1}
{^k \iu(k)}\right]$.
\item[(3)]
If $\ip$ is locally finite and $^n\iu=0$ for
some $n$, then it is finitely generated.
\end{enumerate}
\end{theorem}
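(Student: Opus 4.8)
The plan is to obtain all three parts as consequences of Theorem~\ref{xxthm4.5}(1), specialized to the ideal $\I=\ip$, combined with the elementary principle that a basis of a filtered vector space can be assembled from bases of its successive quotients. First I would set up, for each fixed $n$, the finite descending chain of truncation ideals. Since ${^{k+1}\iu}(n)\subseteq {^k\iu}(n)$ for all $k$ (a consequence of Lemma~\ref{xxlem2.4}(1): any $\pi^J$ with $|J|=k-1$ factors through some $\pi^I$ with $|I|=k$) and since ${^k\iu}(n)=0$ whenever $k>n$, we have
\[
\ip(n)={^0\iu}(n)\supseteq {^1\iu}(n)\supseteq\cdots\supseteq {^n\iu}(n)\supseteq {^{n+1}\iu}(n)=0.
\]

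To prove part (1), I would apply Theorem~\ref{xxthm4.5}(1) with $\I=\ip$ to each layer: for $0\le k\le n$ the set ${\mathbf B}_k(n)$ maps to a $\Bbbk$-basis of the quotient $({^k\iu}/{^{k+1}\iu})(n)$, and each of its elements genuinely lies in ${^k\iu}(n)$ because ${^k\iu}$ is an ideal containing $\theta^k_i$ [Proposition~\ref{xxpro3.1}(1)] and is stable under the $\S_n$-action $\ast c_I$. The filtered-basis principle then gives that $\bigcup_{0\le k\le n}{\mathbf B}_k(n)$ is a basis of $\ip(n)$: spanning follows by subtracting off ${\mathbf B}_0(n),{\mathbf B}_1(n),\dots$ in turn to push any vector down the filtration, and linear independence follows by examining, in a hypothetical vanishing combination, the smallest index $k$ with a nonzero coefficient, which would yield a nontrivial relation in $({^k\iu}/{^{k+1}\iu})(n)$. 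Running the same argument on the shorter chain starting at ${^k\iu}(n)$ produces the claimed basis $\bigcup_{k\le i\le n}{\mathbf B}_i(n)$ of ${^k\iu}(n)$. Finally I would identify the bottom layer: for $k=0$ we have ${^0\iu}(0)=\ip(0)=\Bbbk\1_0$, the unique $I\subseteq\n$ with $|I|=n$ is $I=\n$ with $c_{\n}$ the identity, and $\1_2\circ(\1_0,\1_n)=\1_n$ by the left $2$-unit identity \eqref{E1.1.4}, so ${\mathbf B}_0(n)=\{\1_n\}$.

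Part (2) is then immediate from the description of the basis in (1): each basis element $\1_2\circ(\theta^k_i,\1_{n-k})\ast c_I$ lies in the suboperad generated by $\{\1_0,\1_1,\1_2\}\cup[\bigoplus_{k\ge1}{^k\iu}(k)]$, since $\1_{n-k}$ is built from $\1_1$ and $\1_2$ via \eqref{E2.4.1}, each $\theta^k_i$ with $k\ge1$ is a listed generator, and $\ast c_I$ is an admissible $\S_n$-action; the arity-zero part $\ip(0)=\Bbbk\1_0$ is supplied by $\1_0$. For part (3), the descending property forces ${^n\iu}=0$ to propagate to ${^k\iu}=0$ for all $k\ge n$, so the generating set of (2) reduces to the finite list $\{\1_0,\1_1,\1_2\}\cup[\bigoplus_{1\le k\le n-1}{^k\iu}(k)]$; local finiteness makes each ${^k\iu}(k)$ finite dimensional, hence this generating set is finite dimensional and $\ip$ is finitely generated.

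The genuinely load-bearing step is part (1), and within it the only delicate point is the bookkeeping that lets Theorem~\ref{xxthm4.5}(1) be applied layer by layer: one must confirm that the chain is finite and descending and that each candidate basis element sits in the correct filtration layer. Once that is in place, the remainder is the routine filtered-basis principle together with the explicit identification of ${\mathbf B}_0(n)$ with $\{\1_n\}$, and parts (2) and (3) follow with no further work of substance.
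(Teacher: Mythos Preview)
Your proposal is correct and follows essentially the same approach as the paper: both arguments use the finite descending filtration $\{{^k\iu}(n)\}_{k\ge 0}$ on $\ip(n)$, invoke Theorem~\ref{xxthm4.5}(1) to obtain a basis of each successive quotient, and then assemble these into a basis of $\ip(n)$ (respectively ${^k\iu}(n)$), with parts (2) and (3) following directly. Your write-up is somewhat more explicit than the paper's in verifying that each ${\mathbf B}_k(n)$ lies in the correct filtration layer and in identifying ${\mathbf B}_0(n)=\{\1_n\}$ via \eqref{E1.1.4}, but the underlying argument is the same.
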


\begin{proof} (1) For each $n\geq 0$, $\ip(n)$ admits a decreasing filtration
$\{{^k \iu}(n)\}_{k=0}^{\infty}$. As a vector space, we have
$$\ip(n)\cong \bigoplus_{k=0}^{\infty} {^k \iu}(n)/{^{k+1} \iu}(n)
\cong \Bbbk \1_n \oplus \bigoplus_{k=1}^{\infty} {^k \iu}(n)/{^{k+1} \iu}(n).$$
By Theorem \ref{xxthm4.5} (1), ${\mathbf B}_k(n)$ is a
$\Bbbk$-linear basis of ${^k \iu}(n)/{^{k+1} \iu}(n)$.
Note that ${\mathbf B}_k(n)$ is empty if $k\geq n+1$.
The first assertion follows. The proof of the second assertion is
similar.

(2) This follows from part (1).

(3) This follows from part (2) and the fact that
${^k \iu(k)}=0$ for all $k\geq n$.
\end{proof}

%

As a consequence of the above basis theorem, we have the following
corollary. A morphism $f$ of operads is called a morphism of 2-unitary
operads if $f$ preserves $\1_i$ for $i=0,1,2$. Before we prove
the corollary, we need the following lemma.

\begin{lemma}
\label{xxlem4.7}
Let $\ip$ be a 2-unitary operad and $\I$ be an ideal of
$\ip$. Then
${^k \iu_{\ip/\I}}\cong{^k \iu_\ip}/({^k \iu_\ip}\cap \I)$.
\end{lemma}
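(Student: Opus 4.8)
The plan is to realize the claimed isomorphism through the canonical quotient morphism $q\colon \ip\to\ip/\I$, which is a morphism of unitary operads preserving $\1_0$ and which, by Lemma~\ref{xxlem2.2}(2), carries the $2$-unit $\1_2$ to the $2$-unit $q(\1_2)$ of $\ip/\I$. By Lemma~\ref{xxlem3.3}, $q$ maps ${^k \iu_\ip}$ into ${^k \iu_{\ip/\I}}$, so it induces
\[
\bar q\colon\ {^k \iu_\ip}/({^k \iu_\ip}\cap\I)\ \longrightarrow\ {^k \iu_{\ip/\I}}.
\]
First I would dispose of injectivity and the kernel: the kernel of $q$ restricted to ${^k \iu_\ip}(n)$ is ${^k \iu_\ip}(n)\cap\I(n)=({^k \iu_\ip}\cap\I)(n)$ by definition, so $\bar q$ is injective arity by arity. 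Everything then reduces to showing that $\bar q$ is surjective, i.e. that every $\bar\theta\in{^k \iu_{\ip/\I}}(n)$ lifts to an element of ${^k \iu_\ip}(n)$.

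For surjectivity I would first cut down the range of arities. Both $\ip$ and $\ip/\I$ carry the decreasing truncation filtration, $q$ respects it, and on $(\ip/\I)(n)$ the filtration is finite since ${^{n+1}\iu_{\ip/\I}}(n)=0$. Hence a standard filtered-lifting argument reduces surjectivity of $q\colon{^k \iu_\ip}(n)\to{^k \iu_{\ip/\I}}(n)$ to surjectivity of the induced maps on all graded pieces ${^j \iu}(n)/{^{j+1} \iu}(n)$ with $j\ge k$. By the explicit description of these pieces in Theorem~\ref{xxthm4.5}(1), together with the identity $q(\1_2\circ(\theta,\1_{n-j})\ast c_I)=q(\1_2)\circ(q(\theta),\1_{n-j})\ast c_I$ (valid because $q$ preserves $\1_2$ and is equivariant), surjectivity on the $j$-th graded piece follows from surjectivity of $q\colon{^j \iu_\ip}(j)\to{^j \iu_{\ip/\I}}(j)$. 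Thus it suffices to prove, for every $m$, the diagonal case where arity equals truncation level.

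The heart of the proof is therefore the following interpolation statement at arity $m$: if $\theta\in\ip(m)$ satisfies $\Gamma^i(\theta)\in\I(m-1)$ for every $i\in[m]$ (which is exactly the condition that $q(\theta)\in{^m \iu_{\ip/\I}}(m)$, since $\Gamma^i=\pi^{[m]\setminus\{i\}}$), then there is $\eta\in\I(m)$ with $\Gamma^i(\eta)=\Gamma^i(\theta)$ for all $i$; subtracting, $\theta-\eta\in{^m \iu_\ip}(m)$ is the desired lift. I would prove this by downward induction on the truncation depth of the data $a_i:=\Gamma^i(\theta)\in\I(m-1)$. In the base case, where all $a_i$ lie in the deepest layer ${^{m-1}\iu}(m-1)\cap\I(m-1)$, Corollary~\ref{xxcor4.4} supplies clean-support elements: $\eta:=\sum_{i=1}^{m}\1_2\circ(a_i,\1_1)\ast c_{\{i\}}$ lies in $\I(m)$ (as $\I$ is an ideal and an $\S$-submodule) and satisfies $\Gamma^{i'}(\eta)=\delta_{i',i'}a_{i'}=a_{i'}$ because $\Gamma^{i'}(\1_2\circ(a_i,\1_1)\ast c_{\{i\}})$ equals $a_i$ when $i'=i$ and $0$ otherwise. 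For the inductive step I would use the leading-term decomposition \eqref{E4.5.2} to produce a single $\eta_0\in\I(m)$ matching the $a_i$ modulo one deeper layer; the compatibility relations $\Gamma^{j-1}(a_i)=\Gamma^i(a_j)$ for $i<j$ (inherited from the common origin $\theta$ via Lemma~\ref{xxlem2.9}(1)) guarantee that the residuals $a_i-\Gamma^i(\eta_0)$ again form a compatible tuple one layer deeper, to which the induction hypothesis applies.

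The main obstacle is precisely this simultaneous interpolation inside $\I$. Each individual restriction $\pi^I\colon\I(n)\to\I(|I|)$ is surjective by Lemma~\ref{xxlem2.12}(1), but the $m$ contractions $\Gamma^1,\dots,\Gamma^m$ must be matched at once by a \emph{single} $\eta\in\I(m)$, and their prescribed values are not independent: they obey the compatibility constraints above. Reconciling surjectivity with these constraints is exactly what forces the induction on depth and the use of the clean-support sections from Corollary~\ref{xxcor4.4}. The remaining reductions---injectivity, the kernel computation, and the graded/filtered argument---are routine once the basis theorem (Theorems~\ref{xxthm4.5} and \ref{xxthm4.6}) is in hand.
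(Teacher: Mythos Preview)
Your proposal is correct, and at its heart uses the same correction device as the paper: iteratively subtract $\sum_{J}\1_2\circ(\Gamma^J(\theta),\1_{*})\ast c_J\in\I$ from a lift $\theta$ to push it one truncation level deeper, relying on Corollary~\ref{xxcor4.4} and the fact that each $\Gamma^J(\theta)$ lands in $\I$. The difference is organizational. You first reduce surjectivity to the associated graded pieces, then via Theorem~\ref{xxthm4.5}(1) to the diagonal case $q\colon{^m\iu_\ip}(m)\to{^m\iu_{\ip/\I}}(m)$, and only there run the ``simultaneous interpolation'' induction. The paper skips both reductions: given $x\in{^k\iu_{\ip/\I}}(n)$ and any lift $\theta\in{^i\iu_\ip}(n)$ with $i<k$, it observes that $\Gamma^J(\theta)\in\I$ for each $|J|=n-i$ (since its image $\Gamma^J(x)$ vanishes), forms the correction $\Omega\in\I(n)$ exactly as in \eqref{E4.5.2}, and replaces $\theta$ by $\theta-\Omega\in{^{i+1}\iu_\ip}(n)$, iterating until $i=k$. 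This is your interpolation step applied directly at arity $n$ rather than after passing to the diagonal, so the filtered-lifting and diagonal-reduction scaffolding is not needed. Your route is a valid alternative but buys nothing extra here; the paper's direct induction on the truncation level of the lift is shorter.
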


\begin{proof}
Let $\iq=\ip/\I$. The canonical morphism $\varphi: \ip \to \iq$
induces a natural map $f: {^k \iu_{\ip}}\to {^k \iu_{\iq}}$ by
Lemma \ref{xxlem3.3}. Since $\I$ is the kernel
of $\varphi$, $f$ induces a natural injective morphism
$$g: {^k \iu_{\ip}}/({^k \iu_{\ip}}\cap \I)\to {^k \iu_{\iq}}.$$
It remains to show that $g$ is surjective, equivalently, to show
that, for each $n$,
$$\phi:  ({^k \iu_{\ip}}(n)+\I(n))/\I(n)\to {^k \iu_{\iq}}(n)$$
is surjective. For every $x\in {^k \iu_{\iq}}(n)$, let $\theta
\in \ip(n)$ such that $\varphi(\theta)=x$. Suppose
$\theta\in {^i \iu_{\ip}}$ for some $i$. We will use induction
to show that $i\geq k$ for some choice of $\theta$. There is 
nothing to be proved if $i\geq k$. Assume now that $i<k$. Then
$\Gamma^{J}(\theta)\in {^i \iu_{\ip}}(i)$ when $|J|=n-i$.
Let
$$\theta' = \theta -
\sum\limits_{J\subseteq \n \atop |J|= n-i}
\1_2\circ (\Gamma^J(\theta), \1_{n-i})\ast c_J
$$
which is similar to the element given in \eqref{E4.5.2}.
By Corollary \ref{xxcor4.4} or the proof of Theorem \ref{xxthm4.5} (1),
$\Gamma^J(\theta')=0$ for all $J\subseteq
\n$ with $|J|= n-i$. This means that $\theta'\in
{^{i+1} \iu}_{\ip}(n)$. For each $J$ as above, we have
$$\varphi(\Gamma^J(\theta))=\Gamma^{J}(\varphi(\theta))=\Gamma^{J}
(x)=0$$
as $x\in {^k \iu_{\iq}}(n)$ and $k>i$. Thus $\Gamma^{J}(\theta)\in \I(i)$
for all $J$.
Consequently,
$$\Omega:=\sum\limits_{J\subseteq \n \atop |J|= n-i}
\1_2\circ (\Gamma^J(\theta), \1_{n-i})\ast c_J\in \I(n).$$
Hence $\phi(\theta')
=\phi(\theta)=x$. Replacing
$\theta$ by $\theta'$ we move $i$ to $i+1$. The assertion follows
by induction.
\end{proof}

Recall from \eqref{E3.11.1} $\iu_\ip$ denote the $\S$-submodule 
$\bigoplus_{i=0}^\infty {^k\iu_\ip}(k)$.

\begin{corollary}
\label{xxcor4.8}
Suppose $\ip$ and $\iq$ are 2-unitary operads.
Let $f:\ip\to \iq$ be a morphism of 2-unitary operads.
\begin{enumerate}
\item[(1)]
$f$ is uniquely determined by $f\mid_{\iu_\ip}$.
\item[(2)]
$f$ is injective if and only if
$f\mid_{\iu_\ip}$ is.
\item[(3)]
$f$ is surjective if and only if
$f\mid_{\iu_\ip}$ is.
\item[(4)]
$f$ is an isomorphism if and only if
$f\mid_{\iu_\ip}$ is.
\end{enumerate}
\end{corollary}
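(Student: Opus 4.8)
The strategy rests on the Basis Theorem (Theorem \ref{xxthm4.6}) together with the fact that a morphism $f$ of $2$-unitary operads intertwines the data that build up that basis. First I would record that $f(\1_m)=\1_m$ for every $m$, which follows by induction from \eqref{E2.4.1} since $f$ preserves $\1_1$, $\1_2$ and composition. Combined with $\S_n$-equivariance and Lemma \ref{xxlem3.3} (which says $f$ carries ${^k\iu_\ip}$ into ${^k\iu_\iq}$ and commutes with every $\pi^I$ and $\Gamma^I$), this yields the key identity
$$f\bigl(\1_2\circ(\theta,\1_{n-k})\ast c_I\bigr)=\1_2\circ\bigl(f(\theta),\1_{n-k}\bigr)\ast c_I,\qquad \theta\in{^k\iu_\ip}(k).$$
By Theorem \ref{xxthm4.6}(1) the elements on the left, as $\theta$ runs over a basis of ${^k\iu_\ip}(k)$ and $0\le k\le n$, form a basis of $\ip(n)$; the identity expresses $f$ on each of them through $f\mid_{\iu_\ip}$, so by $\k$-linearity $f$ is completely determined by $f\mid_{\iu_\ip}$. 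This proves part (1).

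For part (2) the forward implication is trivial, since a restriction of an injective map is injective. For the converse I would assume $f\mid_{{^k\iu_\ip}(k)}$ injective for all $k$ and argue directly. Let $x\in\ip(n)$ satisfy $f(x)=0$ with $x\neq 0$, and choose $k$ maximal with $x\in{^k\iu_\ip}(n)$ (such $k$ exists because ${^{n+1}\iu_\ip}(n)=0$). By Lemma \ref{xxlem2.12}(1), $\Gamma^I(x)\in{^k\iu_\ip}(k)$ whenever $|I|=n-k$, and maximality forces $\Gamma^{I_0}(x)\neq 0$ for some such $I_0$. Since $\Gamma^{I_0}=\pi^{\hat I_0}$ commutes with $f$ (Lemma \ref{xxlem3.3}), we get $f(\Gamma^{I_0}(x))=\Gamma^{I_0}(f(x))=0$, contradicting injectivity of $f$ on ${^k\iu_\ip}(k)$. (Equivalently one may pass to the associated graded of the truncation filtration, where Theorem \ref{xxthm4.5}(1) identifies $\gr^k f$ with $\binom{n}{k}$ copies of $f\mid_{{^k\iu_\ip}(k)}$.)

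For part (3), if $f\mid_{\iu_\ip}$ is surjective then every basis element $\1_2\circ(\theta,\1_{n-k})\ast c_I$ of $\iq(n)$ from Theorem \ref{xxthm4.6}(1) has $\theta\in{^k\iu_\iq}(k)=f({^k\iu_\ip}(k))$; writing $\theta=f(\theta')$ and applying the key identity exhibits the basis element as $f(\1_2\circ(\theta',\1_{n-k})\ast c_I)$, so $f$ is surjective. The converse is the step I expect to be the main obstacle, because surjectivity of a filtered map does not in general descend to associated graded pieces, so the graded argument of part (2) is unavailable. Instead I would invoke Lemma \ref{xxlem4.7}: if $f$ is surjective then $\iq\cong\ip/\ker f$, and Lemma \ref{xxlem4.7} identifies ${^k\iu_\iq}(k)$ with ${^k\iu_\ip}(k)/({^k\iu_\ip}(k)\cap\ker f)$ in such a way that $f\mid_{{^k\iu_\ip}(k)}$ factors as the canonical surjection onto this quotient followed by the resulting isomorphism; hence $f\mid_{{^k\iu_\ip}(k)}$, and therefore $f\mid_{\iu_\ip}$, is surjective.

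Finally, part (4) is a formal consequence: a morphism of operads is an isomorphism exactly when each component $\ip(n)\to\iq(n)$ is bijective, i.e.\ $f$ is both injective and surjective, and likewise $f\mid_{\iu_\ip}$ is an isomorphism iff it is injective and surjective; combining parts (2) and (3) gives the equivalence. The only genuinely non-formal inputs are the Basis Theorem, used for (1) and the two ``if'' directions, and Lemma \ref{xxlem4.7}, used for the ``only if'' direction of (3), which is where the real content of the corollary lies.
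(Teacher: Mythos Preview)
Your proposal is correct and follows essentially the same route as the paper: both establish the key identity $f(\1_2\circ(\theta,\1_{n-k})\ast c_I)=\1_2\circ(f(\theta),\1_{n-k})\ast c_I$ from $f(\1_m)=\1_m$ and equivariance, then use the Basis Theorem for (1) and the ``if'' directions of (2)--(3), the truncation filtration on $\ker f$ for the ``only if'' of (2), and Lemma~\ref{xxlem4.7} for the ``only if'' of (3). One cosmetic remark: your citation of Lemma~\ref{xxlem2.12}(1) for $\Gamma^I(x)\in{^k\iu_\ip}(k)$ is valid but overkill---you only need that ${^k\iu_\ip}$ is an ideal (Proposition~\ref{xxpro3.1}(1)), not surjectivity of $\pi^I$.
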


\begin{proof} 
Since $f$ is a morphism of operads, it follows from
Lemma \ref{xxlem3.3} that $f$ maps ${^k \iu_\ip}$ to
${^k \iu}_{\iq}$ for every $k$. Consequently,
$f$ maps ${^k \iu_\ip}(k)$ to
${^k \iu_\ip}_{\iq}(k)$ for every $k$. This defines a
map $f:  \iu_\ip \to \iu_{\iq}$.
Since $f$ preserves
$\1_i$ for $i=0, 1, 2$, it preserves $\1_n$ for all $n$.
Therefore $f$ maps $\1_2\circ
(\theta, \1_{n-k})\ast c_{I}$ to $\1_2\circ
(f(\theta), \1_{n-k})\ast c_{I}$
for all $\theta\in {^k \iu_\ip}(k)$.

(1) Since $f$ is a morphism of 2-unitary operads,
$\ip$ is generated by elements in $\Theta^k$ for $k\geq 0$
by Theorem \ref{xxthm4.6} (1). The assertion follows.

(2) Suppose $f$ is not injective. Let $\I$ be the nonzero
kernel $\ker f$. Then $\I$ is an ideal of
$\ip$. Since $\I\neq 0$, $\I\cap {^k \iu_\ip}\neq \I\cap {^{k+1} \iu_\ip}$
for some $k$. Let $x\in (\I\cap {^k \iu_\ip})\setminus (\I\cap {^{k+1} \iu_\ip})$.
Then there is an $I$ with $|I|=k$ such that $0\neq \pi^I(x) \in
\I\cap {^k \iu_\ip}(k)$. So $f\mid_{\iu_\ip}$ is
not injective. The converse is easy.

(3) Suppose $f\mid_{\iu_\ip}$ is surjective.
Since $\iq$ is generated by $\bigoplus_{k\geq 1}{^k \iu_{\iq}(k)}$
by Theorem \ref{xxthm4.6} (2), $f$ is surjective.

Conversely, suppose that $f$ is surjective. Then $\iq$ is a quotient
operad of $\ip$. By Lemma \ref{xxlem4.7}, 
$f$ maps surjectively from ${^k \iu_\ip(k)}$ to ${^k\iu_{\iq}(k)}$ for each $k$.
The assertion follows.

(4) This is a consequence of parts (2) and (3).
\end{proof}

\subsection{Categorification of binomial coefficients}
\label{xxsec4.3}

Following the basis theorem [Theorem \ref{xxthm4.6} (1)], for each
$I\subseteq \n$ with $|I|= n-k$, we define a $\Bbbk$-linear
map
\begin{equation}
\label{E4.8.1}\tag{E4.8.1}
\Lambda_{I}^n: {^k \iu(k)}\to {^k \iu(n)}
\end{equation}
by
\begin{equation}
\label{E4.8.2}\tag{E4.8.2}
\Lambda_I^n(\theta)=\1_2\circ (\theta, \1_{n-k})\ast c_{I}.
\end{equation}

\begin{lemma}
\label{xxlem4.9}
Retain the above notation.
For every $n\geq k$ and every $\sigma\in \S_n$, the
following diagram is commutative in
the quotient space ${^k \iu}/{^{k+1} \iu}$
$$\begin{CD}
{^k\iu(k)} @>\Lambda_{I}^n >> {^k \iu(n)} \\
@V \ast \Gamma^{\sigma^{-1}(I)}(\sigma) VV @VV \ast \sigma V \\
{^k\iu(k)} @>>\Lambda_{\sigma^{-1}(I)}^n > {^k \iu(n)} .
\end{CD}$$
As a consequence, if ${^{k+1} \iu}=0$, then
$$\Lambda_I^n(\theta)\ast \sigma=
\Lambda^n_{\sigma^{-1}(I)}(\theta\ast \Gamma^{\sigma^{-1}(I)}(\sigma)).$$
\end{lemma}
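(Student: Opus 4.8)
The plan is to detect the claimed congruence through the contraction operators, exploiting that $\Lambda_I^n$ is ``diagonalized'' by them via Corollary \ref{xxcor4.4}. The starting observation is that, combining the definition \eqref{E3.0.1} with the identity $\pi^K=\Gamma^{\hat K}$ (Lemma \ref{xxlem2.4}), one has
\[
{^{k+1}\iu}(n)=\bigcap_{I'\subseteq\n,\ |I'|=n-k}\ker\Gamma^{I'}.
\]
Since ${^k\iu}(n)$ is an $\S_n$-submodule and $\Lambda_I^n$ lands in ${^k\iu}(n)$, both sides of the asserted identity are genuine elements of ${^k\iu}(n)$, and they represent the same class in ${^k\iu}/{^{k+1}\iu}$ precisely when their images under every $\Gamma^{I'}$ with $|I'|=n-k$ agree. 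So it suffices to check, for each such $I'$, that $\Gamma^{I'}$ of the two sides yields the same element of ${^k\iu}(k)$.

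First I would treat the lower-right corner. Writing $J=\sigma^{-1}(I)$, note $\theta\ast\Gamma^{J}(\sigma)\in{^k\iu}(k)$ because ${^k\iu}(k)$ is an $\S_k$-submodule, so Corollary \ref{xxcor4.4} applies verbatim: for $|I'|=n-k$,
\[
\Gamma^{I'}\bigl(\Lambda_J^n(\theta\ast\Gamma^J(\sigma))\bigr)=
\begin{cases}\theta\ast\Gamma^{J}(\sigma), & I'=J,\\ 0, & I'\neq J.\end{cases}
\]
This isolates the single surviving contraction $I'=\sigma^{-1}(I)$ on the right-hand side.

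Next I would handle $\Lambda_I^n(\theta)\ast\sigma$. Since $\Gamma^{I'}=\pi^{\hat{I'}}$, the equivariance formula \eqref{E2.11.1} of Lemma \ref{xxlem2.11} moves the action past the restriction:
\[
\Gamma^{I'}(\Lambda_I^n(\theta)\ast\sigma)=\pi^{\hat{I'}}(\Lambda_I^n(\theta)\ast\sigma)=\pi^{\sigma\hat{I'}}(\Lambda_I^n(\theta))\ast\pi^{\hat{I'}}(\sigma).
\]
Because $\sigma$ is a bijection of $\n$, one has $\widehat{\sigma\hat{I'}}=\sigma(I')$, so $\pi^{\sigma\hat{I'}}=\Gamma^{\sigma(I')}$, while $\pi^{\hat{I'}}(\sigma)=\Gamma^{I'}(\sigma)$. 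Applying Corollary \ref{xxcor4.4} to $\Gamma^{\sigma(I')}(\Lambda_I^n(\theta))$ (valid since $|\sigma(I')|=n-k$) shows this vanishes unless $\sigma(I')=I$, i.e.\ $I'=\sigma^{-1}(I)=J$, in which case it equals $\theta$; hence the whole expression is $\theta\ast\Gamma^{J}(\sigma)$ when $I'=J$ and $0$ otherwise. Thus $\Gamma^{I'}$ of the left-hand side matches $\Gamma^{I'}$ of the right-hand side for every admissible $I'$, which gives the congruence. When ${^{k+1}\iu}=0$ the congruence is an honest equality, yielding the displayed consequence.

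The only delicate point — and the step I would slow down on — is the index bookkeeping in the third paragraph: verifying $\widehat{\sigma\hat{I'}}=\sigma(I')$ and confirming that the permutation surviving on the left, namely $\pi^{\hat{I'}}(\sigma)=\Gamma^{I'}(\sigma)$ evaluated at $I'=\sigma^{-1}(I)$, is \emph{exactly} the permutation $\Gamma^{\sigma^{-1}(I)}(\sigma)$ appearing in the statement. Everything else is a direct invocation of Corollary \ref{xxcor4.4} and the equivariance \eqref{E2.11.1}; in particular no computation with the explicit cycle $c_I$ is needed beyond what is already packaged into Corollary \ref{xxcor4.4}.
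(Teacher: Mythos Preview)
Your argument is correct and is essentially the same as the paper's own proof: both test the two sides against every $\Gamma^{I'}$ with $|I'|=n-k$, invoke the equivariance formula of Lemma~\ref{xxlem2.11} to rewrite $\Gamma^{I'}(\Lambda_I^n(\theta)\ast\sigma)$ as $\Gamma^{\sigma(I')}(\Lambda_I^n(\theta))\ast\Gamma^{I'}(\sigma)$, and then use Corollary~\ref{xxcor4.4} to see that both sides vanish unless $I'=\sigma^{-1}(I)$ and agree there. Your extra paragraph unpacking $\widehat{\sigma\hat{I'}}=\sigma(I')$ is just a more explicit justification of the same step the paper compresses into ``by Lemma~\ref{xxlem2.11}.''
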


\begin{proof} Let $\theta$ be an element in
${^k \iu(k)}$. For every $I'\subseteq \n$
with $|I'|=n-k$, by Lemma \ref{xxlem2.11} and Corollary
\ref{xxcor4.4},
$$\Gamma^{I'}(\Lambda_I^n(\theta)\ast \sigma)
=\Gamma^{\sigma(I')}(\Lambda_I^n(\theta))\ast \Gamma^{I'}(\sigma)
=\begin{cases} \theta\ast \Gamma^{I'}(\sigma), & \sigma(I')=I,\\
0,& \sigma(I')\neq I.\end{cases}$$
and
$$\Gamma^{I'}(\Lambda^n_{\sigma^{-1}(I)}(\theta\ast \Gamma^{\sigma^{-1}(I)}(\sigma)))
=\begin{cases}\theta\ast \Gamma^{\sigma^{-1}(I)}(\sigma), & \sigma(I')=I,\\
0,& \sigma(I')\neq I.\end{cases}$$
Thus $\Gamma^{I'}(\Lambda_I^n(\theta)\ast \sigma)=
\Gamma^{I'}(\Lambda^n_{\sigma^{-1}(I)}(\theta\ast \Gamma^{\sigma^{-1}(I)}(\sigma)))$
for all $I'$. Therefore
$$\Lambda_I^n(\theta)\ast \sigma=
\Lambda^n_{\sigma^{-1}(I)}(\theta\ast \Gamma^{\sigma^{-1}(I)}(\sigma))$$
modulo ${^{k+1}\iu}$. The assertion follows.
\end{proof}

Let $\rMod \S_n$ denote the category of right $\Bbbk \S_n$-modules.
Suggested by Lemma \ref{xxlem4.9}, we define the following functor
$${\mathcal C}^{n}_{k}: \rMod \S_{k}\to \rMod \S_{n}$$
for $n\geq k$ as follows. Let $T^n_k$ be the set $\{I\subset \n\, \mid |I|=n- k\}$.
Let $M$ be a right $\S_k$-module. Then ${\mathcal C}^n_{k}(M)$ is a right $\S_n$-module
such that
\begin{enumerate}
\item[(i)]
as a vector space, ${\mathcal C}^n_{k}(M)=\bigoplus\limits_{I\in T^n_k} M$,
elements in ${\mathcal C}^n_k(M)$ are linear combinations of $(m, I)$ for $m\in M$ and
$I\in T^n_k$;
\item[(ii)]
the $\S_n$-action on ${\mathcal C}^n_{k}(M)$ is determined by
$$ (m,I)\ast \sigma:=(m\ast \Gamma^{\sigma^{-1}(I)}(\sigma), \sigma^{-1}(I))$$
for all $(m,I)\in {\mathcal C}^n_{k}(M)$ and all $\sigma\in \S_n$.
\end{enumerate}

\begin{lemma}
\label{xxlem4.10}
Retain the notation as above.
\begin{enumerate}
\item[(1)]
If $M$ is a right $\S_{k}$-module, then ${\mathcal C}^n_k(M)$ is a
right $\S_n$-module.
\item[(2)]
Let $A$ be an algebra.
If $M$ is an $(A,\S_{k})$-bimodule, then ${\mathcal C}^n_k(M)$ is an
$(A,\S_n)$-bimodule.
\item[(3)]
The functor ${\mathcal C}^{n}_{k}(-)$ is equivalent to the tensor
functor $-\otimes_{\S_k} {\mathcal C}^{n}_{k}(\S_k)$.
\end{enumerate}
\end{lemma}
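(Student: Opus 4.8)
The plan is to verify the right $\S_n$-module axioms directly in part (1), deduce part (2) from the bimodule structure of $M$, and establish part (3) by exhibiting an explicit natural isomorphism. The whole argument takes place over the associative operad $\As$, since the functor ${\mathcal C}^n_k$ only invokes the contraction $\Gamma^{\sigma^{-1}(I)}(\sigma)$ for permutations, i.e. the contraction operator on $\As(n)=\k\S_n$.

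For part (1), the unit axiom is immediate: taking $\sigma=1_n$ gives $(m,I)\ast 1_n=(m\ast\Gamma^{I}(1_n),I)$, and since $\Gamma^{I}=\pi^{\hat I}$ and $\pi^{\hat I}(\1_n)=\1_{k}$ by Lemma \ref{xxlem2.7}(1), this equals $(m,I)$. For the compatibility $((m,I)\ast\sigma)\ast\tau=(m,I)\ast(\sigma\tau)$, the second coordinates on both sides equal $\tau^{-1}\sigma^{-1}(I)$, and because $M$ carries a genuine right $\S_k$-action the whole axiom collapses to the single identity
\[
\Gamma^{\sigma^{-1}(I)}(\sigma)\;\Gamma^{\tau^{-1}\sigma^{-1}(I)}(\tau)
=\Gamma^{\tau^{-1}\sigma^{-1}(I)}(\sigma\tau)
\]
in $\S_k$. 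This cocycle identity for the contraction operator is the heart of the proof, and I expect it to be the main obstacle: although it follows from known equivariance, getting all the conventions aligned is the delicate step. To prove it, I would set $W=\tau^{-1}\sigma^{-1}(I)$ and apply Lemma \ref{xxlem2.11}(1) in $\As$ with $\theta=\sigma$, acting permutation $\tau$, and subset $\hat W$. Since the right action in $\As$ is right multiplication, $\sigma\ast\tau=\sigma\tau$, and since $\tau\hat W=\widehat{\tau W}$, Lemma \ref{xxlem2.11}(1) yields $\pi^{\hat W}(\sigma\tau)=\pi^{\widehat{\tau W}}(\sigma)\ast\pi^{\hat W}(\tau)$, that is, $\Gamma^{W}(\sigma\tau)=\Gamma^{\tau W}(\sigma)\,\Gamma^{W}(\tau)$. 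Because $\tau W=\sigma^{-1}(I)$, this is exactly the required identity. (This also makes precise the sense in which ${\mathcal C}^n_k$ abstracts the module structure computed in Lemma \ref{xxlem4.9}.)

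Part (2) is then formal. I would define the left $A$-action by $a\cdot(m,I):=(a\cdot m,I)$ on each summand, and check it commutes with the right $\S_n$-action established in (1). Comparing $a\cdot\big((m,I)\ast\sigma\big)$ with $\big(a\cdot(m,I)\big)\ast\sigma$, both equal $\big(a\cdot(m\ast\Gamma^{\sigma^{-1}(I)}(\sigma)),\,\sigma^{-1}(I)\big)$ using the bimodule relation $a\cdot(m\ast g)=(a\cdot m)\ast g$ in $M$ with $g=\Gamma^{\sigma^{-1}(I)}(\sigma)\in\S_k$. Hence ${\mathcal C}^n_k(M)$ is an $(A,\S_n)$-bimodule.

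For part (3), I first note that ${\mathcal C}^n_k(\S_k)={\mathcal C}^n_k(\k\S_k)$ is, by part (2) applied with $A=\k\S_k$ and $M=\k\S_k$ the regular bimodule, a $(\k\S_k,\S_n)$-bimodule, so $M\otimes_{\S_k}{\mathcal C}^n_k(\S_k)$ is a right $\S_n$-module. I would then define
\[
\Psi_M\colon M\otimes_{\S_k}{\mathcal C}^n_k(\k\S_k)\To {\mathcal C}^n_k(M),\qquad
m\otimes(\rho,I)\mapsto (m\ast\rho,\,I),
\]
and verify four things. It is balanced over $\k\S_k$, since $mg\otimes(\rho,I)$ and $m\otimes(g\rho,I)$ both map to $(m\ast(g\rho),I)$. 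It is $\S_n$-equivariant, since both $\Psi_M\big((m\otimes(\rho,I))\ast\sigma\big)$ and $\Psi_M(m\otimes(\rho,I))\ast\sigma$ equal $\big(m\ast\rho\ast\Gamma^{\sigma^{-1}(I)}(\sigma),\,\sigma^{-1}(I)\big)$ by the definition of the right action on each side. It is bijective, because ${\mathcal C}^n_k(\k\S_k)=\bigoplus_{I\in T^n_k}\k\S_k$ is free as a left $\k\S_k$-module, so the source is $\bigoplus_{I}M\otimes_{\S_k}\k\S_k\cong\bigoplus_I M={\mathcal C}^n_k(M)$, under which $\Psi_M$ sends $m\otimes(1_k,I)\mapsto(m,I)$. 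Finally, naturality in $M$ is clear from the formula. This gives the asserted natural equivalence ${\mathcal C}^n_k(-)\cong -\otimes_{\S_k}{\mathcal C}^n_k(\S_k)$.
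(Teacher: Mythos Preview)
Your proof is correct. Parts (1) and (2) follow essentially the same route as the paper: the paper's computation for (1) is the same chain of equalities you write out, hinging on the identity $\Gamma^{\sigma^{-1}(I)}(\sigma)\,\Gamma^{(\sigma\tau)^{-1}(I)}(\tau)=\Gamma^{(\sigma\tau)^{-1}(I)}(\sigma\tau)$, which the paper uses at the step ``$=(m\ast(\Gamma^{(\sigma\tau)^{-1}(I)}(\sigma\ast\tau)),(\sigma\tau)^{-1}(I))$'' without further comment; you make this explicit by deriving it from Lemma~\ref{xxlem2.11}(1), which is a welcome clarification. Part (2) is handled identically in spirit.

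Part (3) is where your approach genuinely diverges. The paper invokes the Eilenberg--Watts theorem, observing only that ${\mathcal C}^n_k$ is exact (and implicitly that it preserves coproducts), whence it must be naturally isomorphic to $-\otimes_{\S_k}{\mathcal C}^n_k(\S_k)$. Your argument instead writes down the explicit natural isomorphism $\Psi_M(m\otimes(\rho,I))=(m\ast\rho,I)$ and checks balancedness, equivariance, bijectivity, and naturality by hand, exploiting that ${\mathcal C}^n_k(\Bbbk\S_k)$ is free of rank $|T^n_k|$ as a left $\Bbbk\S_k$-module. Your route is more elementary and self-contained, avoiding any appeal to Watts, and it makes the isomorphism concrete; the paper's route is shorter but presupposes the reader knows the Watts theorem. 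Both are valid, and in fact your explicit $\Psi_M$ is precisely the canonical isomorphism that Watts' theorem would produce.
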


\begin{proof}
(1) For $\sigma,\tau\in \S_n$, and $(m,I)\in {\mathcal C}^n_k(M)$,
$$\begin{aligned}
((m,I)\ast \sigma)\ast \tau
&=(m\ast \Gamma^{\sigma^{-1}(I)}(\sigma), \sigma^{-1}(I))\ast \tau\\
&=((m\ast \Gamma^{\sigma^{-1}(I)}(\sigma))\ast
\Gamma^{\tau^{-1}\sigma^{-1}(I)}(\tau), \tau^{-1}(\sigma^{-1}(I)))\\
&=((m\ast \Gamma^{\sigma^{-1}(I)}(\sigma))\ast
\Gamma^{(\sigma\tau)^{-1}(I)}(\tau), (\sigma\tau)^{-1}(I))\\
&=(m\ast (\Gamma^{\tau(\sigma\tau)^{-1}(I)}(\sigma)\ast
\Gamma^{(\sigma\tau)^{-1}(I)}(\tau)), (\sigma\tau)^{-1}(I))\\
&=(m\ast (\Gamma^{(\sigma\tau)^{-1}(I)}(\sigma\ast \tau)), (\sigma\tau)^{-1}(I))\\
&=(m\ast (\Gamma^{(\sigma\tau)^{-1}(I)}(\sigma \tau)), (\sigma\tau)^{-1}(I))\\
&=(m,I)\ast (\sigma \tau).
\end{aligned}
$$

(2) This follows from the definition and part (1).

(3) This follows from the Watts Theorem and the fact that
${\mathcal C}^n_k$ is exact.
\end{proof}

\section{Binomial transform of generating series}
\label{xxsec5}
In this section we study 2-unitary operads of finite
Gelfand-Kirillov dimension. One tool is binomial transform
\cite{Ku, Pr, SS} of generating series that is closely related
to truncation ideals of 2-unitary operads.

\subsection{Binomial transform}
\label{xxsec5.1}
First of all, there are at least two versions of binomial 
transforms, We will use the following version. We also list 
some facts without proofs.

Let $a:=\{a_0,a_1,a_2,\cdots\}$ be a sequence of numbers.
Its generating series is denoted by
$$G_{a}(t)=\sum_{i=0}^{\infty} a_i t^i$$
and
its exponential generating series is
$$E_{a}(t)=\sum_{i=0}^{\infty} \frac{a_i}{i!} t^i.$$
The {\it binomial transform} of $a$ is a sequence
$b:=\{b_0,b_1,b_2,\cdots,\}$ defined by
\begin{equation}
\label{E5.0.1}\tag{E5.0.1}
b_i=\sum_{k=0}^{i} a_k (-1)^{i-k} \tbinom{i}{k}
\end{equation}
for all $i\geq 0$. It is well-known (see \cite{Pr}) that
\begin{equation}
\label{E5.0.2}\tag{E5.0.2}
a_i=\sum_{k=0}^{i} b_k \tbinom{i}{k}
\end{equation}
for all $i\ge 0$, and
\begin{equation}
\label{E5.0.3}\tag{E5.0.3}
G_{a}(t)=\frac{1}{1-t} \; G_{b}(\frac{t}{1-t}), \qquad
G_{b}(t)=\frac{1}{1+t} \; G_{a}(\frac{t}{1+t})
\end{equation}
and
\begin{equation}
\label{E5.0.4}\tag{E5.0.4}
E_{a}(t)=e^{t} E_{b}(t), \qquad E_{b}(t)=e^{-t}E_{a}(t).
\end{equation}

Note that \eqref{E5.0.3} is equivalent to
\begin{equation}
\label{E5.0.5}\tag{E5.0.5}
\sum_{k=0}^{\infty} a_k t^k=\sum_{k=0}^{\infty} b_k \; \frac{t^{k}}{(1-t)^{k+1}}.
\end{equation}
We also write
$$\tt(\{a_i\})=\{b_i\}, \quad {\text{ and }} \quad
\tt^{-1}(\{b_i\})=\{a_i\},$$
or
$$\tt(\sum_{k=0}^{\infty} a_k t^k)= \sum_{k=0}^{\infty} b_k
t^k,
\quad {\text{ and }} \quad \tt^{-1}(\sum_{k=0}^{\infty} b_k t^k)=
\sum_{k=0}^{\infty} a_k t^k,$$
where $\{a_k\}_{k\geq 0}$ and $\{b_k\}_{k\geq 0}$ are determined
by each other via \eqref{E5.0.1}-\eqref{E5.0.2}, and in this case we call
$a=\{a_i\}$ the {\it inverse binomial transform} of $b=\{b_i\}$.
For a sequence of non-negative numbers (called {\it a
non-negative sequence}) $a=\{a_i\}$, define the {\it exponent} of
$a$ to be
\begin{equation}
\label{E5.0.6}\tag{E5.0.6}
\exp(a):=\limsup_{n\to \infty} a_n^{\frac{1}{n}}.
\end{equation}
When $\{a_n\}$ is a sequence of non-negative integers with infinitely
many nonzero $a_n$'s, then by \cite[Lemma 1.1(1)]{StZ},
\begin{equation}
\label{E5.0.7}\tag{E5.0.7}
\exp(a)=\limsup_{n\to \infty} \left(\sum_{i=0}^n a_i\right)^{\frac{1}{n}}.
\end{equation}

\begin{lemma}
\label{xxlem5.1}
Let $b:=\{b_n\}$ be a non-negative sequence with $b_0=1$ and
$a:=\{a_n\}={\mathcal T}^{-1}(\{b_n\})$.
\begin{enumerate}
\item[(1)]
$\exp(a)=\exp(b)+1$.
\item[(2)]
If $b_n=0$ for $n\gg 0$, then
$\exp(a)=1$.
\item[(3)]
For every real number $r\geq 2$, let $b=\{ \lfloor (r-1)^n \rfloor\}$,
then $\exp(a)=r$.
\end{enumerate}
\end{lemma}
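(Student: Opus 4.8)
The plan is to reduce the whole statement to part (1), since parts (2) and (3) follow from it almost immediately. For (2), if $b_n=0$ for $n\gg 0$ then $\exp(b)=\limsup_n b_n^{1/n}=0$, so (1) gives $\exp(a)=1$; alternatively, $a_n=\sum_{k=0}^{N}b_k\binom{n}{k}$ is then a polynomial in $n$ of degree $\le N$, whence $a_n^{1/n}\to 1$ directly. For (3), with $b_n=\lfloor(r-1)^n\rfloor$ and $r\ge 2$ one has $(r-1)^n-1<b_n\le(r-1)^n$, so a squeeze gives $b_n^{1/n}\to r-1$, i.e.\ $\exp(b)=r-1$; then (1) yields $\exp(a)=(r-1)+1=r$. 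Note $b_0=\lfloor 1\rfloor=1$ and $b_n\ge 1$, so the hypotheses of (1) are met.

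For part (1) I would argue through radii of convergence rather than through the termwise formula \eqref{E5.0.2}. By Cauchy--Hadamard, for a non-negative sequence $c$ the radius of convergence $R_c$ of $G_c(t)=\sum_n c_n t^n$ satisfies $\exp(c)=\limsup_n c_n^{1/n}=1/R_c$; moreover, for non-negative coefficients $R_c=\sup\{t\ge 0 : G_c(t)<\infty\}$, with $G_c$ finite on $[0,R_c)$ and $+\infty$ on $(R_c,\infty)$. So it suffices to compute $R_a$ in terms of $R_b$. The key step is to read the functional equation \eqref{E5.0.3}, equivalently its term-by-term form \eqref{E5.0.5}, as an identity of extended non-negative reals: since all $a_n,b_k\ge 0$ and $\frac{t^k}{(1-t)^{k+1}}=\sum_{n\ge k}\binom{n}{k}t^n$, rearrangement of non-negative series (Tonelli) gives
\[
G_a(t)=\frac{1}{1-t}\,G_b\!\Bigl(\frac{t}{1-t}\Bigr)\qquad(0\le t<1)
\]
as an identity in $[0,+\infty]$. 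For $t\in[0,1)$ the map $t\mapsto t/(1-t)$ is an increasing bijection of $[0,1)$ onto $[0,\infty)$ and the prefactor $1/(1-t)$ is finite, so $G_a(t)<\infty$ exactly when $t/(1-t)<R_b$. Solving $t/(1-t)=R_b$ gives $R_a=R_b/(1+R_b)$, and therefore
\[
\exp(a)=\frac{1}{R_a}=\frac{1+R_b}{R_b}=1+\frac{1}{R_b}=1+\exp(b).
\]

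The main obstacle, and the reason I avoid the naive route, is the lower bound $\exp(a)\ge\exp(b)+1$. Because $\exp(b)$ is a $\limsup$, the large values of $b_k$ may occur only along a sparse subsequence with arbitrarily long gaps, so bounding $a_n=\sum_k b_k\binom{n}{k}$ below by a single term $b_k\binom{n}{k}$ yields only $\exp(a)\ge 1$, not $\exp(b)+1$; one would need $b_k$ large across the entire window $k\approx \frac{\exp(b)}{1+\exp(b)}\,n$ that dominates the binomial sum, which the hypothesis does not supply. The radius-of-convergence argument sidesteps this entirely, since $R_b$ is insensitive to such gaps.

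The remaining care is with the two boundary cases. When $R_b=\infty$ (i.e.\ $\exp(b)=0$) the identity shows $G_a$ converges precisely on $[0,1)$, so $R_a=1$ and $\exp(a)=1$, matching $1+0$; when $R_b=0$ (i.e.\ $\exp(b)=\infty$) no $t\in(0,1)$ satisfies $t/(1-t)<R_b$, forcing $R_a=0$ and $\exp(a)=\infty=1+\infty$. Reading $R_a=R_b/(1+R_b)$ with the conventions $\infty/(1+\infty)=1$ and $0/(1+0)=0$ covers both, completing part (1) and hence the lemma.
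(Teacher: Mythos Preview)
Your proof is correct and follows essentially the same approach as the paper: both reduce (2) and (3) to (1), and both prove (1) by translating the functional equation \eqref{E5.0.3} into the relation $r_a^{-1}=r_b^{-1}+1$ between radii of convergence via Cauchy--Hadamard. Your version is simply more detailed, supplying the Tonelli justification and the boundary cases $R_b\in\{0,\infty\}$ that the paper leaves implicit.
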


\begin{proof} First of all $\exp(a)\geq 1$ since $a_n\geq 1$ for each $n$.
From calculus, the radius of convergence of the power series
$G_{a}(t)$ is $ r_a:=\exp(a)^{-1}$. The same is true for $b$.

(1) By \eqref{E5.0.3}, $r_a^{-1}=r_b^{-1}+1$. The assertion follows.

(2) Since $\exp(b)=0$, this is a special case of (1).

(3) Clearly $\exp(b)=r-1$. The assertion follows from part (1).
\end{proof}

Next we apply binomial transform to operads.
Let $\ip$ be a 2-unitary operad and let $\I$ be an ideal of
$\ip$ or $\I=\ip$. Let $^{n}\iu$ be defined as \eqref{E3.0.1} and
let $^{0}\iu=\ip$. Let
\begin{equation}
\label{E5.1.1}\tag{E5.1.1}
G_{\I}^{w}(t)=\sum_{n=0}^{\infty} \dim_{\k}(\frac{\I}{{^w \iu}\cap \I}(n)) t^n=
\sum_{n=0}^{\infty} d_{\I}^w(n)t^n
\end{equation}
and
\begin{equation}
\label{E5.1.2}\tag{E5.1.2}
G_{\I}(t)=\sum_{n=0}^{\infty} \dim_{\k}(\I(n)) t^n.
\end{equation}

\begin{lemma}
\label{xxlem5.2}
Let $\ip$ be a 2-unitary operad and let $\I$ be an ideal of $\ip$ or $\I=\ip$.
Then $G_{\I}^w(t)$ and $G_{\I}(t)$ are
\begin{equation}
\label{E5.2.1}\tag{E5.2.1}
G_{\I}^w(t)=\sum_{k=0}^{w-1} f_{\I}(k) \; \frac{t^{k}}{(1-t)^{k+1}}
\end{equation}
for all $w$ and
\begin{equation}
\label{E5.2.2}\tag{E5.2.2}
G_{\I}(t)=\sum_{k=0}^{\infty} f_{\I}(k) \; \frac{t^{k}}{(1-t)^{k+1}}
\end{equation}
where $f_{\I}(k)=d_{\I}^{k+1}(k)-d_{\I}^{k}(k)$ for all $k$.
\end{lemma}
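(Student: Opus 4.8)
The plan is to derive both identities from Theorem \ref{xxthm4.5}(2) by a telescoping sum, followed by a passage to the limit carried out coefficientwise in the ring of formal power series.

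First I would record the base case $G_\I^0(t)=0$. Since ${^0\iu}=\ip\supseteq \I$ by convention, we have $({^0\iu}\cap\I)(n)=\I(n)$ for every $n$, so $d_\I^0(n)=\dim_\k(\I/({^0\iu}\cap\I))(n)=0$, whence $G_\I^0(t)=0$. Theorem \ref{xxthm4.5}(2) supplies, for every $k\geq 0$,
$$G_\I^{k+1}(t)-G_\I^k(t)=f_\I(k)\,\frac{t^k}{(1-t)^{k+1}}.$$
Summing this telescoping identity from $k=0$ to $k=w-1$ and cancelling $G_\I^0(t)=0$ gives \eqref{E5.2.1} at once.

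For the second identity I would compare the coefficient of $t^n$ on the two sides as $w\to\infty$. On the left, \eqref{E3.0.1} shows ${^w\iu}(n)=0$ whenever $w>n$, hence $({^w\iu}\cap\I)(n)=0$ and $d_\I^w(n)=\dim_\k\I(n)$ for all $w>n$; thus the coefficient of $t^n$ in $G_\I^w(t)$ is eventually constant in $w$ and equals the coefficient of $t^n$ in $G_\I(t)$ as defined in \eqref{E5.1.2}. This says $G_\I^w(t)\to G_\I(t)$ coefficientwise. On the right, the standard expansion $\frac{t^k}{(1-t)^{k+1}}=\sum_{n\geq k}\tbinom{n}{k}t^n$ shows that the coefficient of $t^n$ in $f_\I(k)\frac{t^k}{(1-t)^{k+1}}$ is $f_\I(k)\tbinom{n}{k}$, which vanishes for $k>n$; therefore the coefficient of $t^n$ in the partial sum $\sum_{k=0}^{w-1}f_\I(k)\frac{t^k}{(1-t)^{k+1}}$ equals $\sum_{k=0}^{\min(w-1,n)}f_\I(k)\tbinom{n}{k}$ and is likewise constant once $w>n$. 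Since the two sides coincide for every finite $w$ by \eqref{E5.2.1}, their coefficientwise limits coincide, which is \eqref{E5.2.2}.

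The only point that needs care is the legitimacy of the $w\to\infty$ passage, and the key observation is that no analytic convergence is required: for each fixed $n$ both sides are \emph{eventually constant} in $w$, so the identity holds coefficientwise and hence as an equality of formal power series. This same stabilization argument also covers the case in which $\ip$ is not locally finite and some $f_\I(k)$ is an infinite cardinal, since each coefficient $\sum_{k\le n}f_\I(k)\tbinom{n}{k}$ remains a well-defined cardinal and is still reached after finitely many steps.
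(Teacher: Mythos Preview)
Your proof is correct and follows essentially the same line as the paper: telescope using Theorem~\ref{xxthm4.5}(2) from the base case $G_\I^0(t)=0$ to obtain \eqref{E5.2.1}, then pass to the limit for \eqref{E5.2.2}. The paper's own proof is terser about the limit (it simply writes $G_\I(t)=\lim_{w\to\infty}G_\I^w(t)$), whereas you spell out the coefficientwise stabilization; this extra care is harmless and arguably clarifies why the formal-series limit is legitimate.
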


\begin{proof} Since $G_{\I}(t)=\lim_{w\to \infty} G_{\I}^w(t)$,
\eqref{E5.2.2} is a consequence of \eqref{E5.2.1}.
So we only prove \eqref{E5.2.1}.

By Theorem \ref{xxthm4.5}(2), we have
$$G_{\I}^{w}(t)=G_{\I}^{w-1}(t) + f_{\I}(w-1) \frac{t^{w-1}}{(1-t)^{w}}$$
for all $w\geq 1$. When $w=1$, the above equation becomes $0=0+0$
where $\I\neq \ip$, or $\sum_{i=0}^{\infty} t^i=0+\frac{1}{1-t}$ where $\I=\ip$,
both of which hold clearly. We have
$$
G_{\I}^w(t)=\sum_{k=1}^{w} (G_{\I}^{k}(t)-G_{\I}^{k-1}(t))
=\sum_{k=0}^{w-1} f_{\I}(k) \frac{t^{k}}{(1-t)^{k+1}}.
$$
\end{proof}

Lemma \ref{xxlem5.2} tells us that the sequence $\{f_\I(n)\}$ is the
binomial transform of $\{\dim_\I(n)\}$. By Definition \ref{xxdef4.1}(6)
and \eqref{E5.0.5}, we immediately get
\begin{equation}
\label{E5.2.3}\tag{E5.2.3}
\gkdim \I=\max\{k\mid  f_{\I}(k)\neq 0\}+1.
\end{equation}

\subsection{Proofs of some main results}
\label{xxsec5.2}
We are ready to prove Theorem \ref{xxthm0.1}.

\begin{proof}[Proof of Theorem \ref{xxthm0.1}]
If ${^k \iu}=0$, then $\ip$ has finite GKdimension by
Lemma \ref{xxlem4.2}. Conversely, we assume that
$\gkdim \ip<\infty$. By Lemma \ref{xxlem5.2},
\begin{equation}
\label{E5.2.4}\tag{E5.2.4}
G_{\ip}(t)=\sum_{n=0}^{\infty} f_{\ip}(n) \;
\frac{t^{n}}{(1-t)^{n+1}}.
\end{equation}
By definition, $f_{\ip}(n)\geq 0$ for all $n$. Since
$\gkdim \ip<\infty$, there is an $N\in \N$ such that
$f_\ip(n)=0$ for all $n\geq N$ where
$f_\ip(n)=d^{n+1}_\ip(n)-d_\ip^{n}(n)=\dim {^{n}\iu}(n)$.
This implies that ${^n \iu}(n)=0$ for all $n\geq N$.
By Theorem \ref{xxthm4.6}(1), ${^n \iu}=0$ for all $n\geq
N$. In this case,
$$G_{\ip}(t)=\sum_{n=0}^{N-1} f_{\ip}(n) \;
\frac{t^{n}}{(1-t)^{n+1}}$$
which is rational. It is clear and follows from \eqref{E5.2.3} that
\begin{equation}
\label{E5.2.5}\tag{E5.2.5}
\gkdim \ip=\max\{ n\mid f_{\ip}(n)\neq 0\}+1
=\max\{n \mid {^n \iu}\neq 0\}+1.
\end{equation}
Therefore assertions in parts (1) and (2) follow.
\end{proof}

\begin{proof}[Proof of Corollary \ref{xxcor0.2}]
By Theorem \ref{xxthm0.1} (2) and Lemma \ref{xxlem4.7},
$\gkdim \ip\leq k$ if and only if ${^k \iu}_{\ip}=0$
if and only if $\I\supseteq {^k \iu}_{\As}$.

By definition,
$$G_{\As}(t)=\sum_{n=0}^{\infty} f_{\As}(n) \frac{t^n}{(1-t)^{n+1}},$$
and
\begin{equation}
\label{E5.2.6}\tag{E5.2.6}
G_{\As/{^k \iu}}(t)=G^k_{\As}(t)=\sum_{n=0}^{k-1} f_{\As}(n) \frac{t^n}{(1-t)^{n+1}}.
\end{equation}
Since $G_{\As}(t)=\sum_{k=0}^{\infty} k! t^k$, by
\eqref{E5.0.1},
$$f_{\As}(n)=\sum_{s=0}^{n} (-1)^{n-s} s! \tbinom{n}{s} .$$
It is easy to check that $f_{\As}(n)\neq 0$ for all $n\neq 1$.
The assertion concerning the GKdimension of
$\As/{^k \iu}$ follows from \eqref{E5.2.6}.
\end{proof}

Let $\sum_{k} a_k t^k$ and $\sum_k b_k t^k$ be two power series.
If $a_k\leq b_k$ for all $k$, then we write $\sum_{k} a_k t^k
\leq \sum_k b_k t^k$.

\begin{lemma}
\label{xxlem5.3}
Let $\I$ be an ideal of $\ip$. Then
$\tt(G_{\I}(t))\leq \tt(G_{\ip}(t))$.
As a consequence, if ${^n \iu}=0$
for some $n$, the set $\{G_{\I}(t)\mid \I\subseteq \ip\}$
is finite.
\end{lemma}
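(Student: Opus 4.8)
The plan is to reduce the asserted coefficient-wise inequality to an obvious containment of vector spaces by passing through the explicit description of the binomial transform. By Lemma \ref{xxlem5.2}, combined with the defining identity \eqref{E5.0.5} of $\tt$, one has
$$\tt(G_{\I}(t))=\sum_{k\ge 0} f_{\I}(k)\,t^k \quad\text{and}\quad \tt(G_{\ip}(t))=\sum_{k\ge 0} f_{\ip}(k)\,t^k,$$
where $f_{\I}(k)=\dim_{\Bbbk}({^k\iu}\cap\I)(k)$ and $f_{\ip}(k)=\dim_{\Bbbk}{^k\iu}(k)$. Thus, recalling that $\le$ between power series means coefficient-wise comparison, the inequality $\tt(G_{\I}(t))\le\tt(G_{\ip}(t))$ is precisely the statement that $f_{\I}(k)\le f_{\ip}(k)$ for every $k$.

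First I would note that $({^k\iu}\cap\I)(k)$ is by definition a $\Bbbk$-subspace of ${^k\iu}(k)$, so that $f_{\I}(k)=\dim_{\Bbbk}({^k\iu}\cap\I)(k)\le\dim_{\Bbbk}{^k\iu}(k)=f_{\ip}(k)$. This settles the first assertion immediately. For the consequence, I would then use that the truncation ideals form a descending chain ${^k\iu}\supseteq{^{k+1}\iu}$, which is clear from \eqref{E3.0.1} since enlarging $k$ only enlarges the family of maps $\pi^I$ over whose kernels one intersects. Hence ${^n\iu}=0$ forces ${^k\iu}=0$, and therefore $f_{\ip}(k)=0$, for all $k\ge n$; by the inequality just proved, also $f_{\I}(k)=0$ for all $k\ge n$.

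It follows that $\tt(G_{\I}(t))=\sum_{k=0}^{n-1} f_{\I}(k)\,t^k$ is a polynomial of degree at most $n-1$ whose $k$-th coefficient is a non-negative integer bounded above by $f_{\ip}(k)$. Invoking local finiteness of $\ip$ (under which the generating series of this section are defined, so that each $f_{\ip}(k)$ is a genuine finite integer), each coefficient ranges over the finite set $\{0,1,\dots,f_{\ip}(k)\}$, and hence $\tt(G_{\I}(t))$ takes at most $\prod_{k=0}^{n-1}\bigl(f_{\ip}(k)+1\bigr)$ distinct values as $\I$ varies. Since $\tt$ is invertible, $G_{\I}(t)=\tt^{-1}(\tt(G_{\I}(t)))$ is determined by $\tt(G_{\I}(t))$, so the set $\{G_{\I}(t)\mid \I\subseteq\ip\}$ is finite.

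There is no genuine obstacle here: the entire content is the containment $({^k\iu}\cap\I)(k)\subseteq{^k\iu}(k)$ transported through the transform, together with the vanishing of the truncations past index $n$. The only point requiring care is bookkeeping, namely ensuring that each $f_{\ip}(k)$ is a finite integer rather than an infinite cardinal, which is exactly what local finiteness guarantees; all the substantive work is already carried out in Lemma \ref{xxlem5.2}, which I am free to cite.
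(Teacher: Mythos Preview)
Your proof is correct and follows essentially the same approach as the paper: both reduce the inequality to the containment $({^k\iu}\cap\I)(k)\subseteq{^k\iu}(k)$ via Lemma~\ref{xxlem5.2}, and then derive finiteness from the fact that each $f_{\I}(k)$ is a bounded non-negative integer vanishing for $k\ge n$. Your write-up is slightly more detailed (you spell out the descending-chain argument for ${^k\iu}=0$ when $k\ge n$ and the invertibility of $\tt$), but the substance is identical.
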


\begin{proof}
By Theorem \ref{xxthm4.5}(1) and Lemma \ref{xxlem5.2},
$$G_{\I}(t)=\sum_{k=0}^{\infty} f_{\I}(k) \; \frac{t^{k}}{(1-t)^{k+1}}$$
where $f_{\I}(k)=\dim ({^{k}\iu}\cap \I)(k)$. Since
$$0\leq f_{\I}(k)=\dim ({^{k}\iu}\cap \I)(k)\leq \dim {^{k}\iu}(k)=
f_{\ip}(k)$$
for all $k$, we have
$$0\leq \tt(G_{\I}(t))=\sum_{k=0}^{\infty} f_{\I}(k) t^k\leq
\sum_{k=0}^{\infty} f_{\ip}(k) t^k=\tt(G_{\ip}(t)).$$

If ${^n \iu}=0$, then there are only finitely many
nonzero $f_{\ip}(k)$. Therefore there are only
finitely many possible choices $\{f_{\I}(k)\}_{k\geq 0}$.
The assertion follows.
\end{proof}

\begin{proof}[Proof of Theorem \ref{xxthm0.3}]
(1)
Let \[\I_1\subseteq \I_2 \subseteq \cdots \subseteq
\I_n\subseteq \cdots\]
be an ascending chain of ideals of $\ip$. Then we have
\[G_{\I_1}(t) \leq G_{\I_2}(t) \leq \cdots \leq
G_{\I_n}(t) \leq \cdots\]
Since $\gkdim\ip<\infty$, we have ${^k\iu}=0$ for some $k$.
By Lemma \ref{xxlem5.3}, $\{G_{\I_i}\mid i=1, 2, \cdots \}$
is finite and therefore the sequence $\{G_{\I_i}(t)\}_{i\ge 1}$ stabilizes.
This implies that the sequence of ideals $\{\I_i\}_{i\ge 1}$ stabilizes.

(2) ($\Rightarrow$) The proof is similar to the proof of
part (2) above.
Let \[\I_1\supseteq \I_2 \supseteq \cdots \supseteq \I_n\supseteq \cdots\]
be a descending chain of ideals of $\ip$. Since $\gkdim\ip<\infty$, we have
${^k\iu}=0$ for some $k$. By Lemma \ref{xxlem5.3}, $\{G_{\I_i}(t)\mid i=1, 2, \cdots \}$
is finite and therefore the sequence $\{G_{\I_i}(t)\}_{i\ge 1}$ stabilizes.
This implies that the sequence of ideals $\{\I_i\}_{i\ge 1}$ stabilizes.

($\Leftarrow$)
By Proposition \ref{xxpro3.1} and Lemma \ref{xxlem2.4}(1), we have
a descending chain of ideals
\[ {^0\iu} \supseteq {^1\iu} \supseteq {^2\iu} \supseteq
\cdots \supseteq {^k\iu} \supseteq \cdots\]
 of $\ip$. If $\ip$ is artinian, then this chain is stable.
On the other hand, we have $\cap_{k\ge 0} {^k\iu}=0$ since
${^k\iu}(k'-1)=0$ for all $k\ge k'\geq 0$.
It follows that ${^k\iu}=0$ for some sufficiently large $k$
and hence $\ip$ has finite GKdimension.

(3) This is a consequence of parts (1) and (2).
\end{proof}

Next we prove Theorem \ref{xxthm0.9}. The Gelfand-Kirillov 
dimension (or GKdimension) is an important tool in the
study of noncommutative algebra \cite{McR, KL}. Similar to
associative algebras, we introduce the notion of the
GKdimension for algebras over any operad.

\begin{definition}
\label{xxdef5.4}
Let $\ip$ be an operad and $A$ a $\ip$-algebra.
\begin{enumerate}
\item[(1)]
Let $X$ be a subset of $A$. We say that $X$ is a
\emph{set of generators} of $A$ if $A= \sum_{n\ge 0}\gamma_n(X)$,
where $\gamma_n(X)$ denotes the image $\ip(n)\otimes (\k X)^{\ot n} \to A$.
\item[(2)]
We say $A$ is \emph{finitely generated} if it has a set of generators
which is finite.
\item[(3)]
The \emph{GKdimension} of $A$ is defined to be
\[\gkdim(A) = \sup_{V}
\left\{\limsup_{n\to \infty}\log_n\left(\dim\sum_{i=0}^n\gamma_i(V)\right)\right\},\]
where the sup is taken over all finite dimensional subspaces $V\subseteq A$.
\end{enumerate}
\end{definition}

\begin{remark}
\label{xxrem5.5}
If $A$ is an associative algebra, then the above defined notions of
generators and GKdimension coincide with the standard ones in
\cite{McR, KL}.
\end{remark}

The next result is Theorem \ref{xxthm0.9}. In this theorem, $\ip$
might not be $2$-unitary.

\begin{theorem}
\label{xxthm5.6}
Let $\ip$ be an operad with order of the growth $o(\ip)$ {\rm{(}}see
Definition \ref{xxdef4.1}(4){\rm{)}} and $A$
an algebra over
$\ip$ with a finite set $X$ of generators. Then $A$ has finite
GKdimension, precisely
\[\gkdim(A) \le o(\ip) + r,\]
where $r= |X|$ is the cardinality of $X$.
If, the generating series of $\ip$ is rational, then
$$\gkdim (A)\le \gkdim (\ip)-1+r.$$
\end{theorem}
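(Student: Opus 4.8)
The plan is to compute $\gkdim A$ through the \emph{arity filtration} attached to the generating set. Set $V=\k X$, a subspace of dimension at most $r$, and let $F_n:=\sum_{i=0}^{n}\gamma_i(V)$, where $\gamma_i(V)$ is the image of $\ip(i)\ot V^{\ot i}\to A$ as in Definition \ref{xxdef5.4}. Because $\gamma$ is a morphism of operads, composition inside $\ip$ collapses nested applications of operations into a single one: for $\theta\in\ip(i)$ and $\eta_j\in\ip(l_j)$ one has $\gamma(\theta)(\gamma(\eta_1)(\cdots),\dots,\gamma(\eta_i)(\cdots))=\gamma(\theta\circ(\eta_1,\dots,\eta_i))(\cdots)$ with $\theta\circ(\eta_1,\dots,\eta_i)\in\ip(l_1+\cdots+l_i)$. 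Hence $\sum_{i\ge 0}\gamma_i(V)$ is already closed under all operations, so it equals $A$, and the same identity shows that if $W\subseteq F_m$ is \emph{any} finite dimensional subspace then $\gamma_i(W)\subseteq F_{mi}$, and therefore $\sum_{i=0}^{n}\gamma_i(W)\subseteq F_{mn}$.

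This last inclusion is the crux of the reduction: since any finite dimensional $W\subseteq A=\bigcup_m F_m$ lies in some $F_m$, and rescaling $n\mapsto mn$ does not affect a $\limsup_n\log_n(-)$, it suffices to bound $\dim F_n$. I would estimate $\dim\gamma_i(V)$ using equivariance. The map $\ip(i)\ot V^{\ot i}\to A$ is $\S_i$-balanced, so it factors through $\ip(i)\ot_{\k\S_i}V^{\ot i}$; decomposing $V^{\ot i}$ into the permutation modules indexed by the degree-$i$ monomials in $r$ variables (a binomial categorification in the spirit of Section \ref{xxsec4.3}) gives
\[\dim\gamma_i(V)\le \dim\bigl(\ip(i)\ot_{\k\S_i}V^{\ot i}\bigr)\le \tbinom{i+r-1}{r-1}\dim\ip(i),\]
each permutation summand contributing a coinvariant quotient of $\ip(i)$ of dimension at most $\dim\ip(i)$.

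By the definition of $o(\ip)$, for every $\e>0$ we have $\dim\ip(i)\le C_\e\, i^{\,o(\ip)+\e}$ for $i\gg 0$, while $\tbinom{i+r-1}{r-1}\le C' i^{\,r-1}$; summing over $i\le n$ yields $\dim F_n\le C''n^{\,o(\ip)+r+\e}$. Feeding this into the reduction of the first paragraph gives $\limsup_n\log_n\dim\sum_{i=0}^n\gamma_i(W)\le o(\ip)+r+\e$ for every finite dimensional $W$ and every $\e>0$, whence $\gkdim A\le o(\ip)+r$ (the bound being vacuous when $o(\ip)=\infty$).

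For the second statement I would relate $o(\ip)$ to $\gkdim\ip$ when $G_\ip(t)$ is rational. If $\gkdim\ip=\infty$ the inequality is vacuous, so assume it is finite; then $G_\ip(t)$ has radius of convergence $1$ and, the coefficients being non-negative, its dominant singularity is a pole at $t=1$ by Pringsheim's theorem. If that pole has order $d$, then $\dim\ip(n)$ grows like $n^{d-1}$ and $\sum_{i\le n}\dim\ip(i)$ like $n^{d}$, so $o(\ip)=d-1=\gkdim\ip-1$; substituting into the first bound gives $\gkdim A\le\gkdim\ip-1+r$. The main obstacle is precisely the reduction step: the definition of $\gkdim A$ ranges over \emph{all} finite dimensional subspaces $W$, not merely $V$, so one must control $\gamma_i(W)$ for arbitrary $W$, and it is the operad-composition identity that confines $\gamma_i(W)$ to $F_{mi}$ and lets the constant $m$ wash out in the logarithm; the ensuing dimension count via the binomial estimate is then routine.
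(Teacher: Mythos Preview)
Your proof is correct and follows essentially the same approach as the paper: the same reduction of an arbitrary finite dimensional $W$ to the filtration $F_n$ via operad composition, the same binomial bound $\dim\gamma_i(V)\le\tbinom{i+r-1}{r-1}\dim\ip(i)$ obtained from the $\S_i$-balanced structure, and the same use of the definition of $o(\ip)$. Your treatment of the rational case via Pringsheim's theorem is more explicit than the paper's one-line assertion that $\gkdim\ip=o(\ip)+1$ for rational $G_\ip$, but arrives at the same conclusion.
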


\begin{proof}
First we claim that
$$\dim (\ip(n)\otimes_{\S_n} (\k X)^{\ot n}) \le \dim\ip(n)\cdot
\tbinom{n+r-1}{r-1}.$$
In fact, assume $X=\{x_1, \cdots, x_r\}$. We define a total ordering on
$X$ by $x_1<x_2<\cdots<x_r$. For any $x_{i_1}, \cdots, x_{i_n}$ with
$1\le i_1, \cdots, i_n\le r$, there exists some $\sigma\in\S_n$ such
that $i_{\sigma(1)}\le \cdots \le i_{\sigma(n)}$, thus
$$\theta\ot (x_{i_1}\ot\cdots \ot x_{i_n}) =
(\theta\ast \sigma)\ot \sigma^{-1}\ast(x_{i_1}\ot\cdots \ot x_{i_n})
=(\theta\ast \sigma)\ot (x_{i_{\sigma(1)}}\ot \cdots\ot  x_{i_{\sigma(n)}}).$$
By a standard argument we obtain the desired inequality.

Consequently, we have
\begin{equation}
\label{E5.6.1}\tag{E5.6.1}
\dim(\gamma_n(\Bbbk X))\le \dim\ip(n)\cdot \tbinom{n+r-1}{r-1}.
\end{equation}
By Definition \ref{xxdef4.1}(4), for any arbitrary small positive
number $\epsilon$, there is a positive number $C$ such that
\begin{equation}
\label{E5.6.2}\tag{E5.6.2}
\dim \ip(n) \leq C n^{o(\ip)+\epsilon}
\end{equation}
for all $n\geq 1$.
Now let $V$ be any finite dimensional subspace of $A$. Since
$X$ is a generating set of $A$,
$V\subseteq \sum_{i=0}^m \gamma_i(\Bbbk X)$
for some integer $m\ge 1$. Then we have
\begin{align*}
\gamma_n(V)\subseteq & \sum\limits_{0\le i_1, \cdots, i_n\le m}
\gamma(\ip(n)\otimes ((\gamma_{i_1}(\Bbbk X)^{i_1})
\otimes \cdots (\gamma_{i_n}(\Bbbk X)^{i_n})))\\
=& \sum\limits_{0\le i_1, \cdots, i_n\le m}
\gamma((\ip(n)\circ( \ip(i_1), \cdots, \ip(i_n)))
\otimes (\Bbbk X)^{i_1+\cdots+i_n})\\
\subseteq & \sum\limits_{0\le i_1, \cdots, i_n\le m}
\gamma_{i_1+\cdots+i_m} (\Bbbk X)
\subseteq  \sum\limits_{i=0}^{mn} \gamma_i(\Bbbk X),
\end{align*}
and hence
$\sum_{i=0}^n \gamma_i(V) \subseteq \sum_{i=0}^{mn}\gamma_i(\Bbbk X)$.
Combining with \eqref{E5.6.1} and \eqref{E5.6.2}, we have
$$\begin{aligned}
\dim (\sum_{i=0}^{n} \gamma_i(V))
&\leq \dim (\sum_{i=0}^{mn}\gamma_i(\Bbbk X))
\leq \sum_{i=0}^{mn} \dim \ip(i)  \tbinom{i+r-1}{r-1}\\
&\leq \sum_{i=0}^{mn} C i^{o(ip)+\epsilon} \tbinom{i+r-1}{r-1}
\leq \sum_{i=0}^{mn} C_1 i^{o(\ip)+\epsilon+r-1}\\
&\leq C_2 n^{o(\ip)+\epsilon+r}
\end{aligned}
$$
for some constants $C_1$ and $C_2$. Thus
$\gkdim (A)\leq o(\ip)+r$ when taking $\epsilon$ arbitrary small.

When $\ip$ has rational generating series,
one can easily check that $\gkdim \ip=o(\ip)+1$. Thus
$\gkdim (A)\leq \gkdim (\ip)-1+r$.
\end{proof}

\begin{example}
\label{xxex5.7}
Consider the operad $\Com$. It is easy to check that
$\gkdim\Com=1$, and consequently, any commutative
algebra generated in $n$-elements has GKdimension no greater
than $n$. Notice that the free algebra generated in $n$ elements
over $\Com$ is the polynomial algebra in $n$ variables,
and has GKdimension $n$.
\end{example}

Recall that $(GK\leq k)\rad(\ip)$ is defined in Definition
\ref{xxdef1.10}(2).

\begin{proposition}
\label{xxpro5.8}
Let $\ip$ be a 2-unitary operad. Then
$(GK\leq k)\rad(\ip)={^{k} \iu}$.
\end{proposition}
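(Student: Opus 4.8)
The plan is to prove the two inclusions $(GK\leq k)\rad(\ip)\subseteq {^k\iu}$ and ${^k\iu}\subseteq (GK\leq k)\rad(\ip)$ separately, in each case converting a statement about the $\gkdim$ of a quotient operad into a statement about the vanishing of a truncation ideal via Theorem \ref{xxthm0.1}(2), and transporting truncation ideals across quotient maps via Lemma \ref{xxlem4.7}. Throughout I will use that every quotient $\ip/\I$ is again 2-unitary [Example \ref{xxex2.1}(2)], so Theorem \ref{xxthm0.1}(2) legitimately applies to it, and that $\{{^m\iu}\}_{m\geq 0}$ is a descending chain (immediate from \eqref{E3.0.1}), so that ${^k\iu}\subseteq {^m\iu}$ whenever $m\leq k$.

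For the inclusion $(GK\leq k)\rad(\ip)\subseteq {^k\iu}$, I would show that ${^k\iu}$ is itself one of the ideals appearing in the intersection defining the radical, i.e. that $\gkdim(\ip/{^k\iu})\leq k$. By Lemma \ref{xxlem4.7}, ${^k\iu_{\ip/{^k\iu}}}\cong {^k\iu}/({^k\iu}\cap {^k\iu})=0$. Applying Theorem \ref{xxthm0.1}(2) to the 2-unitary operad $\ip/{^k\iu}$, the vanishing of its $k$-th truncation forces polynomial growth, with $\gkdim(\ip/{^k\iu})=\min\{m\mid {^m\iu_{\ip/{^k\iu}}}=0\}\leq k$. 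Hence ${^k\iu}$ lies in $\{\I\mid \gkdim(\ip/\I)\leq k\}$, so the defining intersection is contained in ${^k\iu}$.

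For the reverse inclusion ${^k\iu}\subseteq (GK\leq k)\rad(\ip)$, I would fix an arbitrary ideal $\I$ with $\gkdim(\ip/\I)\leq k$ and prove ${^k\iu}\subseteq \I$. Since $\gkdim(\ip/\I)<\infty$, Theorem \ref{xxthm0.1}(2) gives $\min\{m\mid {^m\iu_{\ip/\I}}=0\}=\gkdim(\ip/\I)\leq k$, so ${^m\iu_{\ip/\I}}=0$ for some $m\leq k$, whence ${^k\iu_{\ip/\I}}=0$ by the descending chain property. Now Lemma \ref{xxlem4.7} identifies ${^k\iu_{\ip/\I}}\cong {^k\iu}/({^k\iu}\cap \I)$, so this quotient vanishes, i.e. ${^k\iu}={^k\iu}\cap \I\subseteq \I$. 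As $\I$ was arbitrary, ${^k\iu}$ is contained in the intersection $(GK\leq k)\rad(\ip)$, and combining both inclusions gives the equality.

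Once Theorem \ref{xxthm0.1}(2) and Lemma \ref{xxlem4.7} are in hand, both steps are essentially formal, so I do not anticipate a genuine obstacle. The only point requiring care is the bookkeeping about which operad each truncation ideal is taken in (the quotient $\ip/\I$ versus $\ip$ itself); in particular one should note that, for each individual $\I$, the quotient $\ip/\I$ has \emph{finite} GKdimension even when $\ip$ does not, so that Theorem \ref{xxthm0.1}(2) may be invoked for $\ip/\I$, and that the set $\{\I\mid \gkdim(\ip/\I)\leq k\}$ is never empty since ${^k\iu}$ always belongs to it.
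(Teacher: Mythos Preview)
Your proof is correct and follows essentially the same approach as the paper: both inclusions are obtained by combining Theorem \ref{xxthm0.1}(2) (or, for one direction, the weaker Lemma \ref{xxlem4.2}) with Lemma \ref{xxlem4.7} to pass between vanishing of truncation ideals in a quotient and containment of ${^k\iu}$ in the kernel. The only cosmetic difference is that the paper invokes Lemma \ref{xxlem4.2} directly for the bound $\gkdim(\ip/{^k\iu})\leq k$, whereas you deduce it from the full Theorem \ref{xxthm0.1}(2).
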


\begin{proof} Let $\I$ be an ideal of $\ip$ and let
$\ip'=\ip/\I$. If $\gkdim \ip' \leq k$, then
$^{k} \iu_{\ip'}=0$ by Theorem \ref{xxthm0.1}(2).
By Lemma \ref{xxlem4.7}, $^{k}\iu_{\ip}\subseteq \I$.
By definition, $^{k}\iu_{\ip} \subseteq
(GK\leq k)\rad(\ip)$. For the other inclusion,
note that,
$$\gkdim \ip/{^{k}\iu}\leq k$$
by Lemmas \ref{xxlem4.2} and \ref{xxlem4.7}. Hence
${^{k}\iu}\supseteq (GK\leq k)\rad(\ip).$
\end{proof}

\section{Signature of a 2-unitary operad}
\label{xxsec6}

In this section we introduce the notion of the
signature of an unitary operad. Then we prove
Theorems \ref{xxthm0.6}, \ref{xxthm0.7} and \ref{xxthm0.8}.
Note that we do not usually
assume that $\ip$ is locally finite.

\subsection{Definition of the signature}
\label{xxsec6.1}

\begin{definition}
\label{xxdef6.1}
Let $\ip$ be a unitary operad. The {\it signature} of
$\ip$ is defined to be the sequence
$${\mathcal S}(\ip):=\{d_1,d_2,d_3,\cdots\}$$
where
$$d_{k}=\dim_{\Bbbk} {^{k}\iu}(k)$$
for all $k\geq 1$. We leave out $d_0=
\dim_{\Bbbk} {^{0}\iu}(0)$ because it is always 1.
\end{definition}

We borrow the word ``signature'' from a paper of Brown-Gilmartin
\cite[Definition 5.3(1)]{BG}. There are some similarities
between the signature of a connected Hopf algebra in the
sense of \cite{BG} and the signature of a 2-unitary operad
defined above.

The signature of $\Com$ is $\{0,0,0,\cdots\}$.
Let $\ip$ be a 2-unitary operad of GKdimension $k$. By
\eqref{E5.2.5}, we have the signature of $\ip$ is of form
\[\{f_\ip(1), \cdots, f_\ip(k-1), 0, 0, \cdots \}\]
where $f_\ip(k-1)\neq 0$, and
\[\dim \ip(n)=\sum\limits_{i=0}^{k-1} f_\ip(i)\tbinom{n}{i},\]
where ${n\choose i}=0$ if $n<i$. Thus the signature of
$\ip$ is uniquely determined by the Hilbert series
of $\ip$, and vice versa.

\subsection{Proofs of other main results}
\label{xxsec6.2}
Theorem \ref{xxthm0.6} classifies all 2-unitary operads with
signature $(d,0,\cdots)$ for any $d\geq 0$. We start with the
following lemma.

\begin{lemma}
\label{xxlem6.2}
Let $\ip$ be a 2-unitary operad or a 2-unitary plain operad.
Suppose that ${^2 \iu}=0$. Then
\begin{enumerate}
\item[(1)]
$\ip$ is $2a$-unitary.
\item[(2)]
$\ip$ is $\Com$-augmented, namely, there is a morphism
from $\Com\to \ip$.
\end{enumerate}
\end{lemma}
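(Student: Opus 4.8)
The plan is to exploit the hypothesis ${^2\iu}=0$ as a \emph{separation principle}. By the definition of the truncation, ${^2\iu}(n)=\bigcap_{i\in\n}\ker\pi^{i}$, so an element $\theta\in\ip(n)$ vanishes as soon as $\pi^{i}(\theta)=0$ for every $i\in\n$. Consequently, to prove that two elements of $\ip(n)$ coincide it suffices to check that all of their unary restrictions $\pi^{i}$ agree. This observation is the engine behind both parts.

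For part (1) I would prove $\1_3=\1'_3$. By Lemma \ref{xxlem2.7}(1) we have $\pi^{i}(\1_3)=\1_1$ for $i=1,2,3$, so by the separation principle it is enough to compute $\pi^{i}(\1'_3)$ and obtain the same value. Writing $\1'_3=\1_2\circ(\1_1,\1_2)$ and applying (OP2), the identity axiom, the defining relations \eqref{E1.1.1}--\eqref{E1.1.2} of a $2$-unit, together with $\pi^{\emptyset}(\1_2)=\1_2\circ(\1_0,\1_0)=\1_0$ (again Lemma \ref{xxlem2.7}(1)), each $\pi^{i}(\1'_3)$ collapses to $\1_1$. Hence $\1_3-\1'_3\in{^2\iu}(3)=0$, so $\1_3=\1'_3$ and $\ip$ is $2a$-unitary. (These same computations go through verbatim for a plain operad.)

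For part (2) the preliminary step, which I expect to be the crux, is to show that under ${^2\iu}=0$ each $\1_n$ is \emph{symmetric}, i.e. $\1_n\ast\sigma=\1_n$ for all $\sigma\in\S_n$. This is exactly where the separation principle pays off: by Lemma \ref{xxlem2.11}(1), $\pi^{i}(\1_n\ast\sigma)=\pi^{\{\sigma(i)\}}(\1_n)\ast\pi^{\{i\}}(\sigma)$, and since $\pi^{\{\sigma(i)\}}(\1_n)=\1_1$ while $\pi^{\{i\}}(\sigma)$ lies in the trivial group $\S_1$, this equals $\1_1=\pi^{i}(\1_n)$ for every $i$; the separation principle then forces $\1_n\ast\sigma=\1_n$. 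With symmetry in hand I would define $\phi\colon\Com\to\ip$ on the canonical generator $1$ of $\Com(n)=\Bbbk$ by $\phi_n(1)=\1_n$. Equivariance of $\phi_n$ holds because $\Com(n)$ carries the trivial $\S_n$-action and $\1_n$ is symmetric (in the plain case there is nothing to verify here); $\phi_1(1)=\1_1$ by construction; and compatibility with composition is precisely the relation $\1_m\circ(\1_{k_1},\dots,\1_{k_m})=\1_{k_1+\cdots+k_m}$, which is Lemma \ref{xxlem2.6}(2), available because part (1) makes $\ip$ $2a$-unitary. Thus $\phi$ is a morphism of operads and $\ip$ is $\Com$-augmented. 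The only genuinely delicate point is the symmetry of $\1_n$; once that and Lemma \ref{xxlem2.6}(2) are in place, everything else is a routine unwinding of definitions.
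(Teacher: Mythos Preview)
Your proposal is correct and follows essentially the same approach as the paper: both parts rest on the separation principle that ${^2\iu}=0$ forces an element to vanish once all its unary restrictions $\pi^i$ do. The only minor difference is in part (2): the paper first checks $\1_2\ast(12)=\1_2$ via the separation principle and then obtains $\1_n\ast\sigma=\1_n$ by induction on $n$ from the recursive definition of $\1_n$, whereas you apply the separation principle directly for every $n$ using Lemma~\ref{xxlem2.11}(1); your route is a bit more streamlined but the idea is the same.
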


\begin{proof}
Note that ${^2\iu}=0$ means that, for each $\theta\in \ip(n)$,
if $\pi^i(\theta)=0$ for all $i\in \n$, then $\theta=0$.

(1) It is easily seen that
$\pi^i(\1_3)=\1_1=\pi^i(\1'_3)$ for $i=1, 2, 3$.
So $\1_3-\1'_3\in \ker \pi^i$ and $\1_3-\1'_3\in {^2\iu}(3)=0$.
The assertion follows.

(2) We claim that $\1_2*\tau=\1_2$ where $\tau=(12)\in \S_2$.
The proof is similar to the proof of part (1) by using the
fact that $\pi^i(\1_2*\tau) =\1_1=\pi^i(\1_2)$ for $i=1,2$.
It follows by induction on $n$ that, for every $n\geq 1$,
$\1_n*\sigma=\1_n$ for all $\sigma\in \S_n$. Thus there is an
operad morphism from $\Com$ to $\ip$ by sending $\1_n\in \Com$
to $\1_n\in \ip$.
\end{proof}

\begin{proof}[Proof of Theorem \ref{xxthm0.6}]
If $\ip$ is a 2-unitary operad of $\gkdim\leq 2$, then, by
Theorem \ref{xxthm0.1}, $^2 \iu=0$. Hence Lemma \ref{xxlem6.2}
can be applied. In particular, two categories in Theorem
\ref{xxthm0.6}(2) and (3) are the same. We now show that
two categories in Theorem \ref{xxthm0.6}(1) and (2) are
equivalent.

Suppose $A$ is a finite dimensional augmented algebra.
By Example \ref{xxex2.3}(1), one can construct an operad,
denoted by ${\mathcal D}_A$. It is routine to check
that every algebra morphism $f: A\to A'$ induces
a natural morphism of operads ${\mathcal D}_{A}
\to {\mathcal D}_{A'}$. Thus $F: A\to {\mathcal D}_{A}$
is a functor from the category of finite dimensional
augmented algebras to the category of 2-unitary operads
of $\gkdim\leq 2$.

Let $\mathcal{D}$ be a 2-unitary operad of GKdimension $\leq 2$.
Observe that $\mathcal{D}(1)$ is an associative $\Bbbk$-algebra
with identity $\1_1$. The map $\pi^{\emptyset}\colon {\mathcal D}(1)\to
{\mathcal D}(0)=\Bbbk$ shows that $\mathcal{D}(1)$ is augmented.
The restriction $G: {\mathcal D}\to {\mathcal D}(1)$ defines
a functor from the category of 2-unitary operads
of $\gkdim\leq 2$ to the category of finite dimensional
augmented algebras. It is clear that $G\bullet F\cong Id$.
It remains to show that $F\bullet G\cong Id$.

If $\gkdim \mathcal{D}=1$, then $\mathcal{D}=\Com$ by Proposition
\ref{xxpro0.5}. In this case we have $F\bullet G(\mathcal{D})=\mathcal{D}$.
For the rest of the proof we assume that $\gkdim \mathcal{D}=2$.
By Theorem 0.1, we have that $f_{\mathcal{D}}(1)\neq 0$ (or
${^1\iu_\mathcal{D}}\neq 0$) and $f_{\mathcal{D}}(n)=0$ (or
${^n\iu_\mathcal{D}}=0$) for all $n\ge 2$. Recall that
$f_\mathcal{D}(0)=\dim{^{0}\iu_{\mathcal{D}}(0)}=\dim\mathcal{D}(0)=1$.
Suppose
$f_\mathcal{D}(1)=\dim{^1\iu_\mathcal{D}(1)}=\dim\mathcal{D}(1)-1=d>0$.
Then by Lemma \ref{xxlem5.2}, we know that the generating series
of $\mathcal{D}$ is
\begin{align*}
G_\mathcal{D}(t)= f_\mathcal{D}(0)\dfrac{1}{1-t}+f_{\mathcal{D}}(1)\dfrac{t}{(1-t)^2}
= \sum_{n=0}^\infty (1+nd)t^n.
\end{align*}

Since ${^1\iu}(1)$ is the kernel of
the $\Bbbk$-linear map $\pi^{\emptyset}\colon {\mathcal D}(1)\to
{\mathcal D}(0)$
(sending $\theta\mapsto \theta\circ \1_0$), we can choose a
$\Bbbk$-basis $\1_1, \delta_1, \cdots, \delta_d$ for
$\mathcal{D}(1)$ with $\delta_i\circ \1_0=0$ for all $i=1,
\cdots, d$.

The claim that $F\bullet G\cong \Id$ is equivalent to the claim
that $\mathcal{D}$ is naturally isomorphic to the operad introduced
in Example \ref{xxex2.3}(1). We separate the proof into
several steps.

\noindent
{\bf Step 1:}
Denote $\delta^{n}_{(i)j}\colon =\1_n\underset{i}{\circ} \delta_j$.
We claim that $\{\1_n, \delta^n_{(i)j}\mid i\in \n,
j\in \d \}$ is a basis for $\mathcal{D}(n)$.
In fact, since $\dim\mathcal{D}(n)=1+nd$, we only need show
$\{\1_n, \delta^n_{(i)j} \mid i\in \n, j\in \d \}$ are
linearly independent. Assume that there exist $\{\la_0, \la_{ij}
\in \Bbbk
\mid i\in \n, j\in \d \}$ such that
$\la_0\1_n+\sum\limits_{i, j}\la_{ij}\delta^n_{(i)j}=0$.
Then we have
\begin{align*}
0=\pi^k(\la_0\1_n+\sum\limits_{i, j}\la_{ij}\delta^n_{(i)j})
=\la_0\1_1+\sum\limits_{j}\la_{kj}\delta_j
\end{align*}
since $\pi^k(\delta^n_{(i)j})=
\begin{cases} \delta_j, & i=k \\ 0, & i\neq k.\end{cases}$
It follows that $\la_0=0$ and $\la_{ij}=0$ for all $i, j$.
Therefore we proved our claim.

\noindent
{\bf Step 2:}
For consistency, we set $\delta_0=\1_1$, and
$\delta_{(i)0}^n=\1_n$ for any $n\ge 1$ and any
$i\in \n$.
For other $i\in \n, 0\le j\le d, n\ge 1$, we have
$\delta^{n}_{(i)j} =\1_n\underset{i}{\circ} \delta_j$ by definition.
Next, we compute $\delta^m_{(s)t}\underset{i}{\circ}
\delta^n_{(k)l}$ for all possible $m, s, t, i, n, k, l$.

Case 1: $t\ge 1$ and $l= 0$. We consider the special case $m=1$.
Suppose that $\delta_t\circ \1_n=\la_0^t\1_n+\sum
\limits_{1\le i\le n\atop 1\le j\le d} \la_{ij}^t \delta^n_{(i)j}$.
Then for any $k\in \n$, we have
\begin{align*}
\delta_t
= & (\delta_t \circ \1_n)\circ (\1_0, \cdots, \1_0,
\underset{k}{\1_1}, \1_0, \cdots, \1_0)\\
=& (\la_0^t\1_n+\sum\limits_{1\le i\le n\atop 1\le j\le d}
\la_{ij}^t \delta^n_{(i)j})\circ (\1_0, \cdots, \1_0,
\underset{k}{\1_1}, \1_0, \cdots, \1_0)\\
=& \la_0^t\1_1+\sum\limits_{1\le i\le n\atop 1\le j\le d}
\la_{ij}^t (\1_n\circ (\1_1, \cdots, \1_1,
\underset{i}{\delta_j}, \1_1, \cdots, \1_1))\circ
(\1_0, \cdots, \1_0, \underset{k}{\1_1}, \1_0, \cdots, \1_0)\\
=& \la_0^t\1_1+\sum\limits_{j} \la_{kj}^t \delta_j.
\end{align*}
It follows that $\la_0^t=0$ and
$\la_{ij}^t=\begin{cases} 1, & j=t, i\in \n,
\\ 0, & {\rm otherwise}.\end{cases}$
Therefore,
\begin{equation}\label{E6.2.1}\tag{E6.2.1}
\delta_t\circ \1_n=\sum\limits_{1\le i\le n} \delta^n_{(i)t}.
\end{equation}
In general,
\begin{align*}
\delta_{(s)t}^m \underset{i}{\circ} \1_n
=&(\1_m \underset{s}{\circ} \delta_t)\underset{i}{\circ} \1_n\\
=&\begin{cases}
(\1_m\underset{i}{\circ} \1_n)\underset{s}{\circ} \delta_t, & \; i>s,\\
\1_m\underset{s}{\circ} (\delta_t\underset{1}{\circ} \1_n), & \;  i=s,\\
(\1_m\underset{i}{\circ} \1_n)\underset{s+n-1}{\circ} \delta_t, & \; i<s
\end{cases}\\
=&\begin{cases}
\delta^{m+n-1}_{(s)t}, & \qquad\quad i>s,\\
\sum\limits_{k=1}^n \delta^{m+n-1}_{(s+k-1) t}, & \qquad\quad i=s,\\
\delta^{m+n-1}_{(s+n-1)t}, & \qquad\quad i<s.
\end{cases}
\end{align*}

Case 2: $t=0$ and $l\ge 1$.
\[\1_m \underset{i}{\circ} \delta_{(k)l}^n
=\1_m  \underset{i}{\circ} (\1_n\underset{k}{\circ}\delta_l)
=(\1_m\underset{i}{\circ} \1_n)\underset{i+k-1}{\circ} \delta_l
=\1_{m+n-1} \underset{i+k-1}{\circ} \delta_l
=\delta^{m+n-1}_{(i+k-1) l}.\]

Case 3: $t\ge 1$, $l\ge 1$ and $n=1$.

For any $1\le i<i'\le m$, $1\le j, j'\le d$, we have
\[\pi^k(\1_m\circ (\1_1, \cdots, \1_1,
\underset{i}{\delta_{j}}, \1_1, \cdots, \1_1,
\underset{i'}{\delta_{j'}}, \1_1, \cdots, \1_1))=0\]
for any $k\in [m]$. So
$\1_m\circ (\1_1, \cdots, \1_1, \underset{i}{\delta_{j}},
\1_1, \cdots, \1_1, \underset{i'}{\delta_{j'}}, \1_1, \cdots, \1_1)=0$.
It follows that
\begin{equation}\label{E6.2.2}\tag{E6.2.2}
\delta^{m}_{(s)t}\underset{i}{\circ} \delta_{l}=0
\end{equation}
for any $1\le s\neq i\le m$.

\noindent
{\bf Step 3:}
Next we consider the multiplication of $\mathcal{D}(1)$.
Suppose that
$\delta_j\circ \delta_{j'}
=\Omega_{jj'}^0\1_1+\sum\limits_{k=1}^d \Omega_{jj'}^k \delta_k$,
where $\Omega_{jj'}^k$ ($k=0,1, \cdots, d$) are the structure
constants of the associative algebra $\mathcal{D}(1)$
associated to the basis $\{\1_1, \delta_1, \cdots, \delta_d\}$.
By \eqref{E6.2.1}, we have
$\delta_j\circ \1_2=\1_2\circ (\delta_j, \1_1)
+\1_2\circ (\1_1, \delta_j)$, and
by \eqref{E6.2.2}, we have
$\1_2\circ (\delta_j, \delta_{j'})=0$ for any $1\le j, j'\le d$.
It follows that
\begin{align*}
(\delta_j\circ \delta_{j'})\circ \1_2
=\1_2\circ (\delta_j\circ \delta_{j'}, \1_1)
+\1_2\circ (\1_1, \delta_j\circ \delta_{j'})
\end{align*}
and hence $\Omega_{jj'}^0=0$, which means that
$\mathcal{D}(1)=\Bbbk \1_1\oplus \overline{\mathcal{D}(1)}$
is an augmented algebra with
$\overline{\mathcal{D}(1)}=\bigoplus\limits_{j=1}^d\Bbbk \delta_j$.

\noindent
{\bf Step 4:}
We now consider general $\delta_{(s)t}^m \underset{i}{\circ} \delta_{(k)l}^n$
for $t,l\geq 1$.

By \eqref{E6.2.2}, we have
$\delta_{(s)t}^m \underset{i}{\circ} \delta_{(k)l}^n=0$ for any $i\neq s$.
If $s=i$, we have
\begin{align*}
\delta_{(s)t}^m \underset{i}{\circ} \delta_{(k)l}^n
=& (\1_m\underset{s}{\circ} \delta_t) \underset{i}{\circ}
   (\1_n \underset{k}{\circ}\delta_l)=
   \1_m\underset{s}{\circ} (\delta_t\underset{1}{\circ}
	 (\1_n \underset{k}{\circ} \delta_l))\\
=& \1_m\underset{s}{\circ} ((\delta_t\underset{1}{\circ} \1_n)
   \underset{k}{\circ} \delta_l)\\
=& \1_m\underset{s}{\circ}
   ((\sum\limits_{u=1}^n \delta^{n}_{(u)t})
   \underset{k}{\circ} \delta_l) \qquad {\text{by \eqref{E6.2.1}}}\\
=& \1_m\underset{s}{\circ} (\delta^{n}_{(k)t} \underset{k}{\circ}\delta_l)
\qquad \qquad  {\text{by \eqref{E6.2.2}}}\\
=& \1_m\underset{s}{\circ} (\1_n \underset{k}{\circ}
   (\delta_t\underset{1}{\circ}\delta_l)) \\
=& (\1_m\underset{s}{\circ} \1_n) \underset{k+s-1}{\circ}
   (\delta_t\underset{1}{\circ}\delta_l)\\
=&\sum\limits_{v=1}^d \Omega_{tl}^v \delta^{m+n-1}_{(k+s-1) v}.
\end{align*}

The first 4 steps show that \eqref{E2.3.3} holds.

\noindent
{\bf Step 5:}
Finally it follows from ${^2\iu}_{\mathcal D}=0$ that $\delta^n_{(i)j}\ast \sigma
=\delta^n_{(\sigma^{-1}(i))j}$ for all $\sigma \in \S_n$.

As above, we have shown that a 2-unitary operad $\mathcal{D}$ is isomorphic
to an operad introduced in Example \ref{xxex2.3}(1) with $A=\mathcal{D}(1)$
and that $\mathcal{D}$ is uniquely determined by an augmented algebra $\mathcal{D}(1)$.
This implies that $F\bullet G \cong Id$, as required.
\end{proof}
\def\I{\mathcal{I}}

\begin{proof}[Proof of Theorem \ref{xxthm0.7}]
Let $\ip=\As/\I$ be a quotient operad of $\As$ of
GKdimension $n$.
Let ${^k\iu_\ip}$ and ${^k\iu}$ be the truncations
of $\ip$ and $\As$, respectively.
Since $\ip$ is unitary, $f_\ip(0)=\dim {^{0}\iu_\ip}(0)
=\dim \ip(0)=1$.

(1) This is Proposition \ref{xxpro0.5}.

(2) Since $\ip$ is a quotient of
$\As$, $f_\ip(1)=\dim {^1\iu_\ip}(1)=0$.
By \eqref{E5.2.3}, $\gkdim \ip$ is either 1 or at least 3.

(3) $\gkdim \ip=3$.  From Corollary \ref{xxcor0.2}
and Lemma \ref{xxlem3.8},
it immediately follows that $\I={^3\iu}$ and
$\ip=\As\slash {^3\iu}$.

(4) $\gkdim\ip=4$. Then $\dim\ip(0)=\dim\ip(1)=1$.
By Lemma \ref{xxlem3.8}, $\I \subseteq {^3 \iu}$.
Hence $\dim\ip(2)=2$, and consequently by \eqref{E5.2.4},
$\dim f_{\ip}(1)=0$ and $\dim f_\ip(2)=1$.
Hence we have
\[G_\ip(t)=\sum_{n=0}^\infty (1+\tbinom{n}{2}+f_\ip(3)\tbinom{n}{3})t^n.\]
Observe that
$\I(3)$ must be a $\Bbbk\S_3$-submodule of
$${^3\iu}(3):=
\Bbbk((1, 2, 3)-(2, 1, 3)-(3, 1, 2)+(3, 2, 1))+\Bbbk((1, 3, 2)-(2, 1, 3)-(3, 1, 2)+(2, 3, 1)),$$
where the permutations is written by the convention introduced Appendix 8.1.
Since ${^3\iu}(3)$ above is a simple $\Bbbk \S_3$-module,
we have either $\I(3)=0$ or $\I(3)={^3\iu}(3)$.

If $\I(3)={^3\iu}(3)$, then $f_\ip(3)=0$,
which is impossible. The only possibility is $\I(3)=0$.
In this case, $\dim \ip(3)=6$ and $f_\ip(3)=2$. So we have
\[\dim \ip(n)=1+\tbinom{n}{2}+2\tbinom{n}{3}=\dim (\As\slash {^4\iu})(n),\]
and consequently,
$$\dim \I(n)=\dim {^4\iu}(n).$$
On the other hand, we have
${^4\iu}\subseteq W$. Therefore, we have $\I(n)= {^{4}\iu}(n)$
for all $n\ge 4$. It follows that $\I={^4\iu}$ and
$\ip=\As\slash {^4\iu}$.

(5) It is easy to see that $\dim \As/{^4\iu}(4)=15$
(for example, by the proof of part (4)). Hence
$\dim {^4 \iu}(4)=4!-15=9$. Thus there is a nonzero $\Bbbk \S_4$-submodule
$M\subsetneq {^4\iu}(4)$. Since $\As(1)=\Bbbk$,
both (E3.1.2) and (E3.1.3) hold trivially for $M$. By Proposition
\ref{xxpro3.2}(1,2), ${^4 \iu}^M$ is an ideal of $\As$. By the
choice of $M$, we have
$${^5 \iu}\subsetneq {^4\iu}^M\subsetneq {^4 \iu}$$
which implies that
$$\gkdim \As/{^5 \iu}=5=\gkdim \As/{^4\iu}^M.$$
Since the Hilbert series of $\As/{^5 \iu}$ and $\As/{^4\iu}^M$
are different, these two operads are non-isomorphic.
\end{proof}

We make a remark.

\begin{remark}
\label{xxrem6.3}
The proof of Theorem \ref{xxthm0.6} works for  non-locally
finite operads when the use of the generating function is replaced by
the basis theorem \ref{xxthm4.6} (1). Therefore, for non-locally finite
2-unitary operads, there are natural equivalences between the following
categories:
\begin{enumerate}
\item[(1I)]
the category of $\Bbbk$-algebras not necessarily having unit;
\item[(1I')]
the category of unital augmented $\Bbbk$-algebras;
\item[(2I)]
the category of 2-unitary operads with $^2 \iu=0$;
\item[(3I)]
the category of $2a$-unitary operads with $^2 \iu=0$.
\end{enumerate}
Every operad in one of the above categories is isomorphic to one 
given in Example \ref{xxex2.3}(1).
\end{remark}

For the rest of this section we consider $\Com$-augmented
operads. Let $\OP_{\Com}$ denote the category of
$\Com$-augmented operads. For every $\ip$ in $\OP_{\Com}$,
there is a natural decomposition
$$\ip=\Com \oplus \iu$$
of $\S$-module, where $\iu:={^1 \iu}=\ker(\ip\to \Com)$.

\begin{definition}
\label{xxdef6.4}
Let $\{\ip_i\}_{i\in I}$ be a family of operads in $\OP_{\Com}$.
The {\it $\Com$-augmented sum} of $\{\ip_i\}_{i\in I}$ is
defined to be
\begin{equation}
\label{E6.4.1}\tag{E6.4.1}
\bigoplus_{i\in I} \ip_i:=
\Com \oplus \bigoplus_{i\in I} \iu_{\ip_i}
\end{equation}
with relations, for all homogeneous element
$\theta_k$ in $\Com \cup \bigcup_{i\in I}  \iu_{\ip_i}$,
\begin{equation}
\label{E6.4.2}\tag{E6.4.2}
\theta_0 \circ (\theta_1,\cdots, \theta_n)=0
\end{equation}
whenever at least two of $\theta_0, \cdots, \theta_n$ are in different $\iu_{\ip_j}$.
If all $\theta_k$'s are in the same $\ip_j$,
then the composition in $\bigoplus_{i\in I}
\ip_i$ agrees with the composition in
$\ip_j$.
\end{definition}

\begin{lemma}
\label{xxlem6.5}
Let $\{\ip_i\}_{i\in I}$ be a family of operads in
$\OP_{\Com}$.
\begin{enumerate}
\item[(1)]
$\ip:=\bigoplus_{i\in I} \ip_i$ is an operad in $\OP_{\Com}$.
\item[(2)]
${^k \iu_{\ip}}=\bigoplus_{i\in I} {^k \iu_{\ip_i}}$ for all
$k\geq 1$.
\item[(3)]
${\mathcal S}(\ip)=\sum_{i\in I} {\mathcal S}(\ip_i)$.
\item[(4)]
For each subset $I'\subseteq I$, $\bigoplus_{i\in I'}\iu_{\ip_i}$
is an ideal of $\ip$. As a consequence, if there are infinitely
many $i$ such that $\iu_{\ip_i}\neq 0$, then $\ip$ is neither
artinian nor noetherian.
\end{enumerate}
\end{lemma}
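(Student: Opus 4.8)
The plan is to exploit the $\S$-module decomposition $\ip=\Com\oplus\bigoplus_{i\in I}\iu_{\ip_i}$ throughout, reducing every assertion to a statement inside a single factor together with a ``vanishing'' bookkeeping for mixed compositions. Recall first that each $\ip_i\in\OP_{\Com}$ is automatically $\Com$-augmented: the structure map $\Com\to\ip_i$ together with the unique terminal map $\ip_i\to\Com$ from Lemma \ref{xxlem1.12}(1) splits $\ip_i$ as $\Com\oplus\iu_{\ip_i}$ with $\iu_{\ip_i}={^1\iu_{\ip_i}}=\ker(\ip_i\to\Com)$, and this is an ideal of $\ip_i$ by Proposition \ref{xxpro3.1}(1). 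This ideal property is the fact that will make all the vanishing consistent.

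For part (1), I would verify the operad axioms directly on homogeneous elements (elements of $\Com$ or of a single $\iu_{\ip_i}$) and extend by multilinearity; since the axioms are multilinear identities, checking them on homogeneous tuples suffices. The composition is well defined: an element of $\Com$ lies in every $\ip_j$, so the clause ``all $\theta_k$ in the same $\ip_j$'' is unambiguous, and since each $\Com\to\ip_j$ is an operad morphism, the two readings of a purely $\Com$-composition agree. The only non-formal axiom is associativity; here I would pass to the partial definition [Definition \ref{xxdef1.2}], which involves only three operations $\lambda,\mu,\nu$ at a time. If all three lie in one $\ip_a$ (with $\Com\subseteq\ip_a$), both sides equal the $\ip_a$-composite; if two of them lie in distinct $\iu_{\ip_a},\iu_{\ip_b}$, then \emph{both} sides vanish, the point being that a partial composition inside $\ip_a$ with an $\iu_{\ip_a}$-argument stays in the ideal $\iu_{\ip_a}$, so composing it against an $\iu_{\ip_b}$-operation is again killed by \eqref{E6.4.2}. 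Equivariance is immediate because the $\S_n$-action preserves each summand $\iu_{\ip_i}(n)$ and hence the ``mixed'' locus where composition vanishes. Finally $\ip\in\OP_{\Com}$ since the inclusion of the $\Com$-summand is an operad morphism.

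For parts (2) and (3), the key observation is that the restriction operators use only $\1_0,\1_1\in\Com$, so for $\theta\in\iu_{\ip_i}(n)$ the composite $\pi^I(\theta)=\theta\circ(\1_{\chi_I(1)},\dots,\1_{\chi_I(n)})$ is a single-factor $\ip_i$-composition, equal to $\pi^I_{\ip_i}(\theta)\in\iu_{\ip_i}(k-1)$, while $\pi^I$ carries the $\Com$-summand into $\Com$ via $\pi^I_\Com$. Thus each $\ker\pi^I$ respects the fixed decomposition; since ${^k\iu_\Com}=0$ for all $k\ge1$ (as $\Com(n)=\Bbbk$ and $\pi^\emptyset$ is an isomorphism), intersecting over all $I$ with $|I|\le k-1$ gives ${^k\iu_\ip}(n)=\bigoplus_{i\in I}{^k\iu_{\ip_i}}(n)$, which is part (2). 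Part (3) is then immediate: setting $n=k$ and taking dimensions, $d_k=\dim{^k\iu_\ip}(k)=\sum_{i\in I}\dim{^k\iu_{\ip_i}}(k)$, i.e. ${\mathcal S}(\ip)=\sum_{i\in I}{\mathcal S}(\ip_i)$ (a sum of cardinals when $I$ is infinite).

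For part (4), I would check that $\mathcal{J}:=\bigoplus_{i\in I'}\iu_{\ip_i}$ meets the ideal criterion of Definition \ref{xxdef1.8}(1): a composite $\theta_0\circ(\theta_1,\dots,\theta_n)$ with one argument in $\mathcal{J}$ is either zero (if two arguments lie in distinct $\iu_{\ip_j}$) or a single-factor $\ip_a$-composite with $a\in I'$ having one argument in the ideal $\iu_{\ip_a}$, so that the result lies in $\iu_{\ip_a}\subseteq\mathcal{J}$. Hence $\mathcal{J}$ is an ideal. For the consequence, choose distinct indices $i_1,i_2,\dots$ with $\iu_{\ip_{i_m}}\neq0$; then $\bigl(\bigoplus_{n\le m}\iu_{\ip_{i_n}}\bigr)_m$ is a strictly ascending chain of ideals and $\bigl(\bigoplus_{n\ge m}\iu_{\ip_{i_n}}\bigr)_m$ a strictly descending one, so $\ip$ is neither noetherian nor artinian. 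I expect the multi-level associativity in part (1) to be the only real obstacle; everything else is bookkeeping on the direct-sum decomposition, with the identity $\iu_{\ip_a}={^1\iu_{\ip_a}}$ supplying exactly the stability needed for the vanishing to propagate.
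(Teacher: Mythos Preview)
Your proposal is correct and follows essentially the same approach as the paper's proof: reduce to homogeneous elements via multilinearity, use that all axioms then take place inside a single $\ip_j$ (where they hold automatically), observe that $\pi^I$ respects the direct-sum decomposition to get part (2), and deduce (3) and (4) formally. You supply considerably more detail than the paper, especially the case analysis for associativity via the partial definition and the explicit ideal verification in (4), both of which the paper dismisses as routine or omits entirely.
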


\begin{proof} (1) We need to show (OP1), (OP2), (OP3)
in Definition \ref{xxdef1.1}. Since all maps are linear
or multilinear, we only need to consider elements in
$\Com$, $\iu_{\ip_i}$, for $i\in I$. Using the relations
in \eqref{E6.4.2}, it amounts to verify (OP1), (OP2) and (OP3)
for elements in $\Com\cup \iu_{\ip_i}$ for one $i$.
In this setting (OP1), (OP2), (OP3) hold since $\ip_i$
is an operad. Therefore $\bigoplus_{i\in I} \ip_i$ is an
operad. It is clear from \eqref{E6.4.1} that we can
define a morphism from $\Com \to \bigoplus_{i\in I} \ip_i$.
So the assertion follows.

(2) Let $\ip$ be $\bigoplus_{i\in I} \ip_i$.
It is clear from the definition that
$${^1 \iu_{\ip}}=\bigoplus_{i\in I} {^1\iu_{\ip_i}}.$$
Inside this ideal, we have $\pi^I_{\ip}=\bigoplus_{i\in I}
\pi^I_{\ip_i}$ for restriction maps defined in \eqref{E2.3.4}.
The assertion follows easily from this fact.

(3) This is an consequence of part (2).

(4) It is easy to show and the proof is omitted.
\end{proof}

To prove Theorem \ref{xxthm0.8}(1), we need to construct
an operad with signature $\{0,\cdots, 0, d_w, 0,\cdots\}$
for a positive number $d_w$ in the $w$th position of this
sequence.

\begin{example}
\label{xxex6.6}
Fix $w\geq 1$ and $d\geq 1$. In this example, we construct
a $\Com$-augmented operad of signature $\{0,\cdots, 0, d, 0,\cdots\}$,
where $d$ is in $w$th position.

Let $V$ be an $\S_w$-module of dimension $d$ and let
$\{\delta_1,\cdots,\delta_d\}$ be a $\Bbbk$-linear basis of $V$.
If $w=1$, we further assume that the multiplication
$\delta_i \delta_j=0$ for all $i,j$.
Let ${\mathcal C}^n_w$ be defined as before Lemma \ref{xxlem4.10}.
Define
$$\ip(n)=
\begin{cases}
\Bbbk \1_n, & n<w,\\
\Bbbk \1_w \bigoplus V & n=w,\\
\Bbbk \1_n \bigoplus {\mathcal C}^n_{w}(V)
& n>w.
\end{cases}$$
We recall the following notation.
For $n=w+s$, where $s>0$, and for every $I\subseteq \n$
such that $|I|=s$ and for $j\in \d$, let
$$(\delta, I):=\1_2\circ(\delta, \1_{s})\ast c_{I}$$
for all $\delta\in V$.
As a vector space, $\ip(n)$ has a basis $\{\1_n\}\cup
\{(\delta_i, I)\mid i\in \d, I\subseteq \n, |I|=s\}$.

Assuming first that $\ip$ is an operad, we would like
to derive some defining equations.
By Corollary \ref{xxcor4.4}, if $I'\subseteq \n$ such that
$|I'|=n-w$, then
$$\Gamma^{I'}((\delta, I))=
\begin{cases}
\delta, & I=I',\\
0,& I\neq I',
\end{cases}$$
or, for $J\subseteq \n$ with $|J|=w$,
\begin{equation}
\label{E6.6.1}\tag{E6.6.1}
\pi^{J}((\delta,I))=
\begin{cases}
\delta, & I\cup J=\n,\\
0,& I\cup J\neq \n.\end{cases}
\end{equation}
Following Lemma \ref{xxlem4.9}, we set
$$(\delta, I)\ast \sigma =(\delta\ast \Gamma^{\sigma^{-1}(I)}, \sigma^{-1}(I))$$
for all $\delta\in V$ and $I$. Together with the trivial $\S_n$ on
$\Bbbk \1_n$, this defines $\S_n$-module structure on $\ip(n)$.

Next we consider partial compositions. Similar to
Example \ref{xxex2.3} (1), we set
$$(\delta, I)\underset{s}{\circ} (\delta', I')=0$$
because, for every $|J|=w$,
$$\pi^{J} ((\delta, I)\underset{s}{\circ} (\delta', I'))
=0.$$
Write $I=\{i_1,\cdots, i_{n-w}\}\subseteq \n$. Define
$$\1_m\underset{s}{\circ} \1_n=\1_{m+n-1},$$
$$\1_m\underset{s}{\circ} (\delta, I)=(\delta, I'),$$
where $I'=\{1,\cdots, s-1, I+(s-1), n+s,\cdots, n+m-1\}$,
and
$$(\delta, I) \underset{s}{\circ} \1_m=
\begin{cases} (\delta, {\bar{I}}), & s\in I,\\
\sum_{u=1}^m (\delta, I_u), & s\not\in I,
\end{cases}$$
where
$$\bar{I}=\{i_1,\cdots, i_{f-1}, s, s+1,\cdots, s+m-1,i_{f+1}+m-1,\cdots, i_{n-w}+m-1\}$$
when $s=i_f$ for some $f$,
and where
$$I_u=\{i_1,\cdots,i_{f-1}, s, \cdots, \widehat{s+u-1}, \cdots, s+m-1,
i_{f}+m-1, i_{f+1}+m-1,\cdots, i_{n-w}+m-1\}$$
when $i_{f-1}<s<i_{f}$.
Now it is routine to check that $\ip$ is a 2-unitary operad with
given signature.
\end{example}

\begin{theorem}
\label{xxthm6.7}
Let $w\geq 2$.
Every $\Com$-augmented operad of signature
$\{0,\cdots, 0, d_w, 0,\cdots\}$ is of the following form
given in Example \ref{xxex6.6}.
\end{theorem}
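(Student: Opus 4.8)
The plan is to show that the signature $\{0,\cdots,0,d_w,0,\cdots\}$ forces the entire operad structure on $\ip$, and then to match $\ip$ arity-by-arity with the operad of Example \ref{xxex6.6}. First I would record the standing facts: since $\ip$ is $\Com$-augmented it is $2a$-unitary [Lemma \ref{xxlem6.2}], and since $f_\ip(k)=\dim{}^{k}\iu(k)=0$ for every $k\geq w+1$, the Basis Theorem [Theorem \ref{xxthm4.6}(1)] gives ${}^{w+1}\iu(n)=0$ for all $n$, i.e. ${}^{w+1}\iu=0$ and $\gkdim\ip=w+1$. Set $V:={}^{w}\iu(w)$, an $\S_w$-module of dimension $d=d_w$, and fix a basis $\delta_1,\cdots,\delta_d$; this is exactly the datum feeding Example \ref{xxex6.6}. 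Applying Theorem \ref{xxthm4.6}(1) with ${}^{k}\iu(k)=0$ for $1\leq k\neq w$, the set $\{\1_n\}\cup\{\1_2\circ(\delta_i,\1_{n-w})\ast c_I\mid i\in\d,\ I\subseteq\n,\ |I|=n-w\}$ is a $\Bbbk$-basis of $\ip(n)$, which is precisely the underlying graded vector space of the operad in Example \ref{xxex6.6}. Writing $(\delta,I):=\1_2\circ(\delta,\1_{n-w})\ast c_I$ for these basis vectors, the $\S_n$-action is governed by Lemma \ref{xxlem4.9} (applicable because ${}^{w+1}\iu=0$), yielding $(\delta,I)\ast\sigma=(\delta\ast\Gamma^{\sigma^{-1}(I)}(\sigma),\sigma^{-1}(I))$, exactly the action prescribed in Example \ref{xxex6.6}.

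The heart of the argument is that, because ${}^{w+1}\iu=0$, an element $\theta\in\ip(n)$ is completely determined by the family $\{\pi^J(\theta)\mid J\subseteq\n,\ |J|=w\}$; by Corollary \ref{xxcor4.4} the ``coordinates'' of a basis vector are $\pi^J((\delta,I))=\delta$ when $J=\hat I$ and $0$ otherwise, which is formula \eqref{E6.6.1}. I would then pin down each partial composition by comparing these coordinates on both sides. The composition $\1_m\underset{s}{\circ}\1_n=\1_{m+n-1}$ is immediate from $2a$-unitarity [Lemma \ref{xxlem2.6}(2)]. The decisive vanishing $(\delta,I)\underset{s}{\circ}(\delta',I')=0$ is not a computation at all: both factors lie in ${}^{w}\iu$, so the product lies in ${}^{w}\iu\,{}^{w}\iu\subseteq{}^{2w-1}\iu$ by Proposition \ref{xxpro3.1}(2), and since $w\geq 2$ we have $2w-1\geq w+1$, whence ${}^{2w-1}\iu\subseteq{}^{w+1}\iu=0$. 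This is exactly where the hypothesis $w\geq 2$ enters, and it is what separates the present case from the GKdimension-two case $w=1$ treated in Theorem \ref{xxthm0.6}.

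For the two remaining ``mixed'' compositions $\1_m\underset{s}{\circ}(\delta,I)$ and $(\delta,I)\underset{s}{\circ}\1_m$, I would first observe that each lies in the ideal ${}^{w}\iu$ [Proposition \ref{xxpro3.1}(1)], hence is a linear combination of basis vectors $(\delta',I'')$ with no $\1$-term. To identify the combination I would push $\pi^J$ through the partial composition via Lemma \ref{xxlem2.11}(2), using $\pi^{J'}(\1_m)=\1_{|J'|}$ [Lemma \ref{xxlem2.7}(1)] and the coordinate rule \eqref{E6.6.1} for $(\delta,I)$. Reading off for which $J$ the coordinate is nonzero, and with which value of $\delta$, reproduces exactly the index sets $I'$, $\bar I$ and $I_u$ written in Example \ref{xxex6.6}. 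Finally I would assemble these identities into an isomorphism: let $\mathcal{E}$ be the operad of Example \ref{xxex6.6} built from $V$, and define $\Psi\colon\mathcal{E}\to\ip$ by $\1_n\mapsto\1_n$ and by sending the generator $(\delta,I)$ of $\mathcal{E}$ to $\1_2\circ(\delta,\1_{n-w})\ast c_I$ in $\ip$. This is a linear isomorphism in each arity by the Basis Theorem, it intertwines the $\S_n$-actions by Lemma \ref{xxlem4.9}, it respects all partial compositions by the identities just established, and it preserves $\1_0,\1_1,\1_2$, hence is an isomorphism of $\Com$-augmented (in particular $2$-unitary) operads.

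The step I expect to be the main obstacle is purely combinatorial: the explicit bookkeeping of the subsets produced by Lemma \ref{xxlem2.11}(2) when evaluating $\pi^J$ on the mixed compositions, and checking that the resulting positions and multiplicities match the somewhat intricate definitions of $\bar I$ and $I_u$ in Example \ref{xxex6.6} (notably the ``splitting'' of one index of $I$ into the block $s,\cdots,s+m-1$ when $s\notin I$). There is no conceptual difficulty once the determination principle ${}^{w+1}\iu=0$ is in hand; the care required lies entirely in tracking the order-preserving relabellings encoded in $c_I$ and in $\Gamma^{\sigma^{-1}(I)}$ through these index manipulations.
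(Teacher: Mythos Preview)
Your proposal is correct and follows the approach the paper indicates, namely to mimic the proof of Theorem~\ref{xxthm0.6}; the paper itself omits the details, so your outline in fact supplies more than the paper does. Your use of Proposition~\ref{xxpro3.1}(2) to obtain the vanishing $(\delta,I)\underset{s}{\circ}(\delta',I')=0$ directly from $2w-1\geq w+1$ is a clean replacement for the coordinate-by-coordinate computation used in the $w=1$ case, and correctly isolates where the hypothesis $w\geq 2$ enters.
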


\begin{proof} The proof is similar to the proof of Theorem
\ref{xxthm0.6}. We omit the proof due to its length.
\end{proof}

\begin{proof}[Proof of Theorem \ref{xxthm0.8}]
(1) For each $d_w$ for $w\geq 1$, pick a trivial $\S_w$-module
$V_w$ of dimension $d_w$. By Example \ref{xxex6.6},
there is a $\Com$-augmented (thus 2-unitary)
operad $\ip_w$ with signature $\{0,\cdots, 0, d_w, 0,\cdots\}$.
By Lemma \ref{xxlem6.5}(3), $\bigoplus_w \ip_w$ has the
required signature.

(2) Take a sequence ${\mathcal S}(\ip)$
with $\exp({\mathcal S}(\ip))=\infty$, then $\exp(\ip)
=\infty$. One such example is $\ip=\As$.

We know that $\exp(\Com)=1$. Let $\ip$ be an
2-unitary operad with ${\mathcal S}(\ip)=\{b_1, \cdots, b_w,\cdots\}$.
If $b_n=0$ for all $n\gg 0$, then $\exp({\mathcal S}(\ip))=0$
and by Lemma \ref{xxlem5.1}(1), $\exp(\ip)=\exp(\{\dim \ip(n)\}_{n\geq 0})=1$ since
$\{\dim \ip(n)\}_{n\geq 0}$ is the inverse binomial transform
of ${\mathcal S}(\ip)$, see \eqref{E5.2.4}.
Otherwise, $\exp({\mathcal S}(\ip))=1$ and by Lemma \ref{xxlem5.1}(1),
$\exp(\ip)\geq 2$.

It remains to show that for each $r\geq 2$, there is a 2-unitary
operad (in fact, a $\Com$-augmented operad) $\ip$ such that
$\exp(\ip)=r$. Let $d_w=\lfloor (r-1)^w \rfloor$ for each $w\geq 1$.
By part (1), there is a $\Com$-augmented operad $\ip$ such that
${\mathcal S}(\ip)=\{d_1, d_2, \cdots, d_w, \cdots\}$. Thus
$\exp({\mathcal S}(\ip))=r-1$. By Lemma \ref{xxlem5.1}(1),
$\exp(\ip)=r$ as required.
\end{proof}

\begin{proof}[Proof of Theorem \ref{xxthm0.4} (3)]
The proof of this part is similar to the proof of
Theorem \ref{xxthm0.4}(2).

Let ${^k\iu}$ be the truncation ideals of $\ip$. By definition,
$\bigcap_{k\geq 1}{^k \iu} =0$. Since $\ip$ is left or right artinian,
${^k \iu}=0$ for some $k$. Let $n$ be the largest integer such that
${^n \iu}\neq 0$. If $n\geq 2$, by Proposition \ref{xxpro3.1} (2),
$({^n \iu})^2\subseteq {^{2n-1}\iu}=0$. This contradicts the
hypothesis that $\ip$ is semiprime. Therefore ${^2 \iu}=0$.

Let $A=\ip(1)$. By Proposition \ref{xxpro3.2}(1,2),
if $A$ is not left (respectively, right) artinian, then $\ip$ is
not left (respectively, right) artinian. Since $\ip$ is
left or right artinian, so is $A$. Let $N$ be an ideal
of $A$ such that $N^2=0$. By Proposition \ref{xxpro3.2}(1,2),
${^1 \iu}^N$ is an ideal of $\ip$. By Proposition \ref{xxpro3.2}(3),
$$({^1 \iu}^N)^2\subseteq {^1 \iu}^{N^2}={^1 \iu}^{0}={^2 \iu}=0.$$
Since $\ip$ is semiprime, ${^1 \iu}^N=0$, consequently, $N=0$.
Thus $A$ is semiprime. Since $A$ is left artinian or right
artinian, $A$ is semisimple.

By Remark \ref{xxrem6.3}, the operad $\ip$ is given as in Example
\ref{xxex2.3}(1).

If, further, $\ip(1)$ is finite dimensional, then by
Theorem \ref{xxthm4.6}(1) $\ip$ is locally finite.
Since ${^2\iu}=0$, $\gkdim \ip\leq 2$. If $\gkdim \ip=1$,
then $\ip=\Com$ by Proposition \ref{xxpro0.5}.
Otherwise $\gkdim \ip=2$. The rest of assertion follows.
\end{proof}

We conclude this section with an easy corollary.

\begin{corollary}
\label{xxcor6.8}
Let ${\mathbf d}:=\{d_i\}_{i\geq 1}$ be any sequence of non-negative integers.
Then there is a unitary operad $\ip$ such that $G_{\ip}(t)=1+(d_1+1)t+\sum_{i=2}^{\infty}
d_i t^i$.
\end{corollary}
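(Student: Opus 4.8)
The plan is to realize the prescribed generating series by a direct-sum construction, invoking the unitary (but non-2-unitary) operads of Example \ref{xxex2.1}(3). The key observation is that the corollary asks only for a \emph{unitary} operad, with no 2-unitarity or other structural constraint, so we are free to use the most flexible construction available: an operad of the form $\Uni \oplus \M$ for a suitably chosen $\S$-module $\M$. Since in such an operad the dimensions in each arity are prescribed independently, we can match the target series term by term.

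First I would build the $\S$-module. Define $\M$ by $\M(0)=0$ and, for each $n\geq 1$, let $\M(n)$ be any right $\Bbbk\S_n$-module with $\dim_{\Bbbk}\M(n)=d_n$; the trivial module $\Bbbk^{\oplus d_n}$ will do, as the $\S_n$-action plays no role. By Example \ref{xxex2.1}(3), the $\S$-module $\ip:=\Uni\oplus\M$ carries the structure of a unitary operad, with partial compositions given there. It then remains only to compute the generating series. Since $G_{\Uni}(t)=1+t$ and $G_{\M}(t)=\sum_{i\geq 1}d_i t^i$, additivity of dimension over the direct sum yields
\[
G_{\ip}(t)=G_{\Uni}(t)+G_{\M}(t)=1+t+\sum_{i=1}^{\infty}d_i t^i
=1+(d_1+1)t+\sum_{i=2}^{\infty}d_i t^i,
\]
which is exactly the desired series.

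There is essentially no hard step here: the entire content is the recognition that dropping 2-unitarity allows one to prescribe the dimension sequence freely via Example \ref{xxex2.1}(3). The only points to keep straight are bookkeeping: $\dim\ip(0)=1$ (forced by unitarity, giving the constant term $1$), the arity-$1$ piece absorbs the extra $+1$ coming from $\1_1\in\Uni(1)$ so that $\dim\ip(1)=1+d_1$, and $\Uni(n)=0$ for $n\geq 2$ so that $\dim\ip(n)=d_n$ for all $n\geq 2$. By way of contrast with the 2-unitary results elsewhere in the paper, one may note that the operad so constructed is in general not 2-unitary, consistent with Example \ref{xxex2.1}(3); this is why the non-integer GKdimension phenomenon mentioned after Theorem \ref{xxthm0.1} can occur for unitary operads.
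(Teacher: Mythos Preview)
Your proof is correct and takes a genuinely more elementary route than the paper. The paper first invokes Theorem \ref{xxthm0.8}(1) to produce a 2-unitary operad $\iq$ with signature ${\mathcal S}(\iq)=\mathbf d$, and then extracts the unitary suboperad $\ip=\Bbbk \1_1\oplus\bigoplus_{i\geq 0}{^i\iu}_{\iq}(i)$ of Proposition \ref{xxpro3.12}(2); since $\dim{^i\iu}_{\iq}(i)=d_i$ by definition of the signature, this yields the desired generating series. Your argument instead bypasses the truncation machinery entirely and appeals only to the trivial construction $\Uni\oplus\M$ of Example \ref{xxex2.1}(3), which already lets one prescribe $\dim\ip(n)$ arbitrarily for $n\geq 1$.

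What each approach buys: yours is shorter and self-contained, needing nothing beyond Example \ref{xxex2.1}(3). The paper's construction, on the other hand, produces an operad with genuinely nontrivial partial compositions (inherited from $\iq$), and it serves as an application of the signature and $\iu$ formalism developed earlier. For the bare statement of the corollary, though, your argument is entirely sufficient, and your closing remark about non-2-unitarity and non-integer GKdimension is apt.
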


\begin{proof} By Theorem \ref{xxthm0.8}(1),
there is a 2-unitary operad $\iq$ such that ${\mathcal S}(\iq)={\mathbf d}$.
Let $\ip=\Bbbk 1_1\oplus \bigoplus_{i=0}^{\infty} {^i \iu}_{\iq}(i)$.
By Proposition \ref{xxpro3.12}(2), $\ip$ is a unitary operad. By the
definition of signature, we see that
$$G_{\ip}(t)=1+(d_1+1)t+\sum_{i=2}^{\infty} d_i t^i.$$
\end{proof}

\section{Truncatified operads}
\label{xxsec7}

The truncation of a unitary operad $\ip$ defines a descending filtration
on $\ip$ which induces an associated operad, called a truncatified
operad, as we will define next.

\begin{definition}
\label{xxdef7.1}
A unitary operad $\ip$ is called {\it truncatified} if the following
hold.
\begin{enumerate}
\item[(1)]
For each $n$, $\ip(n)$ has a decomposition of $\S_n$-submodules,
$$\ip(n)=\bigoplus_{i=0}^{n} \ip(n)_i.$$
\item[(2)]
For all $k$ and all $n\geq k$,
$${^k \iu}(n)=\bigoplus_{i=k}^n \ip(n)_i.$$
\item[(3)]
Let $\mu\in \ip(n)_{n_0}$ and $\nu\in \ip(m)_{m_0}$. Suppose $1\leq i\leq n$.
\begin{enumerate}
\item[(3a)]
If $n_0, m_0\geq 1$, then
$$\mu\underset{i}{\circ} \nu\in \ip(n+m-1)_{n_0+m_0-1}.$$
\item[(3b)]
If $m_0=0$ or $n_0=0$, then
$$\mu\underset{i}{\circ} \nu\in \ip(n+m-1)_{m_0+n_0}.$$
\end{enumerate}
\end{enumerate}
\end{definition}

\begin{remark}
\label{xxrem7.2}
A truncatified operad in the above definition may be called a
{\it truncated} operad since it is induced by the truncation
(see also Lemma \ref{xxlem7.3}).
However, the notion of a {\it truncated} operad has been defined
in \cite[Definition 4.2.1]{GNPR} and been used in some other
papers \cite{We}. To avoid possible confusions, we create a new
word, ``truncatified'', in Definition \ref{xxdef7.1}. Note that
every truncatified operad is either $\Com$-augmented or
$\Uni$-augmented.
\end{remark}

It is easy to check that the operads in Examples \ref{xxex2.3}
and \ref{xxex6.6} are truncatified. Truncatified operads can be
constructed from a non-truncatified operad.

\begin{lemma}
\label{xxlem7.3}
Let $\iq$ be a unitary operad and $\{{^i \iu}_{\iq}\}_{i\geq 0}$
be the truncation of $\iq$. For each $n\geq 0$, let $\ip(n)$
denote the $\Bbbk$-linear space $\bigoplus_{i=0}^{\infty}
{^i \iu}_{\iq}(n)/{^{i+1} \iu}_{\iq}(n)$. Then
$\ip:=\{\ip(n)\}_{n\geq 0}$ is a truncatified operad.
\end{lemma}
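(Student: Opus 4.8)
The plan is to equip $\ip$ with the operad structure associated to the truncation filtration of $\iq$, that is, to realize $\ip$ as the associated graded operad $\gr\iq$, and then to check the three clauses of Definition \ref{xxdef7.1} directly. Two structural facts make this work. First, for fixed $n$ the chain ${^0\iu_\iq}(n)\supseteq {^1\iu_\iq}(n)\supseteq\cdots$ terminates, since ${^i\iu_\iq}(n)=0$ whenever $i>n$; hence the defining sum is finite, $\ip(n)=\bigoplus_{i=0}^n \ip(n)_i$ with $\ip(n)_i:={^i\iu_\iq}(n)/{^{i+1}\iu_\iq}(n)$, and each $\ip(n)_i$ is an $\S_n$-submodule because the truncation ideals are $\S$-submodules (Lemma \ref{xxlem2.11}). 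This already yields Definition \ref{xxdef7.1}(1). Second, Proposition \ref{xxpro3.1}(2) says the filtration is multiplicative for a shifted grading: writing $a\star b:=a+b-1$ when $a,b\ge 1$ and $a\star b:=a+b$ when $a=0$ or $b=0$, one has ${^a\iu_\iq}\,{^b\iu_\iq}\subseteq {^{a\star b}\iu_\iq}$. A short case check shows $\star$ is an associative and commutative operation on $\N$ with identity $0$; this associativity is exactly what the operad axioms will consume.

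Next I would define the graded partial composition. For $\bar\mu\in\ip(n)_{n_0}$ and $\bar\nu\in\ip(m)_{m_0}$, represented by $\mu\in {^{n_0}\iu_\iq}(n)$ and $\nu\in {^{m_0}\iu_\iq}(m)$, set $\bar\mu\underset{i}{\circ}\bar\nu$ to be the class of $\mu\underset{i}{\circ}\nu$ in $\ip(n+m-1)_{n_0\star m_0}$, extended bilinearly. Well-definedness is immediate from Proposition \ref{xxpro3.1}(2): replacing $\mu$ by an element of ${^{n_0+1}\iu_\iq}(n)$ moves $\mu\underset{i}{\circ}\nu$ into ${^{(n_0+1)\star m_0}\iu_\iq}={^{(n_0\star m_0)+1}\iu_\iq}$, which is precisely the subspace quotiented out in $\ip(n+m-1)_{n_0\star m_0}$ (and symmetrically in $\nu$). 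Clauses (3a) and (3b) then hold by construction, since $n_0\star m_0=n_0+m_0-1$ when $n_0,m_0\ge 1$ and $=n_0+m_0$ otherwise. The identity is $\bar\1_1\in\ip(1)_0$ (note $\1_1\notin {^1\iu_\iq}(1)$ because $\pi^{\emptyset}(\1_1)=\1_0\neq 0$), and unitality $\ip(0)\cong\Bbbk$ follows from ${^1\iu_\iq}(0)=0$. The operad axioms (OP1$'$)--(OP3$'$) I would reduce to the corresponding identities in $\iq$: both sides of an associativity relation are classes of honestly equal compositions in $\iq$, and they land in the same graded piece precisely because $\star$ is associative, while equivariance descends since the $\ast$-action is $\S$-stable on the filtration. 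These verifications are routine once the $\star$-bookkeeping is in place.

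The main obstacle is Definition \ref{xxdef7.1}(2), the identification of the truncation ideals of the newly built operad $\ip$. The key observation is that the restriction operator $\pi^I_\ip$ is the associated graded of $\pi^I_\iq$: since $\pi^I$ is composition with the degree-$0$ elements $\1_0,\1_1$ and $a\star 0=a$, the iterated partial compositions (Remark \ref{xxrem1.3}) preserve the grading, so $\pi^I_\ip$ sends $\bar\theta\in\ip(n)_{n_0}$ to the class of $\pi^I_\iq(\theta)$ in $\ip(|I|)_{n_0}$. Granting this, I would show that a nonzero $\bar\theta\in\ip(n)_{n_0}$ lies in ${^k\iu_\ip}(n)$ iff $n_0\ge k$. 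If $n_0\ge k$ and $|I|=k-1$, then $\pi^I_\iq(\theta)\in {^{n_0}\iu_\iq}(k-1)=0$ (the ideal maps to itself by Lemma \ref{xxlem2.12}(1), and it vanishes since $k-1<n_0$), so $\bar\theta\in {^k\iu_\ip}(n)$. Conversely, if $n_0\le k-1$, choose $J$ with $|J|=n_0$ and $\pi^J_\iq(\theta)\neq 0$ (possible as $\theta\notin {^{n_0+1}\iu_\iq}(n)$), extend $J$ to $I\subseteq\n$ with $|I|=k-1$ (possible as $n\ge k$), and apply Lemma \ref{xxlem2.4}(1) to write $\pi^J_\iq=\pi^{J'}\bullet\pi^I_\iq$ for the corresponding $J'\subseteq[k-1]$; then $\pi^I_\iq(\theta)\notin {^{n_0+1}\iu_\iq}(k-1)$, so $\pi^I_\ip(\bar\theta)\neq 0$ and $\bar\theta\notin {^k\iu_\ip}(n)$. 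Because $\pi^I_\ip$ respects the grading, the membership conditions decouple over the homogeneous pieces, giving ${^k\iu_\ip}(n)=\bigoplus_{i=k}^n\ip(n)_i$ for $n\ge k$, which is clause (2). I expect the fiddliest points to be the verification that $\pi^I_\ip$ is genuinely degree-preserving (the interaction of the iterated units with the $\star$-rule) and the combinatorial extension-of-$J$ step in the converse direction; everything else is a mechanical transfer of the axioms from $\iq$ to its associated graded.
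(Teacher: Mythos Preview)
Your proposal is correct and follows essentially the same approach as the paper: define the associated graded operad via representatives using Proposition~\ref{xxpro3.1}(2), verify clause~(2) by showing $\pi^I_\ip$ is degree-preserving and then using the subset-extension argument via Lemma~\ref{xxlem2.4}(1), and read off clause~(3) directly from Proposition~\ref{xxpro3.1}(2). Your explicit introduction of the $\star$ operation and its associativity is a nice organizational touch absent from the paper's proof; one minor quibble is your citation of Lemma~\ref{xxlem2.12}(1), which is stated only for 2-unitary operads and concerns surjectivity --- the fact you actually need, that $\pi^I_\iq(\theta)=0$ for $\theta\in{^{n_0}\iu_\iq}(n)$ and $|I|\le n_0-1$, is immediate from the definition \eqref{E3.0.1} of the truncation ideal and requires no lemma.
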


\begin{proof} Let $\ip(n)_i:={^i \iu}_{\iq}(n)/{^{i+1} \iu}_{\iq}(n)$
for all $n,i$. For the rest of the proof, $i,j,k$, $m,n$ and $s$ are
non-negative integers. Assume that $1\leq s\leq m$.

Let $\mu\in \ip(m)_i$ and $\nu\in \ip(n)_j$.
Then $\mu$ is the image of some $a\in {^i \iu}_{\iq}(m)$
and $\nu$ is the image of some $b\in {^j \iu}_{\iq}(n)$.
Define $\mu\underset{s}{\circ} \nu$ to be the image of
$a\underset{s}{\circ} b$ in
$\ip(m+n-1)_{i+j-1}:={^{i+j-1} \iu}_{\iq}(m+n-1)/{^{i+j} \iu}_{\iq}(m+n-1)$
(or in $\ip(m+n-1)_{i+j}$ if
either $i$ or $j$ is zero). It is routine to check that
$\ip$ is a unitary operad using the partial definition Definition
\ref{xxdef1.2}.

Next we show (1), (2) and (3) in Definition \ref{xxdef7.1}.

(1) Since ${^i \iu}_{\iq}(n)=0$ for all $i>n$,  we have
$$\ip(n)=\bigoplus_{i=0}^{\infty}
{^i \iu}_{\iq}(n)/{^{i+1} \iu}_{\iq}(n)=\bigoplus_{i=0}^{n}
{^i \iu}_{\iq}(n)/{^{i+1} \iu}_{\iq}(n)
=\bigoplus_{i=0}^{n} \ip(n)_i.$$
Since each $\ip(n)_i$ is clearly an $\S_n$-module, (1) holds.

(2) Denote $T^n_{n-k}=\{K\subset \n \mid |K|=k\}$.
(Note that $T^n_{k}$ is defined before Lemma \ref{xxlem4.10}.)

Let $\theta$ be an element in $\up{i}_\iq(n)$ such that
$\theta\notin \up{i+1}_\iq(n)$. If $k<i$, by definition, we
have $\pi_\iq^K(\theta)=0$ for all $K\in T^n_{n-k}$.
If $k\geq i$, we have $\pi_\iq^K(\theta)\in \up{i}_\iq(k)$ for
all $K\in T^n_{n-k}$, and there exists some $K_0\in T^n_{n-k}$
such that $\pi_\iq^{K_0}(\theta)\notin \up{i+1}_\iq(k)$. In fact,
since $\theta\notin \up{i+1}_\iq(n)$, there exists some
$I\in T^n_{n-i}$ such that $\pi_\iq^I(\theta)\neq 0$.
Then for every $K_0$ with $I\subseteq K_0 \in T^n_{n-k}$, we have
$\pi_\iq^{K_0}\notin \up{i+1}_\iq(k)$.

Return to consider the restricted operator
$\pi_\ip^I\colon \ip(n) \to \ip(|I|)$. Pick any nonzero
element $\mu$ in $\up{i}_\iq(n)/\up{i+1}_\iq(n)$ and write
it as $\mu=\theta+\up{i+1}_\iq(n)\neq \bar 0$ for
some $\theta\in \up{i}_\iq(n)$. If $i>k-1$, then, for every
$I\in T^n_{n-(k-1)}$,
we have
\[\pi_\ip^I(\theta+\up{i+1}_\iq(n))=\bar 0\in \ip(k-1).\]
This implies that, for any $i\geq k$,
$\ip(n)_i=\up{i}_\iq(n)/\up{i+1}_\iq(n)\subset \up{k}_\ip(n)$.

On the other hand, if $i<k$, then, for every nonzero
element $\mu:=\theta+\up{i+1}_\iq(n)\in \ip(n)_i$, there
exists $I_0\in T^n_{n-(k-1)}$ such that
\[\pi_\ip^{I_0}(\mu)=\pi_\ip^{I_0}(\theta+\up{i+1}_\iq(n))
=\pi^{I_0}_\iq(\theta)+\up{i+1}_\iq(k-1)\neq \bar 0.\]
in $\ip(k-1)$. It follows that
$\up{k}_\ip(n)\subset \bigoplus_{i=k}^n \ip(n)_i$.

(3) Note that (3a) and (3b) follow from the proof of Proposition
\ref{xxpro3.1}(2).
\end{proof}

In the setting of Lemma \ref{xxlem7.3}, we say that $\ip$ is the
{\it associated truncatified operad} of $\iq$, and denoted it by ${\rm{trc}} \iq$.
The process from $\iq$ to ${\rm{trc}} \iq$ is called {\it truncatifying}.

It follows from Lemma \ref{xxlem7.3} that a unitary operad $\ip$
is truncatified if and only if $\ip\cong {\rm trc} (\ip)$.
As a consequence, ${\rm trc}({\rm trc}(\ip))\cong {\rm trc}(\ip)$
for all unitary operads $\ip$.

Next we show that $\Pois$ is the associated truncatified operad of $\As$.
For any unitary operad $\ip$, let $\ip_{\geq 1}$ be the non-unitary version
of $\ip$, namely,
$$\ip_{\geq 1}(n)=\begin{cases} 0 & n=0, \\
\ip(n) & n\geq 1.\end{cases}$$
Note that $\Pois_{\geq 1}$ agrees with the non-unitary version of
the Poisson operad, and $\Pois_{\geq 1}$ is denoted by $\Pois$
in \cite[Section 1.2.12]{Fr1} and \cite[Section 13.3.3]{LV}.
On the other hand, the unitary version of the Poisson operad
(namely, our $\Pois$) is denoted by $\Pois_{+}$ in the book \cite{Fr1}.

\begin{lemma}
\label{xxlem7.4}
Let $\As$ be the operad for the unital associative algebras and $\Pois$
be the operad for unital commutative Poisson algebras. Then
${\rm{trc}}\As\cong \Pois$.
\end{lemma}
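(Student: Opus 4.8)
The plan is to exhibit an explicit isomorphism $\phi\colon \Pois \to {\rm trc}\,\As$ by prescribing it on the two generating operations of $\Pois$ and then proving bijectivity by a generation-and-dimension argument. Recall that $\Pois$ is presented by a symmetric binary product $\mu$ and an antisymmetric binary bracket $\lambda$, subject to: $\mu$ is commutative and associative, $\lambda$ satisfies the Jacobi identity, and $\lambda$ is a derivation of $\mu$ (the Leibniz rule). In ${\rm trc}\,\As$ the arity-$2$ part splits as ${\rm trc}\,\As(2)={\As}(2)_0\oplus {\As}(2)_2$, where ${\As}(2)_0$ is spanned by the symbol $\bar\1_2$ of $\1_2$ and ${\As}(2)_2={}^{2}\iu(2)$ is spanned by the symbol $\bar\Phi_2$ of the commutator $\Phi_2=\1_2-\tau$ (Lemma \ref{xxlem3.7}). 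First I would set $\phi(\mu)=\bar\1_2$ and $\phi(\lambda)=\bar\Phi_2$.

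To see that $\phi$ is a well-defined morphism of operads, I would check that $\bar\1_2$ and $\bar\Phi_2$ satisfy the defining relations of $\Pois$ in ${\rm trc}\,\As$, the point being that the relevant identities already hold in $\As$ and are carried to the associated graded operad. Concretely: $\1_2$ is associative in $\As$ (i.e. $\1_3=\1'_3$), so $\bar\1_2$ is associative; since $\1_2-\tau=\Phi_2\in {}^{2}\iu(2)$, the elements $\1_2$ and $\tau$ coincide in ${\As}(2)_0$, so $\bar\1_2$ is commutative; $\Phi_2\ast\tau=-\Phi_2$ gives antisymmetry of $\bar\Phi_2$; and the Jacobi identity together with the Leibniz rule are exact identities for the commutator in any associative algebra, hence hold for $\Phi_2$ in $\As$. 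Here each side of these identities lives in a single truncation layer (both sides of Leibniz in ${}^{2}\iu(3)$, both sides of Jacobi in ${}^{3}\iu(3)$, by Proposition \ref{xxpro3.1}(2)), so they descend verbatim to the corresponding relations in ${\rm trc}\,\As$. Thus $\phi$ is a morphism. As a consistency check, $\phi$ sends a Poisson monomial supported on a set partition of $\n$ into the layer ${\As}(n)_d$ with $d=n$ minus the number of singleton blocks, in agreement with the degree rules of Definition \ref{xxdef7.1}(3).

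For bijectivity I would prove surjectivity and then match dimensions. The operad $\As$ is generated in arity $2$ (by $\1_2$, with $\1_0,\1_1$), and the truncation is a multiplicative filtration by Proposition \ref{xxpro3.1}(2); since the composition in ${\rm trc}\,\As={\rm gr}\,\As$ is by construction the symbol of the composition in $\As$ (Lemma \ref{xxlem7.3}), the symbol map is multiplicative, so the leading term of any composite of the generators $\1_2,\1_1,\1_0$ is the corresponding composite of their symbols. Consequently ${\rm trc}\,\As$ is generated by the symbols of the arity-$2$ generators, that is by $\bar\1_2$ and $\bar\Phi_2$, which span ${\rm trc}\,\As(2)$; as these lie in the image of $\phi$, the map $\phi$ is surjective in every arity. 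On the other hand $\dim_\k {\rm trc}\,\As(n)=\dim_\k\As(n)=n!$, because forming the associated graded of a finite filtration preserves total dimension, while $\dim_\k\Pois(n)=n!$ by the Poincar\'e--Birkhoff--Witt decomposition $\Pois\cong\Com\circ\mathrm{Lie}$ of $\S$-modules (a set partition of $\n$ with a Lie word on each block is counted by $\sum_{\pi}\prod_{B}(|B|-1)!=n!$). A surjection between finite-dimensional spaces of equal dimension is bijective, so $\phi$ is an isomorphism in each arity, hence an isomorphism of operads.

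The hard part is the generation step: verifying that ${\rm trc}\,\As$ is genuinely generated in arity $2$ by $\bar\1_2$ and $\bar\Phi_2$, equivalently that the multiplicativity of the symbol map in the graded sense of Definition \ref{xxdef7.1}(3) lets one reach every top layer ${\As}(k)_k={}^{k}\iu(k)$ (of dimension the derangement number $D_k$) as symbols of composites of the generators, i.e. that ${}^{k}\iu(k)$ is spanned by products of iterated brackets on blocks of size $\ge 2$. Once this and the multiplicativity are in hand, the relation checks of the first step and the dimension bookkeeping are routine given Sections \ref{xxsec3} and \ref{xxsec4}.
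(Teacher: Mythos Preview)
Your overall strategy matches the paper's proof: define a map on the generators $\mu\mapsto\bar\1_2$, $\lambda\mapsto\bar\Phi_2$, verify the Poisson relations in ${\rm trc}\,\As$, prove surjectivity via a generation argument, and conclude by the dimension count $n!=n!$. The relation checks and the dimension bookkeeping are fine.

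The gap is in your surjectivity argument. You write that $\As$ is generated by $\1_2$, that the symbol map is multiplicative, and conclude that ${\rm trc}\,\As$ is generated by ``the symbols of the arity-$2$ generators, that is by $\bar\1_2$ and $\bar\Phi_2$.'' But the symbol map is \emph{not additive}: the only generator of $\As$ is $\1_2$, and every pure operadic composite of $\1_2$ (with the $\S_n$-action) is a single permutation $\sigma\in\S_n$, whose symbol lies in $\ip(n)_0\cong\Bbbk\bar\1_n$ since $\pi^{\emptyset}(\sigma)\neq 0$. Thus multiplicativity alone only yields the $\Com$-part of ${\rm trc}\,\As$, never $\bar\Phi_2$. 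The element $\Phi_2=\1_2-\tau$ is not itself a generator of $\As$, and its symbol in degree~$2$ cannot be recovered from the degree-$0$ symbols of $\1_2$ and $\tau$. In short, for a filtered operad generated by a set $X$, the associated graded need not be generated by the symbols of $X$; one needs the filtration to be \emph{induced} by the degrees of the generators, which is precisely the point you have not established.

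The paper closes this gap by invoking Lemma~\ref{xxlem3.7} (${}^{2}\iu=\langle\Phi_2\rangle$) together with the basis theorem (Theorem~\ref{xxthm4.6}(1)): every element of ${}^{k}\iu(k)$, being in $\langle\Phi_2\rangle(k)$, is an $\S_k$-linear combination of composites built from $\Phi_2$ and the $\1_n$'s, and one checks these composites already land in the correct truncation layer so that the expression survives in ${\rm trc}\,\As$. This is exactly the statement you isolate in your last paragraph as ``the hard part,'' but your proposed route to it does not work; you need an argument of the paper's type, namely an explicit control of which layer each monomial in $\Phi_2$ and $\1_n$ lands in.
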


\begin{proof}
Denote by $\up{k}$ the $k$-th truncation ideal of $\As$.
By Lemma 3.4, we have $\up{1}=\up{2}$. By definition, we
have ${\rm{trc}}\As(2)=\Bbbk \bar 1_2\oplus \Bbbk \bar\Phi_2$,
where $\bar 1_2=1_2+\up{1}(2)$ and
$\bar\Phi_2\in {\rm{trc}}\As(2)$ is the corresponding
element of $(1_2-(21))\in \up{2}(2)$. Clearly,
$\bar 1_2\ast (21)=\bar 1_2$ and $\bar\Phi_2\ast (21)=-\bar\Phi_2$,
and they satisfy the following relations
\begin{align}
\label{E7.4.1}\tag{E7.4.1}
\bar 1_2 \underset{1}{\circ} \bar 1_2=
& \bar 1_2 \underset{2}{\circ} \bar 1_2, \\
\label{E7.4.2}\tag{E7.4.2}
\bar\Phi_2\underset{1}{\circ} \bar 1_2=
& \bar 1_2 \underset{2}{\circ} \bar\Phi_2
  +(\bar 1_2 \underset{2}{\circ} \bar\Phi_2)\ast (213),\\
\label{E7.4.3}\tag{E7.4.3}
\bar\Phi_2 \underset{2}{\circ} \bar\Phi_2 =
& \bar\Phi_2 \underset{1}{\circ} \bar\Phi_2
  +(\bar\Phi_2 \underset{2}{\circ} \bar\Phi_2)\ast (213)
\end{align}
which are exactly the defining relations of $\Pois_{\geq 1}$,
see \cite[Section 1.2.12]{Fr1}.
Observe that ${\rm trc}\As$ is generated by
$\Bbbk \bar 1_2\oplus \Bbbk \bar\Phi_2$.
In fact, from Theorem \ref{xxthm4.6}(1), we know
$\up{k}(n)/\up{k+1}(n)$ admits a $\Bbbk$-linear basis
\[\textbf{\it B}_k(n)=
\{1_2\circ (\theta_i^k, 1_{n-k})\ast c_I\mid 1\le i\le z_k,
I\in T^n_{k}\},\]
where $\{\theta_1^k, \cdots, \theta_{z_k}^k\}$ is a
$\Bbbk$-basis of $\up{k}(k)$. Furthermore, for every
$k\ge 3$, we have $\up{k}(k)\subset \up{2}(k)$.
By Lemma \ref{xxlem3.7}, $\up{2}(2)$ is generated by $\Phi_2$.
Note that $1_2$ generates $1_n$ for all $n\geq 2$.
By the proof of Lemma \ref{xxlem3.7},
for every $k\geq 3$, $\up{k}(k)$ is generated by $\{1_n\}_{n\geq 2}$
and $\Phi_2$.  Therefore $\up{k}(n)/\up{k+1}(n)$ can be
generated by $\bar 1_2$ and $\bar \Phi_2$ for any $n\ge k\ge 2$.
It follows that $({\rm{trc}}\As)_{\geq 1}$ can be
generated by $\bar 1_2$ and $\bar \Phi_2$.
The above argument shows that there is a canonical
epimorphism $\ip:=\mathscr{T}(E)/(R) \to ({\rm{trc}}\As)_{\geq 1}$,
where $\mathscr{T}(E)/(R)$ be the quotient operad of the
free operad $\mathscr{T}(E)$ on the $\Bbbk\S$-module
$E=(0, 0, \Bbbk \bar 1_2\oplus \Bbbk \bar\Phi_2, 0, \cdots)$
modulo relations \eqref{E7.4.1}-\eqref{E7.4.3}. By
\cite[Section 1.2.12]{Fr1}, $\ip\cong \Pois_{\geq 1}$.
By the fact that
$$\dim \ip(n)=\dim \Pois(n)=n!=\dim \As(n)=\dim {\rm{trc}}\As(n)$$
for all $n\geq 1$ \cite[Section 13.3.3]{LV},
we have $({\rm{trc}}\As)_{\geq 1}=\ip$,
which is isomorphic to the Poisson operad $\Pois_{\geq 1}$.
Therefore we obtain that $({\rm{trc}}\As)_{\geq 1}=
\Pois_{\geq 1}$. It remains to verify that 0-ary
operations of ${\rm{trc}}\As$ and $\Pois$ agree.
We can easily see that, in ${\rm{trc}}\As$,
$${\bar 1_2}\underset{i}{\circ} {\bar 1_0}={\bar 1_1}$$
and
$${\bar \Phi_2}\underset{i}{\circ} {\bar 1_0}=0$$
for $i=1,2$. This is also how we define the unitary
Poisson operad $\Pois$. This finishes the proof.
\end{proof}

\begin{remark}
\label{xxrem7.5} We make some comments about the
above lemma.

\begin{enumerate}
\item[(1)]
The result in Lemma \ref{xxlem7.4} may be well-known,
possibly in a different language. Similar ideas
appeared in \cite{LL, Br, MaR}.
\item[(2)]
By Livernet-Loday \cite{LL}, $\As$ is a deformation of
$\Pois$,  in the sense that there is a family of operads,
denoted by ${\mathcal {LL}}_q$, such that $\Pois\cong
{\mathcal {LL}}_0$ and that $\As\cong {\mathcal {LL}}_q$
for any $q\neq 0$. Further study in this direction can be
found in \cite{Br, MaR} and \cite[Section 13.3.4]{LV}.
Lemma \ref{xxlem7.4} gives an explanation why $\As$ is
a deformation of $\Pois$.
We refer to \cite[Example 4 and Theorem 5]{MaR} for
some interesting connections with
deformation quantization.
\item[(3)]
Related to combinatorics, the dimension of ${^k \iu}(k)$
of either $\As$ and $\Pois$ is the number of derangements
of a set of size $k$.
\item[(4)]
It would be interesting to determine associated
truncatified operads of other unitary operads.
\end{enumerate}
\end{remark}

\section{Appendix}
\label{xxsec8}

In this part, we mainly rewrite some conventions and facts on 
operads, see \cite{LV} or \cite{Fr1, Fr2}.

\subsection{Symmetric groups, permutations and block permutations}
\label{xxsec8.1}
 We use $\S_n$ to denote the symmetric group,
namely, the set of bijections, on the set $\n$. Note that both $\S_0$ and
$\S_1$ are isomorphic to the trivial group with one element.

Following convention in the book \cite{LV}, we identify $\S_n$ with the
set of permutations of $\n$ by assigning each $\sigma\in \S_n$ the sequence
$(\sigma^{-1}(1),\sigma^{-1}(2),\cdots,\sigma^{-1}(n))$. This assignment
is convenient when we use other convention such as \eqref{E1.1.4}.
Equivalently, each permutation $(i_1, i_2, \cdots, i_n)$ of $\n$ corresponds
to the $\sigma\in\S_n$ given by $\sigma(i_k)=k$ for all $1\le k\le n$.

Let $n>0$, $k_1, k_2\cdots, k_n\ge 0$ be integers. For simplicity we write
$m=k_1+k_2+\cdots+ k_n$, $m_1=0$, and $m_i=k_1+\cdots +k_{i-1}$ for
$2\le i\le n$. We may divide $(1,2,\cdots, m)$ into $n$-blocks
$(B_1, B_2, \cdots, B_n)$, where $B_i= (m_i+1, \cdots, m_i+k_i)$ for
$1\le i\le n$. Now each $\S_{k_i}$ acts on the block $B_i$, and each element
in $\S_n$ acts on $[m]$ naturally by permuting the blocks. More
precisely, we have the following natural map
\begin{align*}
\vartheta_{n; k_1,\cdots, k_n}\colon  \S_n \times\S_{k_1}\times\cdots
        \times \S_{k_n}  & \to \S_m,\\
(\sigma, \sigma_1,\cdots, \sigma_n)& \mapsto (\tilde{B}_{\sigma^{-1}(1)},
\cdots, \tilde{B}_{\sigma^{-1}(n)})
\end{align*}
for all $\sigma\in \S_n$ and $\sigma_i\in\S_{k_i}$ for $1\le i \le n$,
where each
\begin{equation}
\label{E1.0.1}\tag{E8.0.1}
\tilde{B}_i=m_i+(\sigma_i^{-1}(1), \cdots, \sigma_i^{-1}(k_i))
=(m_i+\sigma_i^{-1}(1), \cdots, m_i+\sigma_i^{-1}(k_i))
\end{equation}
is the sequence corresponding to $\sigma_i$.

The following lemma is easy.

\begin{lemma}
\label{xxlem8.1}
Retain the above notation.
\begin{enumerate}
\item[(1)]
We have
\begin{align}\label{E8.1.1}\tag{E8.1.1}
\vartheta_{n; k_1,\cdots, k_n}
&(\tau\sigma, \tau_1\sigma_1, \cdots, \tau_n\sigma_n)\\
\notag
=&\vartheta_{n; k_{\sigma^{-1}(1)},\cdots, k_{\sigma^{-1}(n)}}
(\tau, \tau_{\sigma^{-1}(1)}, \cdots, \tau_{\sigma^{-1}(n)})
\vartheta_{n; k_1, \cdots, k_n}(\sigma, \sigma_1, \cdots, \sigma_n)
\end{align}
for all $\sigma, \tau\in\S_n$, and $\sigma_i,\tau_i\in \S_{k_i}$, $1\le i\le n$.
\item[(2)]
In particular,
\begin{align}
\vartheta_{n; k_1,\cdots, k_n}
&(\sigma, \sigma_1, \cdots, \sigma_n)\nonumber \\
=& \vartheta_{n; k_1,\cdots, k_n}(\sigma, 1, \cdots, 1)
\vartheta_{n; k_1, \cdots, k_n}(1, \sigma_1, \cdots, \sigma_n)
\label{E8.1.2}\tag{E8.1.2}\\
=& \vartheta_{n; k_{\sigma^{-1}(1)}, \cdots, k_{\sigma^{-1}(n)}}
(1, \sigma_{\sigma^{-1}(1)}, \cdots, \sigma_{\sigma^{-1}(n)})
\vartheta_{n; k_1,\cdots, k_n}(\sigma, 1, \cdots, 1)\nonumber
\end{align}
where $1$ in different positions represents the identity map of $[k_i]$
or $[n]$.
\end{enumerate}
\end{lemma}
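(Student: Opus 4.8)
The plan is to reduce both identities to a single explicit formula for how a block permutation acts on $[m]$, and then to verify \eqref{E8.1.1} by composing two such actions elementwise; \eqref{E8.1.2} will fall out of \eqref{E8.1.1} by specialization.

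First I would unwind the definition of $\vartheta:=\vartheta_{n;k_1,\cdots,k_n}$ to obtain a closed formula for the permutation $\pi:=\vartheta(\sigma,\sigma_1,\cdots,\sigma_n)$. Each $a\in[m]$ is written uniquely as $a=m_i+p$ with $1\le p\le k_i$, recording its block index $i$ and its within-block position $p$. I claim that
$$\pi(m_i+p)=m'_{\sigma(i)}+\sigma_i(p),\qquad m'_j:=\sum_{l<j}k_{\sigma^{-1}(l)}.$$
This is read off from the defining sequence $(\tilde B_{\sigma^{-1}(1)},\cdots,\tilde B_{\sigma^{-1}(n)})$, which by the convention $\sigma\leftrightarrow(\sigma^{-1}(1),\cdots,\sigma^{-1}(n))$ equals $(\pi^{-1}(1),\cdots,\pi^{-1}(m))$: the entry in output position $m'_j+q$ is the $q$-th coordinate $m_{\sigma^{-1}(j)}+\sigma_{\sigma^{-1}(j)}^{-1}(q)$ of $\tilde B_{\sigma^{-1}(j)}$ by \eqref{E1.0.1}, so $\pi^{-1}(m'_j+q)=m_{\sigma^{-1}(j)}+\sigma_{\sigma^{-1}(j)}^{-1}(q)$, and inverting gives the displayed formula. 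The only content here is bookkeeping of the offsets $m_i$ and $m'_j$; note that the domain of $\pi$ is addressed by the partition $(k_1,\cdots,k_n)$ while its codomain is re-blocked by $(k_{\sigma^{-1}(1)},\cdots,k_{\sigma^{-1}(n)})$.

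Next I would prove \eqref{E8.1.1} by comparing actions. Write $\pi_1:=\vartheta_{n;k_1,\cdots,k_n}(\sigma,\sigma_1,\cdots,\sigma_n)$, which sends block coordinates $(i,p)\mapsto(\sigma(i),\sigma_i(p))$ into the codomain partition $(k_{\sigma^{-1}(1)},\cdots,k_{\sigma^{-1}(n)})$. The left factor $\pi_2:=\vartheta_{n;k_{\sigma^{-1}(1)},\cdots,k_{\sigma^{-1}(n)}}(\tau,\tau_{\sigma^{-1}(1)},\cdots,\tau_{\sigma^{-1}(n)})$ is a block permutation for exactly this partition, so Step~1 gives $(j,q)\mapsto(\tau(j),\tau_{\sigma^{-1}(j)}(q))$. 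Composing (with $\pi_1$ applied first, consistent with juxtaposition being composition in $\S_m$),
$$(i,p)\xrightarrow{\ \pi_1\ }(\sigma(i),\sigma_i(p))\xrightarrow{\ \pi_2\ }\bigl(\tau\sigma(i),\ \tau_{\sigma^{-1}(\sigma(i))}\sigma_i(p)\bigr)=\bigl(\tau\sigma(i),\ (\tau_i\sigma_i)(p)\bigr),$$
using $\tau_{\sigma^{-1}(\sigma(i))}=\tau_i$. The resulting codomain partition is $(k_{\sigma^{-1}(\tau^{-1}(1))},\cdots)=(k_{(\tau\sigma)^{-1}(1)},\cdots)$ since $\sigma^{-1}\tau^{-1}=(\tau\sigma)^{-1}$, which is precisely the codomain partition of $\vartheta_{n;k_1,\cdots,k_n}(\tau\sigma,\tau_1\sigma_1,\cdots,\tau_n\sigma_n)$; and by Step~1 this last permutation also sends $(i,p)\mapsto(\tau\sigma(i),(\tau_i\sigma_i)(p))$. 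Hence $\pi_2\pi_1$ and $\vartheta(\tau\sigma,\tau_1\sigma_1,\cdots,\tau_n\sigma_n)$ agree on every $a\in[m]$, proving \eqref{E8.1.1}.

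Finally, \eqref{E8.1.2} is extracted from \eqref{E8.1.1} by specialization: putting the outer $\sigma=1$ and then $\tau_i=1$ (and relabeling $\tau$ as $\sigma$) gives the first line $\vartheta(\sigma,\sigma_1,\cdots,\sigma_n)=\vartheta(\sigma,1,\cdots,1)\,\vartheta(1,\sigma_1,\cdots,\sigma_n)$, while putting the outer $\tau=1$ and then $\sigma_i=1$ (and relabeling $\tau_i$ as $\sigma_i$) gives the second. The main obstacle I anticipate is purely notational rather than conceptual: one must keep the domain and codomain partitions of each block permutation straight (the codomain is re-blocked by the outer permutation), and track the index shift that converts the inner permutation $\tau_{\sigma^{-1}(j)}$ into $\tau_i$ upon substituting $j=\sigma(i)$. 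Once the action formula of Step~1 is established with the correct offsets, everything else is a direct substitution.
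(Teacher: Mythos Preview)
Your argument is correct, but the route differs from the paper's. You first extract the pointwise action formula $\pi(m_i+p)=m'_{\sigma(i)}+\sigma_i(p)$ and use it to prove \eqref{E8.1.1} directly by composing two block permutations in coordinates, then obtain \eqref{E8.1.2} as a specialization. The paper proceeds in the opposite order: it first verifies both equalities of \eqref{E8.1.2} by direct sequence manipulation from the definition (computing $(\tilde B_{\sigma^{-1}(1)},\cdots,\tilde B_{\sigma^{-1}(n)})$ in two ways), and then deduces \eqref{E8.1.1} by repeatedly applying \eqref{E8.1.2} to factor $\vartheta(\tau\sigma,\tau_1\sigma_1,\cdots,\tau_n\sigma_n)$ into four pieces and regrouping. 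Your approach makes the wreath-product structure of $\vartheta$ explicit and handles the bookkeeping of domain/codomain partitions once, after which the composition is a one-line substitution; the paper's approach avoids introducing the auxiliary offsets $m'_j$ and the action formula, staying entirely within the sequence notation, at the cost of a longer chain of factorizations for part~(1). Either method is fine for this lemma.
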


\begin{proof}
We first prove  part (2). For any $\sigma\in \mathbb{S}_n$,
$\sigma_i\in \mathbb{S}_{k_i}$, $1\le i\le n$, using notation
in \eqref{E1.0.1}, we have
\begin{align*}
\vartheta_{n; k_1, \cdots, k_n}(\sigma, 1, \cdots, 1)&
\vartheta_{n; k_1, \cdots, k_n}(1, \sigma_1, \cdots, \sigma_n)\\
&=  \vartheta_{n; k_1, \cdots, k_n}(\sigma, 1, \cdots, 1)
(\tilde{B}_1, \cdots, \tilde{B}_n)\\
&=  (\tilde{B}_{\sigma^{-1}(1)}, \cdots, \tilde{B}_{\sigma^{-1}(n)})\\
&=  \vartheta_{n; k_1, \cdots, k_n}(\sigma, \sigma_1, \cdots, \sigma_n),
\end{align*}
and
\begin{align*}
\vartheta_{n; k_{\sigma^{-1}(1)}, \cdots, k_{\sigma^{-1}(n)}}
&(1, \sigma_{\sigma^{-1}(1)}, \cdots, \sigma_{\sigma^{-1}(n)})
\vartheta_{n; k_1, \cdots, k_n}(\sigma, 1, \cdots, 1)\\
&= \vartheta_{n; k_{\sigma^{-1}(1)}, \cdots, k_{\sigma^{-1}(n)}}
(1, \sigma_{\sigma^{-1}(1)}, \cdots, \sigma_{\sigma^{-1}(n)})
(B_{\sigma^{-1}(1)}, \cdots, B_{\sigma^{-1}(n)})\\
&= (\tilde{B}_{\sigma^{-1}(1)}, \cdots, \tilde{B}_{\sigma^{-1}(n)})\\
&= \vartheta_{n; k_1, \cdots, k_n}(\sigma, \sigma_1, \cdots, \sigma_n).
\end{align*}

For part (1), by part (2), we have
\begin{align*}
\vartheta_{n; k_1,\cdots, k_n}
&(\tau\sigma, \tau_1\sigma_1, \cdots, \tau_n\sigma_n)\\
&= \vartheta_{n; k_1,\cdots, k_n}(\tau\sigma, 1, \cdots, 1)
\vartheta_{n; k_1,\cdots, k_n}(1, \tau_1\sigma_1, \cdots, \tau_n\sigma_n)\\
&= \vartheta_{n; k_{\sigma^{-1}(1)},\cdots, k_{\sigma^{-1}(n)}}
(\tau, 1, \cdots, 1)\vartheta_{n; k_1,\cdots, k_n}(\sigma, 1, \cdots, 1)\\
&\qquad
\vartheta_{n; k_1,\cdots, k_n}(1, \tau_1, \cdots, \tau_n)
\vartheta_{n; k_1,\cdots, k_n}(1, \sigma_1, \cdots, \sigma_n)\\
&= \vartheta_{n; k_{\sigma^{-1}(1)},\cdots, k_{\sigma^{-1}(n)}}
(\tau, 1, \cdots, 1)
\vartheta_{n; k_{\sigma^{-1}(1)},\cdots, k_{\sigma^{-1}(n)}}
(1, \tau_{\sigma^{-1}(1)}, \cdots, \tau_{\sigma^{-1}(n)})\\
&\qquad
\vartheta_{n; k_1,\cdots, k_n}(\sigma, 1, \cdots, 1)
\vartheta_{n; k_1,\cdots, k_n}(1, \sigma_1, \cdots, \sigma_n)\\
&=\vartheta_{n; k_{\sigma^{-1}(1)},\cdots, k_{\sigma^{-1}(n)}}
(\tau, \tau_{\sigma^{-1}(1)}, \cdots, \tau_{\sigma^{-1}(n)})
\vartheta_{n; k_1, \cdots, k_n}(\sigma, \sigma_1, \cdots, \sigma_n)
\end{align*}
for all $\tau, \sigma\in \mathbb{S}_n$, $\tau_i, \sigma_i\in
\mathbb{S}_{k_i}$, $1\le i\le n$.
\end{proof}

For convenience, we introduce the following maps obtained from
$\vartheta_{n; k_1,\cdots, k_n}$:
\begin{align}
\notag
\vt_{k_1,\cdots, k_n}& \colon \S_n\to \S_{m},
 \sigma\mapsto \vartheta_{n; k_1,\cdots, k_n}(\sigma, 1, \cdots, 1),\\
\label{E8.1.3}\tag{E8.1.3}
\vt_{k_1,\cdots, k_n}^i & \colon \S_{k_i}\to \S_{m},
\sigma_i\mapsto \vartheta_{n; k_1,\cdots, k_n}
(1,1,\cdots, 1, \sigma_i, 1,\cdots, 1).
\end{align}

Note that $\S_{k_1}\times\cdots\times \S_{k_n}$ can be viewed
as a subgroup of $\S_m$ via the embedding maps $\vt_{k_1,\cdots, k_n}^i$.
While in general, $\vartheta_{k_1,\cdots, k_n}$ is not an embedding
of groups. It is the case if and only if all blocks have the same
size, that is, $k_1=k_2=\cdots =k_n$.

\subsection{multilinear maps, compositions and symmetric group action}
\label{xxsec8.2}
Let $V$ be a vector space and $n>0$ an integer. Denote by $V^{\ot n}$
the tensor space $V\ot V\ot\cdots\ot V$ with $n$ factors. For any
$v_1,\cdots ,v_n\in V$, we simply denote $v_1\ot v_2\ot\cdots \ot v_n$
by $[v_1, v_2,\cdots, v_n]$. Let $\mathbf{B}\subset V$ be a $\Bbbk$-linear
basis of $V$, then $V^{\ot n}$ has a $\Bbbk$-linear basis
$$\{[v_1,v_2,\cdots, v_n] \; \mid \; v_i\in \mathbf{B},\ 1\le i\le n\}.$$
For consistency, we set $V^{\ot0}=\k$,
and denote by $[\ ]$ a fixed basis element of $V^{\ot 0}$. Under the
map $[v_1,\cdots, v_i]\ot [\ ]\ot [v_{i+1},\cdots, v_{i+j}]\mapsto
[v_1,\cdots, v_{i+j}]$, we may identify $V^{\ot i}\ot V^{\ot 0}
\ot V^{\ot j}$ with $V^{\ot (i+j)}$.

Let $\End_V(n)$ denote the $\k$-vector space $\Hom_\k(V^{\ot n}, V)$
of multilinear operators on $V$. Clearly, $\End_V(0)\cong V$ under
the mapping $f\mapsto f([\ ])$.

It is standard that $\S_n$ acts on $V^{\ot n}$ on the left by
permuting the factors, more precisely
\begin{equation}
\label{E8.1.4}\tag{E8.1.4}
\sigma\cdot[x_1, x_2,\cdots, x_n] =
[x_{\sigma^{-1}(1)}, x_{\sigma^{-1}(2)},\cdots,x_{\sigma^{-1}(n)}]
\end{equation}
for all $\sigma\in\S_n$, and $x_1, x_2\cdots, x_n\in V$
\cite[p. xxiv and p.164]{LV}.
This convention could be different from the one used by some researchers.
This action induces a right action of $\S_n$ on
$\End_V(n)$ by
$$(f\ast \sigma)(X) = f(\sigma X)$$ for all $\sigma\in \S_n$,
$f\in \End_V(n)$ and $X\in V^{\ot n}$. Here $\ast$ denotes the
(right) $\S_n$-action.

Consider the composition map
\begin{align}\label{E8.1.5}\tag{E8.1.5}
\circ \colon \End_V(n)\ot \End_V(k_1)\ot \cdots\ot \End_V(k_n)
&\To \End_V(k_1+\cdots+k_n),\\
(f, f_1, \cdots, f_n)\mapsto f\circ(f_1,\cdots, f_n)
&:=f\bullet(f_1\ot\cdots \ot f_n),\notag
\end{align}
where
\begin{align}\label{E8.1.6}\tag{E8.1.6}
f\bullet(f_1\ot \cdots\ot f_n)&([x_{1,1},\cdots, x_{1,k_1},
\cdots, x_{n,1},\cdots, x_{n,k_n}])\\
&= f(f_1([x_{1,1},\cdots, x_{1,k_1}])\ot \cdots\ot
f_n([x_{n,1},\cdots, x_{n,k_n}])), \notag
\end{align}
for all $f\in\End_V(n)$, $f_i\in\End_V(k_i)$ and $x_{ij}\in V$. Here
$\bullet$ denotes an ordinary composition of two functions and
$\circ$ denotes the composition map of an operad. The composition
map $\circ$ is compatible with the symmetric group actions. The following
is clear.

\begin{lemma}
\label{xxlem8.2}
Keep the above notation. Then
\begin{align}\label{E8.2.1}\tag{E8.2.1}
&(f\ast \sigma)\circ(f_{1} \ast \tau_{1},
\cdots, f_{n} \ast \tau_{n})\\
=& (f\circ(f_{\sigma^{-1}(1)},\cdots, f_{\sigma^{-1}(n)}))
\ast\vartheta_{n; k_1, \cdots, k_n}
(\sigma, \tau_1, \cdots, \tau_n)\notag
\end{align}
for all $\sigma\in \S_n$, and $\tau_i\in \S_{k_i}$, $1\le i\le n$.
\end{lemma}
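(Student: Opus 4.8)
The plan is to verify the identity directly, by evaluating both sides on an arbitrary pure tensor. Both sides lie in $\End_V(m)$ with $m=k_1+\cdots+k_n$, and two multilinear maps agree precisely when they agree on all basis tensors, so it suffices to fix a generic argument
\[
X=[x_{1,1},\cdots, x_{1,k_1},\cdots, x_{n,1},\cdots, x_{n,k_n}]\in V^{\ot m},
\]
regarded as grouped into $n$ consecutive blocks of sizes $k_1,\cdots,k_n$.

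First I would compute the left-hand side. Unwinding \eqref{E8.1.6} together with the definition $(g\ast \rho)(Y)=g(\rho\cdot Y)$, applying $f_i\ast \tau_i$ to the $i$-th block of $X$ produces $f_i([x_{i,\tau_i^{-1}(1)},\cdots, x_{i,\tau_i^{-1}(k_i)}])$ by \eqref{E8.1.4}; feeding these $n$ outputs into $f\ast \sigma$ permutes the slots by $\sigma$, again via \eqref{E8.1.4}. The upshot is that the left-hand side, evaluated at $X$, equals $f$ applied to the tensor whose $j$-th slot is $f_{\sigma^{-1}(j)}$ evaluated on the $\tau_{\sigma^{-1}(j)}$-shuffled variables of block $\sigma^{-1}(j)$.

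Next I would compute the right-hand side. Here the composition $f\circ(f_{\sigma^{-1}(1)},\cdots, f_{\sigma^{-1}(n)})$ regroups $V^{\ot m}$ into blocks of sizes $k_{\sigma^{-1}(1)},\cdots, k_{\sigma^{-1}(n)}$, while the right action by $w:=\vartheta_{n; k_1,\cdots, k_n}(\sigma,\tau_1,\cdots,\tau_n)$ first rewrites $X$ as $w\cdot X$ using \eqref{E8.1.4} and the block formula \eqref{E1.0.1}. The heart of the matter is to check that cutting $w\cdot X$ into blocks of sizes $k_{\sigma^{-1}(j)}$ presents to each $f_{\sigma^{-1}(j)}$ exactly the same variables, in the same order, as on the left-hand side.

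To keep this bookkeeping under control I would invoke the factorization \eqref{E8.1.2} of Lemma~\ref{xxlem8.1}(2), namely $w=\vartheta_{n; k_1,\cdots, k_n}(\sigma,1,\cdots,1)\,\vartheta_{n; k_1,\cdots, k_n}(1,\tau_1,\cdots,\tau_n)$, and treat the two factors separately: since the $\S_m$-action on $V^{\ot m}$ is a left action, $w\cdot X$ is obtained by first applying the within-block factor $\vartheta_{n; k_1,\cdots, k_n}(1,\tau_1,\cdots,\tau_n)$, which performs the $\tau_i^{-1}$-shuffle inside each block (the case $\sigma=1$), and then the pure block-permutation factor $\vartheta_{n; k_1,\cdots, k_n}(\sigma,1,\cdots,1)$, which reorders the $n$ blocks according to $\sigma$ (the case all $\tau_i=1$). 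Matching each of these two special cases slot-by-slot against the left-hand side, and then recombining via \eqref{E8.1.2}, yields the claim. I expect the only real obstacle to be notational: one must track the inverses appearing both in the left action \eqref{E8.1.4} and in the identification of $\S_n$ with sequences, and confirm that the offsets $m_i=k_1+\cdots+k_{i-1}$ used in \eqref{E1.0.1} align the block boundaries on the two sides exactly. Once the conventions are pinned down, the verification is a routine, if fiddly, reindexing.
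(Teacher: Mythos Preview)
Your proposal is correct and follows essentially the same approach as the paper: both evaluate the two sides on an arbitrary pure tensor in $V^{\otimes m}$ and match indices. The only difference is cosmetic---the paper tracks the indices through $\vartheta_{n;k_1,\ldots,k_n}(\sigma,\tau_1,\ldots,\tau_n)$ in one pass, whereas you factor $w$ via \eqref{E8.1.2} and handle the within-block shuffle and the block permutation separately before recombining; this detour is harmless and leads to the same verification.
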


\begin{proof}
We write $m_1=0$, $m_i=k_1+\cdots+k_{i-1}$ for $2\le i\le n$, and
$m=k_1+\cdots+k_n$. Then
\begin{align*}
(f\ast \sigma)\circ &(f_{1} \ast \tau_{1}, \cdots, f_{n} \ast
   \tau_{n})[x_1, \cdots, x_n]\\
&= (f\ast \sigma) ([(f_1\ast\tau_1)([x_1, \cdots, x_{k_1}]),
   \cdots, (f_n\ast\tau_n)([x_{m_n+1}, \cdots, x_{m_n+k_n}])])\\
&= f([(f_{\sigma^{-1}(1)}\ast \tau_{\sigma^{-1}(1)})
   ([x_{m_{\sigma^{-1}(1)}+1}, \cdots,
	 x_{m_{\sigma^{-1}(1)}+k_{\sigma^{-1}(1)}}]), \cdots, \\
&\quad\quad (f_{\sigma^{-1}(n)}\ast \tau_{\sigma^{-1}(n)})
   ([x_{m_{\sigma^{-1}(n)}+1}, \cdots, x_{m_{\sigma^{-1}(n)}+k_{\sigma^{-1}(n)}}])])\\
&= f([f_{\sigma^{-1}(1)}([x_{m_{\sigma^{-1}(1)}+
   \tau_{\sigma^{-1}(1)}^{-1}(1)}, \cdots, x_{m_{\sigma^{-1}(1)}
	 +\tau_{\sigma^{-1}(1)}^{-1}(k_{\sigma^{-1}(1)})}]),\cdots,\\
&\quad\quad f_{\sigma^{-1}(n)}([x_{m_{\sigma^{-1}(n)}
   +\tau_{\sigma^{-1}(n)}^{-1}(1)}, \cdots, x_{m_{\sigma^{-1}(n)}
	 +\tau_{\sigma^{-1}(n)}^{-1}(k_{\sigma^{-1}(n)})} ])])\\
&= ((f\circ(f_{\sigma^{-1}(1)},\cdots, f_{\sigma^{-1}(n)}))\ast
   \vartheta_{n; k_1, \cdots, k_n}
   (\sigma, \tau_1, \cdots, \tau_n))([x_1, \cdots, x_m])
\end{align*}
This completes the proof.
\end{proof}

Moreover, denote by $\1\in\End_V(1)$ the identity map on $V$.
Clearly, the composition $\circ$ satisfies the following
coherence axioms:
\begin{enumerate}
\item (Identity)
$$f\circ(\1,\1,\cdots,\1) = f =\1\circ f;$$
\item (Associativity)
\begin{align*}
&f\circ(f_1\circ(f_{1,1},\cdots, f_{1,k_1}),
\cdots, f_n\circ(f_{n,1},\cdots, f_{n,k_n}))\\
=& (f\circ(f_1,\cdots,f_n))\circ(f_{1,1},
\cdots, f_{1,k_1},\cdots, f_{n,1}, \cdots, f_{n,k_n}).
\end{align*}
\end{enumerate}

\subsection{Associative algebras and the operad $\As$}
\label{xxsec8.3}
Recall that an \emph{associative algebra} (over $\k$) is a $\k$-vector space
$A$ equipped with a binary operation,
\[\mu\colon A\ot A\to A,\qquad \mu(a,b)=ab\]
satisfying the associative law $\mu\circ(\mu\ot \id_A) = \mu\circ(\id_A\ot\mu)$.
If moreover, there exists a linear map $u\colon \k\to A$ such that
$\mu\circ(u\ot\id_A) = \id_A = \mu\circ(\id_A\ot u)$, then $A$ is said to be
\emph{unital}.

The famous operad $\As$ encodes the category of unital
associative algebras, namely, unital associative algebras are exactly
$\As$-algebras. Recall that, for each $n\ge0$,
$\As(n) =\k\S_n$ as a right $\S_n$-module, and the composition $\circ$ is
given by
$$\sigma\circ (\sigma_1,\cdots, \sigma_n) =
\vt_{n; k_1, \cdots, k_n}(\sigma, \sigma_1,\cdots, \sigma_n)$$
for all $n>0$, $k_1,\cdots, k_n\ge0$, and $\sigma\in\S_n$ and
$\sigma_i\in\S_{k_i}$ for $1\le i\le n$.
It is direct to verify that $\As$ is an operad with the identity
$\1_1 := 1_{\S_1}\in \As(1)$ \cite[Section 9.1.3]{LV}. From
now on, we denote $1_{\S_n}$ by $1_n$ (or $\1_n$) for short for all $n\ge0$.

Let $(A, \gamma)$ be an $\As$-algebra. Clearly $\mu:=\gamma(1_{2})$
gives a binary operator on $A$, which is associative since
\begin{equation}
\label{E8.2.2}\tag{E8.2.2}
1_{\S_2}\circ(1_{2}, 1_{1}) =1_{3} = 1_{2}\circ(1_{1}, 1_{2}).
\end{equation}
Moreover, $1_{0}$ gives a linear map $u:=\gamma(1_{0})\colon \k\to A$,
and the fact that
\begin{equation}
\label{E8.2.3}\tag{E8.2.3}
1_{2}\circ(1_{0}, 1_{1}) = 1_{1} = 1_{2}\circ(1_{1}, 1_{0})
\end{equation}
means that $u$ is the unit map of $A$. Thus $(A, \mu, u)$ is a unital algebra.
Conversely, for every unital associative algebra $(A, \mu, u)$, we may define
$\gamma\colon \As\to \End_A$ as follows. By definition, $\gamma(1_{0}) = u$,
and for each $n>0$ and each $\sigma\in \S_n$, $\gamma(\sigma)$ is given by
$$\gamma(\sigma)\colon A^{\ot n}\to A,\quad a_1\ot\cdots\ot a_n\mapsto
a_{\sigma^{-1}(1)}a_{\sigma^{-1}(2)}\cdots a_{\sigma^{-1}(n)}$$
for all $a_1, \cdots, a_n \in A$, where the right hand side in the above formula
means the multiplication in $A$. It is direct to check that $\gamma$ is a morphism
of operads (these are standard facts in the book \cite{LV}).

\noindent\textbf{Acknowledgments}.
We would like to thank Marcelo Aguiar, Jonathan Beardsley,
Zheng-Fang Wang, Xiao-Wei Xu and Zhi-Bing Zhao for many useful
conversations. Y.-H. Bao and Y. Ye was supported by NSFC (Grant
No. 11871071, 11431010 and 11571329) and J. J. Zhang by the US National Science
Foundation (Grant No. DMS-1700825).

\end{document}